\newcommand{\Addresses}{{
		\bigskip
		\footnotesize
		
		\textsc{Department of Mathematics, Technion - Israel Institute of Technology, Haifa, Israel}\par\nopagebreak
		\textit{E-mail address:} \texttt{ofir.gor@technion.ac.il}

        \medskip

        \textsc{Department of Mathematical Sciences, Durham University, Stockton Road, Durham DH1 3LE}\par\nopagebreak
		\textit{E-mail address:} \texttt{mo-dick.wong@durham.ac.uk}
}}
\title{On the limiting distribution of sums of random multiplicative functions}
\author{Ofir Gorodetsky, Mo Dick Wong}
\date{}
\theoremstyle{plain}
\newtheorem{thm}{Theorem}[section]
\newtheorem{lem}[thm]{Lemma}  
\newtheorem{proposition}[thm]{Proposition}
\newtheorem{cor}[thm]{Corollary}
\newtheorem{definition}[thm]{Definition}
\theoremstyle{remark}
\newtheorem{rem}{Remark}[section]
\newcommand{\PP}{\mathbb{P}}
\newcommand{\RR}{\mathbb{R}}
\newcommand{\CC}{\mathbb{C}}
\newcommand{\EE}{\mathbb{E}}
\newcommand{\NN}{\mathbb{N}}
\newcommand{\ZZ}{\mathbb{Z}}
\newcommand{\FF}{\mathbb{F}}
\newcommand{\Fa}{\mathcal{F}}
\newcommand{\Ga}{\mathcal{G}}
\newcommand{\Na}{\mathcal{N}}
\newcommand{\Ea}{\mathcal{E}}
\newcommand{\Ia}{\mathcal{I}}
\newcommand{\OurEpsilon}{\varepsilon}
\newcommand{\weight}{\Phi}
\DeclareMathOperator*{\argmin}{arg\,min}
\numberwithin{equation}{section}
\begin{document}

\maketitle

\begin{abstract}
We establish the limiting distribution of $\frac{{(\log \log x)}^{1/4}}{\sqrt{x}} \sum_{n\le x}\alpha(n)$ where $\alpha$ is a Steinhaus random multiplicative function, answering a question of Harper.
The distributional convergence is proved by applying the martingale central limit theorem to a suitably truncated sum. This truncation is inspired by work of Najnudel, Paquette, Simm and Vu on subcritical holomorphic multiplicative chaos setting, but analysed with a different conditioning argument generalised from Harper's work on fractional moments to circumvent integrability issues at criticality.

A significant part of the proof is devoted to the convergence in probability of the associated partial Euler product to a critical multiplicative chaos measure, independent of the mild shift away from the critical line. Our approach to the universality of critical non-Gaussian multiplicative chaos bypasses the barrier analysis with the help of a modified second moment method, and employs a novel argument based on coupling and homogenisation by change of measure, which could be of independent interest. 
\end{abstract}

\section{Introduction}
\subsection{Background}
Let $\alpha\colon \NN \to \CC$ be a Steinhaus random multiplicative function: this is a function such that $(\alpha(p))_{p\text{ prime}}$ are i.i.d.~random variables uniformly distributed on $\{ z \in \CC: |z|=1\}$, and $\alpha(n) := \prod_{p}\alpha(p)^{a_p}$ if $n$ factorises into primes as $n=\prod_{p} p^{a_p}$. In this article, we would like to study the asymptotic distribution of $\sum_{n \le x} \alpha(n)$ and $\sum_{n \le x} \mu(n) \alpha(n)$ as $x \to \infty$, where $\mu$ is the M\"obius function.

This problem is of great importance in analytic number theory because it provides a random model for the cancellation behaviour of fundamental arithmetic quantities. For instance, the Riemann hypothesis can be reformulated as $\sum_{n \le x} \mu(n) \ll x^{\frac{1}{2} + \OurEpsilon}$ for any $\OurEpsilon > 0$, where
\begin{equation}\label{eq:Perron-Mobius}
    \sum_{n \le x} \mu(n) = \frac{1}{2\pi} \int_{\RR} \prod_{p \le x} \left(1 - \frac{1}{p^{\frac{1}{2} + it}}\right) \frac{x^{\frac{1}{2}+it}}{\frac{1}{2} + it}dt
\end{equation}
by Perron's formula. As one expects non-trivial contributions and cancellations to the integral to come from a growing region of $t$ as $x \to \infty$, the rotations $(p^{it})_p$ start to resemble that of i.i.d.~random variables distributed on the complex unit circle. One may thus wonder whether the randomised sum $\sum_{n \le x} \mu(n) \alpha(n)$ would provide new insights for the analysis of \eqref{eq:Perron-Mobius}.
A more common application is to use the Steinhaus random multiplicative function as a model for character and zeta sums, see the work of Harper \cite{harper2023typical} which successfully uses the random model to deduce intriguing and surprising estimates for deterministic counterparts.

At the same time, this problem is motivated by questions from harmonic analysis. In his last paper \cite{Helson}, Helson considered an infinite-dimensional generalisation of a theorem of Nehari on Hankel forms, and he conjectured that $\EE|\sum_{n \le x} \alpha(n)| = o(\sqrt{x})$, which would be sufficient for disproving such generalisation.
This conjecture stirred up much discussion at the time because it was not consistent with the mean-square heuristic in probability (it is easy to check that $\EE[|\sum_{n \le x} \alpha(n)|^2] = \lfloor x\rfloor$ by orthogonality). The better-than-squareroot cancellation for low fractional moments was eventually resolved in the seminal work of Harper \cite{Har2020}, but the asymptotic distribution for $\sum_{n \le x} \alpha(n)$ remains a mystery and is arguably the biggest and most fundamental open problem in the study of random multiplicative functions.

\subsection{Main results}
Before stating the distributional convergence of $\sum_{n \le x} \alpha(n)$, we need to explain some related results concerning the associated random Euler product
\[A_y(s) := \prod_{p \le y} (1-\alpha(p)p^{-s})^{-1}=\prod_{p \le y} \bigg(\sum_{k \ge 0} \frac{\alpha(p^k)  }{p^{ks}}\bigg)=\sum_{\substack{n\ge 1\\p\mid n \implies p \le y}} \frac{\alpha(n)}{n^s}.\]
Here $y \ge 2$ and $s$ is a complex number with $\Re s >0$. 
Our first theorem concerns the asymptotic behaviour of $|A_y|^2$ near the critical line as $y \to \infty$.
\begin{thm}\label{thm:mc-critical}
Let
\[\sigma_y(u) := \frac{1}{2}\left(1 + \frac{u}{\log y}\right)\]
and
\[m_{y, u}(dt) := \sqrt{\log \log y}\frac{|A_y(\sigma_y(u) + it)|^2}{\EE\left[|A_y(\sigma_y(u) + it)|^2\right]}dt, \quad t \in \RR.\]
There exists a non-trivial non-atomic random Radon measure $m_\infty(dt)$ supported on $\RR$ such that the following are true for any fixed $u \ge 0$: for any bounded interval $I$ and any test function $h \in C(I)$, we have
\[   m_{y, u}(h) \xrightarrow[y \to \infty]{p} m_\infty(h).\]
In particular, $m_{y, u} \xrightarrow[y \to \infty]{p} m_\infty$, independent of $u \ge 0$.\footnote{In this article, whenever we say a sequence of measures converge in probability, we are implicitly assuming that the convergence takes place in the space of Radon measures  equipped with the weak$^*$ topology.}
\end{thm}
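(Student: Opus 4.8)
The plan is to establish convergence of $m_{y,u}(h)$ in probability by the standard multiplicative chaos strategy: decompose $\log|A_y(\sigma_y(u)+it)|^2$ into an approximately Gaussian log-correlated field plus lower-order fluctuations, identify $m_{y,u}$ as a critical (derivative) multiplicative chaos associated to that field, and prove the required $L^1$/$L^2$ estimates. Write $\log A_y(s) = \sum_{p\le y}\alpha(p)p^{-s} + O(1)$ on the relevant region, so that $2\Re\log A_y(\sigma_y(u)+it) = 2\sum_{p\le y}\Re(\alpha(p)p^{-\sigma_y(u)-it}) + O(1)$; the point process of primes makes this a real Gaussian-like field $G_y(t)$ with covariance $\sum_{p\le y} p^{-2\sigma_y(u)}\cos((t-t')\log p) \approx \log\min(1/|t-t'|,\log y)$ after summation by parts, i.e. exactly the $\log$-kernel at variance $\log\log y$. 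Since $\sqrt{\log\log y}$ is precisely the critical normalisation $\sqrt{2}\cdot\frac{\text{variance}}{\sqrt{\text{variance}}}$ for a $\log$-correlated field of this strength (recall the critical inverse temperature corresponds to the factor making $e^{G_y - \frac12\EE G_y^2}$ degenerate and the extra $\sqrt{\log\log y}$ the derivative-martingale normalisation), the measure $m_{y,u}$ is the partial-Euler-product analogue of critical Gaussian multiplicative chaos. The effect of the shift $u>0$ only damps the high primes $p$ with $\log p \asymp \log y$, which contribute $O(1)$ to the variance, hence does not change the limit — this is the content of the ``independent of $u$'' claim.

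The first substantive step is a \emph{first-moment/$L^1$ control}: show $\EE[m_{y,u}(h)]$ converges (to $\int_I h\,dt$ up to a constant, by the exact formula for $\EE|A_y|^2 = \prod_{p\le y}(1-p^{-2\sigma_y(u)})^{-1} \asymp (\log y)$ after the normalisation, times $\sqrt{\log\log y}$ which makes the total mass $\asymp \sqrt{\log\log y}$... — so in fact one must be careful: $m_{y,u}$ is \emph{not} bounded in $L^1$, reflecting the well-known fact that critical chaos has infinite expectation when approached naively). This is exactly the ``integrability issues at criticality'' flagged in the abstract. To circumvent this I would not work with $m_{y,u}$ directly but introduce a \emph{barrier-type event} or, following the abstract's hint, a change-of-measure/conditioning argument generalised from Harper's fractional-moment work: condition on the low-frequency part of the field (primes up to $y_0$ for a slowly growing $y_0$) and on a good event that the random walk $k\mapsto \sum_{p\le e^k}\Re(\alpha(p)p^{-\sigma-it})$ stays below the barrier $\approx$ (something like $\frac{k}{\log\log y}\cdot\text{const}$), and show that on this event a truncated/tilted version $\tilde m_{y,u}$ is $L^2$-bounded and agrees with $m_{y,u}$ with probability $\to 1$.

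The core of the argument is then a \emph{second-moment computation} for the tilted measure: estimate $\EE[\tilde m_{y,u}(h)^2]$ via $\EE[|A_y(\sigma_y(u)+it)|^2|A_y(\sigma_y(u)+it')|^2]$, which by independence over primes factorises into $\prod_{p\le y}\EE[|1-\alpha(p)p^{-\sigma-it}|^{-2}|1-\alpha(p)p^{-\sigma-it'}|^{-2}]$; expanding the logarithm this is $\exp(\sum_{p\le y}(2p^{-2\sigma} + 4p^{-2\sigma}\cos((t-t')\log p)+\dots))$, giving a bound $\asymp (\log y)^? \cdot \min(1/|t-t'|,\log y)^2$ — i.e. a kernel $|t-t'|^{-2}$ which is \emph{not} integrable, again the criticality obstruction, tamed by the barrier which supplies the missing $\sqrt{\log(1/|t-t'|)}$ factors to make the double integral converge (the standard Seneta–Heyde / derivative-martingale $L^1$-via-truncated-$L^2$ phenomenon). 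Combined with a matching lower bound and a uniform-integrability argument, this yields convergence in probability of $\tilde m_{y,u}(h)$, hence of $m_{y,u}(h)$, to a limit $m_\infty(h)$ which (from the low-frequency conditioning being asymptotically the same for all $u$, since the $u$-dependence lives entirely in primes near $y$) is manifestly independent of $u$; non-atomicity and non-triviality follow from the lower bound (positive mass) and a scaling/multifractal estimate (no atoms) for $m_\infty$.

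\textbf{Main obstacle.} The hardest part is genuinely the \emph{criticality}: both the first and second moments of $m_{y,u}$ diverge, so the entire proof hinges on designing the right barrier event / change of measure so that (i) it has probability $\to 1$ after the $\sqrt{\log\log y}$ rescaling, (ii) the truncated measure is $L^2$-bounded, and (iii) the truncation does not move the measure. Matching the partial Euler product (a product of heavy-tailed factors $|1-\alpha(p)p^{-s}|^{-2}$, which are \emph{not} sub-Gaussian and not even in all $L^q$) to a genuinely Gaussian log-correlated field well enough to run the critical-GMC machinery — the ``universality'' step — is the second serious difficulty, and is where I expect the coupling/homogenisation-by-change-of-measure idea advertised in the abstract to do the work, replacing the usual barrier/modified-second-moment estimates that would be painful to push through directly for these non-Gaussian weights.
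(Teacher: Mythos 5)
Your sketch correctly identifies the landscape (critical multiplicative chaos, the $\sqrt{\log\log y}$ Seneta--Heyde normalisation, the fact that the $u$-dependence lives in primes with $\log p\asymp\log y$ and contributes $O(1)$ to the variance) and the obstruction (after this normalisation the limit has no first moment, so naive $L^1$ and $L^2$ arguments fail). However, the concrete technical plan you propose --- truncate by a \emph{barrier event} controlling the random walk $k\mapsto\sum_{p\le e^k}\Re(\alpha(p)p^{-\sigma-it})$ and then run a second-moment calculation for the truncated measure --- is precisely the approach the paper explicitly declines to take. The authors point out that even in the Gaussian world the barrier analysis is not applied directly to general log-correlated fields but only to special (almost star-scale invariant) ones, and that adapting it to the non-Gaussian Euler product would require sharp asymptotics restricted to barrier events (``fusion estimates'') that are not tractable here. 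Your plan, followed to the letter, would run into exactly that wall.

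What the paper actually does is rather different and you have not described it. First, there is a \emph{density removal lemma}: one factors $m_{y,u}(dt)=X_{y,u}(t)\,\nu_{y,u}(dt)$ where $\nu_{y,u}$ is the exponential of the linearised field $\Gamma_{y,u}(t)=2\Re\sum_{p\le y}\alpha(p)p^{-\sigma_y(u)-it}$ and $X_{y,u}$ is a well-behaved random density converging in $C(I)$; this reduces Theorem 1.1 to the convergence of $\nu_{y,u}$. Second --- and this is the key idea you are missing --- the universality $\nu_{y,u}\to\nu_\infty$ (same limit for all $u\ge 0$) is proved by the \emph{modified second moment method}: one shows
\[
\lim_{y\to\infty}\EE\bigl[|\nu_{y,0}(I)-\nu_{y,u}(I)|^2\,e^{-L\nu_{y,0}(I)}\bigr]=0
\]
for every fixed $L>0$. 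The global exponential damping $e^{-L\nu_{y,0}(I)}$ replaces pointwise barrier truncation; crucially it makes the integrand uniformly bounded via the elementary inequality $x^2e^{-Lx}\le 4/L^2$, so uniform integrability is automatic and no delicate fusion/barrier estimates are needed. The cross-moments are then evaluated by a change of measure that tilts the law of each $\alpha(p)$, implemented with a \emph{simultaneous} monotone coupling of phases across all pairs $(t_1,t_2)\in I^2$; under this coupling the tilt is (deterministic shift) $+$ (small residual), and the residual field is controlled uniformly by Bernstein-type concentration and generic chaining. This simultaneous coupling is what allows the integral over $(t_1,t_2)$ to be moved inside the expectation and the $x^2e^{-Lx}$ bound to be applied --- an insight that is absent from your plan. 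Finally, for $u=0$, the paper upgrades Saksman--Webb's distributional convergence to convergence in probability by running the same modified-second-moment/coupling machinery against a genuine Gaussian critical GMC (where Girsanov makes the residual exactly zero); you gesture at a Gaussian approximation but do not explain how to get convergence in probability rather than in distribution, which is essential for the later martingale CLT application.

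In short: your diagnosis of the difficulty is right, but the barrier-truncated second moment you propose is a known dead end for this model, and you have not identified the two mechanisms that the paper actually relies on, namely the exponential damping factor $e^{-L\nu_{y,0}(I)}$ (which sidesteps the lack of integrability globally rather than pointwise) and the simultaneous monotone coupling (which makes the change of measure tractable and uniform in $\mathbf t$).
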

When $u = 0$, the convergence $m_{y, 0} \to m_\infty$ was established in the weaker distributional sense by Saksman and Webb in \cite{SW}. This is insufficient for the application of martingale central limit theorem (\Cref{lem:mCLT}), and we need to strengthen it to convergence in probability - we will explain it in \Cref{sec:case-u=0}. 

At a heuristic level, $m_\infty$ may be seen as a random measure with formal Lebesgue density given by the infinite Euler product $\prod_{p}\left|1 - \alpha(p) p^{-1/2 - it}\right|^{-2}$ (up to renormalisation), though it should be pointed out that $m_\infty$ is singular with respect to the Lebesgue measure. Moreover, it was conjectured in \cite[Conjecture 1.11]{SW} (and will be established in a forthcoming work of Harper, Saksman and Webb) that $m_\infty$ is the critical chaos measure associated with randomly shifted Riemann zeta function: if $U_T \sim \mathrm{Uniform}([0, T])$, then
\[
    \sqrt{\log \log T} \frac{\left|\zeta(\tfrac{1}{2} + i(x+U_T))\right|^2}{\EE\big[\left|\zeta(\tfrac{1}{2} + i(x+U_T))\right|^2\big]} dx \xrightarrow[T \to \infty]{d} m_\infty(dx),
\]
highlighting the connection between the Steinhaus model and the behaviour of $\zeta$ on the critical line.

The following corollary extends \Cref{thm:mc-critical} to test functions with sufficient decay at infinity.
\begin{cor}\label{cor:mc-critical}
Let $h \in C(\RR)$ be such that $h(t)\ll |t|^{-a}$ for some $a > 1$. Then $m_{y, u}(h) \xrightarrow[y \to \infty]{p} m_\infty(h)$ for any fixed $u \ge 0$.
\end{cor}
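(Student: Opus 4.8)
The plan is to deduce \Cref{cor:mc-critical} from \Cref{thm:mc-critical} by a truncation argument, controlling the tail contribution of $h$ via a first-moment estimate. Write $h = h\mathbf{1}_{[-R,R]} + h\mathbf{1}_{|t|>R}$ for a large parameter $R$. The first piece is continuous on the compact interval $[-R,R]$ (after a harmless modification near $\pm R$, or simply because $\Cref{thm:mc-critical}$ applies to any bounded interval), so $m_{y,u}(h\mathbf{1}_{[-R,R]}) \xrightarrow{p} m_\infty(h\mathbf{1}_{[-R,R]})$ as $y\to\infty$. The point is then that both $m_{y,u}(h\mathbf{1}_{|t|>R})$ and $m_\infty(h\mathbf{1}_{|t|>R})$ are small, uniformly in $y$, once $R$ is large, and one concludes by the standard $\varepsilon/3$ argument: given $\delta>0$, choose $R$ so the two tails are $<\delta/3$ in probability (using Markov's inequality and a uniform bound on their expectations), then let $y\to\infty$ for the compact piece.

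The key estimate is therefore a uniform bound of the form $\EE[\, m_{y,u}(|h|\mathbf{1}_{|t|>R})\,] \ll \sum_{|k| > R} k^{-a}$, which tends to $0$ as $R\to\infty$ since $a>1$. To get this, I would break $\{|t|>R\}$ into unit intervals $I_k = [k, k+1)$ for integers $k$ with $|k|>R$, and on each use $|h(t)| \ll |t|^{-a} \ll |k|^{-a}$ together with $\EE[m_{y,u}(I_k)] = \sqrt{\log\log y}\,|I_k| \le \sqrt{\log\log y}$, which is \emph{not} bounded in $y$. So the naive first-moment bound fails, and one must instead use that $m_{y,u}$ has been normalised by its second moment: the correct quantity to estimate is not $\EE[m_{y,u}(I_k)]$ but rather something exploiting decorrelation. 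Concretely, I would bound $\EE[m_{y,u}(|h|\mathbf{1}_{|t|>R})]$ by revisiting whatever first/second moment input is used in the proof of \Cref{thm:mc-critical} on a fixed bounded interval and checking it is translation-uniform in $t$ — the Euler product $A_y(\sigma_y(u)+it)$ has a distribution that depends on $t$ only through the arithmetic factors $p^{-it}$, and the relevant moments $\EE[|A_y(\sigma_y(u)+it)|^{2}/\EE|A_y|^2]$ are exactly $1$ for every $t$. Thus $\EE[m_{y,u}(I_k)] = \sqrt{\log\log y}$ genuinely, and a pure first-moment bound cannot work; instead one should use the $\xrightarrow{p}$ convergence on each $I_k$ to replace $\EE[m_{y,u}(I_k)]$-type control by a bound on $m_\infty(I_k)$ plus a vanishing error, OR — cleaner — use that $\sum_{|k|>R}|k|^{-a} m_{y,u}(I_k)$ is dominated in probability because its truncation at level $N$ converges and the tail in $k$ is summable against a random variable with bounded expectation after the correct normalisation.

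Let me restructure: the cleanest route is to apply \Cref{thm:mc-critical} itself on each $I_k$ to get $m_{y,u}(I_k) \xrightarrow{p} m_\infty(I_k)$, and separately to obtain a bound $\sup_y \EE[\, g(m_{y,u}(I_k))\,] \le C$ for a suitable nonnegative function (e.g. $\min(m_{y,u}(I_k),1)$, or a fractional moment $m_{y,u}(I_k)^{\theta}$ with $\theta<1$ chosen so the low moment is uniformly bounded — this is exactly the better-than-squareroot phenomenon driving the whole paper, and such an estimate should be available from the proof of \Cref{thm:mc-critical}). With a uniform bound $\sup_y\EE[m_{y,u}(I_k)^\theta] \le C$ for some fixed $\theta \in (1/a, 1)$, Markov gives $\PP(m_{y,u}(I_k) > \lambda) \le C\lambda^{-\theta}$, and then $\EE[\sum_{|k|>R} |k|^{-a} \min(m_{y,u}(I_k), |k|^{\varepsilon})] \ll \sum_{|k|>R} |k|^{-a+\varepsilon(1-\theta)}$ is summable and $o_R(1)$ for $\varepsilon$ small; meanwhile the events $\{m_{y,u}(I_k) > |k|^\varepsilon\}$ for some $|k|>R$ have total probability $\ll \sum_{|k|>R}|k|^{-\varepsilon\theta} \to 0$. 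Combining, $m_{y,u}(|h|\mathbf{1}_{|t|>R}) \to 0$ in probability as $R\to\infty$, uniformly in $y$, and the same with $m_\infty$ in place of $m_{y,u}$ (taking $y\to\infty$ in the uniform bound, or arguing directly on $m_\infty$).

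The main obstacle is establishing the \emph{uniform-in-$y$} tail bound, i.e. a uniform bound on $\EE[m_{y,u}(I_k)^\theta]$ (or equivalently on the low fractional moments $\EE[|A_y(\sigma_y(u)+it)|^{2\theta}]$, suitably normalised by $\sqrt{\log\log y}$) for a single $\theta<1$, with a constant independent of $k$ and $y$ — this is precisely the kind of fractional-moment control that the paper develops (following Harper), so I would expect to invoke it as an intermediate estimate already proved en route to \Cref{thm:mc-critical}, rather than reprove it. Once that input is granted, the rest is the routine $\varepsilon/3$ / dominated-convergence bookkeeping described above, with the only care being that all the $I_k$-bounds are summable against $|k|^{-a}$ because $a>1$ leaves room to absorb a small power from the Markov step.
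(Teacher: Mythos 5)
Your strategy is the same as the paper's: truncate $h$ to $[-R,R]$, apply \Cref{thm:mc-critical} on the compact piece, and control the tail via a uniform-in-$y$ fractional-moment bound on $m_{y,u}(I_k)$ plus Markov's inequality. You also correctly identify the central obstruction (first moments are $\sqrt{\log\log y}$, so a pure first-moment bound is useless) and the right remedy (a uniform bound on $\EE[m_{y,u}(I_k)^q]$ for some $q<1$). Two points to flag.

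First, the uniform fractional-moment bound is \emph{not} an intermediate estimate established en route to \Cref{thm:mc-critical}; the proof of that theorem goes through the modified second moment method and never produces such a bound. The paper instead invokes it as an external ingredient, \Cref{thm:ayintegral} (Harper, Saksman--Webb): $\EE\big[\big(\tfrac{1}{\log y}\int_{\RR}|A_y(1/2+it)/(1/2+it)|^2\,dt\big)^q\big]\ll_q(\log\log y)^{-q/2}$. Moreover this statement is on the critical line $1/2$, whereas your $m_{y,u}(I_k)$ involves $|A_y(\sigma_y(u)+it)|^2$ with $\sigma_y(u)=\tfrac12+\tfrac{u}{2\log y}$. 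The step your sketch is missing is the reduction from the shifted line to the critical line, which the paper does via a two-sided Plancherel comparison: with $\delta=u/(2\log y)$ and $\alpha_y(n)=\alpha(n)\mathbf{1}_{p\mid n\Rightarrow p\le y}$,
\[
\int_0^1|A_y(1/2+\delta+it)|^2\,dt\ \ll_u\ \int_{\RR}\Big|\tfrac{A_y(1/2+\delta+it)}{1/2+\delta+it}\Big|^2\,dt
= 2\pi\int_0^\infty\big|\textstyle\sum_{n\le t}\alpha_y(n)\big|^2\,\tfrac{dt}{t^{2+2\delta}}
\le \int_{\RR}\Big|\tfrac{A_y(1/2+it)}{1/2+it}\Big|^2\,dt,
\]
using that $t^{-2-2\delta}\le t^{-2}$ for $t\ge 1$. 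This is what yields $\sup_{y\ge 3}\EE[m_{y,u}(I_k)^q]<\infty$ for $q\in[0,1)$ uniformly in $k$ (the $k$-uniformity follows from the distributional translation invariance $m_{y,u}([k,k+1])\overset{d}{=}m_{y,u}([0,1])$, which your sketch also uses implicitly). Without this Plancherel reduction your key assumption is not justified.

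Second, a stylistic point: once you have $\sup_{y,k}\EE[m_{y,u}(I_k)^q]<\infty$ for a single $q\in(1/a,1)$, there is no need for the $\min(m_{y,u}(I_k),|k|^{\varepsilon})$ cutoff and the attendant two-parameter bookkeeping; the concavity (subadditivity) of $x\mapsto x^q$ gives directly
\[
\EE\Big[\Big(\int_{|t|\ge R}|h(t)|\,m_{y,u}(dt)\Big)^q\Big]
\ll \sum_{|k|\ge R}|k|^{-aq}\,\EE\big[m_{y,u}(I_k)^q\big]
\ll R^{1-aq},
\]
which vanishes as $R\to\infty$ uniformly in $y$ since $aq>1$. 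This is the paper's route and is cleaner; your cutoff version does work after checking the consistency of the constraints on $\varepsilon$ and $\theta$, but it is an avoidable complication.
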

We use \Cref{cor:mc-critical} and the martingale central limit theorem to establish the following result.
\begin{thm}\label{thm:summain}
We have
\[   \frac{(\log \log x)^{\frac{1}{4}}}{\sqrt{x}}\sum_{n\le x} \alpha(n) \xrightarrow[x \to \infty]{d} \sqrt{V_\infty} \ G\]
where $\displaystyle V_\infty:= \frac{1}{2\pi}\int_{\RR} \frac{m_\infty(dt)}{|\tfrac{1}{2} + it|^2}$ is almost surely finite and strictly positive, and is independent of $G \sim \Na_\CC(0, 1)$. 
Moreover, the convergence in distribution is stable in the sense of \Cref{def:stable}.
\end{thm}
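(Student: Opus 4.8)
The plan is to deduce \Cref{thm:summain} from \Cref{cor:mc-critical} by realising $\frac{(\log\log x)^{1/4}}{\sqrt x}\sum_{n\le x}\alpha(n)$ as (asymptotically) the terminal value of a martingale and applying the martingale central limit theorem \Cref{lem:mCLT}. First I would use a Perron-type / contour integral representation, analogous to \eqref{eq:Perron-Mobius}, to write $\sum_{n\le x}\alpha(n)$ in terms of the random Euler product $A_y(s)$ on a vertical line with $\Re s = \sigma_y(u)$ for a suitable shift $u$ off the critical line (taking $\Re s$ slightly bigger than $1/2$ so that $A_y(s)$, equivalently the Dirichlet series, actually converges and the manipulation is legitimate); here one also truncates the Euler product at some $y$ that is a small power of $x$ (say $y = x^{1/\text{(slowly growing)}}$), the point being that primes $p>y$ contribute negligibly to the relevant smoothed sum. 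This truncation step is exactly the one the abstract attributes to Najnudel–Paquette–Simm–Vu. After this reduction, $\frac{(\log\log x)^{1/4}}{\sqrt x}\sum_{n\le x}\alpha(n)$ is, up to a small error in $L^1$ or in probability, an integral of the form $\int_\RR (\text{Gaussian-like increment}) \cdot (\text{weight built from } A_y)$, i.e. a sum over primes $p\le y$ of martingale increments with respect to the filtration $\Fa_y = \sigma(\alpha(p): p\le y)$.

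Next I would set up the martingale structure explicitly: writing $M_y := \EE\big[\frac{(\log\log x)^{1/4}}{\sqrt x}\sum_{n\le x}\alpha(n) \,\big|\, \Fa_y\big]$ (or the analogous object for the truncated smoothed sum), $(M_y)_y$ is a complex martingale in $y$, and I would verify the two hypotheses of \Cref{lem:mCLT}: (i) the conditional variance $\sum \EE[|\Delta M_p|^2 \mid \Fa_{p^-}]$ converges in probability to $V_\infty$, and (ii) a conditional Lindeberg / negligibility condition on the increments. For (i), the conditional variance will turn out to be essentially $\frac{1}{2\pi}\int_\RR \sqrt{\log\log y}\,\frac{|A_y(\sigma_y(u)+it)|^2}{\EE|A_y(\sigma_y(u)+it)|^2}\,\frac{dt}{|\tfrac12+it|^2} = \frac{1}{2\pi} m_{y,u}(h)$ with $h(t) = 1/|\tfrac12+it|^2$, which decays like $|t|^{-2}$, so $a=2>1$ and \Cref{cor:mc-critical} applies to give convergence in probability to $\frac{1}{2\pi}\int_\RR \frac{m_\infty(dt)}{|\tfrac12+it|^2} = V_\infty$; the independence of $V_\infty$ from $u\ge 0$ is precisely what makes the off-critical-line shift harmless, and the positivity/finiteness of $V_\infty$ follows from the corresponding properties of $m_\infty$ in \Cref{thm:mc-critical} together with the integrability of $h$. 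The conditional Lindeberg condition (ii) should follow from a moment bound on individual prime increments: each $\Delta M_p$ is of size roughly $\frac{\log p}{\sqrt p}$ times something bounded in a suitable norm, so the maximal increment is $o(1)$ and a second-moment computation controls the truncated sum; this is where Harper's conditioning argument (generalising the fractional-moment method to circumvent the loss of integrability at criticality) enters, as flagged in the abstract.

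I expect the main obstacle to be item (ii) together with making the Perron-step error genuinely negligible: near criticality the Euler product $A_y$ on the line $\Re s = \sigma_y(u)$ is not in $L^2$ in the way one would like (this is exactly Harper's "better than squareroot cancellation" phenomenon — the second moment is not the right normalisation), so one cannot simply bound everything by $L^2$ norms and must instead condition on a high-moment event (the value of $A$ at some intermediate scale, or a Girsanov-type tilt) before estimating, so that conditionally the remaining randomness does behave Gaussianly. Controlling the contribution of the tails of the $t$-integral (large $|t|$), the contribution of primes between $y$ and $x$, and the replacement of the sharp cutoff $n\le x$ by a smooth one, all feed into the same error analysis. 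Finally, stability of the convergence (in the sense of \Cref{def:stable}) comes almost for free from the martingale set-up: the limit is $\sqrt{V_\infty}\,G$ with $G$ a standard complex Gaussian independent of everything, and $V_\infty$ is $\Fa_\infty$-measurable, so the martingale CLT in fact delivers $\Fa_\infty$-stable convergence, and I would record this by checking the defining condition of \Cref{def:stable} against the conditional-variance convergence already established.
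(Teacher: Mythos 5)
Your high-level plan — realise the normalised sum as a Doob martingale $M_y = \EE\bigl[\tfrac{(\log\log x)^{1/4}}{\sqrt{x}}\sum_{n\le x}\alpha(n)\mid\Fa_y\bigr]$ indexed by primes and apply \Cref{lem:mCLT}, with \Cref{cor:mc-critical} supplying the bracket-process limit $V_\infty$ — agrees with the paper, but several of the intermediate claims are wrong in ways that would stop a proof along these lines from closing.

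The most serious gap is your claim that the conditional variance ``will turn out to be essentially $\tfrac{1}{2\pi}m_{y,u}(h)$.'' It is not, and this is precisely the point the paper flags. The bracket process is
$T_x = \sum_{p\le x}\EE\bigl[|Z_{x,p}|^2\mid\Fa_{p^-}\bigr] \approx \sqrt{\log\log x}\sum_{p\le x}\tfrac1p |s_{x/p,p}|^2$,
and the inner sum $s_{x/p,p}$ has $p$ appearing \emph{both} as the outer scale $x/p$ \emph{and} as the smoothness cutoff. Plancherel only produces a clean $m_{y,u}$-type integral when the smoothness cutoff is held fixed (cf.~\eqref{eq:plancherel} and \Cref{lem:plancherelapp}), so the raw bracket process does not admit the representation you want. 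The NPSV-style truncation exists precisely to fix this: within each block $p\in I_k = (x_k, x_{k+1}]$ of the truncated sum, the inner cutoff is frozen at $x_k$, and then a Riemann-sum argument over the blocks — with the Dickman function $\rho$ emerging from \Cref{lem:tshift} and \Cref{cor:sandwich} — produces $C_{\OurEpsilon,\delta}V_\infty$ with $C_{\OurEpsilon,\delta}\to 1$. Without the truncation, there is no single $y$ for which $T_x \approx \tfrac{1}{2\pi}m_{y,u}(h)$.

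Second, you have misidentified the truncation itself. The paper's $S^{\weight}_{x,\OurEpsilon,\delta}$ in \eqref{eq:trunc} does not discard primes $p>y$ for a small power $y$ of $x$ — quite the opposite: integers with $P(n)\le x^{\OurEpsilon}$ are discarded, and $P(n)$ ranges up to $Ax$. What is removed are integers whose largest and second-largest prime factors fall in the same interval $I_k$, together with those with $P(n)^2\mid n$. A genuine cutoff at $P(n)\le y$ with $y = x^{o(1)}$ would throw away essentially all of $\sum_{n\le x}\alpha(n)$, so the Perron-style reduction you describe, in which primes $p>y$ ``contribute negligibly,'' cannot be the right reduction; indeed the paper uses no Perron contour integral for the sum at all.

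Third, you have put Harper's conditioning argument in the wrong place. In the paper it is not used for the conditional Lindeberg condition — that is a routine fourth-moment estimate (\Cref{lem:lind}). It is used for the $L^1$ truncation justification (\Cref{prop:negdelta} via \Cref{lem:conditioning} and \Cref{lem:smallprimes}): the second moment of the discarded part carries an extra $\sqrt{\log\log x}$ factor that ruins a direct Cauchy--Schwarz, so one conditions on $\Fa_y$ with $y = x^{1/\log\log x}$ \emph{before} applying Cauchy--Schwarz, then appeals to the fractional-moment bound of \Cref{thm:ayintegral} for $\int|A_y(1/2+it)/(1/2+it)|^2\,dt$. That is where the criticality/integrability issue you correctly anticipate actually lives. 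Your identification of $h(t)=1/|1/2+it|^2$, the need for decay $|h(t)|\ll|t|^{-2}$ so that \Cref{cor:mc-critical} applies, and the observation that stability is automatic from the martingale CLT, are all correct and match the paper.
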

In his seminal work, Harper proved that \cite{Har2020}
\begin{equation}\label{eq:harpermoms}
\EE\Bigg[  \bigg|\sum_{n\le x}\alpha(n)\bigg|^{2q} \Bigg] \asymp \bigg( \frac{x}{1+(1-q)\sqrt{\log \log x}}\bigg)^q
\end{equation}
holds uniformly for $x\ge 3$ and $0\le q \le 1$. In particular, $|\sum_{n \le x}\alpha(n) (\log \log x)^{\frac{1}{4}}/\sqrt{x}|^{2q}$ is uniformly integrable for any $q\in (0,1)$ and so by a general principle \cite[Lemma 5.11]{Kal2021} one deduces moment convergence from \Cref{thm:summain}.
\begin{cor}
Fix $q\in (0,1)$. In the notation of \Cref{thm:summain},
\[ \lim_{x\to \infty} \EE \bigg[ \bigg| \frac{(\log \log x)^{\frac{1}{4}}}{\sqrt{x}}\sum_{n\le x} \alpha(n)  \bigg|^{2q} \bigg] = \EE [|\sqrt{V_{\infty}} G|^{2q}]=\Gamma(1+q)\EE [(V_{\infty})^q].\]
\end{cor}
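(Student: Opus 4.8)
The plan is to deduce the corollary from \Cref{thm:summain} by a standard uniform integrability argument, with the necessary bound supplied by Harper's moment estimate \eqref{eq:harpermoms}. Write $X_x := \frac{(\log\log x)^{1/4}}{\sqrt{x}}\sum_{n\le x}\alpha(n)$, so that \Cref{thm:summain} gives $X_x \xrightarrow[x\to\infty]{d}\sqrt{V_\infty}\,G$. Since $z\mapsto |z|^{2q}$ is continuous, it suffices to show that the family $\{|X_x|^{2q}:x\ge 3\}$ is uniformly integrable: once this is established, the general fact that convergence in distribution together with uniform integrability of a continuous functional upgrades to convergence of the corresponding expectations (e.g.\ \cite[Lemma 5.11]{Kal2021}) yields $\EE[|X_x|^{2q}]\to\EE[|\sqrt{V_\infty}\,G|^{2q}]$.

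For the uniform integrability I would pick $r\in(1,1/q)$ --- possible precisely because $q<1$ --- and set $p:=qr\in(q,1)$. Then by \eqref{eq:harpermoms},
\[
\EE\big[|X_x|^{2p}\big]=\frac{(\log\log x)^{p/2}}{x^p}\,\EE\bigg[\Big|\sum_{n\le x}\alpha(n)\Big|^{2p}\bigg]\asymp\frac{(\log\log x)^{p/2}}{\big(1+(1-p)\sqrt{\log\log x}\,\big)^p}\le (1-p)^{-p},
\]
uniformly for $x\ge 3$, where the final inequality uses $1+(1-p)\sqrt{\log\log x}\ge (1-p)\sqrt{\log\log x}$ and the implied constant in \eqref{eq:harpermoms} is absolute. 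Hence $\sup_{x\ge 3}\EE\big[(|X_x|^{2q})^r\big]=\sup_{x\ge 3}\EE\big[|X_x|^{2p}\big]<\infty$, which gives the desired uniform integrability of $\{|X_x|^{2q}\}$.

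It remains to identify the limit. By the independence of $V_\infty$ and $G$ in \Cref{thm:summain}, $\EE[|\sqrt{V_\infty}\,G|^{2q}]=\EE[V_\infty^q]\,\EE[|G|^{2q}]$, and for $G\sim\Na_\CC(0,1)$ one has $|G|^2\sim\mathrm{Exp}(1)$, so $\EE[|G|^{2q}]=\int_0^\infty t^q e^{-t}\,dt=\Gamma(1+q)$. The finiteness of $\EE[V_\infty^q]$ is then automatic, being the limit of the bounded sequence $\EE[|X_x|^{2q}]/\Gamma(1+q)$ (alternatively it follows from Fatou). This yields the stated identity.

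I do not expect a genuine obstacle here. The one point requiring a little care is to apply \eqref{eq:harpermoms} at an exponent $p$ strictly between $q$ and $1$, so that the $\sqrt{\log\log x}$ saving in the denominator is still present and exactly cancels the $(\log\log x)^{p/2}$ in the numerator; this cancellation degenerates as $p\uparrow 1$, which is why one cannot take $r=1$ directly. Everything else is the cited soft convergence principle together with the elementary computation of the fractional moments of $|G|^2$.
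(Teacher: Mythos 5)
Your proof is correct and takes essentially the same route as the paper: both derive uniform integrability of $|X_x|^{2q}$ from Harper's moment bound \eqref{eq:harpermoms} and then invoke the convergence-in-distribution plus uniform-integrability principle of \cite[Lemma 5.11]{Kal2021} together with \Cref{thm:summain}. You merely spell out the step that the paper leaves implicit under the phrase ``in particular,'' namely the choice of an exponent $p=qr\in(q,1)$ at which the $(\log\log x)^{p/2}$ factor in the numerator is cancelled by the $\sqrt{\log\log x}$ saving in \eqref{eq:harpermoms}, and the elementary identification $\EE[|G|^{2q}]=\Gamma(1+q)$ for $G\sim\Na_\CC(0,1)$.
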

All the results above can be extended analogously to $\frac{(\log \log x)^{1/4}}{\sqrt{x}} \sum_{n \le x} \mu(n) \alpha(n)$ with the obvious modifications, i.e.~replacing $A_y(s)$ with $\prod_{p\le y}(1-\alpha(p)p^{-s})$ in the definition of $m_{\infty}$.

\subsection{Previous literature}
Wintner \cite{Wintner} initiated the area of random multiplicative functions in the 1940s, and proved the first results on their partial sums and associated Dirichlet series. Early efforts in the area, including the work of Wintner, focused mostly on almost-sure bounds, but in recent years there have been increasing interest in exploring the moment asymptotics. 

For instance, even moments of $|\sum_{n\le x} \alpha(n)|$ were studied by Heap--Lindqvist \cite{HL}, Harper--Nikeghbali--Radziwi\l\l\ \cite{HNR} and in unpublished work of Granville--Soundararajan using complex analytic methods (see \cite{ACZ} for the fourth moment and \cite{Hofmann} for even moments in the function field setting), thanks to the useful orthogonality relation $\EE[ \alpha(n)\overline{\alpha(m)}]=\delta_{n,m}$.
On the other hand, a tantalising conjecture in the area, due to Helson, was that the absolute first moment is vanishingly small compared to the 
standard deviation:
\[ \EE |\sum_{n\le x}\alpha(n)|=o(\sqrt{x}).\]
After earlier attempts \cite{BS,Weber}, Helson's conjecture was finally resolved by Harper's landmark work \cite{Har2020}, using ideas from Gaussian multiplicative chaos (GMC) and in particular of Berestycki's work \cite{Ber2017}. Harper also established lower and upper bounds on (not necessarily integer) `high' moments \cite{HarperHigh}, which were also not understood until then. See Harper's excellent short review of these works \cite{harper2024moments}.

Harper first introduced the martingale perspective to the investigation of distributional convergence in \cite{Harper2013}, which studied the sum of $\alpha$ restricted to the sparse set of integers with prescribed number of prime factors. Since then many activities were centred around establishing distributional results for $\sum_{n\le x}a_n\alpha(n)$ with deterministic weights $(a_n)_n$, see especially Klurman, Shkredov and Xu \cite{KSX} and Soundararajan and Xu \cite{SoundXu}; the main ingredient in these works is McLeish martingale central limit theorem, which reduces the task of showing a Gaussian limit to a fourth moment computation (an earlier work of Chatterjee and Soundararajan \cite{CS2012} on partial sums on short intervals of length $o(x / \log x)$ also relied on fourth moment estimates, but its approach to distributional convergence was based on Stein's method). This is inadequate for the study of $\sum_{n \le x} \alpha(n)$, however, as non-Gaussian limit distribution was anticipated. 

In his introduction \cite{Har2020} on Helson's conjecture, Harper posed the question of finding the limiting distribution of $\tfrac{1}{\sqrt{x}}\sum_{n\le x}\alpha(n)$ after normalisation by the factor $(\log \log x)^{1/4}$ that appeared in his moment bounds. While his work hinted a connection to the integral of the Euler product $|A_x|^2$ on the critical line, precise conjectures for the limit were not available until our work (see \cite[Conjecture 1.6]{GWL2}), where we established the first non-Gaussian limit distribution for partial sums of random multiplicative functions with so-called $L^2$ twists (later extended to the $L^1$-regime in \cite{GWL1}).
Partial progress towards our conjecture was attained by Hardy \cite{Hardy}, who studied the distributional limit of $\sum_{n\le x,\, P(n)>\sqrt{x}}\alpha(n)$ where $P(n)$ is the largest prime factor of $n$. This modified problem captured some properties of the original sum including the extra $(\log \log x)^{1/4}$ scaling, but benefits from considerable simplification due to restriction to integers with a large prime factor: this structural difference introduces independence that is indispensable for conditional Gaussian approximation at the heart of his proof. See Mastrostefano \cite{Mastrostefano} for another problem that is solved in the $P(n)>\sqrt{x}$ model and is open otherwise, namely almost-sure upper bound on the sum.

Let us also mention the topic of Fourier analysis of holomorphic multiplicative chaos (HMC), which shares various features with the problem considered in this paper but enjoys simplifications due to its Gaussian nature and the lack of number-theoretic complications. This model was first studied in \cite{NPS} by Najnudel, Paquette and Simm who were motivated by the connections to circular $\beta$ ensemble in random matrix theory. Their paper included not only moment bounds for the relevant Fourier coefficients, but also a result on the limiting distribution of these coefficients in the $L^2$-regime (this was later extended to the $L^1$-regime by the same authors and Vu in \cite{NPSV}). Independently,  Soundararajan and Zaman  \cite{SZ} introduced the model as a large-$q$ limit of an equivalent problem in the function field setting (that is, over $\FF_q[T]$), and adapted Harper's techniques to demonstrate similar phenomenon of better-than-squareroot cancellation. Motivated by harmonic analysis of multifractal measures and applications in conformal field theory, Garban and Vargas \cite{GV2023} studied an analogous problem for real GMCs on the unit circle, and they established (among other results) the distribution of the Fourier coefficient in the high frequency limit. Their method, however, relied crucially on Gaussianity and also stopped short of the critical case.\footnote{This point often creates confusion in the community and we provide a brief clarification. Due to the `doubling effect' of the intermittency parameter in the random variance, the parameter $\theta$ used by \cite{NPS, NPSV} in HMC as well as \cite{GWL2, GWL1} in random multiplicative functions should be related to the intermittency parameter $\gamma$ in the real GMC case by $2\theta = (2\gamma)^2$. When we speak of subcritical/$L^1$-regime in this article we are referring to $2\gamma$ which appears in the random variance of \cite{GV2023}; with respect to the original parameter $\gamma$ this is actually the $L^4$-regime of real GMCs.}

Following our previous works \cite{GWL2, GWL1} on random multiplicative functions, we apply the martingale central limit theorem (a generalisation of McLeish to allow for random variance) to deduce the distributional convergence. Our proof rests on two key ingredients:
\begin{itemize}
    \item the universality of the limit of $m_{y,u}$ as in \Cref{thm:mc-critical}, i.e.~the limiting random measure $m_\infty$ is independent of the choice of $u \ge 0$; and
    \item a truncation argument which enables us to throw away integers with atypical factorisation from the original sum, an idea that first appeared in the HMC context in \cite{NPS} and developed further in \cite{NPSV} (see also \cite{SoundXu} for a related idea).
\end{itemize}
The rapid loss of integrability in the critical case, on top of the departure from the Gaussian world, presents unique and formidable challenges from both the number-theoretic and probabilistic perspectives, and novel arguments are needed to achieve the aforementioned steps - we refer the readers to the upcoming subsections for the main ideas of our analysis. We mention in passing that, to the best of our knowledge, \Cref{thm:mc-critical} is the first universality result in the literature of non-Gaussian critical multiplicative chaos, and we expect our techniques to find further applications in the future.

Let us conclude our discussion by mentioning that our method can handle more general sums. For instance, our proof allows us to consider partial sums restricted to $P(n) > n^{\OurEpsilon}$ for any fixed $\OurEpsilon \in (0, 1)$, and in particular recovers the result of Hardy \cite{Hardy}. More importantly, our general \Cref{thm:summainw} covers partial sums of the form $\sum_{x \le n \le x + y} \alpha(n)$ where $y = cx$ for any fixed $c > 0$; the distributional result here - a non-Gaussian limit that depends on $(\alpha(p))_p$ due to stable convergence - complements and is in contrast to a forthcoming work of Harper, Soundararajan and Xu, who observe Gaussianity as soon as $y = o(x)$. We also remark that the critical case in the HMC setup has been pursued independently and resolved in a recent work of Atherfold and Najnudel \cite{AN2025}.

\subsection{Main idea for \texorpdfstring{\Cref{thm:mc-critical}}{Theorem \ref{thm:mc-critical}}}
Let $f(p) \equiv \mathbf{1}$ for our discussion below.\footnote{Our analysis actually applies to more general class of twists $f$ - see the beginning of \Cref{sec:chaos} for details.} It is very natural to study
\[\Ga_{y, u}(t) := 2 \Re \sum_{p \le y} \frac{f(p) \alpha(p)}{p^{\sigma_y(u) + it}}
\qquad \text{and} \qquad 
\nu_{y, u}(dt) := \sqrt{\log \log y}\frac{\exp  \Ga_{y, u}(t)}{\EE\left[ \exp \Ga_{y, u}(t)\right]}dt, \quad t \in \RR,\]
as they capture the first-order Taylor series expansion of $\log|A_y(\sigma_y(u) + it)|^2$, which is responsible for the emergence of multifractality of the random Euler product as $y \to \infty$. Indeed, using a density removal lemma (see \Cref{sec:density-removal}), \Cref{thm:mc-critical} may be deduced from the following statement for $\nu_{y, u}$:
\begin{thm}\label{thm:mc-critical2}
There exists a non-trivial non-atomic random Radon measure $\nu_\infty(dt)$ supported on $\RR$ such that the following are true for any fixed $u \ge 0$: for any compact interval $I$ and any test function $h \in C(I)$, we have $\nu_{y, u}(h) \xrightarrow[y \to \infty]{p} \nu_\infty(h)$. In particular, $\nu_{y, u} \xrightarrow[y \to \infty]{p} \nu_\infty$ independent of $u \ge 0$.
\end{thm}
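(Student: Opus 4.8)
The plan is to realise $\nu_{y,u}$ as a non-Gaussian analogue of a critical multiplicative chaos and then to separate two tasks: \textbf{(a)} for each fixed $u\ge 0$, prove that $\nu_{y,u}(h)$ is Cauchy in probability as $y\to\infty$, which upgrades the distributional convergence of Saksman--Webb \cite{SW} at $u=0$ to convergence in probability; and \textbf{(b)} prove the resulting limits coincide for all $u\ge 0$. Decompose the primes into blocks $(y_{j-1},y_j]$ with $y_j=\exp(\exp j)$, so that $\Gamma_{y,u}=\sum_{j\le K}\Gamma^{(j)}_{y,u}$ with $K=\lfloor\log\log y\rfloor$; the blocks are independent, each contributes $\asymp 1$ to the variance of $\Gamma_{y,u}(t)$ and decorrelates $\Gamma_{y,u}(t)$ over $|t-t'|\asymp 1/\log y_j$, so $\nu_{y,u}$ is a log-correlated chaos at criticality built from $K$ scales and carrying the \emph{Seneta--Heyde} normalisation $\sqrt{K}\asymp\sqrt{\log\log y}$. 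The difficulty that pervades everything is that this normalisation forces $\EE[\nu_{y,u}(I)]=\sqrt{\log\log y}\,|I|\to\infty$: the measure converges in probability but not in $L^1$, so a direct first/second moment comparison is unavailable, and on top of this the underlying field is not Gaussian.

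For task \textbf{(a)} I would use a \emph{modified second moment method}. Fix a coarse scale $z$ with $\log\log z=K-\ell$ for a large constant $\ell$, condition on the primes $p\le z$, and introduce a good event $\mathcal G_y$ that discards the points of $I$ at which the coarse field $\Gamma_{z,u}$ overshoots an appropriate linear barrier -- equivalently, the event that the mass carried by such points is negligible. The point of this truncation is that it restores a bounded conditional second moment for $\nu_{y,u}(h)\mathbf 1_{\mathcal G_y}$ while replacing the full multi-scale ballot analysis by a truncation at essentially one scale. Comparing successive scales (each step adjoining one independent block) one then gets $\EE\big[(\nu_{y,u}(h)\mathbf 1_{\mathcal G_y}-\nu_{y',u}(h)\mathbf 1_{\mathcal G_{y'}})^2\big]\to 0$, while $\nu_{y,u}(h)(1-\mathbf 1_{\mathcal G_y})\to 0$ in probability; hence $(\nu_{y,u}(h))_y$ is Cauchy in probability with some limit $\nu^{(u)}_\infty(h)$, and these limits are consistent in $h$ and thus define a random Radon measure. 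Strict positivity of $\nu^{(u)}_\infty$ follows by Paley--Zygmund applied to the same modified second moment (together with a zero--one law on the tail of $(\alpha(p))_p$), and non-atomicity from standard fractional-moment/multifractal estimates for $\nu^{(u)}_\infty$ on short intervals.

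For task \textbf{(b)} I would couple $\nu_{y,u}$ and $\nu_{y,0}$ through the common randomness $(\alpha(p))_p$ and analyse the difference field $D_y(t)=\Gamma_{y,u}(t)-\Gamma_{y,0}(t)=2\Re\sum_{p\le y}f(p)\alpha(p)p^{-it}\big(p^{-\sigma_y(u)}-p^{-1/2}\big)$. An elementary Mertens estimate shows $D_y$ has total variance $O(1)$ and that block $j$ contributes $\asymp e^{2(j-K)}$, so the shift effectively perturbs only the last $O(1)$ blocks of the cascade. The heart of the matter is to show that such a fine-scale perturbation cannot change the Seneta--Heyde limit. I would do this by \emph{homogenisation through a change of measure}: conditioning on the coarse primes and tilting by the (suitably truncated) coarse Euler product, the field seen from a typical point of $\nu_{y,u}$ becomes that of a walk with a deterministic Girsanov drift, under which the residual discrepancy $D_y$ -- which lives in the last $\ell$ blocks, multiplied by the smooth factor $p^{-\sigma_y(u)}-p^{-1/2}$ -- decouples from the location of the point and averages to a multiplicative constant tending to $1$ after the $\sqrt{K}$ normalisation. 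Letting $\ell\to\infty$ slowly gives $\nu^{(u)}_\infty=\nu^{(0)}_\infty=:\nu_\infty$, and the full statement for compact $I$ and $h\in C(I)$ follows by approximating $h$ and a diagonal argument in $y$.

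The main obstacle is the loss of integrability at criticality: since $\EE[\nu_{y,u}(I)^2]$ is never under control, every comparison must be carried out on good events whose complements are small only in probability, and one must then check that the truncated and untruncated measures share the same limit. This is sharpest in task \textbf{(b)}, where the $u$-shift disturbs precisely the finest and most delicate scales of the cascade, so the homogenisation by change of measure has to be quantitative enough that all error terms are $o(1/\sqrt{\log\log y})$ before renormalisation -- which is exactly why the classical barrier analysis is replaced by the modified second moment method and the coupling-plus-homogenisation argument.
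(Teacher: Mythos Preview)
Your outline hits the right high-level targets, but what you actually describe under the labels ``modified second moment'' and ``homogenisation by change of measure'' diverges substantially from the paper, and in task \textbf{(b)} there is a genuine gap.

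On \textbf{(a)}, your scheme is a one-scale barrier: condition on primes up to $z$ with $\log\log z=K-\ell$, discard points where $\Ga_{z,u}$ overshoots, and claim this restores a bounded conditional second moment. The paper explicitly avoids barrier analysis for non-Gaussian fields because at criticality one needs not merely the order but the \emph{precise asymptotics} of barrier-restricted second moments, and these are already delicate in the Gaussian world (where they are obtained via Bessel-3 connections for star-scale invariant kernels, not for general log-correlated fields). The paper's modified second moment is instead $\EE\big[|\nu_{y,0}(I)-\nu_{y,u}(I)|^2 e^{-L\nu_{y,0}(I)}\big]$: the damping factor $e^{-L\nu_{y,0}(I)}$ is a single global weight with no pointwise good/bad dichotomy, and it reduces everything to computing $\lim_y\EE\big[\nu_{y,u_1}(I)\nu_{y,u_2}(I)e^{-L\nu_{y,0}(I)}\big]$ (\Cref{lem:mod-cross-mom}). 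Crucially, the paper does not prove Cauchy-in-probability for each $u$ separately; it obtains the $u=0$ limit $\nu_\infty$ by the Saksman--Webb Gaussian coupling (reducing to an already-constructed critical GMC, followed by an $e^{-L\nu}$-weighted second moment that is a clean Girsanov calculation), and only \emph{then} compares $\nu_{y,u}$ to $\nu_{y,0}$.

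On \textbf{(b)}, the claim that after tilting ``the field seen from a typical point becomes a walk with a deterministic Girsanov drift'' is exactly the step that fails outside the Gaussian world: Steinhaus variables admit no exact Girsanov, so the tilt produces a mean shift \emph{plus} a random residual that is coupled to $\alpha$ and to the tilting point $\mathbf t$, and this residual is precisely what blocks the homogenisation you want. The paper's resolution is specific and not anticipated by your sketch: tilt by the difference density $\exp\big(\sum_j\Ga_{y,0}(t_j;\OurEpsilon_{y,u_j}\alpha)\big)$ rather than by the coarse Euler product, and then realise \emph{all} the tilted laws $(\PP_y^{\mathbf{u},\mathbf t})_{\mathbf t\in I^2}$ simultaneously via monotone coupling of the phases $\phi(p)$. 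This yields an explicit residual field $\widetilde{\Ga}^{\mathbf u,\mathbf t}_{y,0}(t_0)$ with quantitative Lipschitz increments in $(t_0,\mathbf t)$ (\Cref{lem:couple-estimate}), to which Bernstein's inequality and generic chaining give $\sup|\widetilde{\Ga}^{\mathbf u,\mathbf t}_{y,0}|\to 0$ in probability with all exponential moments (\Cref{cor:residual-estimate}). The simultaneous coupling is the key device: it lets the $dt_1\,dt_2$ integral go back inside the expectation, whereupon the elementary bound $x^2e^{-Lx}\le 4L^{-2}$ delivers uniform integrability and sidesteps the ``fusion'' asymptotics your sketch would otherwise need to make quantitative to $o(1/\sqrt{\log\log y})$.
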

\begin{rem}
Since $\nu_\infty$ is non-atomic and any $h \in C(I)$ can be estimated from above and below by piecewise constant functions, it suffices to establish \Cref{thm:mc-critical2} by taking $h = \mathbf{1}_I$ for any interval $I \subset \RR$.
\end{rem}
The main contribution of \Cref{thm:mc-critical2} is the universality of critical chaos, i.e.~the limit $\nu_\infty$ is independent of the shift parameter $u \ge 0$. In the context of the theory of GMCs, a common approach to universality would be comparison principle: one interpolates between $\Ga_{y, 0}$ and $\Ga_{y, u}$ and shows that their fractional moments agree in the limit as $y \to \infty$. This method crucially depends on Kahane's convexity inequality (see e.g.~\cite{JS2017}) and unfortunately does not extend beyond Gaussianity.

For a more robust approach, one may consider the second moment method, which amounts to checking $\lim_{y \to \infty} \EE[|\nu_{y, u}(I) - \nu_{y, 0}(I)|^2] = 0$. In this naive form the approach would not work, however, because critical multiplicative chaos does not possess first moment, let alone second moment, in the limit $y \to \infty$. To avoid the integrability issue, one often introduces barrier events and restrict the analysis of $\nu_{y, u}(dt)$ to points $t \in I$ where the growth of $\Ga_{y, u}(t)$ (as a function of $y$) is not atypically large - these are ultimately the points that support the limit measure $\nu_\infty$. Such method was inspired by earlier works in branching Brownian motion and has been widely used for the construction of critical GMCs in the literature, but its adaptation to the current setting presents non-trivial technical challenges because we are working with non-Gaussian fields - it should be emphasised that in studying second moments restricted to barrier events, we are not only interested in their order but also their precise asymptotics which are a lot more delicate. 
The only non-Gaussian example where the barrier approach was successfully implemented in the probability literature was the so-called critical Brownian multiplicative chaos \cite{Jeg2021}: this is a very specific problem concerning the local times of 2-dimensional Brownian motion, and its analysis crucially relies on various special distributional identities and connections to Bessel processes (via Ray-Knight isomorphism theorem) such that techniques from stochastic calculus are possible. Even in the Gaussian world, the barrier analysis is not directly applied to the construction general critical GMCs. Rather, one first performs the analysis within the class of so-called almost star-scale invariant fields, where the crucial white-noise decomposition and connections to 3-dimensional Bessel process allow one to establish convergence of the associated critical chaos, and then extends it to more general logarithmically correlated Gaussian fields by means of decomposition theorem, see for instance \cite{JSW2019} and also the nice review article \cite{Pow2021} by Powell. Indeed, it is worth noting that the convergence $\nu_{y, 0} \to \nu_\infty$ was obtained in \cite{SaksmanWebb, SW} by Gaussian approximation instead of barrier analysis, which allowed Saksman and Webb to reduce the problem to that of a convenient critical GMC.

We propose an alternative solution which we call the modified second moment method (see \Cref{sec:mod-2nd}). This amounts to showing that for any fixed $u \ge 0$ and fixed $L > 0$,
\[   \lim_{y \to \infty} \EE\left[|\nu_{y, 0}(I) - \nu_{y, u}(I)|^2 e^{-L\nu_{y, 0}(I)}\right]= 0. \]
In essence, the damping factor $e^{-L\nu_{y, 0}(I)}$ penalises the bad events on which $\nu_{y, u}(I)$ is exceptionally large to ensure that the weighted second moment does not blow up as $y \to \infty$. In contrast to the barrier method which requires control of $\left(\Ga_{y, u}(t)\right)_{y \ge 3}$ at each point $t \in I$ and the need to study the contributions from good and bad events separately, we shall see very soon that our global weighting factor allows for a very simplified analysis. Expanding the square, one can easily see that it suffices to establish the following claim:
\begin{lem}\label{lem:mod-cross-mom}
For any fixed $L >0$ and $u_1, u_2 \ge 0$, we have
\[    \lim_{y \to \infty} \EE\left[ \nu_{y, u_1}(I) \nu_{y, u_2}(I) e^{-L \nu_{y, 0}(I)}\right] = 
    \EE\left[ \nu_{\infty}(I)^2 e^{-L \nu_{\infty}(I)}\right].\]
\end{lem}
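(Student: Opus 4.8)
To prove \Cref{lem:mod-cross-mom} the plan is to expand the product, pass to a tilted (size‑biased) measure anchored at two marked points, and then exploit the damping factor twice: once to identify the limit via the case $u_1=u_2=0$, and once to secure the integrability needed for a dominated‑convergence argument. Write $n_{y,u}(t):=\sqrt{\log\log y}\,e^{\Ga_{y,u}(t)}/\EE[e^{\Ga_{y,u}(t)}]$ for the random density of $\nu_{y,u}$, so that by Tonelli
\[
\EE\!\left[\nu_{y,u_1}(I)\nu_{y,u_2}(I)e^{-L\nu_{y,0}(I)}\right]=\int_I\!\int_I \EE\!\left[n_{y,u_1}(s)\,n_{y,u_2}(t)\,e^{-L\nu_{y,0}(I)}\right]ds\,dt .
\]
For fixed $s\ne t$, let $\PP_{y;s,t}^{u_1,u_2}$ be the probability measure whose Radon--Nikodym derivative with respect to $\PP$ is proportional to $e^{\Ga_{y,u_1}(s)+\Ga_{y,u_2}(t)}$, with expectation $\EE_{y;s,t}^{u_1,u_2}$; then
\[
\EE\!\left[n_{y,u_1}(s)\,n_{y,u_2}(t)\,e^{-L\nu_{y,0}(I)}\right]=(\log\log y)\,C_y(s,t)\,\EE_{y;s,t}^{u_1,u_2}\!\left[e^{-L\nu_{y,0}(I)}\right],
\]
where $C_y(s,t):=\EE[e^{\Ga_{y,u_1}(s)+\Ga_{y,u_2}(t)}]/\bigl(\EE[e^{\Ga_{y,u_1}(s)}]\EE[e^{\Ga_{y,u_2}(t)}]\bigr)$ is deterministic. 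Evaluating the Steinhaus moment generating functions gives $\log C_y(s,t)=2\sum_{p\le y}\cos((s-t)\log p)/p+O(1)$ uniformly, whence $C_y(s,t)\to C_\infty(s,t)$ for each fixed $s\ne t$, with $C_\infty(s,t)\asymp|s-t|^{-2}$ as $s\to t$; crucially $u_1,u_2$ enter only through the tail primes and change $\log C_y(s,t)$ by $o(1)$, so $C_\infty$ does not depend on $u_1,u_2$. The degenerate range $|s-t|\lesssim(\log y)^{-1}$, where $C_y\asymp(\log y)^2$ on a set of area $\asymp(\log y)^{-2}$, is treated separately.

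Under $\PP_{y;s,t}^{u_1,u_2}$, a Girsanov‑type identity shows that $\Ga_{y,0}$ acquires a mean shift $\approx 2\log\tfrac{1}{|r-s|}+2\log\tfrac{1}{|r-t|}$ (truncated at $2\log\log y$) for $r$ near $\{s,t\}$, so $\nu_{y,0}(I)$ under the tilt is a critical chaos carrying two insertions. The first key point is a homogenisation in the shift parameters. Split the primes into a body $p\le y^{c}$ and a top $y^{c}<p\le y$: the top block contributes only $O_c(1)$ to the variance of $\Ga_{y,0}$, yet the multiscale analysis shows that, up to an $O(c^2)$‑variance remainder, all of $\Ga_{y,u}(s)-\Ga_{y,0}(s)=2\Re\sum_{p\le y}\alpha(p)p^{-1/2-is}(p^{-u/(2\log y)}-1)$ (whose total variance is $O(1)$) lives on the top scales, and the two blocks remain independent under the tilt. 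Hence the ratio $n_{y,u_1}(s)n_{y,u_2}(t)/(n_{y,0}(s)n_{y,0}(t))$ becomes, after a coupling letting $c\to0$, essentially a functional of the top block, which is governed — up to a fluctuation from the top scales controlled by the same coupling — by quantities independent of it; together with $\EE_{y;s,t}^{0,0}$ of this ratio being $C_y(s,t;u_1,u_2)/C_y(s,t;0,0)\to 1$, this yields
\[
\EE_{y;s,t}^{0,0}\!\left[\Bigl(\tfrac{n_{y,u_1}(s)n_{y,u_2}(t)}{n_{y,0}(s)n_{y,0}(t)}-1\Bigr)e^{-L\nu_{y,0}(I)}\right]=o\!\left(\tfrac{1}{\log\log y}\right)\qquad(y\to\infty),\ \text{for each }s\ne t .
\]
The residual analysis of $\nu_{y,0}(I)$ under the two‑insertion tilt $\PP_{y;s,t}^{0,0}$ — needed both here and for the bound below — is carried out by a further change of measure that homogenises the field around each insertion point, coupling it to (and transferring convergence and the order $1/\log\log y$ from) the insertion‑free model of \Cref{sec:case-u=0}; this is the coupling‑and‑homogenisation mechanism advertised in the abstract.

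Granting the above, the limit is pinned down by the case $u_1=u_2=0$: there $\EE[\nu_{y,0}(I)^2 e^{-L\nu_{y,0}(I)}]\to\EE[\nu_\infty(I)^2 e^{-L\nu_\infty(I)}]$ follows at once from the distributional convergence $\nu_{y,0}(I)\to\nu_\infty(I)$ (a consequence of Saksman and Webb \cite{SW} via the density removal lemma, \Cref{sec:density-removal}) together with the boundedness and continuity of $x\mapsto x^2 e^{-Lx}$ on $[0,\infty)$. It therefore remains to show that
\[
\int_I\!\int_I\Bigl(\EE\bigl[n_{y,u_1}(s)n_{y,u_2}(t)e^{-L\nu_{y,0}(I)}\bigr]-\EE\bigl[n_{y,0}(s)n_{y,0}(t)e^{-L\nu_{y,0}(I)}\bigr]\Bigr)ds\,dt\to0 .
\]
By the change of measure the integrand equals $(\log\log y)\,C_y(s,t;0,0)$ times the displayed quantity of the previous paragraph, which is $o(1/\log\log y)$ pointwise while $C_y(s,t;0,0)\to C_\infty(s,t)<\infty$; so the integrand tends to $0$ on $\{s\ne t\}$, and it only remains to dominate it.

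The dominated‑convergence bound is the main obstacle. One needs $(\log\log y)\,C_y(s,t)\,\EE_{y;s,t}^{u_1,u_2}[e^{-L\nu_{y,0}(I)}]\le\Psi(s,t)$ for a fixed $\Psi\in L^1(I\times I)$, uniformly in $y$, and this is exactly where the damping factor is indispensable: dropping it leaves $(\log\log y)C_y(s,t)\asymp(\log\log y)|s-t|^{-2}$, which blows up and is non‑integrable near the diagonal. Under the two‑insertion tilt, however, $\nu_{y,0}(I)$ is forced to be large — the ``merged'' part of the two mean shifts on the scales below $\log\tfrac{1}{|s-t|}$ forces $\nu_{y,0}(I)\gtrsim|s-t|^{-2}$ times a non‑degenerate random factor — so that $\EE_{y;s,t}^{u_1,u_2}[e^{-L\nu_{y,0}(I)}]\lesssim(\log\log y)^{-1}G(|s-t|)$ with $G$ decaying near $0$ like a positive power of $|s-t|$ (via lower‑tail/negative‑moment estimates for the critical chaos), which renders $|s-t|^{-2}G(|s-t|)$ integrable on $I\times I$; the range $|s-t|\lesssim(\log y)^{-1}$ is controlled likewise, the damping being even more effective there. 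Establishing this lower bound on $\nu_{y,0}(I)$ under the tilt in the non‑Gaussian critical regime — where Kahane's convexity inequality and Gaussian comparison are unavailable, and one must track not merely the order but the precise asymptotics of the relevant conditioned second moments — is the substance of the modified second moment method of \Cref{sec:mod-2nd} and accounts for the bulk of the work.
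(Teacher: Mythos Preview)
Your proposal follows what the paper calls the ``first attempt at change of measure'' (Section~1.4.1): you tilt by the full density $e^{\Ga_{y,u_1}(s)+\Ga_{y,u_2}(t)}$, keep the $(s,t)$-integral on the outside, and then try to run dominated convergence on the integrand. The paper explicitly rejects this route as intractable in the critical non-Gaussian setting, and the gap is precisely your domination step. You need
\[
(\log\log y)\,C_y(s,t)\,\EE_{y;s,t}^{u_1,u_2}\!\left[e^{-L\nu_{y,0}(I)}\right]\le\Psi(s,t)\in L^1(I\times I)
\]
uniformly in $y$, and your claimed bound $\EE_{y;s,t}[e^{-L\nu_{y,0}(I)}]\lesssim(\log\log y)^{-1}G(|s-t|)$ with $G$ vanishing like a positive power is exactly the ``fusion estimate'' the paper says is out of reach: since $C_y(s,t)\asymp|s-t|^{-2}$ and the diagonal in $I\times I\subset\RR^2$ requires integrability of $|s-t|^{-a}$ with $a<1$, you would need $G(|s-t|)\lesssim|s-t|^{1+\varepsilon}$, i.e.\ a very precise lower-tail/negative-moment estimate for critical chaos under a two-point insertion, with the correct $(\log\log y)^{-1}$ prefactor, in the \emph{Steinhaus} model. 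No such estimate is available, and your sketch (``$\nu_{y,0}(I)\gtrsim|s-t|^{-2}$ times a non-degenerate random factor'') does not supply one; note also that under the two-insertion tilt $\nu_{y,0}(I)$ diverges, so controlling $e^{-L\nu_{y,0}(I)}$ to the required precision is delicate even in the Gaussian case.

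The paper's actual argument circumvents this entirely. It tilts only by the \emph{residual} part $\exp\{\sum_j\Ga_{y,0}(t_j;\OurEpsilon_{y,u_j}\alpha)\}$ (a small perturbation of $\PP$), and---crucially---constructs a \emph{simultaneous} monotone coupling of all the tilted laws $(\PP_y^{\mathbf{u},\mathbf{t}})_{\mathbf{t}\in I^2}$ on one probability space. This lets one move the $(t_1,t_2)$-integral back \emph{inside} the expectation (equation~\eqref{eq:coupling_idea}), after which the entire integrand is controlled by $C\nu_{y,0}(I;\alpha)^2\exp\{-Lc\,\nu_{y,0}(I;\alpha)\}\le 4Cc^{-2}L^{-2}$ times a factor with uniform exponential moments (\Cref{cor:residual-estimate}). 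Uniform integrability follows immediately, with no fusion estimate needed; convergence in probability is then obtained via a dyadic squeezing argument using only the non-atomicity of $\nu_\infty$. Your proposal misses this mechanism: by tilting with the full density and keeping the integral outside, you are forced into the very estimates the simultaneous coupling was designed to avoid.
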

\subsubsection{A first attempt at change of measure}
At first glance it might be surprising how one could evaluate the limit of the complicated expectation in \Cref{lem:mod-cross-mom}, but it turns out that this could be achieved with clever uses of change of measure. Indeed, this philosophy was adopted for the first time in \cite{GWL1} when we established the universality of non-Gaussian multiplicative chaos in the subcritical (i.e.~$L^1$) regime. The analysis in the subcritical case, however, relies on various arguments that either exploit $L^{1+}$-integrability or are too crude for the critical regime. It is instructive to highlight a few issues with the old approach to motivate our novel argument later.

For any vector $(v(p))_p$ indexed by primes, write $\nu_{y, u}(dt; v) := \sqrt{\log \log y} M_y(u)^{-1} \exp\left(\Ga_{y, u}(t; v)\right)dt$ where
\[     \Ga_{y, u}(t; v) = 2 \Re \sum_{p \le y} \frac{f(p) v(p)}{p^{\sigma_y(u) + it}},
    \qquad \text{and} \qquad M_{y}(u) := \EE[\exp(\Ga_{y, u}(t; \alpha))]. \]
Following the approach in \cite{GWL1}, let us write $\mathbf{u} = (u_1, u_2)$, $\mathbf{t} = (t_1, t_2)$, and consider
\begin{align*}
& \EE\left[ \nu_{y, u_1}(I) \nu_{y, u_2}(I) e^{-L \nu_{y, 0}(I)}\right]\\
& \quad =  \left(\log \log y\right)\int_{I \times I} \frac{\EE\left[\exp\left(\sum_{j=1}^2 \Ga_{y, u}(t_j;\alpha) \right)\right]}{M_y(u_1)M_y(u_2)}\EE\left[\frac{\exp\left(\sum_{j=1}^2 \Ga_{y, u}(t_j;\alpha) \right)}{\EE\left[\exp\left(\sum_{j=1}^2 \Ga_{y, u}(t_j;\alpha) \right)\right]}
\exp\left\{-L \nu_{y, 0}(I; \alpha)\right\}\right] dt_1 dt_2\\
& \quad =:  \left(\log \log y\right)\int_{I \times I} \widehat{K}_y(\mathbf{u}, \mathbf{t}) \widehat{\EE}_y^{\mathbf{u}, \mathbf{t}}\left[\exp\left\{-L \nu_{y, 0}(I; \alpha)\right\}\right] dt_1 dt_2.
\end{align*}
If one could show that $\widehat{K}_y(\mathbf{u}, \mathbf{t}) \approx \widehat{K}_y(\mathbf{0}, \mathbf{t})$ and $\widehat{\EE}_y^{\mathbf{u}, \mathbf{t}}\left[\exp\left\{-L \nu_{y, 0}(I; \alpha)\right\}\right] \approx \widehat{\EE}_y^{\mathbf{0}, \mathbf{t}}\left[\exp\left\{-L \nu_{y, 0}(I; \alpha)\right\}\right]$ in some suitable sense as $y \to \infty$, then we would be able to relate the overall object back to $\EE\left[ \nu_{y, 0}(I) \nu_{y, 0}(I) e^{-L \nu_{y, 0}(I)}\right]$ which does converge to the desired limit by dominated convergence (as $x \mapsto x^2 e^{-Lx}$ is bounded).

To achieve the latter approximation in the $L^1$-regime, we used an approximate Girsanov's theorem (\cite[Theorem 2.21]{GWL1}). At a high level, this allowed us to write, for each $\mathbf{t} \in I^2$, that
\[    \widehat{\EE}_y^{\mathbf{u}, \mathbf{t}}\left[\exp\left\{-L \nu_{y, 0}(I; \alpha)\right\}\right] \approx \widehat{\EE}_y^{\mathbf{0}, \mathbf{t}}\left[\exp\left\{-L \nu_{y, 0}(I; \alpha + \widehat{\Delta})\right\}\right] \]
where $\widehat{\Delta}$ is some residual fluctuation. Since $\nu_{y, 0}(I; \alpha + \widehat{\Delta}) = \int_I \exp\left\{\Ga_{y, 0}(t_0; \widehat{\Delta})\right\}\nu_{y, 0}(dt_0; \alpha)$, if one could show (under $\widehat{\EE}_y^{\mathbf{0}, \mathbf{t}}$) that $\sup_{t_0 \in I} |\Ga_{y, 0}(t_0; \widehat{\Delta})|$ is negligible, then it might be possible to carry out the strategy above. Unfortunately, $\widehat{\Delta}$ is not independent of $\alpha$, and in \cite{GWL1} we were forced to apply a rough estimate for $\sup_{t_0 \in I} |\Ga_{y, 0}(t_0; \widehat{\Delta})|$ that is too crude for the critical case even before considering the extra $\log \log y$ factor in front of the integral. One might naively propose directly tackling $\widehat{\EE}_y^{\mathbf{u}, \mathbf{t}}\left[\exp\left\{-L \nu_{y, 0}(I; \alpha)\right\}\right]$ instead, but this is extremely tricky because $\nu_{y, 0}(I; \alpha)$ under $\widehat{\EE}_y^{\mathbf{u}, \mathbf{t}}$ diverges as $y \to \infty$. A careful analysis of the leading order of the expectation would involve `fusion estimates' for multiplicative chaos which are highly technical - even in the Gaussian case it is intractable unless one works with very special logarithmically correlated fields (recall that we require not only the order but the exact asymptotics), see for example \cite{BW2018, Won2019}.

\subsubsection{Homogenisation by change of measure}
In the critical case, we pursue a different approach based on the following identity: for any $u \ge 0$ we have
\begin{equation}\label{eq:shift-critical}
    \nu_{y, u}(dt; \alpha) = \frac{M_y(0)}{M_y(u)}\exp\left\{\Ga_{y, 0}\left(t; \OurEpsilon_{y, u}\alpha\right)\right\} \nu_{y, 0}(dt; \alpha)
    \qquad \text{with} \qquad \OurEpsilon_{y, u}(p) := p^{-\frac{u}{2 \log y}} - 1,
\end{equation}
where $\OurEpsilon_{y, u} \alpha$ is the shorthand for the vector $\left(\OurEpsilon_{y, u}(p) \alpha(p)\right)_{p}$. We would like to pretend that the extra density in front of $\nu_{y, 0}(dt; \alpha)$ on the right-hand side of \eqref{eq:shift-critical} is negligible as $y \to \infty$. A quick inspection, however, shows that uniform estimates are impossible because these are oscillatory terms - that they become irrelevant in the limit is a consequence of homogenisation. Since $\Ga_{y, 0}\left(t; \OurEpsilon_{y, u}\alpha\right)$ and $\nu_{y, 0}(dt; \alpha)$ are not independent of each other (they are both functions of $\alpha$) and we are constrained by limited integrability, moment-based proof of homogenisation could involve technical challenges. We shall circumvent this issue by means of change of measure.

Let us define $E_y\left(\mathbf{u}, \mathbf{t}\right) = E_y\left((u_1, u_2), (t_1, t_2)\right) := \EE\left[\exp\left(\sum_{j=1}^2 \Ga_{y, 0}(t_j; \OurEpsilon_{y, u_j}\alpha) \right)\right]$. Applying Fubini (for fixed $y$), one can rewrite
\begin{align}
\notag
&\EE\left[ \nu_{y, u_1}(I) \nu_{y, u_2}(I) e^{-L \nu_{y, 0}(I)}\right]
= \int_{I \times I} \EE\left[\nu_{y, u_1}(dt_1)\nu_{y, u_1}(dt_2)e^{-L \nu_{y, 0}(I)}\right]\\
\notag
& \qquad = \int_{I \times I} \frac{M_y(0)^2 E_y(\mathbf{u}, \mathbf{t})}{M_y(u_1)M_y(u_2)}\EE\left[\frac{\exp\left(\sum_{j=1}^2 \Ga_{y, 0}(t_j; \OurEpsilon_{y, u_j}\alpha) \right)}{E_y(\mathbf{u}, \mathbf{t})}\nu_{y,0}(dt_1; \alpha)\nu_{y, 0}(dt_2; \alpha) 
e^{-L \nu_{y, 0}(I; \alpha)}\right]\\
\label{eq:change-of-measure}
& \qquad =: \int_{I \times I}\frac{M_y(0)^2 E_y(\mathbf{u}, \mathbf{t})}{M_y(u_1)M_y(u_2)} \EE_y^{\mathbf{u}, \mathbf{t}}\left[\nu_{y,0}(dt_1; \alpha)\nu_{y, 0}(dt_2; \alpha) 
e^{-L \nu_{y, 0}(I; \alpha)}\right]
\end{align}
where on the last line we have introduced 
the expectation $\EE_y^{\mathbf{u}, \mathbf{t}}$ with respect to the new probability measure
\[    d\PP_y^{\mathbf{u}, \mathbf{t}}
    = \frac{\exp\left(\sum_{j=1}^2 \Ga_{y, 0}(t_j; \OurEpsilon_{y, u_j}\alpha) \right)}{E_y(\mathbf{u}, \mathbf{t})} d\PP\]
characterised by the property that $\left(\alpha(p)\right)_{p \le y}$ are independent random variables on the unit circle with marginal distributions given by
\begin{equation}\label{eq:change-of-measure-marginal}
    \EE_y^{\mathbf{u}, \mathbf{t}}\left[g(\alpha(p))\right]
    = \EE\left[\frac{\exp\left(2\Re\sum_{j=1}^2  \frac{f(p) \alpha(p)}{p^{1/2 + it_j}} \OurEpsilon_{y, u_j}(p) \right)}{\EE\left[\exp\left(2\Re\sum_{j=1}^2 \frac{f(p) \alpha(p)}{p^{1/2 + it_j}} \OurEpsilon_{y, u_j}(p) \right)\right]} g(\alpha(p))\right] \qquad \forall p \le y.
\end{equation}
To analyse \eqref{eq:change-of-measure}, we would like to couple $\EE_y^{\mathbf{u}, \mathbf{t}}$ simultaneously for all $\mathbf{t} = (t_1, t_2) \in I^2$. In more probabilistic terms, this means defining, apart from the Steinhaus random multiplicative function $\alpha$, a new collection of random variables $\left(\alpha_{y}^{\mathbf{u}, \mathbf{t}}(p)\right)_{\text{$p$ prime}, \mathbf{t} \in I^2}$ on the same (extended) probability space $(\Omega, \Fa, \PP)$ such that
\begin{enumerate}
    \item[(i)] $\left(\alpha_y^{\mathbf{u}, \mathbf{t}}(p)\right)_{\mathbf{t} \in I^2}$ (as collections of random variables indexed by $p$) are mutually independent; and
    \item[(ii)] $\left(\alpha_y^{\mathbf{u}, \mathbf{t}}(p)\right)_{\mathbf{t} \in I^2}$ has the correct marginals \eqref{eq:change-of-measure-marginal}, i.e.~$\EE\left[g\left(\alpha_y^{\mathbf{u}, \mathbf{t}}(p)\right)\right] = \EE_y^{\mathbf{u}, \mathbf{t}}\left[g(\alpha(p))\right]$ for each $p$ and $\mathbf{t} \in I^2$.\footnote{In \eqref{eq:change-of-measure} we imposed the condition that $p \le y$, but here we ignore the condition for brevity because our expression in the expectation only depends on $\alpha(p)$ for $p \le y$ and thus tweaking the distribution for $\left(\alpha(p)\right)_{p > y}$ in any arbitrary way is irrelevant to our analysis.}
\end{enumerate}
Suppose this is possible, and denote $\Delta_y^{\mathbf{u}, \mathbf{t}}(p) := \alpha_y^{\mathbf{u}, \mathbf{t}}(p) - \alpha(p)$. Then we can rewrite \eqref{eq:change-of-measure} as
\begin{equation}\label{eq:coupling_idea}
\EE\left[\int_{I \times I} \frac{M_y(0)^2 E_y(\mathbf{u}, \mathbf{t})}{M_y(u_1)M_y(u_2)} \nu_{y,0}(dt_1; \alpha + \Delta_y^{\mathbf{u}, \mathbf{t}})\nu_{y, 0}(dt_2; \alpha + \Delta_y^{\mathbf{u}, \mathbf{t}}) 
\exp\left(-L \nu_{y, 0}(I; \alpha + \Delta_{y}^{\mathbf{u}, \mathbf{t}})\right)\right].
\end{equation}
Since $\PP_{y}^{\mathbf{u}, \mathbf{t}}$ is a small perturbation of the original probability measure $\PP$, it may be reasonable to expect, under a suitable coupling, that $\Delta_y^{\mathbf{u}, \mathbf{t}}(p)$ behaves like 
\begin{equation}\label{eq:coupling-question}
\Delta_y^{\mathbf{u}, \mathbf{t}}(p)
    \overset{?}{=} \EE\left[\alpha_y^{\mathbf{u}, \mathbf{t}}(p)\right] + \text{residual random variable}
\end{equation}
where the residual term is negligible as $y \to \infty$. This would allow us to treat $\Delta_{y}^{\mathbf{u}, \mathbf{t}}(p)$ as if they were deterministic, and the rest of the analysis of \eqref{eq:coupling_idea} could be reduced to a straightforward computation. It may appear that this new change of measure argument is not too different from the approach adopted in the subcritical case, but a key distinction here is the use of simultaneous coupling of all measures $\PP_y^{\mathbf{u}, \mathbf{t}}$. This enables us to move the integral back into the expectation (as in \eqref{eq:coupling_idea}) and use the bound $x^2 e^{-LCx} \le 4/(LC)^2$, which helps us derive uniform integrability (see Step 1 in \Cref{sec:main_lemma}) and ultimately avoid various technical issues such as the intractable `fusion estimates'.

Note that this change-of-measure/coupling approach is novel even in the Gaussian context, and provides a new and elegant solution to the universality of critical GMCs under Seneta--Heyde renormalisation for instance, or an analogous problem in the context of HMC. Thanks to Girsanov's theorem, significant simplification could be achieved in the Gaussian case as one could choose a coupling such that the residual fluctuation in \eqref{eq:coupling-question} is identically equal to $0$. As a warm-up, we will illustrate these ideas in \Cref{sec:case-u=0} where we upgrade the distributional convergence of $\nu_{y, 0} \to \nu_\infty$ to convergence in probability via a Gaussian reduction by Saksman and Webb.

While the Steinhaus model does not admit an exact Girsanov's theorem, we will explain in \Cref{sec:mod-cross-mom} how to control the residual fluctuation uniformly under the so-called monotone coupling of phases, using techniques of concentration and generic chaining. With some extra effort, it should be possible to generalise our construction to more general class of random multiplicative functions $\alpha$ and twists $f$.

\subsection{Main idea for \texorpdfstring{\Cref{thm:summain}}{Theorem \ref{thm:summain}}}
As in the works on holomorphic multiplicative chaos \cite{NPS,NPSV} and our earlier works on random multiplicative functions \cite{GWL2,GWL1}, the tool behind the proof is the martingale central limit theorem. Let $P(n)$ be the largest prime factor of $n$. As observed by Harper \cite{Harper2013}, one can write our sum as
\[ S_x:=\frac{(\log \log x)^{\frac{1}{4}}}{\sqrt{x}}\sum_{n\le x} \alpha(n)= \sum_{p\le x} Z_{x,p}
\qquad \text{where} \qquad 
Z_{x,p} := \frac{(\log \log x)^{\frac{1}{4}}}{\sqrt{x}}\sum_{n\le x ,\,P(n) = p} \alpha(n).\]
The sequence $(Z_{x,p})_p$ indexed by primes is a \textit{martingale difference sequence} with respect to the filtration $\Fa_{p} := \sigma(\alpha(q), q \le p)$.  The martingale central limit theorem allows one to obtain the limiting distribution of $S_x$ under some conditions, the most important and difficult one being the computation of the limit of the \textit{bracket process}, which in our case is the random variable
\[T_{x} :=\sum_{p\le x} \EE\left[ |Z_{x,p}|^2 \mid \Fa_{p^-}\right] \approx \frac{\sqrt{\log \log x}}{x}\sum_{p \le x}  \big|\sum_{\substack{m\le x/p\\P(m)<p}}  \alpha(m)\big|^2, \]
where $\Fa_{p^-} := \sigma(\alpha(q), q < p)$. If we introduce the notation
\[ s_{t,y} := t^{-\frac{1}{2}}\sum_{n\le t,\, P(n) \le y}\alpha(n)\]
then, informally,
\begin{equation}\label{eq:Txapprox}
T_{x}\approx \sqrt{\log \log x}\sum_{p \le x} \frac{1}{p} |s_{x/p,p}|^2\approx \sqrt{\log \log x}\int_{2}^{x} \frac{|s_{x/t,t}|^2 dt}{t\log t}
\end{equation}
since $\sum_{p\le t} 1/p \approx \log \log t$ and $(\log \log t)' = 1/(t\log t)$. Given \Cref{thm:mc-critical}, we may obtain rather easily the limit of a somewhat similar expression, namely
\begin{equation}\label{eq:planc}
\sqrt{\log \log x}\int_{0}^{\infty} q(t^{1/\log x})\frac{|s_{x/t,x^a}|^2 dt}{t\log x}
\end{equation}
for any fixed $a>0$ and any `nice' function $q$, see \Cref{lem:plancherelapp} and \Cref{cor:sandwich}. The relationship of \eqref{eq:planc} to \Cref{thm:mc-critical} is via Plancherel's theorem. However, knowing the limit of \eqref{eq:planc} is not enough for studying \eqref{eq:Txapprox}: the term $|s_{x/t,t}|^2$ in the integrand of \eqref{eq:Txapprox}, where $t$ appears in both indices, is more complicated  than $|s_{x/t,x^a}|^2$ appearing in the integrand of \eqref{eq:planc}.

In our work in the $L^1$-regime we circumvented this issue by defining a new sum $S'_x$ which contained only a subset of the terms in $S_x$, adapting a similar truncation performed in the holomorphic multiplicative chaos (HMC) setting \cite{NPSV}. We use the same sum in the critical regime as well, see \eqref{eq:trunc} for its explicit definition. It goes as follows: divide the primes in $[2,x]$ into finitely many disjoint intervals $(x_k,x_{k+1}]$ ($k\ge 0$, $x_k$ is increasing), and if $n\le x$ has $P(n) \in (x_k,x_{k+1}]$ for $k\ge 1$, keep this $n$ in $S'_x$ only if $P(n/P(n))$ (the second largest prime factor of $n$) does not exceed $x_k$, and otherwise discard it. If $P(n)\le x_1$, we discard this $n$ as well. In this way, the martingale difference sequence of $S'_x$ is $(Z'_{x,p})_p$ for 
\[Z'_{x,p} := \frac{(\log \log x)^{\frac{1}{4}}}{\sqrt{x}}\sum_{\substack{n\le x,\, P(n) = p\\ P(n/P(n))\le x_k}} \alpha(n)\]
if $p\in (x_k,x_{k+1}]$ ($k \ge 1$), and the bracket process of $S'_x$ takes the shape
\begin{equation}\label{eq:newbracket}
T'_x \approx \sqrt{\log \log x}\sum_{k\ge 1} \sum_{x_k<p\le x_{k+1}} \frac{1}{p} |s_{x/p, x_k}|^2 \approx \sqrt{\log \log x}\sum_{k\ge 1} \int_{x_k}^{x_{k+1}} \frac{|s_{x/t,x_k}|^2 dt}{t\log t} .
\end{equation}
Each integral in the right-hand side can be handled using our understanding of \eqref{eq:planc}, by an appropriate choice of a function $q$. The second approximation in \eqref{eq:newbracket} is justified in \Cref{lem:close}, and requires some mild information on the asymptotics of primes in short intervals. It remains to explain how one justifies this truncation.

The justification in the $L^1$-regime is done in \cite[Lemma~3.4]{GWL1}. It goes by computing the second moment of the contribution of the discarded integers:
\begin{equation}\label{eq:diff} \EE \left[|S_x-S'_x|^2\right],
\end{equation}
and showing that it is small for an appropriate sequence $(x_k)_k$. The estimation of \eqref{eq:diff} is achieved by using the orthogonality relation
\begin{equation}\label{eq:orth}
\EE [\alpha(n)\overline{\alpha(m)}]=\delta_{n,m}
\end{equation}
for all $n,m \in \NN$, which reduces the task to computing the density of the discarded integers. We claim it is easy to see that the integers that are discarded constitute a sparse set - let us focus on the discarded integers with $x_{k+1}\ge P(n)\ge P(n/P(n))> x_k$ for some $k\ge 1$. These integers are divisible by $pq$ for two primes $p,q\in (x_k,x_{k+1}]$. The probability that a number is divisible by $pq$ is $\sim 1/(pq)$; summing this over $p,q\in(x_k,x_{k+1}]$ and then over $k\ge 1$ shows that the proportion of such integers is (at most)
\[ \sum_{k\ge 1} (\sum_{x_k<p\le x_{k+1}} 1/p)^2 \ll \sum_{k\ge 1} \log^2 (x_{k+1}/x_k)\]
by Mertens' theorem \eqref{eq:mertens35}. This bound goes to $0$ if we take $x_k=x^{\OurEpsilon+k\delta}$ and send $\delta$ to $0^+$. This brief argument was essentially enough in our work in the $L^1$-regime \cite{GWL1}. However, computing \eqref{eq:diff} in the critical regime is lossy due to the $(\log \log x)^{1/4}$-factor which breaks down the argument.

Instead, in \Cref{prop:negdelta} we bound the first absolute moment
\[ \EE |S_x-S'_x|,\]
which requires the same kind of tools required in the proof of Helson's conjecture. The proof of \Cref{prop:negdelta} occupies the entirety of Section \ref{sec:trunc}. Instead of using Cauchy--Schwarz and relating this expectation to $\EE\left[ |S_x-S'_x|^2\right]$ (which is easy to compute but gives a useless bound), we first condition of $\Fa_y = \sigma(\alpha(q),q\le p)$ where $y$ is a parameter at our disposal and only then apply Cauchy--Schwarz, obtaining
\[ \EE |S_x-S'_x| \le \EE\left[\big(\EE [|S_x-S'_x|^2 \mid \Fa_y] \big)^{1/2}\right]\]
by the law of total expectation. This is motivated by a conditioning argument of Harper \cite{Har2020} that was used by him to bound $\EE|S_x|$, and more generally to prove \eqref{eq:harpermoms}. We then estimate the inner expectation, $\EE [|S_x-S'_x|^2 \mid \Fa_y]$, and use basic analytic number theory to bound it by a sum of two random variables: one involves the integral  $\int_{0}^{\infty}|s_{t,y}|^2dt/t$ and the other has expectation which increases with $y$ (see e.g.~\Cref{lem:conditioning}), so we are motivated to take $y$ small relative to $x$. By Plancherel's theorem,  $\int_{0}^{\infty}|s_{t,y}|^2dt/t$ is \textit{equal} to a weighted integral of $|A_y(1/2+it)|^2$. This leads us to another required ingredient:
\begin{thm}[Harper, Saksman--Webb]\label{thm:ayintegral}
For $y \ge 3$ and fixed $q\in [0,1)$,
\begin{equation}\label{eq:Harper}
\EE \bigg[ \bigg(\frac{1}{\log y}\int_{\RR} \left|\frac{A_y(1/2+it)}{1/2+it}\right|^{2} dt \bigg)^{q}\bigg]\ll_q (\log \log y)^{-q/2}.
\end{equation}
\end{thm}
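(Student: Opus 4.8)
The plan is to handle the non-compact part of the $t$-integral by a distributional periodicity trick, reducing \eqref{eq:Harper} to a low moment of the random Euler product over a bounded interval, and then to invoke the multiplicative-chaos estimate of Harper \cite{Har2020} and Saksman--Webb \cite{SW}.

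\emph{Step 1: reduction to a compact interval.} Write $J_y := \int_{\RR}|A_y(\tfrac12+it)/(\tfrac12+it)|^2\,dt = \int_{\RR}\frac{|A_y(\tfrac12+it)|^2}{1/4+t^2}\,dt$, and split $\RR$ into the unit intervals $[n-\tfrac12,n+\tfrac12]$, $n\in\ZZ$. The substitution $t\mapsto t+n$ replaces each $\alpha(p)$ by $\alpha(p)p^{-in}$, a fresh point uniformly distributed on the unit circle, independently over $p$; hence $t\mapsto A_y(\tfrac12+i(t+n))$ has the same law as $t\mapsto A_y(\tfrac12+it)$, and in particular $\int_{n-1/2}^{n+1/2}|A_y(\tfrac12+it)|^2\,dt$ is equidistributed with $\int_{-1/2}^{1/2}|A_y(\tfrac12+it)|^2\,dt$ for every $n\in\ZZ$. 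Since $z\mapsto z^q$ is subadditive on $[0,\infty)$ for $q\in[0,1]$,
\[
\EE[J_y^q] \le \sum_{n\in\ZZ}\Big(\sup_{|t-n|\le 1/2}\frac{1}{1/4+t^2}\Big)^q\,\EE\Big[\Big(\int_{-1/2}^{1/2}|A_y(\tfrac12+it)|^2\,dt\Big)^q\Big],
\]
and the prefactor converges provided $q>1/2$, as $\sup_{|t-n|\le1/2}(1/4+t^2)^{-1}\asymp n^{-2}$. Recalling that $\EE|A_y(\tfrac12+it)|^2 = \prod_{p\le y}(1-1/p)^{-1}\asymp\log y$ by Mertens' theorem, it therefore suffices to prove, for each fixed $q\in(1/2,1)$,
\[
\EE\Big[\Big(\int_{-1/2}^{1/2}|A_y(\tfrac12+it)|^2\,dt\Big)^q\Big] \ll_q (\log y)^q\,(\log\log y)^{-q/2};
\]
the remaining range $q\in[0,1/2]$ then follows from the case $q'=\tfrac34$ and the power-mean inequality $\EE[X^q]\le(\EE[X^{q'}])^{q/q'}$, valid for $0\le q\le q'$ and $X\ge0$. (Alternatively one may pass through Parseval's identity for Dirichlet series, $\tfrac1{2\pi}J_y = \int_1^\infty |s_{x,y}|^2\,dx/x$, but the compact-interval formulation is the cleaner input for the next step.)

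\emph{Step 2: the Euler-product low moment.} The last display is the heart of the matter and expresses precisely the better-than-squareroot cancellation of \eqref{eq:harpermoms}: although $\int_{-1/2}^{1/2}|A_y(\tfrac12+it)|^2\,dt$ has expectation of order $\log y$, its $q$-th moment is smaller by a factor $(\log\log y)^{-q/2}$, the integral being typically of size $\log y/\sqrt{\log\log y}$ with its mean inflated by rare large values. One proves this by the multiplicative-chaos method of Harper \cite{Har2020} (see also \cite{SW}): partition the primes in $(1,y]$ into $K\asymp\log\log y$ blocks $P_0,\dots,P_K$ with $\sum_{p\in P_j}1/p\asymp1$, write $A_y=\prod_{j\le K}A_{P_j}$ with independent factors, and observe that, up to prime-power corrections of bounded variance, $\log|A_y(\tfrac12+it)| = \sum_{j\le K}G_j(t)$ with $G_j(t):=\Re\sum_{p\in P_j}\alpha(p)p^{-1/2-it}$ a mean-zero increment of variance $\asymp1$; for each fixed $t$ the partial sums $\mathcal S_j(t):=\sum_{i\le j}G_i(t)$ thus behave like a centred random walk of length $K$ with $O(1)$ steps, and $\int_{-1/2}^{1/2}|A_y(\tfrac12+it)|^2\,dt \asymp \int_{-1/2}^{1/2}e^{2\mathcal S_K(t)}\,dt$. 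One then restricts the integral to the set of $t$ on which the walk stays below a suitable barrier: by a first-moment computation weighted by the ballot / three-dimensional Bessel survival probability $\asymp1/\sqrt K$, the barrier-respecting part has expectation $\ll\log y/\sqrt K$, so its $q$-th moment is $\ll(\log y)^qK^{-q/2}$ by concavity; the complementary part is controlled by summing over the first block at which the barrier is crossed, bounding the contribution of the remaining, unconstrained blocks by a second moment and the crossing event by its exponentially small probability, with the barrier height tuned so that this sum is again $\ll(\log y)^qK^{-q/2}$. Uniformity over the continuum $t\in[-\tfrac12,\tfrac12]$ is obtained by discretising $t$ at scale $\asymp1/y$ and controlling the oscillation of each factor $A_{P_j}$ between neighbouring grid points.

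\emph{Main obstacle.} The technical core, and the step I expect to be hardest, is the barrier analysis in Step 2. Three points demand genuine care: (i) the increments $G_j(t)$ are non-Gaussian, so the random-walk and ballot heuristics must be justified through sharp estimates for the Laplace transform of $G_j$ --- Harper proceeds by explicit computation together with H\"older's inequality, while Saksman--Webb instead invoke a Gaussian comparison that reduces the problem to a convenient critical Gaussian multiplicative chaos; (ii) one must genuinely extract the factor $1/\sqrt K$ from the survival probability, since this is exactly where the $(\log\log y)^{-q/2}$ saving lives, which forces a two-sided barrier and a delicate conditioning rather than a one-sided bound that would only yield $O(1)$; and (iii) transferring the pointwise-in-$t$ estimates to the integral uniformly in $t$ requires a union bound (or chaining argument) over $\asymp e^j$ points at the resolution of block $j$, together with quantitative control of $|A_{P_j}|$ on short intervals. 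Once Step 2 is in hand, Step 1 is routine. One might hope to bypass Step 2 using the critical-chaos convergence $m_{y,0}\to m_\infty$ of \Cref{thm:mc-critical} together with the finiteness of the $q$-th moments of $m_\infty$ on bounded intervals, but extracting a uniform bound that way would require uniform integrability of $m_{y,0}(I)^q$, whose proof again rests on an estimate of the strength of Harper's, so this is not a genuine shortcut.
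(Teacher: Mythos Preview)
The paper does not prove \Cref{thm:ayintegral} at all: it is stated with attribution to Harper and Saksman--Webb and accompanied only by the sentence ``For a proof of \Cref{thm:ayintegral} see Harper \cite[Key Propositions 1 and 2]{Har2020} \cite[\S3]{harper2024moments} and Saksman and Webb \cite{SaksmanWebb,SW} (cf.~\cite[Lemma~1.3]{GWBLMS}).'' So there is no in-paper proof to compare against; the result is used as a black box.

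Your proposal is a faithful outline of Harper's argument. Step~1 is correct and self-contained: the distributional translation invariance $A_y(\tfrac12+i(\cdot+n))\overset{d}{=}A_y(\tfrac12+i\cdot)$, subadditivity of $z\mapsto z^q$, and the power-mean reduction to $q\in(\tfrac12,1)$ all work exactly as you say (the paper itself uses the same periodicity trick in the proof of \Cref{cor:mc-critical}). Step~2 is, as you acknowledge, only a sketch, but it accurately identifies the architecture of \cite{Har2020}: the block decomposition into $K\asymp\log\log y$ independent pieces, the random-walk / ballot heuristic yielding the $K^{-1/2}$ factor, and the barrier-plus-complement splitting. Your list of obstacles (non-Gaussian increments, the need for a genuine two-sided barrier rather than a crude one-sided bound, and the discretisation in $t$) matches the actual difficulties in Harper's paper. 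One small inconsistency: you first say ``discretising $t$ at scale $\asymp 1/y$'' and later, more accurately, ``$\asymp e^j$ points at the resolution of block $j$''; the latter is what is really done, with the scale refining as the block index grows.

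Your closing remark about circularity is also on point: the paper invokes \Cref{thm:ayintegral} in the proof of \Cref{cor:mc-critical}, so one cannot deduce it from \Cref{thm:mc-critical} without an independent uniform-integrability input of the same strength.
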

For $q=1/2$, the decay of the $1/2$-moment of the integral counteracts the $(\log \log x)^{1/4}$-factor in $S_x-S'_x$, as long as $y$ is not too small. For a proof of \Cref{thm:ayintegral} see Harper \cite[Key Propositions 1 and 2]{Har2020} \cite[Section 3]{harper2024moments} and Saksman and Webb \cite{SaksmanWebb,SW} (cf.~\cite[Lemma~1.3]{GWBLMS}). In fact, Harper's result is uniform in $q\in [0,1]$ and includes a matching lower bound. 
\subsection{Outline}
The remainder of the paper consists of two main parts that may be read independently.
\begin{itemize}
    \item Part 1 consists of \Cref{sec:chaos} and \Cref{sec:mod-cross-mom} which are dedicated to the proof of \Cref{thm:mc-critical}.
    \begin{itemize}
        \item \Cref{sec:chaos} contains various preparations for the universality result for $m_\infty$. We will first explain the density removal lemma (\Cref{lem:density-removal}), which allows us to reduce the problem of $m_{y, u} \to m_\infty$ (\Cref{thm:mc-critical}) to that of $\nu_{y, u} \to \nu_\infty$ (\Cref{thm:mc-critical2}). We will then explain the modified second moment method (\Cref{lem:mod-2ndmom}), and as a warm-up use it to strengthen the result $\nu_{y, 0} \xrightarrow[y \to \infty]{d} \nu_\infty$ of Saksman and Webb to convergence in probability in \Cref{sec:case-u=0}.
        \item \Cref{sec:mod-cross-mom} focuses on the implementation of the modified second moment method. We will first explain the monotone coupling of phases (\Cref{lem:couple-estimate}) and derive uniform estimates for the so-called residual field (\Cref{cor:residual-estimate}). These are then used in \Cref{sec:main_lemma} as key ingredients for uniform integrability and convergence in probability, from which we conclude \Cref{lem:mod-cross-mom} by generalised dominated convergence. We will conclude the section with a short proof of \Cref{cor:mc-critical}.
    \end{itemize}
    \item Part 2 consists of \Cref{sec:prepsum}, \Cref{sec:trunc} and \Cref{sec:bracket} which are dedicated to the proof of \Cref{thm:summain}.
    \begin{itemize}
        \item \Cref{sec:prepsum} establishes \Cref{thm:summain} using the martingale central limit theorem up to two tasks: the justification of the truncation (\Cref{prop:negdelta}) and the computation of the limit of the bracket process (\Cref{prop:bracket}).
        \item \Cref{sec:trunc} is concerned with the truncation and with verifying \Cref{prop:negdelta}.
        \item \Cref{sec:bracket} computes the bracket process using \Cref{cor:mc-critical}, establishing \Cref{prop:bracket}. 
\end{itemize}
\end{itemize}
\section{Preparation for the proof of \texorpdfstring{\Cref{thm:mc-critical}}{Theorem \ref{thm:mc-critical}}}\label{sec:chaos}
For \Cref{sec:chaos} and \Cref{sec:mod-cross-mom}, we will state our results in the form of general random Euler product
\[A_y(z) 
:=
\prod_{p \le y} \bigg(\sum_{k \ge 0} \frac{f(p^k)\alpha(p^k)  }{p^{kz}}\bigg)=\sum_{\substack{n\ge 1\\p\mid n \implies p \le y}} \frac{f(n)\alpha(n)}{n^z}. \]
Even though we are primarily interested in the case $f \equiv \mathbf{1}$ (or $f \in \{\mu, \mu^2\}$ for partial sums restricted to squarefree numbers), our proofs will track the dependence on $f$ in most places and one can easily see that our arguments do not need any modifications if we work with the following assumptions:
\begin{itemize}
\item $\displaystyle \sum_p \bigg[\frac{|f(p)|^2}{p}
+ \frac{|f(p^2)|^2}{p^{2}} +\sum_{k \ge 3} \frac{|f(p^k)|}{p^{k/2}} \bigg]\log^2 p < \infty$; and
\item $\sum_{p \le x} |f(p)|^2 = \mathrm{Li}(x) + \Ea_{|f|^2}(x)$ where $\Ea_{|f|^2}(x) = o(x / \log x)$ as $x \to \infty$ and $\int_{2}^\infty x^{-2}|\Ea_{|f|^2}(x)| dx < \infty.$
\end{itemize}
The only place where the assumption $f(p) \in \{\pm 1\}$ is implicitly used in these two sections is the discussion at the beginning of \Cref{sec:case-u=0}, where we explained the Gaussian reduction of Saksman and Webb when they established the distributional convergence of $\nu_{y, 0} \to \nu_\infty$. We note, however, that this Gaussian reduction could also be extended to the aforementioned class of twists $f$ using the techniques in \cite{SaksmanWebb, SW} and we refer the interested readers to the original papers for details.

\subsection{A reduction: the density removal lemma}\label{sec:density-removal}
Writing
\begin{equation}\label{eq:density}
m_{y, u}(dt) = X_{y, u}(t) \nu_{y, u}(dt) 
\qquad \text{where} \qquad 
X_{y, u}(t) := \frac{\EE\left[ \exp \Ga_{y, u}(t)\right]}{\EE\left[|A_y(\sigma_y(u) + it)|^2\right]}\frac{|A_y(\sigma_y(u) + it)|^2}{\exp  \Ga_{y, u}(t)},
\end{equation}
it is intuitive to approach \Cref{thm:mc-critical} by separately obtaining the convergence of $X_{y, u}(t)$ in a suitable function space and then combining that with \Cref{thm:mc-critical2}. This is the content of the density removal lemma, which was first used in \cite{GWL1} and is now specialised to our current problem.\footnote{An almost-sure version was also used in the work of Saksman and Webb \cite{SW}.}
\begin{lem}[{\cite[Lemma 2.10]{GWL1}}]\label{lem:density-removal}
Let $I \subset \RR$ be a compact interval. Suppose there exist a random continuous function $X_\infty \in C(I)$ and a random measure $\nu_\infty$ on $I$ such that 
\[    \forall u \ge 0, \qquad X_{y, u} \xrightarrow[n \to \infty]{p} X_\infty \quad \text{in $C(I)$}
    \qquad \text{and} \qquad
    \nu_{y, u} \xrightarrow[n \to \infty]{p} \nu_\infty. \]
Then
\[    \forall u \ge 0, \qquad  m_{y, u}(dt) \xrightarrow[y \to \infty]{p} m_\infty(dt) := X_\infty(t) \nu_\infty(dt). \]
\end{lem}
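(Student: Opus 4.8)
The plan is to reduce the statement to a pointwise claim about individual test functions and then dispose of the two natural error terms in turn. Fix a compact interval $I$ and $h\in C(I)$; since $X_\infty$ is continuous it is bounded on $I$, so $m_\infty(h)=\int_I h(t)X_\infty(t)\,\nu_\infty(dt)$ is well defined, and it suffices to prove
\[
m_{y,u}(h)=\int_I h(t)X_{y,u}(t)\,\nu_{y,u}(dt)\ \xrightarrow[y\to\infty]{p}\ \int_I h(t)X_\infty(t)\,\nu_\infty(dt)=m_\infty(h)
\]
for every such $I$ and $h$, this being exactly the assertion of weak$^*$ convergence in probability (any $g\in C_c(\RR)$ is handled by choosing $I$ so that $\operatorname{supp}(g)$ lies in its interior). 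I would split
\[
m_{y,u}(h)-m_\infty(h)=\int_I h\,(X_{y,u}-X_\infty)\,d\nu_{y,u}+\big(\nu_{y,u}(hX_\infty)-\nu_\infty(hX_\infty)\big)
\]
and estimate the two summands separately.

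For the first summand, $\big|\int_I h\,(X_{y,u}-X_\infty)\,d\nu_{y,u}\big|\le\|h\|_{C(I)}\,\|X_{y,u}-X_\infty\|_{C(I)}\,\nu_{y,u}(I)$, and $\|X_{y,u}-X_\infty\|_{C(I)}\to 0$ in probability by hypothesis. It remains to note that $\big(\nu_{y,u}(I)\big)_y$ is bounded in probability uniformly in $y$: picking a trapezoidal $\phi\in C_c(\RR)$ with $\mathbf 1_I\le\phi$ gives $\nu_{y,u}(I)\le\nu_{y,u}(\phi)$, and $\nu_{y,u}(\phi)\to\nu_\infty(\phi)$ in probability, hence is tight. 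A sequence tending to $0$ in probability times a tight sequence tends to $0$ in probability, so this summand is negligible.

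The second summand is the crux, because $hX_\infty$ is a \emph{random} element of $C(I)$, whereas the hypothesis $\nu_{y,u}\xrightarrow[y\to\infty]{p}\nu_\infty$ only supplies convergence against deterministic test functions. I would resolve this by a tightness-and-finite-approximation argument: since $C(I)$ is Polish, the law of $X_\infty$ is tight, so given $\eta>0$ there is a compact $K\subset C(I)$ with $\PP(X_\infty\notin K)<\eta$; cover $K$ by finitely many $\OurEpsilon$-balls centred at $g_1,\dots,g_N\in C(I)$, and fix $M$ with $\sup_y\PP(\nu_{y,u}(I)>M)<\eta$ (the uniform tightness just established) and $\PP(\nu_\infty(I)>M)<\eta$. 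On the event $\{X_\infty\in K,\ \nu_{y,u}(I)\le M,\ \nu_\infty(I)\le M\}$, choosing the random index $j$ with $\|X_\infty-g_j\|_{C(I)}<\OurEpsilon$ gives
\[
\big|\nu_{y,u}(hX_\infty)-\nu_\infty(hX_\infty)\big|\le 2\,\|h\|_{C(I)}\,\OurEpsilon\,M+\max_{1\le k\le N}\big|\nu_{y,u}(hg_k)-\nu_\infty(hg_k)\big|.
\]
Each $hg_k$ lies in $C(I)$, so every $\nu_{y,u}(hg_k)-\nu_\infty(hg_k)$, and hence the finite maximum, tends to $0$ in probability; fixing $\OurEpsilon$ small once $M$ and $N$ are chosen makes the deterministic term negligible. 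Combining the two summands and letting $\eta\downarrow 0$ yields $m_{y,u}(h)\to m_\infty(h)$ in probability; since $I$ and $h$ were arbitrary, $m_{y,u}\xrightarrow[y\to\infty]{p}m_\infty$, with non-atomicity of $\nu_\infty$ (so $\nu_\infty(\partial I)=0$) removing any endpoint ambiguity between test functions on $I$ and on $\RR$. The only genuinely non-routine ingredient is this random-test-function step; everything else is bookkeeping with tightness.
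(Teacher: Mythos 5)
Your proof is correct. The decomposition into $\int_I h(X_{y,u}-X_\infty)\,d\nu_{y,u}$ plus $\nu_{y,u}(hX_\infty)-\nu_\infty(hX_\infty)$ is the natural one, the tightness-of-$\nu_{y,u}(I)$ argument for the first term is sound, and the tightness-plus-$\varepsilon$-net argument for the random test function $hX_\infty$ in the second term is carefully executed with the quantifiers in the right order (fix $M$ and $K$ from tightness, then $\varepsilon$, then $N$, then send $y\to\infty$, then $\eta\to 0$). The paper itself only cites this lemma from an earlier work and does not reprove it here, so a direct textual comparison is not possible.

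One remark on economy of means: the second-term step --- which you correctly identify as the crux --- can be handled more quickly by the subsequence characterisation of convergence in probability. Since the space of finite measures on a compact $I$ with the weak$^*$ topology is a Polish space and $C(I)$ is as well, convergence in probability is equivalent to: every subsequence $y_{n}\to\infty$ has a further subsequence $y_{n_k}$ along which $\|X_{y_{n_k},u}-X_\infty\|_{C(I)}\to 0$ and $\nu_{y_{n_k},u}\to\nu_\infty$ weakly, both almost surely. Along such a subsequence, $\omega$-by-$\omega$, the function $h(\cdot)X_\infty(\cdot)(\omega)$ is a fixed element of $C(I)$ and $\nu_{y_{n_k},u}(hX_\infty)\to\nu_\infty(hX_\infty)$ by ordinary (deterministic) weak convergence, while the first term vanishes exactly as you argue. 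This sidesteps the covering of $K$ by $\varepsilon$-balls entirely; the randomness of the test function is absorbed by passing to an almost-sure setting. Your version is more hands-on and perhaps instructive, but both are valid and neither needs the non-atomicity of $\nu_\infty$ --- your closing remark about endpoint ambiguity is unnecessary once one treats $\nu_{y,u},\nu_\infty$ as measures on the compact $I$ (where the constant function $\mathbf 1$ is already a legitimate test function), which is the setting of the lemma as stated.
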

In view of \Cref{lem:density-removal}, let us verify that $X_{y, u}$ indeed has the desired convergent property.
\begin{lem}\label{lem:density-estimate}
Let $I \subset \RR$ be a compact interval and $X_{y, u}$ be defined in \eqref{eq:density}. Then $\sup_{y \ge 3} \EE\left[\sup_{t \in I} |X_{y, u}(t)|^r\right] < \infty$ for any $r \in \RR$. Moreover, there exists some random continuous function $X_{\infty}(\cdot)$ such that $X_{y, u} \xrightarrow[y \to \infty]{p} X_\infty$ in $C(I)$ for any $u \ge 0$.
\end{lem}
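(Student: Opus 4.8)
The plan is to recognize $X_{y,u}(t)$ as (a normalized version of) the partial Euler product with the first-order term stripped out, and to establish the uniform moment bounds and the convergence in probability by writing everything as an explicit product over primes. Concretely, observe that
\[
\frac{|A_y(\sigma_y(u)+it)|^2}{\exp\Ga_{y,u}(t)} = \prod_{p\le y} \frac{\bigl|\sum_{k\ge 0} f(p^k)\alpha(p^k) p^{-k(\sigma_y(u)+it)}\bigr|^2}{\exp\bigl(2\Re f(p)\alpha(p)p^{-\sigma_y(u)-it}\bigr)},
\]
so each factor is $1 + O(\log^2 p / p)$ in a suitable averaged sense once we divide by the expectation; the numerator and denominator of $X_{y,u}(t)$ both converge, and the ratio converges to an honest infinite product indexed by all primes. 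For the moment bounds $\sup_{y\ge 3}\EE[\sup_{t\in I}|X_{y,u}(t)|^r]<\infty$ for every real $r$, the key point is that the factor at prime $p$ is bounded above and below by quantities of the form $\exp(O(|f(p^k)|\,p^{-k/2}))$ uniformly in $t$, so that $\log X_{y,u}(t)$ is dominated by $\sum_p$ of terms controlled by the first hypothesis on $f$ (the $\log^2 p$-weighted series), giving a uniform-in-$y$, uniform-in-$t$ deterministic bound on $X_{y,u}$ from above and below, up to the normalizing expectation which is likewise bounded above and below by positive constants. Actually, since $|1-z|^{-2}$ and $\exp(2\Re z)$ for $|z|\le p^{-1/2}\le 2^{-1/2}$ differ multiplicatively by $1+O(|z|^2)$, and the higher prime powers contribute $1+O(|f(p^k)|p^{-k/2})$, the product converges absolutely and is bounded away from $0$ and $\infty$; this already delivers every moment trivially because $X_{y,u}$ is sandwiched between deterministic constants.

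For the convergence in probability to a random continuous limit $X_\infty$ in $C(I)$, I would argue as follows. Write $X_{y,u}(t) = c_{y,u}(t)\prod_{p\le y} \xi_p^{(u)}(t)$ where $\xi_p^{(u)}(t)$ is the $p$-th factor above and $c_{y,u}(t)$ is the deterministic normalizing ratio of expectations. First, $c_{y,u}(t)$ converges uniformly on $I$ as $y\to\infty$ to a deterministic continuous limit $c_\infty(t)$, and the dependence on $u$ washes out because $\sigma_y(u)\to 1/2$ at rate $O(1/\log y)$ while $|I|$ is bounded, so $p^{-\sigma_y(u)}=p^{-1/2}(1+O(\log p/\log y))$ and every affected factor is $1+o(1)$ uniformly; a short computation with Mertens-type estimates (using the second hypothesis on $f$) gives the uniform convergence of $c_{y,u}$. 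Second, the random product $\prod_{p\le y}\xi_p^{(u)}(t)$ converges in probability in $C(I)$: the tail $\prod_{p>N}$ is close to $1$ uniformly in $t$ and $y$ with high probability because $\log\xi_p^{(u)}(t) = O(|f(p^2)|p^{-1}\log^2 p) + (\text{mean-zero fluctuation of size } O(|f(p)|p^{-1}))$ — wait, the fluctuation at prime $p$ is of order $p^{-1}$ after the linear term is removed, which is summable, so $\sum_p (\xi_p^{(u)}(t)-1)$ converges absolutely a.s. and the product converges; continuity in $t$ and the uniform tail control upgrade this to convergence in $C(I)$. Since each $\xi_p^{(u)}(t)\to \xi_p^{(0)}(t)$ pointwise as $y\to\infty$ (the $u$-dependence enters only through $\sigma_y(u)$, which tends to $1/2$ regardless of $u$), the limit is the same random function $X_\infty(t) = c_\infty(t)\prod_p \xi_p^{(0)}(t)$ for every $u\ge 0$.

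The main obstacle is the uniform-in-$t$ control of the random tail $\prod_{N<p\le y}\xi_p^{(u)}(t)$ over the (noncompact-in-scale but compact) interval $I$, together with checking continuity of the limiting product: one needs that the fluctuation $\sum_{N<p\le y}\bigl(\log\xi_p^{(u)}(t) - \EE\log\xi_p^{(u)}(t)\bigr)$ is small uniformly in $t\in I$ with probability close to $1$. Since this is a sum of independent (across $p$) mean-zero random analytic functions with $\ell^2$-summable sup-norms on a neighborhood of $I$, this follows from a standard maximal inequality (e.g.\ Doob or a chaining bound, exactly as in the $L^1$-regime treatment in \cite{GWL1}) combined with the absolute summability coming from the first hypothesis on $f$ — so this is technical but routine. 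Everything else reduces to Mertens-type estimates and the elementary inequality $|{\log(1-z)^{-2} - 2\Re z}|\le C|z|^2$ for $|z|\le 1/\sqrt2$.
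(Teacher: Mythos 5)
Your decomposition of $X_{y,u}$ into a deterministic normalizer times a random prime product is in the right spirit and broadly matches the paper's three-factor splitting, but there is a genuine error at the heart of the moment-bound argument. You write that since $|1-z|^{-2}/\exp(2\Re z)=1+O(|z|^2)$ for $|z|\le 2^{-1/2}$, ``the product converges absolutely and is bounded away from $0$ and $\infty$; this already delivers every moment trivially because $X_{y,u}$ is sandwiched between deterministic constants.'' With $|z|=p^{-\sigma_y(u)}\asymp p^{-1/2}$ this gives $|z|^2\asymp p^{-1}$, and $\sum_p p^{-1}$ diverges, so the product does \emph{not} converge absolutely and there is no uniform deterministic sandwich: deterministically one only gets $\log X_{y,u}(t)=O(\log\log y)$, which blows up. More precisely, after removing the linear term the leading random contribution per prime is $\Re\bigl(\alpha(p)^2(2f(p^2)-f(p)^2)p^{-2\sigma_y(u)-2it}\bigr)$, of size $\asymp p^{-1}$; the series $\sum_p$ of such terms converges only because they are independent, mean-zero, and have summable variances $\asymp p^{-2}$ (and a $\log^2 p$-weighted version thereof), not because of $\ell^1$ summability. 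The same confusion recurs in your parenthetical ``wait, the fluctuation at prime $p$ is of order $p^{-1}$ after the linear term is removed, which is summable'' — it is not. Consequently the uniform-in-$y$, uniform-in-$t$ moment bound $\sup_y\EE[\sup_{t\in I}|X_{y,u}(t)|^r]<\infty$ is not trivial; it genuinely requires a concentration-plus-chaining argument for the random Fourier-type series (this is exactly the role of \Cref{lem:randomFourier} in the paper, which needs symmetric bounded increments and $\ell^2$-summable $C^1$-norms to get uniformly bounded exponential moments of the sup).

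You do partly recover toward the end, noting that one needs a maximal inequality for $\sum_{N<p\le y}(\log\xi_p^{(u)}(t)-\EE\log\xi_p^{(u)}(t))$, which is the right idea; but you attribute its success to ``absolute summability coming from the first hypothesis on $f$,'' which again conflates $\ell^1$ with $\ell^2$. The hypothesis on $f$ gives $\sum_p(|f(p)|^4+|f(p^2)|^2)p^{-2}\log^2 p<\infty$, i.e. square-summability of the sup-norms and their derivatives, which is what the chaining lemma needs — the first moments are not summable. To repair the proof you should (i) isolate the deterministic ratio of expectations and the deterministically bounded higher-order ($k\ge 3$) Euler factors as the paper does for $X^{(1)},X^{(2)}$, and (ii) for the remaining quadratic/cubic random piece, invoke a random Fourier series lemma (or chaining bound) using the $\ell^2$-with-$\log^2 p$-weight hypothesis, rather than claiming deterministic boundedness. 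The treatment of the $u$-dependence washing out via $\sigma_y(u)\to 1/2$ uniformly, and the Mertens-based convergence of the deterministic normalizer, are correct as written.
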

This was essentially established in \cite[Lemma 2.11]{GWL1} under slightly different settings and notations. For completeness we give a quick sketch of proof, which relies on the following lemma which was a small generalisation of \cite[Lemma 3.1]{SW}:
\begin{lem}[{\cite[Lemma 2.9]{GWL1}}]\label{lem:randomFourier}
Let $I \subset \RR$ be a compact interval, and consider the sequence of random functions $F_n(\cdot) := \sum_{k=1}^n U_k f_{n, k}(\cdot)$ where
\begin{itemize}
\item $(U_k)_{k}$ are i.i.d.~(real- or complex-valued) random variables that are either (i) standard normal or (ii) symmetric and uniformly bounded (i.e.~there exists some constant $C$ such that $|U_k| \le C$ almost surely);
\item $f_{n, k} \xrightarrow[n \to \infty]{} f_k$ in $C^1(I)$ for each $k \ge 1$, and
$
\sum_{k=1}^\infty \sup_{n \ge 1} \left[ \| f_{n,k}\|_{\infty}^2 + \| f_{n,k}'\|_{\infty}^2\right] < \infty.
$
\end{itemize}
Then $\sup_{n \ge 1} \EE\left[\exp\left( \lambda \|F_n\|_{\infty}\right)\right] < \infty$ for any $\lambda > 0$, and $F_n(\cdot) \xrightarrow[n \to \infty]{p} F(\cdot) := \sum_{k \ge 1} U_k f_k(\cdot)$ in $C(I)$. 
\end{lem}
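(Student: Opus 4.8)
The plan is to treat \Cref{lem:randomFourier} as a quantitative refinement of the classical fact that a random series of $C^1$ functions with rapidly decaying coefficients converges uniformly, reducing both assertions to $L^2$-estimates via a one-dimensional Sobolev embedding and then invoking concentration of measure. The elementary inequality I would use is that, for any $g \in C^1(I)$, writing $g(t)^2 = g(s)^2 + 2\int_s^t g g'$ and integrating over $s \in I$ yields $\|g\|_{\infty}^2 \le C_I(\|g\|_{L^2(I)}^2 + \|g'\|_{L^2(I)}^2)$ with $C_I$ depending only on $|I|$ (for complex-valued $g$, apply this to $\Re g$ and $\Im g$ and add). Since $F_n$ is a finite sum, $F_n \in C^1(I)$ with $F_n' = \sum_{k=1}^n U_k f_{n,k}'$, so the whole problem is controlled by the two nonnegative functionals $\Sigma_n := \|F_n\|_{L^2(I)}$ and $\Sigma_n' := \|F_n'\|_{L^2(I)}$.

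For the uniform exponential integrability, I would first observe that, using only that the $U_k$ are independent and mean zero, $\EE[\Sigma_n^2] = \EE[|U_1|^2] \sum_{k=1}^n \|f_{n,k}\|_{L^2(I)}^2 \le |I|\,\EE[|U_1|^2] \sum_{k \ge 1}\sup_m \|f_{m,k}\|_\infty^2 < \infty$ uniformly in $n$, and likewise for $\EE[(\Sigma_n')^2]$. To upgrade the second moment to a sub-Gaussian tail, note that $(u_k)_{k\le n} \mapsto \|\sum_k u_k f_{n,k}\|_{L^2(I)}$ is Lipschitz in the coefficient vector: changing one coordinate $u_k$, which ranges over a disk of radius $C$ in case (ii) (or is a single standard Gaussian in case (i)), alters the value by at most $c_k := 2C\sqrt{|I|}\,\|f_{n,k}\|_\infty$, and $\sum_k c_k^2$ is bounded uniformly in $n$ by the summability hypothesis; the same holds with $f_{n,k}'$ in place of $f_{n,k}$. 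Hence McDiarmid's bounded-differences inequality in case (ii), and Gaussian Lipschitz concentration in case (i), both give $\PP(|\Sigma_n - \EE[\Sigma_n]| > t) \le 2e^{-ct^2}$ with $c>0$ independent of $n$ (splitting complex coordinates into real and imaginary parts throughout). Combining this with the uniform bound on $\EE[\Sigma_n] \le \EE[\Sigma_n^2]^{1/2}$, the analogous statement for $\Sigma_n'$, the Sobolev estimate, and Cauchy--Schwarz gives $\sup_{n}\EE[e^{\lambda\|F_n\|_\infty}] \le \sup_n (\EE[e^{2\lambda\sqrt{C_I}\,\Sigma_n}]\,\EE[e^{2\lambda\sqrt{C_I}\,\Sigma_n'}])^{1/2} < \infty$ for every $\lambda>0$.

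For the convergence, the same Sobolev bound gives $\EE[\|\sum_{M<k\le N}U_kf_k\|_\infty^2] \le C_I\,\EE[|U_1|^2]\sum_{M<k\le N}(\|f_k\|_{L^2(I)}^2 + \|f_k'\|_{L^2(I)}^2) \to 0$ as $M,N\to\infty$ (the series $\sum_k\|f_k\|_{C^1(I)}^2$ converges since $\|f_k\|_{C^1(I)} = \lim_n\|f_{n,k}\|_{C^1(I)} \le \sup_n\|f_{n,k}\|_{C^1(I)}$), so the partial sums of $F := \sum_k U_kf_k$ are Cauchy in $L^2(\Omega; C(I))$ and $F$ is a well-defined $C(I)$-valued random variable. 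Decomposing $F_n - F = \sum_{k\le n}U_k(f_{n,k}-f_k) - \sum_{k>n}U_kf_k$, the second term goes to $0$ in $L^2(\Omega;C(I))$ by the tail estimate, while the first satisfies $\EE[\|\cdot\|_\infty^2] \le C_I\,\EE[|U_1|^2]\sum_{k\le n}\|f_{n,k}-f_k\|_{C^1(I)}^2$; since $\|f_{n,k}-f_k\|_{C^1(I)}\to 0$ for each fixed $k$ and $\|f_{n,k}-f_k\|_{C^1(I)}^2$ is dominated, uniformly in $n$, by a sequence summable in $k$, the dominated convergence theorem for series shows this sum tends to $0$. Hence $F_n\to F$ in $L^2(\Omega; C(I))$, which in particular yields $F_n \xrightarrow[n\to\infty]{p} F$ in $C(I)$.

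The step I expect to be the main obstacle is the passage from second moments to uniform exponential moments: one must identify $\Sigma_n,\Sigma_n'$ as Lipschitz functionals of the coefficient vector whose variance proxy is controlled uniformly in $n$ by the summability assumption, and then treat the Gaussian and bounded-symmetric cases with the appropriate concentration inequality. Everything downstream — the $H^1\hookrightarrow C^0$ reduction, which sidesteps any chaining argument, and the $C^1\to C^0$ bookkeeping for the convergence — is routine.
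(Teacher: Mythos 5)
The lemma is cited from \cite{GWL1} rather than proved here, so there is no in-text argument to line up against. Assessed on its own, your proof is correct. The governing idea---replace any chaining or Kolmogorov-continuity argument by the one-dimensional Sobolev embedding $\|g\|_\infty^2\le C_I\bigl(\|g\|_{L^2(I)}^2+\|g'\|_{L^2(I)}^2\bigr)$, then control $\Sigma_n=\|F_n\|_{L^2(I)}$ and $\Sigma_n'=\|F_n'\|_{L^2(I)}$ via orthogonality of the $U_k$ for second moments and Lipschitz concentration (Gaussian concentration in case (i), McDiarmid in case (ii)) for sub-Gaussian tails---is sound, and you correctly check that the Lipschitz constant $\bigl(\sum_k\|f_{n,k}\|_{L^2(I)}^2\bigr)^{1/2}$ and the mean $\EE\Sigma_n\le(\EE\Sigma_n^2)^{1/2}$ are controlled uniformly in $n$ by the summability hypothesis, so the exponential moments are uniform; the convergence part via the Cauchy estimate in $L^2(\Omega;C(I))$ for the tail $\sum_{k>n}U_kf_k$ plus dominated convergence in $k$ for $\sum_{k\le n}U_k(f_{n,k}-f_k)$ is likewise complete. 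This is a genuinely different route from the entropy/chaining machinery that this paper deploys elsewhere (e.g.\ \Cref{thm:chaining} for $\widetilde{\Ga}^{\mathbf{u},\mathbf{t}}_{y,0}$), and that one would otherwise reach for here: you sidestep all metric-entropy bookkeeping, at the cost of exploiting the one-dimensionality of $I$ (Morrey in $d=1$), which is perfectly appropriate for this statement but would not transfer to higher-dimensional index sets. Two small bookkeeping points worth spelling out when writing this up: for $U_k\sim\Na_\CC(0,1)$ the real and imaginary parts have variance $1/2$, so rescale before applying the standard Gaussian Lipschitz concentration; and in case (ii), symmetry plus boundedness gives $\EE U_k=0$, which is precisely what the orthogonality identity $\EE[\Sigma_n^2]=\EE[|U_1|^2]\sum_k\|f_{n,k}\|_{L^2(I)}^2$ implicitly uses.
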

\begin{proof}[Sketch of proof of \Cref{lem:density-estimate}]
Let us write $X_{y, u}(t) := \prod_{j=1}^3 X_{y, u}^{(j)}(t)$ where
\begin{align*}
X_{y, u}^{(1)}(t) & := \frac{\EE\exp \left(\Ga_{y, u}(t)\right)}{\EE\left[|A_y(\sigma_y(u) + it)|^2 \right]}
= \prod_{p \le y} \frac{\EE\left[\exp\{2 \Re \frac{f(p)\alpha(p)}{p^{\sigma_y(u) + it}}\}\right]}{\EE\left[ \left| \sum_{k \ge 0} \frac{f(p^k)\alpha(p)^k}{p^{k(\sigma_y(u) + it)}}\right|\right]},\\
X_{y, u}^{(2)}(t) & :=\prod_{p \le y}  \left|1 + \sum_{k =1}^2 \frac{\alpha(p)^k f(p^k)  }{p^{k(\sigma_y(u) + it)}}\right|^{-2}\left|1 + \sum_{k \ge 1} \frac{\alpha(p)^k f(p^k)  }{p^{k(\sigma_y(u)+ it)}}\right|^2,  \\
X_{y, u}^{(3)}(s) & :=\prod_{p \le y} \exp\left(-2\Re \frac{f(p) \alpha(p)}{p^{\sigma_y(u) + it}}\right) \left|1 + \sum_{k =1}^2 \frac{\alpha(p)^k f(p^k)  }{p^{k(1/2 + it)}}\right|^{2}.
\end{align*}
We only need to verify analogous claims for $X_{y, u}^{(j)}$ separately. The integrability of $X_{y, u}^{(1)}(\cdot)$ and $X_{y, u}^{(2)}(\cdot)$ is trivial since they are bounded above and also away from $0$ deterministically and uniformly in $y \ge 3$, and their convergence in probability (which actually also holds in the stronger almost sure sense) can be verified by dominated convergence - we omit the details\footnote{This should be easy to observe when $f \in \{\mathbf{1}, \mu, \mu^2\}$. For the general class of twists, we leave the straightforward verification to the interested readers; see \cite[Lemma 2.11]{GWL1} for similar computations.} and instead focus on the $X_{y, u}^{(3)}$ now. Observe that
\begin{align*}
&\left|1 + \sum_{k=1}^2 \frac{\alpha(p)^k f(p^k)  }{p^{k(\sigma_y(u)+it)}}\right|^2
 = \exp \left\{
2\Re \left(\sum_{j=1}^2 \frac{(-1)^{j-1}}{j} \left(\sum_{k=1}^2 \frac{\alpha(p)^k f(p^k)  }{p^{k(\sigma_y(u)+it)}}\right)^j\right) + O\left(\frac{|f(p)|^3}{p^{3/2}} + \frac{|f(p^2)|^3}{p^3} \right)
 \right\}\\
&\quad  = \exp \left\{ \Re \left[ 2\frac{\alpha(p) f(p)}{p^{\sigma_y(u) + it}} + \frac{\alpha(p)^2 (2f(p^2) - f(p)^2)}{p^{2(\sigma_t +is)}} - 2\frac{\alpha(p)^3f(p)f(p^2)}{p^{3(\sigma_y(u) + it)}}\right] + O\left(\frac{|f(p)|^3}{p^{3/2}} + \frac{|f(p^2)|^2}{p^2}\right)\right\}.
\end{align*}
The ``error terms" in $O\left(\frac{|f(p)|^3}{p^{3/2}} + \frac{|f(p^2)|^2}{p^2}\right)$ are random continuous functions that have summable deterministic bounds (uniformly in $y\ge 3$), and this means the sum over $p \le y$ converges almost surely to some continuous field as $y \to \infty$ by dominated convergence.
Finally, we need to show that the field
\[ \Re \sum_{p \le y} \left[ \frac{\alpha(p)^2 (2f(p^2) - f(p)^2)}{p^{2(\sigma_y(u) + it)}} - 2\frac{\alpha(p)^3f(p)f(p^2)}{p^{3 (\sigma_y(u) +it)}}\right] \]
has uniformly bounded exponential moments of all orders and that it converges in probability in $C(I)$, but 
\[ \sum_{p}\left\{ \frac{\left[|f(p^2)|^2 + |f(p)|^4\right]\log^2 p}{p^2} + \frac{ |f(p)|^2 |f(p^2)|^2 \log^2 p}{p^3}\right\} < \infty, \]
allows us to conclude the proof by \Cref{lem:randomFourier}.
\end{proof}

\subsection{The modified second moment method}\label{sec:mod-2nd}
To establish the universality result, i.e.~$\nu_{y, u} \xrightarrow[y \to \infty]{p} \nu_\infty$ regardless of the value of $u \ge 0$, we make use of the modified second moment method, as explained in the lemma below.
\begin{lem}\label{lem:mod-2ndmom}
Let $(X_n)_{n \ge 1}, (Y_n)_{n \ge 1}$ be two sequences of real-valued random variables. Suppose $X_n \xrightarrow[n \to \infty]{p} X_\infty$ and $\lim_{n \to \infty} \EE[|X_n - Y_n|^2 e^{-L X_n}] = 0$ for any $L > 0$. Then $Y_n \xrightarrow[n \to \infty]{p} X_\infty$.    
\end{lem}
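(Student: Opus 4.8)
The plan is to reduce everything to showing that $Y_n - X_n \to 0$ in probability: once this is known, the hypothesis $X_n \xrightarrow[n\to\infty]{p} X_\infty$ together with the stability of convergence in probability under addition immediately gives $Y_n \xrightarrow[n\to\infty]{p} X_\infty$. So the whole content is the estimation of $\PP(|Y_n - X_n| > \delta)$ for fixed $\delta > 0$.

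The obstacle to applying Markov's inequality directly is that the weight $e^{-LX_n}$ in the hypothesis is not bounded below, so I would first truncate. Fix $\delta > 0$ and a level $M > 0$ and write
\[
\PP(|Y_n - X_n| > \delta) \le \PP\big(|Y_n - X_n| > \delta,\ X_n \le M\big) + \PP(X_n > M).
\]
On the event $\{X_n \le M\}$ we have $e^{-X_n} \ge e^{-M}$, so Markov's inequality bounds the first term by $\delta^{-2} e^{M}\, \EE\!\left[|X_n - Y_n|^2 e^{-X_n}\right]$, which tends to $0$ as $n \to \infty$ by the hypothesis applied with $L = 1$. (Only a single value of $L$ is needed here; the full strength of the assumption is not used for this lemma.)

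For the second term I would use the assumed convergence $X_n \xrightarrow[n\to\infty]{p} X_\infty$. Since $\{X_n > M\} \subseteq \{X_\infty > M - 1\} \cup \{|X_n - X_\infty| > 1\}$, we get $\limsup_{n\to\infty} \PP(X_n > M) \le \PP(X_\infty > M - 1)$. Because $X_\infty$ is a real-valued, hence almost surely finite, random variable, $\PP(X_\infty > M - 1) \to 0$ as $M \to \infty$. Combining the two bounds yields $\limsup_{n\to\infty} \PP(|Y_n - X_n| > \delta) \le \PP(X_\infty > M - 1)$ for every $M$, and letting $M \to \infty$ gives $\limsup_{n\to\infty} \PP(|Y_n - X_n| > \delta) = 0$. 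As $\delta > 0$ was arbitrary, $Y_n - X_n \to 0$ in probability, which completes the argument.

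There is no substantial difficulty: the only point requiring (minor) care is precisely the handling of the unbounded damping factor $e^{-LX_n}$, which is why one truncates at level $M$ and then controls $\PP(X_n > M)$ uniformly in large $n$ using the convergence of $X_n$ and the finiteness of the limit $X_\infty$.
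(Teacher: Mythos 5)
Your proof is correct and uses essentially the same strategy as the paper: truncate on $\{X_n \le M\}$, apply Markov's inequality on that event to exploit the damped second-moment hypothesis, and control $\PP(X_n > M)$ via the convergence of $X_n$ to the (a.s.~finite) limit $X_\infty$. The only (minor) difference is cosmetic: the paper ties the truncation level to the exponent (using $e^{1-X_n/L}$ and then sending $L \to \infty$, so that the hypothesis is invoked with the parameter $1/L \to 0$), whereas you decouple the two and correctly observe that a single fixed value $L = 1$ already suffices; you also replace the Portmanteau step by an elementary inclusion $\{X_n > M\} \subseteq \{X_\infty > M-1\} \cup \{|X_n - X_\infty| > 1\}$, which is a fine equivalent.
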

\begin{proof}
For any $\delta > 0$ and $L > 0$,
\begin{align*}
\PP(|Y_n - X_\infty| > \delta)
&\le \PP\left(|Y_n - X_n| > \frac{\delta}{2}\right)
+ \PP\left(|X_n - X_\infty| > \frac{\delta}{2}\right)\\
&\le \PP(X_n > L)
+ \PP\left(|Y_n - X_n| > \frac{\delta}{2}, X_n \le L\right)
+ \PP\left(|X_n - X_\infty| > \frac{\delta}{2}\right)\\
& \le \PP(X_n > L)
+ (2/\delta)^2\EE\left[|X_n - Y_n|^2 e^{1 - X_n / L}\right]
+ \PP\left(|X_n - X_\infty| > \frac{\delta}{2}\right).
\end{align*}
As $n \to \infty$, the second term on the right-hand side goes to zero by assumption, and the third term also vanishes due to $X_n \xrightarrow[n \to \infty]{p} X_\infty$. Therefore
\[    \limsup_{n \to \infty} \PP(|Y_n - X_\infty| > \delta) \le \limsup_{n \to \infty} \PP(X_n > L)
     \le \PP(X_\infty \ge L) \]
where the last step follows from Portmanteau theorem. The desired claim now follows by sending $L \to \infty$.
\end{proof}
\begin{proof}[Proof of \Cref{thm:mc-critical2}, assuming the case $u=0$ and \Cref{lem:mod-cross-mom}]
Let $y_n$ be any positive sequence tending to $\infty$. We want to show, for any $u \ge 0$ and compact interval $I \subset \RR$, that $\nu_{y_n, u}(I) \xrightarrow[n \to \infty]{p} \nu_\infty(I)$. 
For any $L > 0$, 
\begin{align*}
&\lim_{n \to \infty} \EE\left[|\nu_{y_n, 0}(I) - \nu_{y_n, u}(I)|^2 e^{-L\nu_{y, 0}(I)}\right]\\
& \quad = \lim_{n \to \infty}
\left\{\EE\left[ \nu_{y_n, 0}(I)^2 e^{-L \nu_{y, 0}(I)}\right]
- 2\EE\left[ \nu_{y_n, 0}(I) \nu_{y_n, u}(I) e^{-L \nu_{y, 0}(I)}\right]
+ \EE\left[ \nu_{y_n, u}(I)^2(I) e^{-L \nu_{y, 0}(I)}\right]
\right\}
= 0
\end{align*}
by \Cref{lem:mod-cross-mom}. The result now follows from \Cref{lem:mod-2ndmom} with $X_n := \nu_{y_n, 0}(I)$ and $Y_n := \nu_{y_n, u}(I)$.
\end{proof}

\subsection{The case \texorpdfstring{$u=0$}{u=0} and the existence of \texorpdfstring{$\nu_\infty$}{nuinfty}}\label{sec:case-u=0}
Let us first briefly comment on the approach of of Saksman and Webb in \cite{SaksmanWebb, SW} where they established $\nu_{y, 0} \to \nu_\infty$. Their main idea was Gaussian approximation: it is possible to construct another random completely multiplicative function $\widetilde{\alpha}$ on the same (extended) probability space where $\widetilde{\alpha}(p)$ are i.i.d.~standard complex Gaussian random variables, such that $\Ga_{y, 0}(\cdot; \widetilde{\alpha}) - \Ga_{y, 0}(\cdot;\widetilde{\alpha})$ converges almost surely to a random continuous function, see e.g.~\cite[Theorem 1.7]{SW}. After further manipulations (\cite[Section 7]{SW}), they identified another sequence of Gaussian fields $G_y(\cdot)$  such that $\Ga_{y, 0}(\cdot; \alpha) - G_y(\cdot)$ converges almost surely to another random continuous function, and the critical chaos associated to $G_y(\cdot)$ exists in the limit as $y \to \infty$. 

The reason why convergence did not hold in probability is that the last step made use of \cite[Theorem 1.1]{JS2017}, which guarantees distributional convergence due to its abstract formulation. It may be possible to refine their analysis to upgrade the convergence, but this requires us to explain the steps leading to the identification of the Gaussian fields $G_y(\cdot)$. As promised, we will instead provide a relatively self-contained proof via modified second moment and homogenisation by change of measure. In view of \Cref{lem:density-removal}, we just need to prove that $\nu_{y, 0}(\cdot; \widetilde{\alpha})$ converges in probability to some limit $\widetilde{\nu}_\infty(\cdot)$ as $y \to \infty$. We will restrict ourselves to $I := [-1/2, 1/2]$ for simplicity, though the arguments could be easily adapted to general intervals.

\subsubsection{A candidate for the limit of \texorpdfstring{$\nu_y(\cdot; \widetilde{\alpha})$}{nu infty dot tilde alpha}}
We first need to identify a limit candidate. Consider
\[\EE\left[ \Ga_{y, 0}(t_1; \widetilde{\alpha})\Ga_{y, 0}(t_2; \widetilde{\alpha})\right]
= 2\sum_{p\le y} \frac{|f(p)|^2}{p} \cos((t_1 - t_2) \log p) =: K_y(t_1 - t_2).\]
When $y = \infty$, we see that
\begin{align*}
    K_\infty(t_1-t_2) &= 2\int_{2^-}^\infty \frac{\cos\left((t_1 - t_2 )\log x\right)}{x} d\left[\mathrm{Li}(x) + \Ea_{|f|^2}(x)\right]\\
    & = 2\left[\int_{(t_1 - t_2) \log 2}^1 \frac{du}{u} + \int_{(t_1 - t_2) \log 2}^1 \frac{\cos u - 1}{u} du + \int_1^\infty \cos u\frac{ du}{u}\right]\\
    & \qquad - 2 \Ea_{|f|^2}(2^-) \cos((t_1 - t_2) \log 2) + 2 \int_2^\infty \frac{\Ea(x)}{x^2} \left[ 1 + (t_1 - t_2) \sin\left((t_1 - t_2)\log x\right)\right] dx\\
    &= - 2 \log|t_1 - t_2| + \text{smooth function in $(t_1 - t_2)$}.
\end{align*}
By \cite[Theorem 5.3]{JSW2019} or \cite[Theorem 1.3 and C.2]{Lac2024}, the critical GMC associated to $\Ga_{\infty, 0}(\cdot; \widetilde{\alpha})$ under the convolution construction with Seneta--Heyde renormalisation exists. In other words, there exists $\widetilde{\nu}_\infty$ on $I$ such that the following is true: for any fixed smooth mollifier\footnote{For our purpose, a smooth mollifier $\theta$ is a smooth, symmetric and compactly supported function that is non-negative and $\int_{\RR} \theta = 1$, though it is known that the convolution construction of critical GMCs can be extended to less regular setting.} $\theta_\OurEpsilon(\cdot) := \OurEpsilon^{-1} \theta(\cdot / \OurEpsilon)$, the sequence of measures
\[    \widetilde{\nu}_{y, 0}(dt; \widetilde{\alpha}) := \sqrt{\log \frac{1}{\OurEpsilon}} \frac{\exp\left\{\Ga_{\infty, 0}^\OurEpsilon\left(t; \widetilde{\alpha}\right)\right\}}{\EE\left[\exp\left\{\Ga_{\infty, 0}^\OurEpsilon\left(t; \widetilde{\alpha}\right)\right\}\right]}dt
    \qquad \text{with} \qquad \OurEpsilon^{-1} = \log y, \quad \Ga_{\infty, 0}^\OurEpsilon(t; \widetilde{\alpha}) := \left(\Ga_{\infty, 0}(\cdot; \widetilde{\alpha}) \ast \theta_\OurEpsilon\right)(t)\]
converges in probability to $\widetilde{\nu}_\infty(dt)$ as $y \to \infty$. The support properties (i.e.~non-atomicity and full support) of $\widetilde{\nu}_\infty$ are standard facts in the literature and are conveniently summarised in e.g.~\cite[Theorem 1.3]{Lac2024}, though they follow from earlier works on critical GMCs. For instance, non-atomicity is implied by stronger statements of modulus of continuity such as \cite[Theorem 2]{BKNSW2015},\footnote{This was proved for scale-invariant kernels, but could be extended by a decomposition theorem in \cite{JSW2019}.} while full support could be deduced from the existence of negative moments of critical GMC measures \cite[Corollary 6]{DRSV2014}.\footnote{This was proved for star-scale invariant fields, but may be extended by Gaussian comparison (\cite[Lemma 16]{DRSV2014}). See the proof of \cite[Lemma 3.4]{GWBLMS} for a similar argument applied to positive moments.}

\subsubsection{Convergence of \texorpdfstring{$\nu_{y, 0}(\cdot; \widetilde{\alpha})$}{nu y 0 dot tilde alpha}}
We now demonstrate the modified second moment method and show that
\[
    \qquad \forall L > 0, \qquad \lim_{y \to \infty} \EE\left[\left|\nu_{y, 0}(I;\widetilde{\alpha}) - \widetilde{\nu}_{y, 0}(I;\widetilde{\alpha})\right|^2 e^{- L\widetilde{\nu}_{y, 0}(I;\widetilde{\alpha})} \right] = 0.
\]
Expanding the square, this amounts to proving that
\[
    \lim_{y \to \infty} \EE\left[\nu_{y, 0}(I;\widetilde{\alpha})^2e^{- L\widetilde{\nu}_{y, 0}(I;\widetilde{\alpha})} \right]
    = \lim_{y \to \infty} \EE\left[\nu_{y, 0}(I;\widetilde{\alpha})\widetilde{\nu}_{y, 0}(I;\widetilde{\alpha}) e^{- L\widetilde{\nu}_{y, 0}(I;\widetilde{\alpha})} \right] 
    = \lim_{y \to \infty} \EE\left[\widetilde{\nu}_{y, 0}(I;\widetilde{\alpha})^2 e^{- L\widetilde{\nu}_{y, 0}(I;\widetilde{\alpha})} \right].
\]
We shall only study the first term below since the cross term can be treated in a similar fashion. Let us introduce notations similar to those in \eqref{eq:change-of-measure}:
denote
\[
    d\widetilde{\PP}_y^{\mathbf{t}} := \frac{\exp\left\{\sum_{j=1}^2 \left[\Ga_{\infty, 0}^\OurEpsilon\left(t_j; \widetilde{\alpha}\right) - \Ga_{y, 0}\left(t_j; \widetilde{\alpha}\right)\right]\right\}}{\widetilde{E}_y(\mathbf{t})}d\PP,
    \quad \widetilde{E}_y(\mathbf{t}) := \EE\left[\exp\left\{\sum_{j=1}^2 \left[\Ga_{\infty, 0}^\OurEpsilon\left(t_j; \widetilde{\alpha}\right) - \Ga_{y, 0}\left(t_j; \widetilde{\alpha}\right)\right]\right\}\right],
\]
and rewrite $\EE\left[\nu_{y, 0}(I;\widetilde{\alpha})^2e^{- L\widetilde{\nu}_{y, 0}(I;\widetilde{\alpha})} \right]$ as
\[ \EE\left[\int_{I \times I} \frac{
    \EE\left[\exp\left\{\Ga_{y, 0}\left(0; \widetilde{\alpha}\right)\right\}\right]^2}{\EE\left[\exp\left\{\Ga_{\infty, 0}^\OurEpsilon\left(0; \widetilde{\alpha}\right)\right\}\right]^2}
\widetilde{E}_y(\mathbf{t})
\widetilde{\nu}_{y, 0}(dt_1; \widetilde{\alpha}_y^\mathbf{t})\widetilde{\nu}_{y, 0}(dt_2; \widetilde{\alpha}_y^\mathbf{t})e^{- L \widetilde{\nu}_{y, 0}(I; \widetilde{\alpha}_y^\mathbf{t})}\right] \]
where $\widetilde{\alpha}_y^{\mathbf{t}}(p)$ under $\PP$ has the same distribution as $\widetilde{\alpha}(p)$ under $\widetilde{\PP}_y^{\mathbf{t}}$ for each $p$. 

Before proceeding further, let us evaluate and introduce some notations for various moment generating functions: if we write $K_y^\OurEpsilon(t_1 - t_2) = \EE\left[\Ga_{y, 0}(t_1; \widetilde{\alpha})\Ga_{\infty, 0}^\OurEpsilon(t_2; \widetilde{\alpha})\right]
= \EE\left[\Ga_{y, 0}(t_2; \widetilde{\alpha})\Ga_{y, 0}^\OurEpsilon(t_1; \widetilde{\alpha})\right]$,\footnote{We will use the fact that $K_y^{\OurEpsilon}$ is symmetric in $(t_1, t_2)$ and translation invariant, which follows from our assumption that the smooth mollifier is symmetric. This is a simplifying assumption only for the purpose of introducing fewer notations.} then
\begin{align*}
    & \EE\left[\exp\left\{\Ga_{y, 0}\left(0; \widetilde{\alpha}\right)\right\}\right] = \exp\left\{\frac{1}{2} K_y(0)\right\},
    \qquad \EE\left[\exp\left\{\Ga_{\infty, 0}^\OurEpsilon\left(0; \widetilde{\alpha}\right)\right\}\right] = \exp\left\{\frac{1}{2} K_\infty^\OurEpsilon(0)\right\},\\
    \text{and} \qquad  &
    \widetilde{E}_y(\mathbf{t}) = \exp\left\{K_\infty^\OurEpsilon(0) + K_y(0) - 2K_y^\OurEpsilon(0) + K_\infty^\OurEpsilon(t_1 - t_2) + K_y(t_1 - t_2) - 2K_y^\OurEpsilon(t_1 - t_2) \right\}.
\end{align*}
At this stage, recall Girsanov's theorem which says that 
\[    \EE\left[\frac{\exp(N_1)}{\EE\exp(N_1)} g(N_2)\right]
    = \EE\left[g(N_2 + \EE[N_1 N_2])\right]  \]
for any centred $(N_1, N_2)$ that are jointly Gaussian (and $N_1$ is real-valued), i.e.~the effect of change of measure is merely a shift in mean. Therefore, the most convenient coupling in the current setting would be to take
\[ 
\widetilde{\alpha}_y^{\mathbf{t}}(p) 
:= \widetilde{\alpha}(p) + \EE\left[\widetilde{\alpha}(p)\sum_{j=1}^2 \left[\Ga_{\infty, 0}^\OurEpsilon\left(t_j; \widetilde{\alpha}\right) - \Ga_{y, 0}\left(t_j; \widetilde{\alpha}\right)\right]\right].
\]
With this particular coupling (and linearity of $\widetilde{\nu}_{y, 0}(\cdot; \cdot)$ in the second variable), it is easy to check that
\begin{align*}
    \widetilde{\nu}_{y, 0}(dt_0; \widetilde{\alpha}_y^{\mathbf{t}})
    &= \exp\left\{\EE\left[\Ga_{\infty, 0}^{\OurEpsilon}\left(t_0; \widetilde{\alpha}\right)\sum_{j=1}^2 \left[\Ga_{\infty, 0}^\OurEpsilon\left(t_j; \widetilde{\alpha}\right) - \Ga_{y, 0}\left(t_j; \widetilde{\alpha}\right)\right] \right]\right\}\widetilde{\nu}_{y, 0}(dt_0; \widetilde{\alpha})\\
    & = \exp\left\{\sum_{j=1}^2 \left[K_\infty^\OurEpsilon(t_0 - t_j) - K_y(t_0 - t_j)\right]\right\}\widetilde{\nu}_{y, 0}(dt_0; \widetilde{\alpha}).
\end{align*}
At the same time, we also have
\begin{align*}
&\frac{
    \EE\left[\exp\left\{\Ga_{y, 0}\left(0; \widetilde{\alpha}\right)\right\}\right]^2}{\EE\left[\exp\left\{\Ga_{\infty, 0}^\OurEpsilon\left(0; \widetilde{\alpha}\right)\right\}\right]^2}
\widetilde{E}_y(\mathbf{t})
\widetilde{\nu}_{y, 0}(dt_1; \widetilde{\alpha}_y^\mathbf{t})\widetilde{\nu}_{y, 0}(dt_2; \widetilde{\alpha}_y^\mathbf{t})\\
& = \exp\left\{
3K_\infty^\OurEpsilon(t_1 - t_2) - K_y(t_1 - t_2) - 2K_y^\OurEpsilon(t_1 - t_2)
\right\}
\widetilde{\nu}_{y, 0}(dt_1; \widetilde{\alpha})\widetilde{\nu}_{y, 0}(dt_2; \widetilde{\alpha}).
\end{align*}
It is a straightforward to check (see also e.g.~\cite[Lemma 3.5]{Ber2017}) that all three kernels $K_\infty^\OurEpsilon(t_1 - t_2), K_y(t_1 - t_2)$ and $K_y^\OurEpsilon(t_1 - t_2)$ (with $\OurEpsilon = 1 / \log y$) are equal to $-2 \log \max\left(|t_1 - t_2|, \OurEpsilon\right) + O(1)$ uniformly in $y \ge 3$ and $t_1, t_2 \in I$, and that they all converge to $K_\infty(t_1 - t_2)$ uniformly for $|t_1 - t_2|$ bounded away from $0$. As such,
\begin{equation}\label{eq:upgrade-intermediate3}
   \int_{I \times I}\frac{
    \EE\left[\exp\left\{\Ga_{y, 0}\left(0; \widetilde{\alpha}\right)\right\}\right]^2}{\EE\left[\exp\left\{\Ga_{\infty, 0}^\OurEpsilon\left(0; \widetilde{\alpha}\right)\right\}\right]^2}
\widetilde{E}_y(\mathbf{t})
\widetilde{\nu}_{y, 0}(dt_1; \widetilde{\alpha}_y^\mathbf{t})\widetilde{\nu}_{y, 0}(dt_2; \widetilde{\alpha}_y^\mathbf{t})e^{- L \widetilde{\nu}_{y, 0}(I; \widetilde{\alpha}_y^\mathbf{t})}
\end{equation}
\begin{itemize}
    \item is uniformly bounded by $C \widetilde{\nu}_{y, 0}(I; \widetilde{\alpha})^2 e^{-LC^{-1} \widetilde{\nu}_{y, 0}(I; \widetilde{\alpha})} \le 4C^3 / L^2$ for some deterministic $C \in (0, \infty)$; and
    \item converges in probability as $y \to \infty$ to $\widetilde{\nu}_{\infty}(I)^2e^{-L\widetilde{\nu}_{\infty}(I)}$ due to the non-atomicity of $\widetilde{\nu}_\infty$.\footnote{Differences between pairs of $K_\infty^\OurEpsilon(t_1 - t_2), K_y(t_1 - t_2)$ or $K_y^\OurEpsilon(t_1 - t_2)$ on the diagonal $t_1 = t_2$ do not necessarily converge to $0$ in the limit $y \to \infty$, but the non-atomicity of $\widetilde{\nu}_\infty$ essentially allows us to ignore these issues. Such analysis will be performed more carefully when we study the actual Steinhaus model, see Step 2 in \Cref{sec:main_lemma}.}
\end{itemize}
Therefore, the expectation of \eqref{eq:upgrade-intermediate3} converges to $\EE\left[\widetilde{\nu}_\infty(I;\widetilde{\alpha})^2 e^{-L\widetilde{\nu}_\infty(I;\widetilde{\alpha})}\right]$ by dominated convergence and our proof is complete.

\section{Convergence of modified moments: proof of \texorpdfstring{\Cref{lem:mod-cross-mom}}{Theorem \ref{lem:mod-cross-mom}}}\label{sec:mod-cross-mom}
Now that we have seen the method of homogenisation by change of measure in action, let us return to our original problem on Steinhaus multiplicative function. Recall from \eqref{eq:change-of-measure} and \eqref{eq:coupling_idea} that
\[ \EE\left[ \nu_{y, u_1}(I) \nu_{y, u_2}(I) e^{-L \nu_{y, 0}(I)}\right]
= \EE\left[\int_{I \times I}\frac{M_y(0)^2 E_y(\mathbf{u}, \mathbf{t})}{M_y(u_1)M_y(u_2)} \nu_{y,0}(dt_1; \alpha_y^{\mathbf{u}, \mathbf{t}})\nu_{y, 0}(dt_2; \alpha_y^{\mathbf{u}, \mathbf{t}}) 
\exp\left(-L \nu_{y, 0}(I; \alpha_{y}^{\mathbf{u}, \mathbf{t}})\right)\right] \]
where 
\[\nu_{y, u}(dt_0; v) := \sqrt{\log \log y}M_y(u)^{-1} \Ga_{y, u}(t_0; v) dt_0 \qquad \text{with} \qquad \Ga_{y, u}(t_0; v) := 2\Re\sum_{p \le y} \frac{f(p) v(p)}{p^{\sigma_y(u) + it_0}},\]
$M_y(u_j) := \EE[\exp(\Ga_{y, u_j}(t; \alpha))]$,  and $ E_y(\mathbf{u}, \mathbf{t}) = E_y\left((u_1, u_2), (t_1, t_2)\right) := \EE\left[\exp\left(\sum_{j=1}^2 \Ga_{y, 0}(t_j; \OurEpsilon_{y, u_j}\alpha) \right)\right].$

We would like to simultaneously define the random variables
$\left(\alpha(p)\right)_{\text{$p$}}$ and $\left(\alpha_{y}^{\mathbf{u}, \mathbf{t}}(p)\right)_{\text{$p$}, ~\mathbf{t} \in [0,1]^2}$ on the same probability space. Ideally one would expect that whenever $\mathbf{t}, \tilde{\mathbf{t}} \in [0,1]^2$ are close to each other, the realisation of the two random collections $\left(\alpha_{y}^{\mathbf{u}, \mathbf{t}}(p)\right)_{p}$ and $\left(\alpha_{y}^{\mathbf{u}, \tilde{\mathbf{t}}}(p)\right)_{p}$ should also be close to each other.

\subsection{Monotone coupling of phases}
Let us represent our random variables by their phases
\[    \alpha(p) := \exp\left(2\pi i \phi(p)\right)\qquad \text{and} \qquad \alpha_y^{\mathbf{u}, \mathbf{t}}(p) := \exp\left(2 \pi i \phi_y^{\mathbf{u}, \mathbf{t}}(p)\right) \]
where $\phi(p), \phi_y^{\mathbf{u}, \mathbf{t}}(p) \in [0, 1]$, and denote their probability densities by $d_\infty(\cdot)$ and $d_y^{\mathbf{u}, \mathbf{t}}(\cdot)$ respectively, i.e.
\[    d_\infty(\phi) \equiv 1 \qquad \text{and} \qquad 
    d_y^{\mathbf{u}, \mathbf{t}}(\phi) := \frac{\exp\left(2 \Re \sum_{j=1}^2 \frac{f(p) e^{2\pi i \phi}}{p^{\frac{1}{2} + it_j}} \OurEpsilon_{y, u_j}(p)\right)}{\EE\left[\exp\left(2 \Re \sum_{j=1}^2 \frac{f(p)\alpha(p)}{p^{\frac{1}{2} + it_j}} \OurEpsilon_{y, u_j}(p)\right)\right]}
    \qquad \forall \phi \in [0, 1]. \]
We would now like to find a coupling of the phases $\left(\phi(p), \phi_y^{\mathbf{u}, \mathbf{t}}(p)\right)$ such that the $r$-Wasserstein distance $\EE\left[|\phi(p) - \phi_y^{\mathbf{u}, \mathbf{t}}(p)|^r\right]^{1/r}$
is minimised.\footnote{We are using the Euclidean metric when measuring $|\phi(p) - \phi_y^{\mathbf{u}, \mathbf{t}}(p)|$. This may not be optimal in the sense that it ignores the fact that the end points of $[0,1]$ are identified (because we are ultimately interested in coupling $(\alpha(p), \alpha_y^{\mathbf{u}, \mathbf{t}}(p))$, but the estimates under the Euclidean metric are simpler to obtain and sufficient for our purpose).} It is well-known (see e.g.~\cite[Section 2.2]{San2015}) that the minimum may be attained simultaneously for all $r \ge 1$ using monotone coupling, i.e.~we set $\phi_y^{\mathbf{u}, \mathbf{t}}(p) := (D_y^{\mathbf{u}, \mathbf{t}})^{-1}(\phi(p))$ where $D_y^{\mathbf{u}, \mathbf{t}}(x) := \int_0^x d_y^{\mathbf{u}, \mathbf{t}}(x_0) dx_0$ is the cumulative distribution function of the random variable $\phi_y^{\mathbf{u}, \mathbf{t}}(p)$, and $(D_y^{\mathbf{u}, \mathbf{t}})^{-1}$ is its inverse.
\begin{lem}\label{lem:couple-estimate}
Let  $\Delta_y^{\mathbf{u}, \mathbf{t}}(p) := \alpha_y^{\mathbf{u}, \mathbf{t}}(p) - \alpha$ and $\widetilde{\Delta}_y^{\mathbf{u}, \mathbf{t}}(p) = \Delta_y^{\mathbf{u}, \mathbf{t}}(p) - \EE\left[\Delta_y^{\mathbf{u}, \mathbf{t}}(p)\right]$, where $\left(\alpha_y^{\mathbf{u}, \mathbf{t}}\right)_{\mathbf{t} \in I \times I}$ and $\alpha(p)$ are jointly defined on the same probability space using monotone coupling of phases. There exists some universal constant $C>0$ such that
\begin{align}
\label{eq:coupling-estimate1}
\left|\Delta_y^{\mathbf{u}, \mathbf{t}}(p) - \Delta_y^{\mathbf{u}, \mathbf{t}'}(p) \right|
& \le \frac{C (u_1+u_2)}{\log y} \frac{|f(p)| \log^{2} p}{p^{\frac{1}{2}}} |\mathbf{t} - \mathbf{t}'|,\\
\label{eq:coupling-estimate2}
\left|\widetilde{\Delta}_y^{\mathbf{u}, \mathbf{t}}(p) - \widetilde{\Delta}_y^{\mathbf{u}, \mathbf{t}'}(p) \right|
&\le \frac{C (u_1+u_2)}{\log y} \frac{|f(p)| \log^{2} p}{p^{\frac{1}{2}}} |\mathbf{t} - \mathbf{t}'|, \\
\label{eq:coupling-estimate3}
\left|\Delta_y^{\mathbf{u}, \mathbf{t}}(p) - \Delta_y^{\mathbf{u}', \mathbf{t}}(p) \right|
& \le \frac{C}{\log y} \frac{|f(p)| \log p}{p^{\frac{1}{2}}} |\mathbf{u} - \mathbf{u}'|,\\
\label{eq:coupling-estimate4}
\text{and} \qquad 
\left|\widetilde{\Delta}_y^{\mathbf{u}, \mathbf{t}}(p) - \widetilde{\Delta}_y^{\mathbf{u}', \mathbf{t}}(p) \right|
&\le \frac{C}{\log y} \frac{|f(p)| \log p}{p^{\frac{1}{2}}} |\mathbf{u} - \mathbf{u}'|.
\end{align}
\end{lem}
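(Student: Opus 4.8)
The plan is to reduce all four bounds to one quantitative statement — that the one–dimensional densities $d_y^{\mathbf u,\mathbf t}(\cdot)$ are Lipschitz in $(\mathbf u,\mathbf t)$ with a gain of $1/\log y$ and the indicated powers of $\log p$ — and then transport this through the monotone rearrangement. First I would note that the common phase $\alpha(p)$ cancels in every difference, so
\[
\Delta_y^{\mathbf u,\mathbf t}(p) - \Delta_y^{\mathbf u,\mathbf t'}(p) = \alpha_y^{\mathbf u,\mathbf t}(p) - \alpha_y^{\mathbf u,\mathbf t'}(p) = e^{2\pi i \phi_y^{\mathbf u,\mathbf t}(p)} - e^{2\pi i \phi_y^{\mathbf u,\mathbf t'}(p)},
\]
and since $|e^{2\pi i a}-e^{2\pi i b}|\le 2\pi|a-b|$ it suffices to bound $|\phi_y^{\mathbf u,\mathbf t}(p)-\phi_y^{\mathbf u,\mathbf t'}(p)|$ (and likewise when $\mathbf u$ varies). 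The centred estimates \eqref{eq:coupling-estimate2} and \eqref{eq:coupling-estimate4} then follow from \eqref{eq:coupling-estimate1} and \eqref{eq:coupling-estimate3} by Jensen's inequality, at the cost of a factor $2$ absorbed into $C$, since $\widetilde\Delta_y^{\mathbf u,\mathbf t}(p)-\widetilde\Delta_y^{\mathbf u,\mathbf t'}(p)$ equals $\Delta_y^{\mathbf u,\mathbf t}(p)-\Delta_y^{\mathbf u,\mathbf t'}(p)$ minus its mean. Finally, writing $x := \phi_y^{\mathbf u,\mathbf t}(p)$ and $x' := \phi_y^{\mathbf u,\mathbf t'}(p)$, the defining property $D_y^{\mathbf u,\mathbf t}(x)=D_y^{\mathbf u,\mathbf t'}(x')=\phi(p)$ gives
\[
\Big(\inf_{[0,1]} d_y^{\mathbf u,\mathbf t'}\Big)|x-x'| \le \Big|\int_x^{x'} d_y^{\mathbf u,\mathbf t'}\Big| = \big|D_y^{\mathbf u,\mathbf t}(x)-D_y^{\mathbf u,\mathbf t'}(x)\big| \le \int_0^1 \big|d_y^{\mathbf u,\mathbf t}-d_y^{\mathbf u,\mathbf t'}\big|,
\]
so the task reduces to a uniform lower bound on the densities and an $L^1$-bound on their difference.

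For the uniform control I would write the exponent as $g_{\mathbf u,\mathbf t}(\phi) := 2\Re\sum_{j=1}^2 \tfrac{f(p)e^{2\pi i\phi}}{p^{1/2+it_j}}\OurEpsilon_{y,u_j}(p)$, so that $d_y^{\mathbf u,\mathbf t}(\phi)=e^{g_{\mathbf u,\mathbf t}(\phi)}/Z_{\mathbf u,\mathbf t}$ with $Z_{\mathbf u,\mathbf t}=\int_0^1 e^{g_{\mathbf u,\mathbf t}}$. Using the crude bound $|\OurEpsilon_{y,u}(p)|=|p^{-u/(2\log y)}-1|\le 1$ (valid for all $u\ge 0$) together with $\sup_p|f(p)|p^{-1/2}<\infty$ (a consequence of the hypotheses on $f$), one gets $\|g_{\mathbf u,\mathbf t}\|_\infty\le C_0$ for a constant $C_0$ depending only on the implied constants for $f$; hence $e^{-C_0}\le d_y^{\mathbf u,\mathbf t}\le e^{C_0}$ and $e^{-C_0}\le Z_{\mathbf u,\mathbf t}\le e^{C_0}$, uniformly in $y\ge 3$, in $p$, in $\mathbf t$ and in $\mathbf u$ over any bounded range. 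In particular the $D_y^{\mathbf u,\mathbf t}$ are strictly increasing bijections of $[0,1]$, so the monotone coupling is genuinely well-defined and the infimum above is $\ge e^{-C_0}$.

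The heart of the argument is the identity
\[
g_{\mathbf u,\mathbf t}(\phi)-g_{\mathbf u,\mathbf t'}(\phi) = 2\Re\sum_{j=1}^2 \frac{f(p)e^{2\pi i\phi}}{p^{1/2}}\,\OurEpsilon_{y,u_j}(p)\big(p^{-it_j}-p^{-it'_j}\big),
\]
into which I would now insert the sharper bound $|\OurEpsilon_{y,u}(p)|\le \tfrac{u\log p}{2\log y}$ (from $|e^{-a}-1|\le a$) and $|p^{-it_j}-p^{-it'_j}|\le(\log p)|t_j-t'_j|$ to obtain $\sup_\phi|g_{\mathbf u,\mathbf t}-g_{\mathbf u,\mathbf t'}|\ll \tfrac{u_1+u_2}{\log y}\,\tfrac{|f(p)|\log^2 p}{p^{1/2}}|\mathbf t-\mathbf t'|$; the analogous computation with $\mathbf t$ fixed and $|\OurEpsilon_{y,u}(p)-\OurEpsilon_{y,u'}(p)|\le \tfrac{\log p}{2\log y}|u-u'|$ gives $\sup_\phi|g_{\mathbf u,\mathbf t}-g_{\mathbf u',\mathbf t}|\ll \tfrac{1}{\log y}\,\tfrac{|f(p)|\log p}{p^{1/2}}|\mathbf u-\mathbf u'|$. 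Then from $|e^a-e^b|\le e^{\max(a,b)}|a-b|$ and the bounds on $g$ and $Z$, the decomposition
\[
d_y^{\mathbf u,\mathbf t}(\phi)-d_y^{\mathbf u,\mathbf t'}(\phi) = \frac{e^{g_{\mathbf u,\mathbf t}(\phi)}-e^{g_{\mathbf u,\mathbf t'}(\phi)}}{Z_{\mathbf u,\mathbf t}} + e^{g_{\mathbf u,\mathbf t'}(\phi)}\,\frac{Z_{\mathbf u,\mathbf t'}-Z_{\mathbf u,\mathbf t}}{Z_{\mathbf u,\mathbf t}Z_{\mathbf u,\mathbf t'}}
\]
yields $\int_0^1|d_y^{\mathbf u,\mathbf t}-d_y^{\mathbf u,\mathbf t'}|\le \sup_\phi|d_y^{\mathbf u,\mathbf t}-d_y^{\mathbf u,\mathbf t'}|\ll \sup_\phi|g_{\mathbf u,\mathbf t}-g_{\mathbf u,\mathbf t'}|$. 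Feeding this into the inequalities of the first paragraph gives \eqref{eq:coupling-estimate1} and \eqref{eq:coupling-estimate3}, whence \eqref{eq:coupling-estimate2} and \eqref{eq:coupling-estimate4}.

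I expect no serious obstacle here: the argument is soft, and the only genuine point is to use the right bound on $\OurEpsilon_{y,u}(p)$ in the right place — the crude $|\OurEpsilon_{y,u}(p)|\le 1$ for the uniform positivity of the densities (it cannot be improved when $y$ is small or $u$ is large, which is why one leans on the decay of $f$ to keep $\|g\|_\infty$ bounded), and the sharp $|\OurEpsilon_{y,u}(p)|\le\tfrac{u\log p}{2\log y}$ only inside the difference $g_{\mathbf u,\mathbf t}-g_{\mathbf u,\mathbf t'}$ to manufacture the gain $1/\log y$ and the stated powers of $\log p$. One should also record that the Euclidean metric is used for $|\mathbf t-\mathbf t'|$ and $|\mathbf u-\mathbf u'|$, so that the passage from $\sum_j|t_j-t'_j|$ to $|\mathbf t-\mathbf t'|$ costs only an absolute constant.
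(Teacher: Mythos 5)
Your proposal is correct and reaches the same bounds; it follows the paper's high-level strategy (reduce to comparing phases, then to Lipschitz control of the densities, with the sharp bound $|\OurEpsilon_{y,u}(p)|\le u\log p/(2\log y)$ supplying the $1/\log y$ gain and an extra $\log p$, and the crude bound $|\OurEpsilon_{y,u}(p)|\le 1$ together with the decay of $f$ supplying uniform two-sided control of the density), but the mechanism by which you transfer the density estimate to the coupled phases differs slightly. The paper differentiates the inverse CDF: it invokes the implicit function theorem to write
\[
\frac{\partial (D_y^{\mathbf u,\tilde{\mathbf t}})^{-1}(x)}{\partial\tilde t_j} = -\frac{\partial_{\tilde t_j}D_y^{\mathbf u,\tilde{\mathbf t}}((D_y^{\mathbf u,\tilde{\mathbf t}})^{-1}(x))}{d_y^{\mathbf u,\tilde{\mathbf t}}((D_y^{\mathbf u,\tilde{\mathbf t}})^{-1}(x))}
\]
and bounds the right-hand side by $\sup_{\phi,\phi'}|\partial_{\tilde t_j}d_y^{\mathbf u,\tilde{\mathbf t}}(\phi)/d_y^{\mathbf u,\tilde{\mathbf t}}(\phi')|$ before integrating over the segment joining $\mathbf t$ to $\mathbf t'$. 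You instead avoid differentiating the inverse entirely and use the defining identity $D_y^{\mathbf u,\mathbf t}(x)=D_y^{\mathbf u,\mathbf t'}(x')=\phi(p)$ to squeeze $|x-x'|$ between a lower bound on the density and an $L^1$ bound on the density difference. These are two sides of the same estimate (a mean-value bound in differential versus integral form) and both require the same uniform lower bound on $d_y^{\mathbf u,\mathbf t}$; your version is marginally more elementary since it dispenses with the implicit function theorem and any smoothness claims about $(D_y^{\mathbf u,\mathbf t})^{-1}$, at the cost of a somewhat longer chain. Either way the constants are absolute, and the passage $|\mathbf t-\mathbf t'|\asymp\sum_j|t_j-t'_j|$ you flag at the end is harmless. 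The centring step (deducing \eqref{eq:coupling-estimate2} and \eqref{eq:coupling-estimate4}) is handled exactly as in the paper, via the triangle inequality.
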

\Cref{lem:couple-estimate} is a statement that does not involve any randomness (in particular, the universal constant $C$ is deterministic) because we can determine the value of $\Delta_y^{\mathbf{u}, \mathbf{t}}(p)$ as soon as the phase $\phi(p)$ of $\alpha(p)$ is given.
\begin{proof}
Note that \eqref{eq:coupling-estimate2} follows from \eqref{eq:coupling-estimate1} (up to a different choice of absolute constant $C$) since
\[ \left|\widetilde{\Delta}_y^{\mathbf{u}, \mathbf{t}}(p) - \widetilde{\Delta}_y^{\mathbf{u}, \mathbf{t'}}(p) \right|
\le \left|{\Delta}_y^{\mathbf{u}, \mathbf{t}}(p) - {\Delta}_y^{\mathbf{u}, \mathbf{t'}}(p) \right|
+\EE\left|{\Delta}_y^{\mathbf{u}, \mathbf{t}}(p) - {\Delta}_y^{\mathbf{u}, \mathbf{t'}}(p) \right|. \]
For this reason, we shall focus on the proof of \eqref{eq:coupling-estimate1}, and consider
\begin{align*}
\left|\Delta_y^{\mathbf{u}, \mathbf{t}}(p) - \Delta_y^{\mathbf{u}, \mathbf{t'}}(p) \right|
= 
\left|\alpha_y^{\mathbf{u}, \mathbf{t}}(p) - \alpha_y^{\mathbf{u}, \mathbf{t'}}(p) \right|
& = 
\left|\exp\left(2 \pi i \phi_y^{\mathbf{u}, \mathbf{t}}(p)\right)
    - \exp\left(2 \pi i \phi_y^{\mathbf{u}, \mathbf{t}'}(p)\right)\right|\\
    & \le 2\pi | \phi_y^{\mathbf{u}, \mathbf{t}}(p) - \phi_y^{\mathbf{u}, \mathbf{t}'}(p)|
    = 2\pi\left|(D_y^{\mathbf{u}, \mathbf{t}})^{-1}(\phi(p)) - (D_y^{\mathbf{u}, \mathbf{t}'})^{-1}(\phi(p))\right|\\
    & \le 2\pi \sup_{\tilde{\mathbf{t}} \in I^2, \phi \in [0,1]} \big|\nabla_{\tilde{\mathbf{t}}}\left(D_y^{\mathbf{u}, \tilde{\mathbf{t}}}\right)^{-1}(\phi)\big| |\mathbf{t} - \mathbf{t}'|.
\end{align*}
If $F_\theta\colon [0,1] \to [0,1]$ is a bijection with continuous strictly positive derivative and smooth dependence on $\theta$, then the same holds for $F_\theta^{-1}\colon [0, 1] \to [0,1]$ by the implicit function theorem and we have
\[    \forall x \in [0,1], \qquad \frac{\partial}{\partial\theta}F_\theta(F_\theta^{-1}(x)) = \frac{\partial x}{\partial\theta} = 0
    \qquad \Rightarrow \qquad \frac{\partial (F_\theta)^{-1}(x)}{\partial \theta} = -\frac{\frac{\partial F_\theta}{\partial \theta}(F_\theta^{-1}(x))}{\frac{\partial F_\theta}{\partial x}(F_\theta^{-1}(x))}. \]
Specialising this to $D_y^{\mathbf{u}, \tilde{\mathbf{t}}}$, it is not difficult to check that
\begin{align*}
    \sup_{x \in [0,1]} \left|\frac{\partial}{\partial \tilde{t}_j} \left(D_y^{\mathbf{u}, \tilde{\mathbf{t}}}\right)^{-1}(x)\right|
    &=\sup_{x \in [0, 1]} \left|\dfrac{\int_0^{\left(D_y^{\mathbf{u}, \tilde{\mathbf{t}}}\right)^{-1}(x)} \partial_{\tilde{t}_j} d_y^{\mathbf{u}, \tilde{\mathbf{t}}}(\phi) d\phi}{d_y^{\mathbf{u}, \tilde{\mathbf{t}}}\left(\left(D_y^{\mathbf{u}, \tilde{\mathbf{t}}}\right)^{-1}(x)\right)}\right|\\
    & \le \sup_{\phi, \phi' \in [0,1]} \left|\frac{\partial_{\tilde{t}_j} d_y^{\mathbf{u}, \tilde{\mathbf{t}}}(\phi)}{d_y^{\mathbf{u}, \tilde{\mathbf{t}}}(\phi')}\right|
    \ll \frac{|f(p)|}{p^{\frac{1}{2}}} |\OurEpsilon_{y, u_j}(p)| \log p
    \ll \frac{|f(p)|}{p^{\frac{1}{2}}} \frac{u_j \log^2 p}{\log y}
\end{align*}
uniformly in $\mathbf{\tilde{t}} \in I \times I$, $y \ge 2$ and $p$. This implies \eqref{eq:coupling-estimate1}.

Similarly, \eqref{eq:coupling-estimate4} can be deduced from \eqref{eq:coupling-estimate3}, and the same method allows us to show that
\[
\left|\Delta_y^{\mathbf{u}, \mathbf{t}}(p) - \Delta_y^{\mathbf{u}', \mathbf{t}}(p) \right| 
\ll \left[\sum_{j=1}^2\sup_{\phi, \phi' \in [0,1], \tilde{u}_1, \tilde{u}_2 \ge 0} \left|\frac{\partial_{\tilde{u}_j} d_y^{\tilde{\mathbf{u}}, \mathbf{t}}(\phi)}{d_y^{\tilde{\mathbf{u}}, \mathbf{t}}(\phi')}\right|\right] |\mathbf{u} - \mathbf{u}'|
\ll \frac{|f(p)|}{p^{\frac{1}{2}}} \frac{\log p}{\log y}|\mathbf{u} - \mathbf{u}'|
\]
with the desired uniformity. This concludes the proof.
\end{proof}

\subsection{Regularity estimates for the residual field}
Now that we have constructed the coupling, we would like to show that the `residual field'
\[
\widetilde{\Ga}^{\mathbf{u}, \mathbf{t}}_{y, 0}(t_0) := \Ga_y\left(t_0;\widetilde{\Delta}_y^{\mathbf{u}, \mathbf{t}}\right) 
= 2\Re \sum_{p \le y} \frac{f(p) \widetilde{\Delta}_y^{\mathbf{u}, \mathbf{t}}(p)}{p^{\frac{1}{2} + it_0}}
\]
is negligible as $y \to \infty$. We commence with the two-point estimate below, which essentially shows that $\widetilde{\Ga}^{\mathbf{u}, \mathbf{t}}_{y, 0}(t_0)$ varies continuously in the variables $(t_0, \mathbf{t})$ with a decaying rate of change as $y \to \infty$.
\begin{lem}\label{lem:residual-field-bound}
Let $\vec{\mathbf{t}}:= (t_0, \mathbf{t}), \vec{\mathbf{t}}':= (t_0', \mathbf{t}')$ any two $3$-tuples in $I^3$ and write $d(\vec{\mathbf{t}}, \vec{\mathbf{t}}') := \sqrt{\sum_{j=0}^2 |t_j - t_j'|^2}$ for the Euclidean metric. There exists some constant $C_{f} \in (0, \infty)$ depending only on $f$ such that for any $x \ge 0$,
\begin{equation}\label{eq:field-Bernstein}
\PP\left(\left|\widetilde{\Ga}^{\mathbf{u}, \mathbf{t}}_{y, 0}(t_0) -
\widetilde{\Ga}^{\mathbf{u}, \mathbf{t}'}_{y, 0}(t_0')\right| > x\right) \le 4 \exp\left\{- \min\left(\dfrac{x^2}{\left[ \dfrac{C_{f} (u_1 + u_2)}{\log y} d(\vec{\mathbf{t}}, \vec{\mathbf{t}}')\right]^2}, 
\dfrac{x}{\dfrac{C_{f} (u_1 + u_2)}{\log y} d(\vec{\mathbf{t}}, \vec{\mathbf{t}}')}
\right)\right\}.
\end{equation}
\end{lem}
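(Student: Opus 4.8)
The plan is to realise the increment $\widetilde{\Ga}^{\mathbf{u}, \mathbf{t}}_{y, 0}(t_0) - \widetilde{\Ga}^{\mathbf{u}, \mathbf{t}'}_{y, 0}(t_0')$ as a sum of independent, centred, uniformly bounded random variables over primes $p \le y$, and then apply a Bernstein-type concentration inequality. Writing $\vec{\mathbf{t}} = (t_0, \mathbf{t})$ and $\vec{\mathbf{t}}' = (t_0', \mathbf{t}')$, the natural decomposition is $\widetilde{\Ga}^{\mathbf{u}, \mathbf{t}}_{y, 0}(t_0) - \widetilde{\Ga}^{\mathbf{u}, \mathbf{t}'}_{y, 0}(t_0') = \sum_{p \le y} W_p$ with
\[
W_p := 2\Re\left[f(p)\left(\frac{\widetilde{\Delta}_y^{\mathbf{u}, \mathbf{t}}(p)}{p^{1/2 + it_0}} - \frac{\widetilde{\Delta}_y^{\mathbf{u}, \mathbf{t}'}(p)}{p^{1/2 + it_0'}}\right)\right].
\]
First I would record the two soft facts: $\EE[W_p] = 0$, since $\EE[\widetilde{\Delta}_y^{\mathbf{u}, \cdot}(p)] = 0$ by construction; and $(W_p)_{p \le y}$ are independent, since under the monotone coupling of phases $\widetilde{\Delta}_y^{\mathbf{u}, \mathbf{t}}(p)$ — and hence $W_p$ — is a deterministic function of the single phase $\phi(p)$, so that independence of $(\phi(p))_p$ transfers to $(W_p)_p$.

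The heart of the argument is a deterministic pointwise bound $|W_p| \le \beta_p$ with $\beta_p \ll_f \frac{(u_1 + u_2)}{\log y}\,\frac{|f(p)|^2 \log^2 p}{p}\, d(\vec{\mathbf{t}}, \vec{\mathbf{t}}')$. To obtain it I would split
\[
\frac{\widetilde{\Delta}_y^{\mathbf{u}, \mathbf{t}}(p)}{p^{1/2 + it_0}} - \frac{\widetilde{\Delta}_y^{\mathbf{u}, \mathbf{t}'}(p)}{p^{1/2 + it_0'}} = \frac{\widetilde{\Delta}_y^{\mathbf{u}, \mathbf{t}}(p) - \widetilde{\Delta}_y^{\mathbf{u}, \mathbf{t}'}(p)}{p^{1/2 + it_0}} + \widetilde{\Delta}_y^{\mathbf{u}, \mathbf{t}'}(p)\left(\frac{1}{p^{1/2 + it_0}} - \frac{1}{p^{1/2 + it_0'}}\right),
\]
bound the first term directly by \eqref{eq:coupling-estimate2}, and for the second use the elementary estimate $|p^{-1/2 - it_0} - p^{-1/2 - it_0'}| \le p^{-1/2}\,|t_0 - t_0'|\log p$ together with a pointwise bound on $|\widetilde{\Delta}_y^{\mathbf{u}, \mathbf{t}'}(p)|$ itself. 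The latter is, I expect, the \emph{only genuinely delicate point}: it is not among the increment estimates of \Cref{lem:couple-estimate}, but it follows from \eqref{eq:coupling-estimate3} applied with $\mathbf{u}' = \mathbf{0}$ once one notices that $\OurEpsilon_{y, 0}(p) = 0$, so the change of measure at $\mathbf{u} = \mathbf{0}$ is trivial and $\Delta_y^{\mathbf{0}, \mathbf{t}'}(p) \equiv 0$; this yields $|\widetilde{\Delta}_y^{\mathbf{u}, \mathbf{t}'}(p)| \le 2|\Delta_y^{\mathbf{u}, \mathbf{t}'}(p)| \ll \frac{(u_1 + u_2)}{\log y}\frac{|f(p)|\log p}{p^{1/2}}$. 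Collecting the two contributions, bounding $|\mathbf{t} - \mathbf{t}'| + |t_0 - t_0'| \le \sqrt{2}\,d(\vec{\mathbf{t}}, \vec{\mathbf{t}}')$, and multiplying through by the $f(p)$ from $W_p$ (which accounts for the extra $|f(p)|$ relative to the coupling estimates) gives the claimed $\beta_p$.

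It then remains to feed $(W_p)_{p \le y}$ into Bernstein's inequality, for which I need $\max_{p \le y}\beta_p$ and $\sum_{p \le y}\EE[W_p^2] \le \sum_{p \le y}\beta_p^2$. Setting $A_0 := \frac{(u_1 + u_2)}{\log y}\,d(\vec{\mathbf{t}}, \vec{\mathbf{t}}')$, both are controlled: $\max_p \beta_p \ll_f A_0$ because $|f(p)|^2\log^2 p/p$ is bounded (being a summand of a convergent series), and $\sum_p \beta_p^2 \ll_f A_0^2 \sum_p \frac{|f(p)|^4\log^4 p}{p^2} \ll_f A_0^2$, the series converging since $|f(p)|^2\log^2 p/p$ is bounded and $\sum_p \frac{|f(p)|^2\log^2 p}{p} < \infty$ by the first standing hypothesis on $f$. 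Thus $\sum_p\EE[W_p^2] \le c_1 A_0^2$ and $\max_p\beta_p \le c_2 A_0$ for constants $c_1, c_2$ depending only on $f$, so Bernstein's inequality for independent, bounded, centred variables gives
\[
\PP\!\left(\Big|\sum_{p \le y} W_p\Big| > x\right) \le 2\exp\left(-\frac{x^2/2}{c_1 A_0^2 + \tfrac{1}{3}c_2 A_0 x}\right).
\]
Finally, using $2(c_1 A_0^2 + \tfrac13 c_2 A_0 x) \le 4\max(c_1 A_0^2, \tfrac13 c_2 A_0 x)$ and setting $A := C_f A_0$ with $C_f := \max(1, 4c_1, \tfrac43 c_2)$ converts the right-hand side into $2\exp(-\min(x^2/A^2,\, x/A)) \le 4\exp(-\min(x^2/A^2,\, x/A))$, which is exactly \eqref{eq:field-Bernstein} with the required $f$-dependent constant $C_f$; alternatively one could invoke Hoeffding's inequality to get a fully sub-Gaussian tail, which a fortiori implies \eqref{eq:field-Bernstein}. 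To summarise, the substantive step is the deterministic pointwise control of $\widetilde{\Delta}_y^{\mathbf{u}, \mathbf{t}'}(p)$ via the degeneracy at $\mathbf{u} = \mathbf{0}$, and the rest is bookkeeping with the Lipschitz estimates of \Cref{lem:couple-estimate} and Mertens-type summability.
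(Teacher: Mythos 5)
Your proof is correct and rests on exactly the same ingredients as the paper's: the Lipschitz estimates of \Cref{lem:couple-estimate} to obtain a deterministic per-prime bound on the increment, followed by Bernstein's inequality. The only structural differences are cosmetic repackagings. The paper splits $\widetilde{\Ga}^{\mathbf{u}, \mathbf{t}}_{y, 0}(t_0) - \widetilde{\Ga}^{\mathbf{u}, \mathbf{t}'}_{y, 0}(t_0')$ into $\mathrm{I}+\mathrm{II}$ (varying $\mathbf{t}$ at fixed $t_0$, then varying $t_0$ at fixed $\mathbf{t}'$) via a union bound, applies the moment-sum form of Bernstein (\Cref{thm:Berstein}) to each piece separately — which is where the prefactor $4 = 2 \times 2$ comes from — whereas you fold both increments into a single per-prime variable $W_p$, get a pointwise bound $|W_p|\le\beta_p$, and apply Bernstein once to the combined sum; the prefactor $2$ you get is strictly stronger and trivially majorised by $4$. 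Your derivation of the pointwise bound on $\widetilde{\Delta}_y^{\mathbf{u},\mathbf{t}'}(p)$ goes via \eqref{eq:coupling-estimate3} and the degeneracy $\Delta_y^{\mathbf{0},\mathbf{t}'}\equiv 0$, followed by the triangle inequality against the mean; the paper reads this off directly from \eqref{eq:coupling-estimate4}, which it proves from \eqref{eq:coupling-estimate3} by precisely that triangle-inequality step, so the two routes are the same computation. One small notational slip worth tightening: $|\widetilde{\Delta}_y^{\mathbf{u},\mathbf{t}'}(p)| \le 2|\Delta_y^{\mathbf{u},\mathbf{t}'}(p)|$ should read $|\widetilde{\Delta}_y^{\mathbf{u},\mathbf{t}'}(p)| \le |\Delta_y^{\mathbf{u},\mathbf{t}'}(p)| + \sup_\omega|\Delta_y^{\mathbf{u},\mathbf{t}'}(p)|$, which is harmless here because the bound on $|\Delta_y^{\mathbf{u},\mathbf{t}'}(p)|$ is already deterministic.
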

\begin{proof}
Consider
\[
\PP\left(\left|\widetilde{\Ga}^{\mathbf{u}, \mathbf{t}}_{y, 0}(t_0) -
\widetilde{\Ga}^{\mathbf{u}, \mathbf{t}'}_{y, 0}(t_0')\right| > x\right)
\le 
\PP\left(\left|\widetilde{\Ga}^{\mathbf{u}, \mathbf{t}}_{y, 0}(t_0) -
\widetilde{\Ga}^{\mathbf{u}, \mathbf{t}'}_{y, 0}(t_0)\right| > \frac{x}{2}\right)
+ 
\PP\left(\left|\widetilde{\Ga}^{\mathbf{u}, \mathbf{t}'}_{y, 0}(t_0) -
\widetilde{\Ga}^{\mathbf{u}, \mathbf{t}'}_{y, 0}(t_0')\right| > \frac{x}{2}\right)
=: \mathrm{I} + \mathrm{II}.
\]
Let us focus on the first term involving 
\[
\widetilde{\Ga}^{\mathbf{u}, \mathbf{t}}_{y, 0}(t_0) -
\widetilde{\Ga}^{\mathbf{u}, \mathbf{t}'}_{y, 0}(t_0)
= \Ga_y\left(t_0;\widetilde{\Delta}_y^{\mathbf{u}, \mathbf{t}} -\widetilde{\Delta}_y^{\mathbf{u}, \mathbf{t}'}\right)
=2\Re \sum_{p \le y} \frac{f(p) \left(\widetilde{\Delta}_y^{\mathbf{u}, \mathbf{t}}(p) -\widetilde{\Delta}_y^{\mathbf{u}, \mathbf{t}'}(p)\right)}{p^{\frac{1}{2} + it_0}}. \]
Using \Cref{lem:couple-estimate}, we have for any $r \ge 2$ that
\begin{align*}
&\sum_{p \le y} \EE\left[\left|2\Re \frac{f(p) \left(\widetilde{\Delta}_y^{\mathbf{u}, \mathbf{t}}(p) -\widetilde{\Delta}_y^{\mathbf{u}, \mathbf{t}'}(p)\right)}{p^{\frac{1}{2} + it_0}}\right|^r \right]
\le \sum_{p \le y} 
\left(\frac{2|f(p)|}{p^{\frac{1}{2}}}\right)^r
\EE\left[\left| \widetilde{\Delta}_y^{\mathbf{u}, \mathbf{t}}(p) -\widetilde{\Delta}_y^{\mathbf{u}, \mathbf{t}'}(p)\right|^r \right]
\\
&\quad \le \sum_{p \le y} 
\left(\frac{2|f(p)|}{p^{\frac{1}{2}}}\right)^r \left[\frac{C (u_1+u_2)}{\log y} \frac{|f(p)| \log^{2} p}{p^{\frac{1}{2}}} |\mathbf{t} - \mathbf{t}'|\right]^r\\
 & \quad \le \left[\frac{C'(u_1 + u_2)|\mathbf{t} - \mathbf{t}'|}{\log y}\right]^r \left[ \sup_{p } \frac{|f(p)|^2 \log^2 p}{p}\right]^{r-2}
 \qquad \text{with} \qquad C' := 2C \sqrt{1 + \sum_{p} \frac{|f(p)|^4 \log^4 p}{p^2}}.
\end{align*}
By \Cref{thm:Berstein}, we have
\begin{align}
\notag
\mathrm{I}&:= \PP\left(\left|\widetilde{\Ga}^{\mathbf{u}, \mathbf{t}}_{y, 0}(t_0) -
\widetilde{\Ga}^{\mathbf{u}, \mathbf{t}'}_{y, 0}(t_0)\right| > \frac{x}{2}\right)\\
\label{eq:field-Bernstein1}
& \le 2 \exp\left(- \dfrac{x^2}{8\left(\dfrac{C'(u_1 + u_2)|\mathbf{t} - \mathbf{t}'|}{\log y}\right)^2 + 4\left(\dfrac{C'(u_1 + u_2)|\mathbf{t} - \mathbf{t}'|}{\log y}\right)\left( \sup_{p } \dfrac{|f(p)|^2 \log^2 p}{p}\right)x } \right).
\end{align}
Let us look at the second term. Recall that for $\mathbf{u}' := (0, 0)$,
\[    \Delta_{y}^{\mathbf{u}', \mathbf{t}}(p) := \alpha_y^{\mathbf{u}', \mathbf{t}(p)} - \alpha(p) = 0
    \qquad \text{and} \qquad 
    \widetilde{\Delta}_{y}^{\mathbf{u}', \mathbf{t}}(p) = \Delta_{y}^{\mathbf{u}', \mathbf{t}}(p) - \EE[\Delta_{y}^{\mathbf{u}', \mathbf{t}}(p)] = 0. \]
By \eqref{eq:coupling-estimate4} from \Cref{lem:couple-estimate}, this means
\begin{equation}\label{eq:residual-ptwise}
\left|\widetilde{\Delta}_y^{\mathbf{u}, \mathbf{t}}(p)\right| \le \frac{C(u_1 + u_2)}{\log y} \frac{|f(p)| \log p}{p^{\frac{1}{2}}}.
\end{equation}
In particular, for any $r \ge 2$ we have
\begin{align*}
&\sum_{p \le y}\EE\left[\left|2\Re\frac{f(p) \widetilde{\Delta}_y^{\mathbf{u}, \mathbf{t}'}(p)}{p^{\frac{1}{2}}} \left(p^{-it_0} - p^{-it_0'}\right)\right|^r\right]
 \le \sum_{p \le y}\left(\frac{2|t_0 - t_0'| |f(p)|\log p}{p^{\frac{1}{2}} } \right)^r\EE\left[\left|\widetilde{\Delta}_y^{\mathbf{u}, \mathbf{t}'}(p)\right|^r\right]\\
& \qquad \le \sum_{p \le y} \left(\frac{2|t_0 - t_0'| |f(p)|\log p}{p^{\frac{1}{2}} } \right)^r \left[\frac{C(u_1 + u_2)}{\log y} \frac{|f(p)| \log p}{p^{\frac{1}{2}}}\right]^r\\
& \qquad \le \left(\frac{C'(u_1+u_2)|t_0 - t_0'|}{\log y} \right)^r 
\left[\sup_{p} \frac{|f(p)|^2 \log^2 p}{p}\right]^{r-2}.
\end{align*}
By another application of \Cref{thm:Berstein}, we have
\begin{align}
\notag
\mathrm{II}&:= \PP\left(\left|\widetilde{\Ga}^{\mathbf{u}, \mathbf{t}'}_{y, 0}(t_0) -
\widetilde{\Ga}^{\mathbf{u}, \mathbf{t}'}_{y, 0}(t_0')\right| > \frac{x}{2}\right)\\
\label{eq:field-Bernstein2}
& \le 2 \exp\left(- \dfrac{x^2}{8\left(\dfrac{C'(u_1 + u_2)|t_0 - t_0'|}{\log y}\right)^2 + 4\left(\dfrac{C'(u_1 + u_2)|t_0 - t_0'|}{\log y}\right)\left( \sup_{p } \dfrac{|f(p)|^2 \log^2 p}{p}\right)x } \right).
\end{align}
The desired claim \eqref{eq:field-Bernstein} now follows by combining \eqref{eq:field-Bernstein1} and \eqref{eq:field-Bernstein2}.
\end{proof}
Let us also include another one-point decay estimate.
\begin{lem}\label{lem:residual-point}
There exists some constant $C_f \in (0, \infty)$ depending only on $f$ such that for any $x \ge 0$,
\[ \PP\left(\left|\widetilde{\Ga}^{\mathbf{u}, \mathbf{t}}_{y, 0}(t_0)\right| > x\right) \le 2 \exp\left\{- \min\left(\frac{x^2}{\left[ \dfrac{C_{f} (u_1 + u_2)}{\log y}\right]^2}, \dfrac{x}{\dfrac{C_{f} (u_1 + u_2)}{\log y}}\right) \right\}. \]
\end{lem}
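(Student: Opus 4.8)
The plan is to run, in a simplified one-point form, the Bernstein-type argument already used in the proof of \Cref{lem:residual-field-bound}: here the pointwise bound \eqref{eq:residual-ptwise} on $|\widetilde{\Delta}_y^{\mathbf{u}, \mathbf{t}}(p)|$ is invoked on its own, without the extra factor $|p^{-it_0} - p^{-it_0'}|$ that appeared in the term ``$\mathrm{II}$'' there. First I would record that, under the monotone coupling, $\alpha_y^{\mathbf{u}, \mathbf{t}}(p) = \exp(2\pi i (D_y^{\mathbf{u}, \mathbf{t}})^{-1}(\phi(p)))$ is a deterministic function of the single phase $\phi(p)$, so that $\widetilde{\Delta}_y^{\mathbf{u}, \mathbf{t}}(p)$ depends only on $\phi(p)$; since the $\phi(p)$ are i.i.d., the random variables
\[
X_p := 2\Re \frac{f(p)\,\widetilde{\Delta}_y^{\mathbf{u}, \mathbf{t}}(p)}{p^{1/2 + it_0}}, \qquad p \le y,
\]
are independent, real-valued, and centred (the centring is built into $\widetilde{\Delta}$), with $\widetilde{\Ga}^{\mathbf{u}, \mathbf{t}}_{y, 0}(t_0) = \sum_{p \le y} X_p$. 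This is precisely the setting of \Cref{thm:Berstein}.

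Next I would estimate the moment sums. By \eqref{eq:residual-ptwise}, for every integer $r \ge 2$,
\[
\sum_{p \le y} \EE\left[|X_p|^r\right] \le \left(\frac{2C(u_1+u_2)}{\log y}\right)^{r} \sum_{p \le y}\left(\frac{|f(p)|^2 \log p}{p}\right)^{r}.
\]
The standing hypotheses on $f$ force $\tfrac{|f(p)|^2\log^2 p}{p}$ to be summable, hence bounded and tending to $0$; consequently $B_f := \sup_p \tfrac{|f(p)|^2\log p}{p}$ and $K_f := \sum_p \tfrac{|f(p)|^4\log^2 p}{p^2}$ are both finite, and the crude bound $\sum_p (\tfrac{|f(p)|^2\log p}{p})^r \le B_f^{r-2} K_f$ gives $\sum_{p \le y}\EE[|X_p|^r] \le \sigma^2 b^{r-2}$ with $\sigma := \tfrac{2C\sqrt{K_f}\,(u_1+u_2)}{\log y}$ and $b := \tfrac{2C B_f\,(u_1+u_2)}{\log y}$. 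Feeding this into \Cref{thm:Berstein} then yields $\PP(|\widetilde{\Ga}^{\mathbf{u}, \mathbf{t}}_{y, 0}(t_0)| > x) \le 2\exp(-x^2/(8\sigma^2 + 4bx))$.

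Finally I would repackage this into the stated form: since $8\sigma^2 + 4bx \le 12\max(\sigma^2, bx)$ one has $x^2/(8\sigma^2 + 4bx) \ge \tfrac{1}{12}\min(x^2/\sigma^2,\, x/b)$, and choosing $C_f$ large enough to dominate $\sqrt{12}$ times the constant in $\sigma$ and $12$ times the constant in $b$ converts the right-hand side into
\[
\min\left(\frac{x^2}{\left[C_f(u_1+u_2)/\log y\right]^2},\ \frac{x}{C_f(u_1+u_2)/\log y}\right),
\]
which is the claim. I do not expect a genuine obstacle here: this lemma is strictly weaker than \Cref{lem:residual-field-bound} and follows the identical template. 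The only points deserving a sentence of care are the independence-and-centring reduction that licenses \Cref{thm:Berstein}, and the verification that $\sum_p |f(p)|^4 (\log^2 p)/p^2 < \infty$ under the hypotheses on $f$; both are routine.
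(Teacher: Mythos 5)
Your proof is correct and follows the same route as the paper's: both bound the $r$-th absolute moment sums of the independent centred summands $X_p = 2\Re\frac{f(p)\widetilde{\Delta}_y^{\mathbf{u},\mathbf{t}}(p)}{p^{1/2+it_0}}$ via \eqref{eq:residual-ptwise} and then apply \Cref{thm:Berstein}, with the paper leaving the independence/centring reduction and the repackaging into the $\min(\cdot,\cdot)$ form implicit. The one cosmetic discrepancy is that a direct application of Bernstein gives the denominator $2(\sigma^2+bx)$ rather than your $8\sigma^2+4bx$ (the latter appears in \Cref{lem:residual-field-bound} because of the $x/2$ triangle-inequality split, which is absent here), but this only changes the value of $C_f$.
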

\begin{proof}
Using the bound \eqref{eq:residual-ptwise}, we have
\begin{align*}
\sum_{p \le y}\EE\left[\left|2\Re\frac{f(p) \widetilde{\Delta}_y^{\mathbf{u}, \mathbf{t}'}(p)}{p^{\frac{1}{2} + it_0}} \right|^r\right]
&\le \sum_{p \le y} \left(\frac{2 |f(p)|}{p^{\frac{1}{2}}} \right)^r \left[\frac{C(u_1 + u_2)}{\log y} \frac{|f(p)| \log p}{p^{\frac{1}{2}}}\right]^r\\
&\le \left(\frac{C'(u_1+u_2)}{\log y} \right)^r 
\left[\sup_{p} \frac{|f(p)|^2 \log p}{p}\right]^{r-2}
\end{align*}
for any $r \ge 2$, and the claim follows from another application of \Cref{thm:Berstein}.
\end{proof}
We are ready to state and prove the following uniform estimate.
\begin{cor}\label{cor:residual-estimate}
There exists some constant $C_f \in (0, \infty)$ depending only on $f$ such that
\begin{equation}\label{eq:residual-exp-moment}
\EE\left[  \exp \left( \lambda \sup_{t_0 \in I, \mathbf{t} \in I^2}\left|\widetilde{\Ga}^{\mathbf{u}, \mathbf{t}}_{y, 0}(t_0)\right|\right)\right] 
\le \exp\left(\lambda \frac{C_f(u_1 + u_2)}{\log y}\right) + \frac{\lambda}{\left[C_f(u_1 + u_2) / \log y\right]^{-1} - \lambda}
\end{equation}
for any $0 \le \lambda < \left[\dfrac{C_f(u_1 + u_2)}{\log y}\right]^{-1}$. In particular, 
\begin{equation}\label{eq:residual-exp-uniform}
\sup_{y \ge 2} \EE\left[  \exp \left( \lambda \sup_{t_0 \in I, \mathbf{t} \in I^2}\left|\widetilde{\Ga}^{\mathbf{u}, \mathbf{t}}_{y, 0}(t_0)\right|\right)\right] < \infty
\end{equation}
for any fixed $\lambda \ge 0$, and $\sup_{t_0 \in I, \mathbf{t} \in I^2} \left|\widetilde{\Ga}^{\mathbf{u}, \mathbf{t}}_{y, 0}(t_0)\right| \xrightarrow[y \to \infty]{p} 0$.
\end{cor}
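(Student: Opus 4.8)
The plan is to read off \Cref{cor:residual-estimate} from the two-point tail bound \Cref{lem:residual-field-bound} and the one-point tail bound \Cref{lem:residual-point} by a chaining argument. Observe that \Cref{lem:residual-field-bound} exhibits the random field $\vec{\mathbf{t}} = (t_0,\mathbf{t}) \in I^3 \mapsto \widetilde{\Ga}^{\mathbf{u},\mathbf{t}}_{y,0}(t_0)$ as a process with Bernstein-type increments with respect to the rescaled Euclidean metric $\rho(\vec{\mathbf{t}},\vec{\mathbf{t}}') := \frac{C_f (u_1+u_2)}{\log y}\, d(\vec{\mathbf{t}},\vec{\mathbf{t}}')$ on the cube $I^3$, in the sense that $\PP(|\widetilde{\Ga}^{\mathbf{u},\mathbf{t}}_{y,0}(t_0) - \widetilde{\Ga}^{\mathbf{u},\mathbf{t}'}_{y,0}(t_0')| > x) \le 4\exp(-\min(x^2/\rho(\vec{\mathbf{t}},\vec{\mathbf{t}}')^2, x/\rho(\vec{\mathbf{t}},\vec{\mathbf{t}}')))$; the $\rho$-diameter of $I^3$ is $\asymp \mu_y := C_f(u_1+u_2)/\log y$. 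Because $I^3$ is a bounded subset of $\RR^3$, its covering number $N(I^3,\rho,\eta)$ (the least number of $\rho$-balls of radius $\eta$ covering $I^3$) obeys $N(I^3,\rho,\eta) \ll (\mu_y/\eta)^3$ for $\eta < \mu_y$ and $N(I^3,\rho,\eta) = 1$ for $\eta \ge \mu_y$, so both entropy integrals $\int_0^\infty \sqrt{\log N(I^3,\rho,\eta)}\, d\eta$ and $\int_0^\infty \log N(I^3,\rho,\eta)\, d\eta$ converge and are $\ll \mu_y$.

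First I would apply the version of Dudley's entropy bound / generic chaining valid for processes with mixed sub-Gaussian and sub-exponential (Bernstein) increments, running the chain simultaneously against the sub-Gaussian regime $x \le \rho$ and the sub-exponential regime $x > \rho$ of the increment tail above. This yields both $\EE[\sup_{\vec{\mathbf{t}} \in I^3}|\widetilde{\Ga}^{\mathbf{u},\mathbf{t}}_{y,0}(t_0) - \widetilde{\Ga}^{\mathbf{u},\mathbf{t}_\star}_{y,0}(t_\star)|] \ll \mu_y$ for an arbitrary base point $(t_\star,\mathbf{t}_\star) \in I^3$, and a tail estimate of the shape $\PP(\sup_{\vec{\mathbf{t}} \in I^3}|\widetilde{\Ga}^{\mathbf{u},\mathbf{t}}_{y,0}(t_0) - \widetilde{\Ga}^{\mathbf{u},\mathbf{t}_\star}_{y,0}(t_\star)| \ge C\mu_y + x) \ll \exp(-c\min(x^2/\mu_y^2, x/\mu_y))$ for all $x \ge 0$. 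Adding the one-point estimate from \Cref{lem:residual-point} at $(t_\star,\mathbf{t}_\star)$, which carries exactly the same scale $\mu_y$, and re-absorbing all constants into a new admissible choice of $C_f$, this upgrades to a clean sub-exponential tail for $W_y := \sup_{t_0 \in I,\, \mathbf{t} \in I^2}|\widetilde{\Ga}^{\mathbf{u},\mathbf{t}}_{y,0}(t_0)|$ itself. Integrating that tail via $\EE[e^{\lambda W_y}] = 1 + \lambda\int_0^\infty e^{\lambda x}\PP(W_y \ge x)\, dx$, split across the ranges $x \le C_f(u_1+u_2)/\log y$ and $x > C_f(u_1+u_2)/\log y$, produces \eqref{eq:residual-exp-moment} for every $0 \le \lambda < [C_f(u_1+u_2)/\log y]^{-1}$; the two summands on the right of \eqref{eq:residual-exp-moment} correspond respectively to the small-$x$ contribution, bounded by $\exp(\lambda C_f(u_1+u_2)/\log y)$, and to the sub-exponential tail contribution.

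The two remaining assertions are then routine. For \eqref{eq:residual-exp-uniform}, fix $\lambda \ge 0$; since $C_f(u_1+u_2)/\log y \to 0$ there is $y_0$ with $\lambda < \tfrac12[C_f(u_1+u_2)/\log y]^{-1}$ for all $y \ge y_0$, and for such $y$ the right-hand side of \eqref{eq:residual-exp-moment} is uniformly bounded (its first term tends to $1$, its second to $0$); for the finitely many $2 \le y < y_0$ the sum over primes $p \le y$ is a finite sum, so \eqref{eq:residual-ptwise} gives a deterministic bound on $W_y$ and hence $\EE[e^{\lambda W_y}] < \infty$. Taking the maximum over the two regimes gives \eqref{eq:residual-exp-uniform}. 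Finally $W_y \xrightarrow[y \to \infty]{p} 0$ follows at once from $\EE[W_y] \ll C_f(u_1+u_2)/\log y \to 0$ together with Markov's inequality.

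The main obstacle is the chaining step itself: unlike in the Gaussian setting one is dealing with a Bernstein-type rather than sub-Gaussian process, so the chaining must be arranged so as to control the sub-Gaussian and the sub-exponential parts of the increment at the same time, with a correctly truncated finest dyadic scale, and one must verify that the scale factor $\mu_y = C_f(u_1+u_2)/\log y$ survives the summation over scales. This is precisely what is needed to beat the crude deterministic estimate $W_y \le \frac{2C(u_1+u_2)}{\log y}\sum_{p \le y}\frac{|f(p)|^2 \log p}{p}$ coming from \eqref{eq:residual-ptwise}, which is only $O(u_1+u_2)$ and does not decay as $y \to \infty$. Once the Bernstein chaining tail bound is in hand, the rest is elementary integration.
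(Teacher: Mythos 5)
Your proposal is correct and follows essentially the same route as the paper: a two-metric Bernstein chaining bound (your Dudley entropy integrals are the covering-number version of the admissible-sequence functionals $\gamma_1,\gamma_2$ used in \Cref{thm:chaining} and \Cref{lem:metric-entropy-estimate}), with all scales collapsing to $\mu_y = C_f(u_1+u_2)/\log y$, followed by elementary tail integration to produce the exponential moment bound. The only minor differences are that the paper combines the one-point and increment-supremum pieces by H\"older on moment generating functions rather than by adding tails, and deduces the convergence in probability from the MGF tending to $1$ rather than from your equally valid $\EE[W_y]\ll\mu_y\to 0$ plus Markov.
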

\begin{proof}
As before, let us write $\vec{\mathbf{t}}:= (t_0, \mathbf{t})$ and $\vec{\mathbf{t}}':= (t_0', \mathbf{t}')$. By H\"older's inequality,
\[\EE\left[  \exp \left( \lambda \sup_{\vec{\mathbf{t}} \in I^3}\left|\widetilde{\Ga}^{\mathbf{u}, \mathbf{t}}_{y, 0}(t_0)\right|\right)\right]
\le 
\EE\left[  \exp \left( 2\lambda \left|\widetilde{\Ga}^{\mathbf{u}, \mathbf{t}'}_{y, 0}(t_0)'\right|\right)\right]^{\frac{1}{2}}
\EE\left[  \exp \left( 2\lambda \sup_{\vec{\mathbf{t}}, \vec{\mathbf{t}}' \in I^3}\left|\widetilde{\Ga}^{\mathbf{u}, \mathbf{t}}_{y, 0}(t_0)
- \widetilde{\Ga}^{\mathbf{u}, \mathbf{t}'}_{y, 0}(t_0')\right|\right)\right]^{\frac{1}{2}},
\]
and we need to estimate each expectation on the right-hand side. For this, consider a random variable $X \ge 0$ such that
\[ \PP(X > x) \le C_1 \exp \left\{-\frac{1}{C_2} \min\left(\frac{x^2}{A^2}, \frac{x}{A}\right)\right\} \qquad \forall x \ge 0 \]
for some $A, C_1, C_2 > 0$. Then for any $\lambda \in [0, A^{-1})$, we have
\begin{align*}
\EE\left[e^{\lambda X}\right]
 \le e^{\lambda A} + \EE\left[\left(e^{\lambda X} - e^{\lambda A}\right) \mathbf{1}_{\{X > A\}}\right]
&= e^{\lambda A} + \lambda \EE\left[\mathbf{1}_{\{X > A\}}\int_A^{\infty} e^{\lambda x} \mathbf{1}_{\{x < X\}} dx \right]\\
& \le e^{\lambda A} + \lambda \int_A^\infty e^{\lambda x} C_1\exp\left(-\frac{x}{C_2A}\right)dx
\le e^{\lambda A} + \frac{C_1\lambda}{(C_2A)^{-1} - \lambda}.
\end{align*}
Let us apply this inequality to $\left|\widetilde{\Ga}^{\mathbf{u}, \mathbf{t}'}_{y, 0}(t_0')\right|$. Using \Cref{lem:residual-point}, we immediately obtain
\[ \EE\left[  \exp \left( 2\lambda \left|\widetilde{\Ga}^{\mathbf{u}, \mathbf{t}'}_{y, 0}(t_0')\right|\right)\right]
\le \exp\left(2\lambda \frac{C_f(u_1 + u_2)}{\log y}\right) + \frac{4\lambda}{\left[C_f(u_1 + u_2) / \log y\right]^{-1} - 2\lambda}. \]
As for $\sup_{\vec{\mathbf{t}}, \vec{\mathbf{t}}' \in I^3}\left|\widetilde{\Ga}^{\mathbf{u}, \mathbf{t}}_{y, 0}(t_0)
- \widetilde{\Ga}^{\mathbf{u}, \mathbf{t}'}_{y, 0}(t_0')\right|$, recall the tail probability in \Cref{lem:residual-field-bound}. In the notation of \Cref{thm:chaining}, if we define $d_1(\vec{\mathbf{t}}, \vec{\mathbf{t}}') = d_2(\vec{\mathbf{t}}, \vec{\mathbf{t}}') : =\frac{C_f(u_1 + u_2)}{\log y} d(\vec{\mathbf{t}}, \vec{\mathbf{t}}')$, then
\[    \PP\left(\sup_{\vec{\mathbf{t}}, \vec{\mathbf{t}}' \in I^3}\left|\widetilde{\Ga}^{\mathbf{u}, \mathbf{t}}_{y, 0}(t_0)
- \widetilde{\Ga}^{\mathbf{u}, \mathbf{t}'}_{y, 0}(t_0')\right| > x \right)
\le 20 \exp\left(- \frac{x^2}{16\gamma_2(I^3, d_2)^2 + 8\gamma_1(I^3, d_1)x}\right)  \qquad \forall x \ge 0 \]
where $\gamma_k(I^3, d_k) \ll C_f(u_1 + u_2)/\log y$ for both $k=1$ and $2$ by \Cref{lem:metric-entropy-estimate}. Therefore, there exists some constant $C > 0$ such that
\[ \EE\left[  \exp \left( 2\lambda \sup_{\vec{\mathbf{t}}, \vec{\mathbf{t}}' \in I^3}\left|\widetilde{\Ga}^{\mathbf{u}, \mathbf{t}}_{y, 0}(t_0)
- \widetilde{\Ga}^{\mathbf{u}, \mathbf{t}'}_{y, 0}(t_0')\right|\right)\right]
\le \exp\left(2\lambda \frac{C_f(u_1 + u_2)}{\log y}\right) + \frac{C \cdot 2\lambda}{\left[\dfrac{CC_f(u_1 + u_2)}{\log y}\right]^{-1} - 2\lambda} \]
and this gives \eqref{eq:residual-exp-moment} up to a different choice of the constant $C_f$.

Now, for any fixed $\lambda \ge 0$, \eqref{eq:residual-exp-moment} is uniformly bounded for all $y$ sufficiently large. As for finite values of $y$, the field $\widetilde{\Ga}^{\mathbf{u}, \mathbf{t}}_{y, 0}(t_0)$ is also trivially bounded because it is given by a finite sum of bounded random variables. This verifies the claim \eqref{eq:residual-exp-uniform}. Finally, from \eqref{eq:residual-exp-moment} we observe that
\[
1
\le 
\liminf_{y \to \infty}\EE\left[  \exp \left( \lambda \sup_{\vec{\mathbf{t}} \in I^3}\left|\widetilde{\Ga}^{\mathbf{u}, \mathbf{t}}_{y, 0}(t_0)\right|\right)\right]
\le \limsup_{y \to \infty}\EE\left[  \exp \left( \lambda \sup_{\vec{\mathbf{t}} \in I^3}\left|\widetilde{\Ga}^{\mathbf{u}, \mathbf{t}}_{y, 0}(t_0)\right|\right)\right]
\le 1 \qquad \forall \lambda \ge 0.\]
This implies $\sup_{\vec{\mathbf{t}} \in I^3}\left|\widetilde{\Ga}^{\mathbf{u}, \mathbf{t}}_{y, 0}(t_0)\right|$ converges in distribution, and hence in probability, to $0$ as $y \to \infty$.
\end{proof}
\subsection{Elementary estimates}
In this subsection we collect a few estimates for moment generating functions that are needed to perform the argument of change of measure. The following lemma will be repeatedly used.
\begin{lem}[{\cite[Lemma 2.5]{GWL1}}]\label{lem:perturb-mgf}
Let $X$ be an $\RR^d$-valued random variable satisfying $\EE[X] = 0$ and $\PP(|X| \le r) = 1$ for some fixed $r > 0$. Then
\begin{align*}
\frac{\EE\left[\langle a_1, X \rangle e^{\langle a_2, X \rangle}\right]}{\EE[e^{\langle a_2, X\rangle}]} 
&= a_1^T \EE[XX^T]a_2 + O( |a_1||a_2|^2)\\
\text{and} \qquad \frac{\EE\left[e^{\langle a_1 + a_2, X \rangle}\right]}{\EE[e^{\langle a_2, X\rangle}]} 
&= \exp \left(\frac{1}{2}a_1^T \EE\left[ X X^T \right] (a_1 +2a_2) +O( |a_1||a_2|^2 + |a_1|^3)\right)
\end{align*}
uniformly for $a_1, a_2$ in compact subsets of $\RR^d$.
\end{lem}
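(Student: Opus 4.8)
The plan is to reduce everything to the smoothness of the cumulant generating function. Since $|X|\le r$ almost surely, the moment generating function $\Lambda(a):=\EE[e^{\langle a,X\rangle}]$ is strictly positive and smooth on $\RR^d$, with $\partial^\beta\Lambda(a)=\EE[X^\beta e^{\langle a,X\rangle}]$ bounded uniformly for $a$ in any fixed compact set $K$ (as $|X^\beta e^{\langle a,X\rangle}|\le r^{|\beta|}e^{r|a|}$). Write $\psi:=\log\Lambda$ for the cumulant generating function; then $\psi$ is smooth with all derivatives bounded on $K$, and $\psi(0)=0$, $\nabla\psi(0)=\EE[X]=0$, $\nabla^2\psi(0)=\EE[XX^T]$ (the Hessian of $\log\Lambda$ at the origin is $\EE[XX^T]-\EE[X]\EE[X]^T$ and $\EE[X]=0$). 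The two ratios in the statement are $\EE[\langle a_1,X\rangle e^{\langle a_2,X\rangle}]/\Lambda(a_2)=\langle a_1,\nabla\psi(a_2)\rangle$ and $\Lambda(a_1+a_2)/\Lambda(a_2)=\exp(\psi(a_1+a_2)-\psi(a_2))$, so both claims follow from controlled Taylor expansions of $\psi$, with all error terms uniform over $a_1,a_2\in K$.

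For the first identity I would Taylor-expand with explicit remainder, using $|\langle a_2,X\rangle|\le r|a_2|$: one has $e^{\langle a_2,X\rangle}=1+\langle a_2,X\rangle+O(|a_2|^2|X|^2)$ with the implied constant uniform over $a_2\in K$. Taking expectations gives $\Lambda(a_2)=1+O(|a_2|^2)$ since $\EE[X]=0$; multiplying first by $\langle a_1,X\rangle$ and then taking expectations, the linear term vanishes, the quadratic term is exactly $a_1^T\EE[XX^T]a_2$, and the remainder is $O(|a_1||a_2|^2)$ because $|\langle a_1,X\rangle|\,|X|^2\le r^3|a_1|$. Dividing the two expansions and using $(1+O(|a_2|^2))^{-1}=1+O(|a_2|^2)$ together with $a_1^T\EE[XX^T]a_2=O(|a_1||a_2|)$ absorbs the extra error into $O(|a_1||a_2|^2)$, which is the claim.

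For the second identity the cleanest route is to integrate the first along the segment $s\mapsto sa_1+a_2$, $s\in[0,1]$. Indeed $\frac{d}{ds}\psi(sa_1+a_2)=\EE[\langle a_1,X\rangle e^{\langle sa_1+a_2,X\rangle}]/\Lambda(sa_1+a_2)$, and applying the first identity with $a_2$ replaced by $sa_1+a_2$ — legitimate since these points remain in a fixed compact set — gives $\frac{d}{ds}\psi(sa_1+a_2)=s\,a_1^T\EE[XX^T]a_1+a_1^T\EE[XX^T]a_2+O(|a_1|(|a_1|^2+|a_2|^2))$, using $|sa_1+a_2|^2\le 2|a_1|^2+2|a_2|^2$. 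Integrating over $s\in[0,1]$ yields $\psi(a_1+a_2)-\psi(a_2)=\tfrac12 a_1^T\EE[XX^T](a_1+2a_2)+O(|a_1|^3+|a_1||a_2|^2)$, and exponentiating gives the stated expansion. An equivalent argument: set $R(a):=\psi(a)-\tfrac12 a^T\EE[XX^T]a$, so that $R(0)=0$, $\nabla R(0)=0$, $\nabla^2R(0)=0$, hence $\nabla R(a)=O(|a|^2)$ on $K$; then $R(a_1+a_2)-R(a_2)=\nabla R(\zeta)\cdot a_1=O(|a_1|(|a_1|+|a_2|)^2)$ by the mean value theorem, while the quadratic parts combine to $\tfrac12 a_1^T\EE[XX^T](a_1+2a_2)$.

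The one point requiring attention — the ``main obstacle'', modest as it is — is that the remainder in the second identity must be $O(|a_1||a_2|^2+|a_1|^3)$, i.e.\ each error term must carry at least one factor of $|a_1|$, rather than the weaker $O(|a_1|^3+|a_2|^3)$ that a black-box cubic Taylor bound on $\psi$ would give. This is precisely why one differentiates in the direction $a_1$ (or applies the mean value theorem to $R$ in that direction): it extracts the factor $|a_1|$ from every error term. Everything else is bookkeeping with the uniform bounds supplied by $|X|\le r$.
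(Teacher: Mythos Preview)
Your proof is correct. The paper does not actually prove this lemma; it is quoted verbatim from \cite[Lemma 2.5]{GWL1}, so there is no in-paper argument to compare against. Your approach via the cumulant generating function $\psi=\log\Lambda$, recognising the first ratio as $\langle a_1,\nabla\psi(a_2)\rangle$ and then integrating along $s\mapsto sa_1+a_2$ to obtain the second, is clean and handles the key point (that every error term must carry a factor of $|a_1|$) transparently.
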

The first estimate explains how much $M_y(u) := \EE[\exp \left(\Ga_{y, u}(t; \alpha)\right)]$ differs from $M_y(0)$.
\begin{lem}\label{lem:mgf-estimate1}
Let $u \ge 0$ be fixed. As $y \to \infty$, we have
\[   \frac{M_y(u)}{M_y(0)} = \exp\left\{ \sum_{p \le y} \frac{|f(p)|^2}{p}\left[ \OurEpsilon_{y, u}(p)^2 + 2\OurEpsilon_{y, u}(p)\right] + O\left(\sum_{p \le y} \frac{|f(p)|^3}{p^{\frac{3}{2}}} |\OurEpsilon_{y, u}(p)|\right)\right\} \]
where the error is uniform for $u$ in compact subsets of $\RR_{\ge 0}$.
\end{lem}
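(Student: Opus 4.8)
The plan is to use independence of $(\alpha(p))_p$ to factor $M_y(u)$ over primes, discard the harmless unit-modulus rotations $p^{it}$ and $f(p)/|f(p)|$, and then compare the $p$-th factors of $M_y(u)$ and $M_y(0)$ by a single application of the perturbative moment generating function estimate in \Cref{lem:perturb-mgf}, chosen with the perturbation direction equal to the \emph{increment}.

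Concretely, I would first write $\exp(\Ga_{y, u}(t;\alpha)) = \prod_{p \le y}\exp(2\Re(f(p)\alpha(p)p^{-\sigma_y(u) - it}))$, a product of independent random variables, so that $M_y(u) = \prod_{p \le y}\EE[\exp(2\Re(f(p)\alpha(p)p^{-\sigma_y(u) - it}))]$; since $\alpha(p)$ is uniform on the unit circle it may be multiplied by the unit $p^{-it}f(p)/|f(p)|$ without changing its law, so with $\alpha(p) = e^{2\pi i \Phi}$ and $\Psi$ the resulting (still uniform) rotated phase, the $p$-th factor equals $\EE[e^{2 w_p(u)\cos\Psi}]$, where $w_p(u) := |f(p)| p^{-\sigma_y(u)} = |f(p)| p^{-1/2}(1 + \OurEpsilon_{y, u}(p))$. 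In particular $M_y(u) \ge 1 > 0$, so $\log(M_y(u)/M_y(0)) = \sum_{p \le y}\log\frac{\EE[e^{2 w_p(u)\cos\Psi}]}{\EE[e^{2 w_p(0)\cos\Psi}]}$ is well defined. To estimate each summand, apply \Cref{lem:perturb-mgf} with $d = 2$, $X = (\cos\Psi, \sin\Psi)$ (which is centred with $|X| = 1$ and $\EE[XX^T] = \tfrac12 I_2$), $a_2 := (2 w_p(0), 0)$, and $a_1 := (2(w_p(u) - w_p(0)), 0) = (2|f(p)| p^{-1/2}\OurEpsilon_{y, u}(p), 0)$, so that $\langle a_1 + a_2, X\rangle = 2 w_p(u)\cos\Psi$ and $\langle a_2, X\rangle = 2 w_p(0)\cos\Psi$. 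A one-line matrix computation gives $\tfrac12 a_1^T \EE[XX^T](a_1 + 2a_2) = w_p(u)^2 - w_p(0)^2 = \frac{|f(p)|^2}{p}(\OurEpsilon_{y, u}(p)^2 + 2\OurEpsilon_{y, u}(p))$, which is precisely the main term, while the error $O(|a_1||a_2|^2 + |a_1|^3)$ is $O(|f(p)|^3 p^{-3/2}|\OurEpsilon_{y, u}(p)|)$ since $|\OurEpsilon_{y, u}(p)| \le 1$ for $u \ge 0$. Summing over $p \le y$ and exponentiating yields the claim. Uniformity is automatic: because $|f(p)| p^{-1/2}$ is bounded in $p$ — a consequence of the first standing hypothesis on $f$, which forces $|f(p)|^2\log^2 p / p \to 0$ hence $|f(p)|^2/p \to 0$ — all the pairs $(a_1, a_2)$ lie in one fixed compact subset of $\RR^2$ depending only on $f$, so the implied constant in \Cref{lem:perturb-mgf} is uniform in $p$, in $y \ge 2$, and in $u$ over compact subsets of $\RR_{\ge 0}$.

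The only genuinely delicate point — and the one I would flag as the main obstacle — is that one must compare the two per-prime moment generating functions \emph{jointly} rather than expanding $\log\EE[e^{2w_p(u)\cos\Psi}]$ and $\log\EE[e^{2w_p(0)\cos\Psi}]$ separately. A separate expansion would leave each with a cubic error of size $O(|f(p)|^3 p^{-3/2})$, and $\sum_{p \le y} |f(p)|^3 p^{-3/2}$ diverges (it is $\asymp \log\log y$ since $|f(p)| p^{-1/2}$ is bounded and $\sum_{p\le y}|f(p)|^2/p \sim \log\log y$), so the difference of the two crude bounds would not close. Taking $a_1$ to be the increment $b_p(u) - b_p(0)$ in \Cref{lem:perturb-mgf} is exactly what forces the error to be proportional to $|a_1|$, hence to $|\OurEpsilon_{y, u}(p)|$; with that factor present the error term $\sum_{p\le y} |f(p)|^3 p^{-3/2}|\OurEpsilon_{y,u}(p)|$ is in fact $O(1/\log y)$ under the hypotheses on $f$, which is why the stated form of the lemma is the useful one. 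Everything else is the bookkeeping indicated above.
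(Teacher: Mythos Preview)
Your proof is correct and is essentially identical to the paper's own argument: the paper likewise uses rotational invariance to reduce to $t=0$ and $f(p)=|f(p)|$, sets $X(p)=(\Re\alpha(p),\Im\alpha(p))$, $a_1(p)=(2|f(p)|p^{-1/2}\OurEpsilon_{y,u}(p),0)$ and $a_2(p)=(2|f(p)|p^{-1/2},0)$, and applies \Cref{lem:perturb-mgf} factor by factor. Your additional remark about why one must take $a_1$ to be the increment (rather than expand numerator and denominator separately) is a useful observation but not needed for the proof itself.
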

\begin{proof}
By rotational invariance of $\alpha$ we may pretend that $t =0$ and $f(p) = |f(p)|$. If we set
\[    X(p) = \begin{pmatrix} \Re \alpha(p) \\ \Im \alpha(p) \end{pmatrix}, 
    \qquad a_1(p) = \begin{pmatrix} \frac{2|f(p)|}{p^{\frac{1}{2}}}\OurEpsilon_{y, u}(p)\\ 0 \end{pmatrix}
    \qquad \text{and} \qquad a_2(p) = \begin{pmatrix} \frac{2|f(p)|}{p^{\frac{1}{2}}}\\ 0 \end{pmatrix}, \]
then $M_y(u)/M_y(0) = \prod_{p \le y} \EE\left[e^{\langle a_1(p) + a_2(p), X(p) \rangle}\right]/\EE[e^{\langle a_2(p), X(p)\rangle}]$ and the result follows from \Cref{lem:perturb-mgf}.
\end{proof}
The next estimate controls an expectation that appears in the change of measure $\PP \leftrightarrow \PP_y^{\mathbf{u}, \mathbf{t}}$.
\begin{lem}\label{lem:mgf-estimate2}
Let $\mathbf{u} := (u_1, u_2) \in \RR_{\ge 0}^2$, $\mathbf{t} := (t_1, t_2) \in I^2$. As $y \to \infty$ we have
\begin{equation}\label{eq:asymptotic-E_y}
\begin{split}
E_y(\mathbf{u}, \mathbf{t})
& =\exp\Bigg\{\sum_{p \le y} \frac{|f(p)|^2}{p} \left[2 \OurEpsilon_{y, u_1}(p)\OurEpsilon_{y, u_2}(p)\cos(|t_1 - t_2| \log p) + \sum_{j=1}^2 \OurEpsilon_{y, u_j}(p)^2\right] \\
& \qquad \qquad \qquad + O\left(\sum_{j=1}^2\sum_{p \le y} \frac{|f(p)|^3}{p^{\frac{3}{2}}}|\OurEpsilon_{y, u_j}(p)|^3\right)\Bigg\}
\end{split}
\end{equation}
where the error is uniform for $\mathbf{u}$ in compact subsets of $\RR_{\ge 0}^2$ and $\mathbf{t} \in I^2$.
\end{lem}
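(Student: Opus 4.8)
The plan is to use the independence of $(\alpha(p))_p$ to factor $E_y(\mathbf{u},\mathbf{t})$ into a product over primes and then apply \Cref{lem:perturb-mgf} prime-by-prime, exactly as in the proof of \Cref{lem:mgf-estimate1} but with the ``base point'' $a_2 = 0$ since no ratio of moment generating functions is involved here. Writing $w_p := \sum_{j=1}^2 \frac{f(p)\OurEpsilon_{y, u_j}(p)}{p^{1/2 + it_j}}$, one has $\sum_{j=1}^2 \Ga_{y, 0}(t_j; \OurEpsilon_{y, u_j}\alpha) = 2\Re\sum_{p \le y}\alpha(p) w_p$, whence $E_y(\mathbf{u},\mathbf{t}) = \prod_{p \le y} \EE[\exp(2\Re(\alpha(p) w_p))]$. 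Setting $X(p) := (\Re\alpha(p), \Im\alpha(p))^T$ (so that $\EE[X(p)] = 0$, $|X(p)| \equiv 1$, $\EE[X(p)X(p)^T] = \tfrac12 I_2$) and $a(p) := (2\Re w_p, -2\Im w_p)^T$, we have $2\Re(\alpha(p)w_p) = \langle a(p), X(p)\rangle$ and $|a(p)|^2 = 4|w_p|^2$.

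Applying the second displayed estimate of \Cref{lem:perturb-mgf} with $a_1 = a(p)$ then yields $\EE[\exp\langle a(p), X(p)\rangle] = \exp(|w_p|^2 + O(|w_p|^3))$, and the implied constant is uniform over $p \le y$, $y \ge 2$, $\mathbf{t} \in I^2$ and $\mathbf{u}$ in compact subsets of $\RR_{\ge 0}^2$ because $|a(p)| = 2|w_p|$ stays bounded: indeed $|\OurEpsilon_{y, u_j}(p)| = |p^{-u_j/(2\log y)} - 1| \le 1$ and $|f(p)|/p^{1/2}$ is bounded, the latter following from the first standing assumption on $f$ (which forces $|f(p)|^2\log^2 p/p \to 0$). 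Taking the product and a logarithm gives $E_y(\mathbf{u},\mathbf{t}) = \exp\bigl(\sum_{p \le y}|w_p|^2 + O(\sum_{p \le y}|w_p|^3)\bigr)$, and expanding
\[ |w_p|^2 = \frac{|f(p)|^2}{p}\Bigl|\OurEpsilon_{y,u_1}(p)p^{-it_1} + \OurEpsilon_{y,u_2}(p)p^{-it_2}\Bigr|^2 = \frac{|f(p)|^2}{p}\Bigl[\sum_{j=1}^2 \OurEpsilon_{y,u_j}(p)^2 + 2\OurEpsilon_{y,u_1}(p)\OurEpsilon_{y,u_2}(p)\cos(|t_1 - t_2|\log p)\Bigr] \]
reproduces the main term in \eqref{eq:asymptotic-E_y}.

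For the error term it remains to note that $|w_p| \le \frac{|f(p)|}{p^{1/2}}\sum_{j=1}^2 |\OurEpsilon_{y,u_j}(p)|$, so by the elementary inequality $(a+b)^3 \le 4(a^3+b^3)$ for $a, b \ge 0$ we obtain $\sum_{p \le y}|w_p|^3 \ll \sum_{j=1}^2\sum_{p \le y}\frac{|f(p)|^3}{p^{3/2}}|\OurEpsilon_{y,u_j}(p)|^3$, matching the stated error. The argument is essentially a bookkeeping exercise; the only point needing slight care is ensuring that the $O$-term supplied by \Cref{lem:perturb-mgf} is uniform across \emph{all} primes simultaneously, so that summing over $p \le y$ preserves uniformity in $\mathbf{u}$ and $\mathbf{t}$ --- but this is immediate from the boundedness of $|a(p)|$ established above, and I do not anticipate any genuine obstacle in this lemma.
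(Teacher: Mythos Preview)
Your proof is correct and follows essentially the same approach as the paper: both factor over primes by independence, set $X(p) = (\Re\alpha(p), \Im\alpha(p))^T$ with $a_2 = 0$, and apply \Cref{lem:perturb-mgf} to obtain $E_y(\mathbf{u},\mathbf{t}) = \exp\bigl(\sum_{p \le y}[\tfrac14|a_1(p)|^2 + O(|a_1(p)|^3)]\bigr)$, which unwinds to \eqref{eq:asymptotic-E_y}. Your $w_p$ is simply a convenient complex shorthand for the paper's real two-vector $a_1(p)$, and if anything you are slightly more explicit about why the implied constant is uniform across primes.
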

\begin{proof}
If we set
\[    X(p) = \begin{pmatrix} \Re \alpha(p) \\ \Im \alpha(p) \end{pmatrix}, 
    \qquad a_1(p) = \frac{2}{p^{\frac{1}{2}}} \sum_{j=1}^2 \OurEpsilon_{y, u_j}(p)\begin{pmatrix} \Re \frac{f(p)}{p^{it_j}} \\ -\Im \frac{f(p)}{p^{it_j}}\end{pmatrix}
    \qquad \text{and} \qquad a_2(p) = \begin{pmatrix} 0\\ 0 \end{pmatrix}, \]
then $\sum_{j=1}^2 \Ga_{y, 0}(t_j; \OurEpsilon_{y, u_j}\alpha) = \sum_{p \le y} \langle a_1(p), X(p)\rangle$, and by \Cref{lem:perturb-mgf} we have
\[E_y(\mathbf{u}, \mathbf{t}) = 
\prod_{p \le y} \frac{\EE\left[e^{\langle a_1(p) + a_2(p), X(p)\rangle}\right]}{\EE[e^{\langle a_2(p), X(p)\rangle}]}
= \exp\left\{
\sum_{p \le y} \left[\frac{1}{4} |a_1(p)|^2 + O\left(|a_1(p)|^3\right) \right]
\right\} \]
which implies \eqref{eq:asymptotic-E_y}.
\end{proof}
The next result quantifies the effect of shift of mean induced by the measure $\PP_y^{\mathbf{u}, \mathbf{t}}$.
\begin{lem}\label{lem:mean-estimate}
Let $t_0 \in I$ and $\mathbf{t} := (t_1, t_2) \in I^2$. We have
\[   \Ga_{y, 0}\left(t_0; \EE\left[\alpha_y^{\mathbf{u}, \mathbf{t}}\right]\right)
    =\sum_{j=1}^2\sum_{p \le y} \frac{2|f(p)|^2}{p} \cos\left(|t_0 - t_j| \log p\right) \OurEpsilon_{y, u_j}(p) + O\left(\sum_{j=1}^2 \sum_{p \le y} \frac{|f(p)|^3}{p^{\frac{3}{2}}} \OurEpsilon_{y, u_j}(p)^2\right) \]
where the error is uniform for $\mathbf{u}$ in compact subsets of $\RR_{\ge 0}^2$ and also $t_0, t_1, t_2 \in I$.
\end{lem}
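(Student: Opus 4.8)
The plan is a direct application of the first identity in \Cref{lem:perturb-mgf}, in the spirit of the proofs of \Cref{lem:mgf-estimate1} and \Cref{lem:mgf-estimate2}. From the characterisation \eqref{eq:change-of-measure-marginal} of $\PP_y^{\mathbf{u}, \mathbf{t}}$ one has, for each prime $p$,
\[
\EE\left[\alpha_y^{\mathbf{u}, \mathbf{t}}(p)\right] = \frac{\EE\left[\alpha(p)\exp\left(2\Re\sum_{j=1}^2\frac{f(p)\alpha(p)}{p^{1/2+it_j}}\OurEpsilon_{y, u_j}(p)\right)\right]}{\EE\left[\exp\left(2\Re\sum_{j=1}^2\frac{f(p)\alpha(p)}{p^{1/2+it_j}}\OurEpsilon_{y, u_j}(p)\right)\right]}.
\]
I would set $X(p) := \left(\Re\alpha(p), \Im\alpha(p)\right)^T$, which satisfies $\EE[X(p)] = 0$, $|X(p)| \equiv 1$, and $\EE[X(p)X(p)^T] = \tfrac12 I_2$ by rotational invariance, and take, exactly as in the proof of \Cref{lem:mgf-estimate2},
\[
a_2(p) := \frac{2}{p^{1/2}}\sum_{j=1}^2 \OurEpsilon_{y, u_j}(p)\begin{pmatrix}\Re\frac{f(p)}{p^{it_j}}\\ -\Im\frac{f(p)}{p^{it_j}}\end{pmatrix},
\]
so that the exponent in the display equals $\langle a_2(p), X(p)\rangle$, while $\alpha(p) = \langle (1,0)^T, X(p)\rangle + i\langle (0,1)^T, X(p)\rangle$. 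Applying the first estimate of \Cref{lem:perturb-mgf} with $a_1 = (1,0)^T$ and then $a_1 = (0,1)^T$ yields
\[
\EE\left[\alpha_y^{\mathbf{u}, \mathbf{t}}(p)\right] = \tfrac12\left(a_2(p)\right)_1 + \tfrac{i}{2}\left(a_2(p)\right)_2 + O\left(|a_2(p)|^2\right) = \frac{1}{p^{1/2}}\sum_{j=1}^2 \OurEpsilon_{y, u_j}(p)\,\overline{\frac{f(p)}{p^{it_j}}} + O\left(|a_2(p)|^2\right).
\]

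To make the error term genuinely uniform, I would check that $a_2(p)$ stays in a fixed compact set: since $0 \le 1 - p^{-u/(2\log y)} = |\OurEpsilon_{y, u}(p)| \le \tfrac{u\log p}{2\log y} \le \tfrac u2$ for $p \le y$, and $|f(p)|/p^{1/2}$ is bounded (a consequence of $\sum_p |f(p)|^2\log^2 p / p < \infty$), one gets $|a_2(p)| \ll |f(p)|/p^{1/2} \ll 1$ uniformly over primes $p$, over $y \ge 2$, and over $\mathbf{u}$ in a compact subset of $\RR_{\ge 0}^2$; moreover $|a_2(p)|^2 \ll \tfrac{|f(p)|^2}{p}\sum_{j=1}^2 \OurEpsilon_{y, u_j}(p)^2$. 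Substituting the expression above into $\Ga_{y, 0}\left(t_0; \EE[\alpha_y^{\mathbf{u}, \mathbf{t}}]\right) = 2\Re\sum_{p \le y} \frac{f(p)}{p^{1/2 + it_0}}\EE\left[\alpha_y^{\mathbf{u}, \mathbf{t}}(p)\right]$, using $\frac{f(p)}{p^{it_0}}\,\overline{\frac{f(p)}{p^{it_j}}} = |f(p)|^2 p^{i(t_j - t_0)}$ and $\Re\, p^{i(t_j - t_0)} = \cos\left((t_j - t_0)\log p\right) = \cos\left(|t_0 - t_j|\log p\right)$, and bounding the remaining error by $O\!\left(\sum_{p \le y}\frac{|f(p)|}{p^{1/2}}|a_2(p)|^2\right) = O\!\left(\sum_{j=1}^2\sum_{p \le y}\frac{|f(p)|^3}{p^{3/2}}\OurEpsilon_{y, u_j}(p)^2\right)$, produces precisely the claimed identity.

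There is no substantial obstacle here: the computation is routine once the vectors $a_1$ and $a_2(p)$ are identified. The only points that require a little care are (i) verifying that $a_2(p)$ ranges over a fixed compact set, so that the $O$-term supplied by \Cref{lem:perturb-mgf} is uniform in the prime $p$ as well as in $y$ and $\mathbf{u}$; and (ii) aggregating the per-prime $O(|a_2(p)|^2)$ errors into the stated sum of $|f(p)|^3 p^{-3/2}\OurEpsilon_{y, u_j}(p)^2$ via the elementary bound on $|a_2(p)|^2$ above.
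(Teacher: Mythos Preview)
The proposal is correct and follows essentially the same approach as the paper: both apply the first identity in \Cref{lem:perturb-mgf} prime-by-prime with the same $a_2(p)$, and the resulting main term and error bound coincide. The only cosmetic difference is that the paper takes $a_1(p)=\tfrac{2}{p^{1/2}}\bigl(\Re\tfrac{f(p)}{p^{it_0}},-\Im\tfrac{f(p)}{p^{it_0}}\bigr)^T$ so as to compute $\EE_y^{\mathbf{u},\mathbf{t}}[\Ga_{y,0}(t_0;\alpha)]$ directly, whereas you first extract the complex mean $\EE[\alpha_y^{\mathbf{u},\mathbf{t}}(p)]$ using $a_1=e_1,e_2$ and then plug it into the field; by linearity these are equivalent.
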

\begin{proof}
Observe that
\[ \Ga_{y, 0}\left(t_0; \EE\left[\alpha_y^{\mathbf{u}, \mathbf{t}}\right]\right)
= \Ga_{y, 0}\left(t_0; \EE_y^{\mathbf{u}, \mathbf{t}}\left[\alpha\right]\right)
= \EE_y^{\mathbf{u}, \mathbf{t}}\left[\Ga_{y, 0}\left(t_0; \alpha\right)\right]
= \sum_{p \le y} \frac{\EE\left[\langle a_1(p), X(p) \rangle e^{\langle a_2(p), X(p) \rangle}\right]}{\EE[e^{\langle a_2(p), X(p)\rangle}]}  \]
where
\[ X(p) = \begin{pmatrix} \Re \alpha(p) \\ \Im \alpha(p) \end{pmatrix}, 
\qquad a_1(p) = \frac{2}{p^{\frac{1}{2}}} \begin{pmatrix} \Re \frac{f(p)}{p^{it_0}} \\ -\Im \frac{f(p)}{p^{it_0}}\end{pmatrix}
\qquad \text{and} \qquad a_2(p) = \frac{2}{p^{\frac{1}{2}}} \sum_{j=1}^2 \OurEpsilon_{y, u_j}(p)\begin{pmatrix} \Re \frac{f(p)}{p^{it_j}} \\ -\Im \frac{f(p)}{p^{it_j}}\end{pmatrix}. \]
We have
\[    a_1(p)^T \EE[X(p)X(p)^T] a_2(p) = \frac{2}{p} \sum_{j=1}^2 \OurEpsilon_{y, u_j}(p) \Re \left[ \frac{f(p)}{p^{it_0}}\overline{\left(\frac{f(p)}{p^{it_j}}\right)} \right]
    = \frac{2|f(p)|^2}{p} \sum_{j=1}^2 \cos(|t_0 - t_j|\log p) \OurEpsilon_{y, u_j}(p), \]
and it is also easy to check that $O(|a_1(p)||a_2(p)|^2)$ has the desired bound.
\end{proof}
Finally, the following estimate controls the oscillatory terms that appear in all previous lemmas.
\begin{lem}\label{lem:ingredient-bound}
Write $\displaystyle C_y(h; v) := \sum_{p \le y} \frac{|f(p)|^2}{p}v(p)\cos(|h| \log p)$. For any fixed $u_1, u_2 \ge 0$, we have
\[ C_y(h; \OurEpsilon_{y, u_1}) \ll 1 \qquad \text{and}\qquad  C_y(h; \OurEpsilon_{y, u_2}\OurEpsilon_{y, u_1}) \ll 1 \]
uniformly for $h$ in compact subsets of $\RR$. Moreover,
\[    C_y(h; \OurEpsilon_{y, u_1}) \xrightarrow[y \to \infty]{} 0 \qquad \text{and} \qquad 
    C_y(h; \OurEpsilon_{y, u_1}\OurEpsilon_{y, u_2}) \xrightarrow[y \to \infty]{} 0 \]
uniformly for $h$ in compact subsets of $\RR \setminus\{0\}$.
\end{lem}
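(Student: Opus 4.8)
Set $\OurEpsilon_{y,u}(x):=x^{-u/(2\log y)}-1$ for the continuous extension and, for $v\in\{\OurEpsilon_{y,u_1},\ \OurEpsilon_{y,u_1}\OurEpsilon_{y,u_2}\}$, put $\phi_v(x):=v(x)\cos(|h|\log x)/x$. The plan is to pass from the prime sum to an integral by Abel summation, using only the second structural hypothesis on $f$, i.e.\ $\sum_{p\le x}|f(p)|^2=\mathrm{Li}(x)+\Ea_{|f|^2}(x)$ with $\Ea_{|f|^2}(x)=o(x/\log x)$ and $\int_2^\infty x^{-2}|\Ea_{|f|^2}(x)|\,dx<\infty$. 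Riemann--Stieltjes integration then gives
\[ C_y(h;v)=\int_{2^-}^{y}\phi_v(x)\,d\bigl(\mathrm{Li}(x)+\Ea_{|f|^2}(x)\bigr)=\underbrace{\int_2^y\frac{\phi_v(x)}{\log x}\,dx}_{=:\,\mathrm{Main}_y(h)}\;+\;\underbrace{\int_{2^-}^y\phi_v(x)\,d\Ea_{|f|^2}(x)}_{=:\,\mathrm{Err}_y(h)}, \]
and I would estimate the two pieces separately. I describe $v=\OurEpsilon_{y,u_1}$ (with $u_1>0$, the case $u_1=0$ being trivial since $\OurEpsilon_{y,0}\equiv0$); the product case is handled identically, the only change being the harmless extra decay $|v(x)|\le u_1u_2(\log x)^2/(4(\log y)^2)$.

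For $\mathrm{Main}_y(h)$, the substitution $x=e^{s\log y}$ turns it into $\int_{\log 2/\log y}^{1}\psi(s)\cos(|h|s\log y)\,ds$ with $\psi(s)=(e^{-u_1s/2}-1)/s$, which extends to a $C^\infty$ function on $[0,1]$ (for the product case $\psi(s)=(e^{-u_1s/2}-1)(e^{-u_2s/2}-1)/s$, also $C^\infty$, with $\psi(0)=0$). Hence $|\mathrm{Main}_y(h)|\le\|\psi\|_{L^1[0,1]}=O(1)$ uniformly in $y\ge2$ and $h\in\RR$, which gives the boundedness for this piece. When $|h|\ge\delta>0$, one integration by parts yields $|\mathrm{Main}_y(h)|\le(|h|\log y)^{-1}\bigl(2\|\psi\|_\infty+\|\psi'\|_{L^1[0,1]}\bigr)\ll_\delta 1/\log y$, so $\mathrm{Main}_y(h)\to0$ uniformly for $|h|$ in compact subsets of $\RR\setminus\{0\}$. (For $h=0$ one has $\mathrm{Main}_y(0)\to\int_0^1\psi(s)\,ds\ne0$, which is exactly why the convergence assertion must exclude $h=0$.)

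For $\mathrm{Err}_y(h)$, integrate by parts: $\mathrm{Err}_y(h)=\phi_v(y)\Ea_{|f|^2}(y)-\phi_v(2^-)\Ea_{|f|^2}(2^-)-\int_{2^-}^y\Ea_{|f|^2}(x)\phi_v'(x)\,dx$. Since $v(y)$ equals the constant $e^{-u_1/2}-1$ we have $|\phi_v(y)|\ll 1/y$, so $\phi_v(y)\Ea_{|f|^2}(y)\to0$ by $\Ea_{|f|^2}(y)=o(y/\log y)$, while the term at $2^-$ is $\ll_{u_1}1/\log y$ because $|v(2)|\ll_{u_1}1/\log y$. A direct computation gives $\phi_v'(x)=x^{-2}\bigl[(1-(1+a)x^{-a})\cos(|h|\log x)+|h|(1-x^{-a})\sin(|h|\log x)\bigr]$ with $a=u_1/(2\log y)$, whence the crude bound $|\phi_v'(x)|\ll(1+|h|)x^{-2}$ holds for all $x\ge2$ (for $\log y\ge u_1/2$); combined with $\int_2^\infty x^{-2}|\Ea_{|f|^2}(x)|\,dx<\infty$ this already gives $\mathrm{Err}_y(h)=O(1)$ uniformly for $h$ in compacts. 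To obtain $\mathrm{Err}_y(h)\to0$ \emph{uniformly} in $h$ — the one genuinely delicate point, since the $\log$-weighted integral $\int_2^\infty x^{-2}|\Ea_{|f|^2}(x)|\log x\,dx$ is not available — I would split the integral at $X_0:=\exp(\sqrt{\log y})$. On $[X_0,y]$ the crude bound contributes $\ll(1+|h|)\int_{X_0}^\infty x^{-2}|\Ea_{|f|^2}(x)|\,dx$, the tail of a convergent integral, hence $\to0$; on $[2,X_0]$ one uses $|1-x^{-a}|\le a\log x\le a\log X_0\ll_{u_1}1/\sqrt{\log y}$ to improve the bound to $|\phi_v'(x)|\ll_{u_1}(1+|h|)(\log X_0/\log y)x^{-2}$, so that part of the integral is $\ll_{u_1,f}(1+|h|)/\sqrt{\log y}\to0$. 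Both estimates are uniform over $|h|\le M$, so $\mathrm{Err}_y(h)\to0$ uniformly for $h$ in any compact subset of $\RR$ (no exclusion of $0$ is needed here).

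Adding the two pieces gives $C_y(h;v)=O(1)$ uniformly for $h$ in compact subsets of $\RR$ and $C_y(h;v)\to0$ uniformly for $h$ in compact subsets of $\RR\setminus\{0\}$, which are the four assertions. I expect the only real obstacle to be the uniform-in-$h$ convergence of $\mathrm{Err}_y(h)$: a plain dominated-convergence argument — the integrand $\mathbf{1}_{[2,y]}(x)\Ea_{|f|^2}(x)\phi_v'(x)$ tends to $0$ pointwise and is dominated by $(1+\sup|h|)x^{-2}|\Ea_{|f|^2}(x)|\in L^1$ — only yields convergence for each fixed $h$, and it is the two-scale cutoff at $X_0=\exp(\sqrt{\log y})$ that upgrades this to the required uniformity.
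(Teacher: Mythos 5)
Your proof is correct and follows essentially the same route as the paper's: split the prime sum via $d\pi_{|f|^2}(x) = dx/\log x + d\Ea_{|f|^2}(x)$, handle the main piece by the change of variables $x = y^s$ (uniform boundedness from $\|\psi\|_{L^1[0,1]}$, decay for $|h|\ge\delta$ from one integration by parts), and handle the error piece by integration by parts against $\Ea_{|f|^2}$ together with the convergence of $\int_2^\infty x^{-2}|\Ea_{|f|^2}(x)|\,dx$.

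The one genuine deviation, your two-scale cutoff at $X_0 = \exp(\sqrt{\log y})$ to obtain uniformity in $h$, is correct but unnecessary, and your worry that plain dominated convergence ``only yields convergence for each fixed $h$'' is unfounded here because the $h$-dependence of $\phi_v'$ can be factored out. Indeed, writing $a := u_1/(2\log y)$, the identity $1-(1+a)x^{-a} = -\OurEpsilon_{y,u_1}(x) - ax^{-a}$ gives the sharper pointwise bound $|\phi_v'(x)| \le x^{-2}\bigl[(1+|h|)\,|\OurEpsilon_{y,u_1}(x)| + a\bigr]$, so that for all $|h|\le M$,
\[ \biggl|\int_2^y \Ea_{|f|^2}(x)\,\phi_v'(x)\,dx\biggr| \le (1+M)\int_2^\infty \frac{|\OurEpsilon_{y,u_1}(x)|\,|\Ea_{|f|^2}(x)|}{x^2}\,dx + a\int_2^\infty \frac{|\Ea_{|f|^2}(x)|}{x^2}\,dx, \]
and the right-hand side no longer depends on $h$: the first integral tends to $0$ as $y\to\infty$ by dominated convergence (pointwise $\to0$, dominated by $|\Ea_{|f|^2}(x)|/x^2\in L^1$), and the second term tends to $0$ because $a\to0$. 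Uniformity over any compact $h$-set is then automatic, which is exactly what the paper's terse ``vanishes as $y\to\infty$ by dominated convergence'' is invoking; your two-scale argument is a heavier hammer than the problem requires.
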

\begin{proof}
Let us write
\[    C_y(h; \OurEpsilon_{y, u_1})
    = \int_{2^-}^{y^+} \frac{\OurEpsilon_{y, u_1}(p) \cos(|h| \log p)}{p} d \pi_{|f|^2}(p)
    = \int_{2^-}^{y^+} \frac{\OurEpsilon_{y, u_1}(p) \cos(|h| \log p)}{p} \left[\frac{dp}{\log p} + d \Ea_{|f|^2}(p)\right]
    = \mathrm{I} + \mathrm{II}. \]
For the main term, we have
\[ I  = \int_{2}^{y} \frac{\OurEpsilon_{y, u_1}(p) \cos(|h| \log p)}{p} \frac{dp}{\log p}
= \int_{\frac{\log 2}{\log y}}^1 \left[\exp\left(-\frac{u_1}{2} x\right) - 1\right] \cos(|h| x \log y) \frac{dx}{x}\]
and integration by parts easily suggests that $|\mathrm{I}| \ll (h \log y)^{-1} $.

As for the error term, we have
\begin{align*}
\mathrm{II} 
& =  \left[\frac{\OurEpsilon_{y, u_1}(p) \cos(|h| \log p)}{p} \Ea_{|f|^2}(p)\right]_{2^-}^{y^+}
- \int_{2}^{y} \left[\frac{\partial}{\partial p} \frac{\OurEpsilon_{y, u_1}(p) \cos(|h| \log p)}{p}\right] \Ea_{|f|^2} (p) dp\\
& = O\left(\frac{\Ea_{|f|^2}(y)}{y} + | \OurEpsilon_{y, u_1}(p)|\right)
+ \int_2^y \OurEpsilon_{y, u_1}(p) \left[\frac{u_1}{2 \log y} \cos(|h| \log p) + |h|\sin(|h| \log p) + 1\right]\Ea_{|f|^2}(p) \frac{dp}{p^2}.
\end{align*}
The integral on the last line is uniformly bounded in absolute value by  $\displaystyle \left[ 1 + |h| +  \frac{u_1}{2 \log y}\right]\int_2^\infty p^{-2}\Ea_{|f|^2}(p)dp$ (which is finite by the assumption on $f$) and vanishes as $y \to \infty$ by dominated convergence. This verifies both claims for $C_y(h; \OurEpsilon_{y, u_1})$. The proof for $C_y(h; \OurEpsilon_{y, u_1}\OurEpsilon_{y, u_2})$ is similar.
\end{proof}

\subsection{Proof of \texorpdfstring{\Cref{lem:mod-cross-mom}}{Theorem \ref{lem:mod-cross-mom}}}\label{sec:main_lemma}
We adopt an approach based on generalised dominated convergence (or Vitali convergence theorem): to establish $\lim_{n \to \infty} \EE[X_n] = \EE[X_\infty]$, it suffices to verify that 
\[ \mathrm{(i)}\qquad (X_n)_{n\ge 1} \quad \text{are uniformly integrable} 
\qquad \qquad \text{and} \qquad  \qquad \mathrm{(ii)}\quad  X_n \xrightarrow[n \to \infty]{p} X_\infty. \]
We also recall that a sufficient criterion for uniform integrability is a uniform bound on the $r$-th absolute moments for some $r > 1$, because 
$\sup_{n \ge 1} \EE[|X_n| \mathbf{1}_{\{|X_n| > K\}}] 
\le \left(\sup_{n \ge 1} \EE[|X_n|^r]\right) /K^{r-1}\xrightarrow[K \to \infty]{} 0.$
\paragraph{Step 1: uniform integrability.} We first show that: 
\begin{lem}
The collection of random variables 
\begin{equation}\label{eq:key-integral-msmm}
\int_{I \times I} \frac{M_y(0)^2 E_y(\mathbf{u}, \mathbf{t})}{M_y(u_1)M_y(u_2)} \nu_{y,0}(dt_1; \alpha_y^{\mathbf{u}, \mathbf{t}})\nu_{y, 0}(dt_2; \alpha_y^{\mathbf{u}, \mathbf{t}}) 
\exp\left\{-L \nu_{y, 0}(I; \alpha_{y}^{\mathbf{u}, \mathbf{t}})\right\}, \qquad y \ge 3
\end{equation}
are uniformly integrable.
\end{lem}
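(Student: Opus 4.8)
The plan is to verify the sufficient criterion for uniform integrability recalled above, namely a uniform bound on the $r$-th moment for some $r>1$; I will take $r=2$. Denote by $W_y$ the random variable in \eqref{eq:key-integral-msmm} and set
\[ R_y := \sup_{t_0 \in I,\, \mathbf{t} \in I^2}\left|\widetilde{\Ga}^{\mathbf{u}, \mathbf{t}}_{y, 0}(t_0)\right|, \]
which is precisely the quantity controlled in \Cref{cor:residual-estimate}. The goal is to show that, pointwise on the probability space, $W_y \le C\,e^{4R_y}$ for some deterministic constant $C = C(\mathbf{u}, f, I, L)$; squaring and taking expectations then finishes the proof via \eqref{eq:residual-exp-uniform}.

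The first ingredient is a bound on the prefactor. Combining \Cref{lem:mgf-estimate1} and \Cref{lem:mgf-estimate2}, the ratio $M_y(0)^2 E_y(\mathbf{u}, \mathbf{t}) / \left(M_y(u_1)M_y(u_2)\right)$ is the exponential of a sum over $p \le y$ of terms of the form $\frac{|f(p)|^2}{p}\left[2\OurEpsilon_{y, u_1}(p)\OurEpsilon_{y, u_2}(p)\cos(|t_1 - t_2|\log p) - 2\OurEpsilon_{y, u_1}(p) - 2\OurEpsilon_{y, u_2}(p)\right]$ plus an error controlled by $\sum_p \frac{|f(p)|^3}{p^{3/2}}\big(|\OurEpsilon_{y, u_1}(p)| + |\OurEpsilon_{y, u_2}(p)|\big)$. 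By \Cref{lem:ingredient-bound} (applied with $h = 0$ and with $h = t_1 - t_2$) together with the summability assumptions on $f$, all these quantities are bounded uniformly in $y \ge 3$ and $\mathbf{t} \in I^2$ (for fixed $\mathbf{u}$), so the prefactor is $\le C_1$ for some deterministic $C_1$.

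The second ingredient transfers the shift $\alpha \mapsto \alpha_y^{\mathbf{u}, \mathbf{t}}$ onto the densities. Writing $\Delta_y^{\mathbf{u}, \mathbf{t}}(p) = \alpha_y^{\mathbf{u}, \mathbf{t}}(p) - \alpha(p)$ and using that $v \mapsto \Ga_{y, 0}(t_0; v)$ is $\RR$-linear, the defining formula $\nu_{y, 0}(dt; v) = \sqrt{\log\log y}\, M_y(0)^{-1}\exp(\Ga_{y, 0}(t; v))\,dt$ gives, for every $\mathbf{t} \in I^2$,
\[ \nu_{y, 0}(dt_j; \alpha_y^{\mathbf{u}, \mathbf{t}}) = \exp\left(\Ga_{y, 0}\!\left(t_j; \Delta_y^{\mathbf{u}, \mathbf{t}}\right)\right)\nu_{y, 0}(dt_j; \alpha), \]
and likewise for $\nu_{y, 0}(I; \alpha_y^{\mathbf{u}, \mathbf{t}})$. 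Splitting $\Ga_{y, 0}(t_j; \Delta_y^{\mathbf{u}, \mathbf{t}}) = \widetilde{\Ga}^{\mathbf{u}, \mathbf{t}}_{y, 0}(t_j) + \Ga_{y, 0}(t_j; \EE[\alpha_y^{\mathbf{u}, \mathbf{t}}])$ and invoking \Cref{lem:mean-estimate} with \Cref{lem:ingredient-bound}, the mean-shift term is bounded in absolute value by a deterministic $C_0$, uniformly in $t_j \in I$, $\mathbf{t} \in I^2$ and $y \ge 3$; hence $|\Ga_{y, 0}(t_j; \Delta_y^{\mathbf{u}, \mathbf{t}})| \le C_0 + R_y$ throughout. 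Substituting these bounds into \eqref{eq:key-integral-msmm} — the two density factors contribute at most $e^{2(C_0 + R_y)}$, while $\nu_{y, 0}(I; \alpha_y^{\mathbf{u}, \mathbf{t}}) \ge e^{-(C_0 + R_y)}\nu_{y, 0}(I; \alpha)$ so that the damping factor is at most $\exp\!\left(-L e^{-(C_0 + R_y)}\nu_{y, 0}(I; \alpha)\right)$ — one obtains
\[ W_y \le C_1\, e^{2(C_0 + R_y)}\, \nu_{y, 0}(I; \alpha)^2\, \exp\!\left(-L e^{-(C_0 + R_y)}\nu_{y, 0}(I; \alpha)\right) \le C_1\, e^{2(C_0 + R_y)}\, \frac{4}{e^2 \left(L e^{-(C_0 + R_y)}\right)^2} = \frac{4 C_1 e^{4C_0 - 2}}{L^2}\, e^{4 R_y}, \]
using the elementary bound $x^2 e^{-ax} \le 4/(e^2 a^2)$ with $x = \nu_{y, 0}(I; \alpha)$ and $a = L e^{-(C_0 + R_y)}$.

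Finally, squaring and taking expectations, $\EE[W_y^2] \le \left(4 C_1 e^{4C_0 - 2}/L^2\right)^2 \EE[e^{8 R_y}]$, and $\sup_{y \ge 3}\EE[e^{8 R_y}] < \infty$ by \eqref{eq:residual-exp-uniform} in \Cref{cor:residual-estimate}; hence $\sup_{y \ge 3}\EE[W_y^2] < \infty$, which gives uniform integrability. The only genuinely delicate point — already taken care of by the time we reach this lemma — is the uniformity in the integration variable: one needs $R_y$, a supremum over \emph{both} the evaluation point $t_0$ and the shift parameter $\mathbf{t}$, to have uniformly bounded exponential moments, which is exactly what the concentration-plus-generic-chaining analysis behind \Cref{cor:residual-estimate} supplies. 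Everything else is bookkeeping built on the elementary moment-generating-function estimates established above, and no further number-theoretic input is needed.
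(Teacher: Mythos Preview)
Your proof is correct and follows essentially the same approach as the paper's: both decompose $\nu_{y,0}(dt_0; \alpha_y^{\mathbf{u}, \mathbf{t}})$ into the mean-shift density and the residual field, bound the deterministic factors via \Cref{lem:mgf-estimate1}--\Cref{lem:ingredient-bound}, apply $x^2 e^{-ax} \ll a^{-2}$, and invoke \Cref{cor:residual-estimate} to control the exponential moments of the supremum of the residual field. The only cosmetic difference is that the paper bundles the prefactor and the mean-shift term together into the single expression \eqref{eq:extra-density} before bounding, whereas you handle them separately as $C_1$ and $C_0$; the content is identical.
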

\begin{proof}
Recall the notation $\alpha_y^{\mathbf{u}, \mathbf{t}}(p) = \alpha(p) + \EE\left[\alpha_y^{\mathbf{u}, \mathbf{t}}(p)\right] + \widetilde{\Delta}_y^{\mathbf{u}, \mathbf{t}}(p)$. By definition, we have
\[
\nu_{y, 0}(dt_0; \alpha_y^{\mathbf{u}, \mathbf{t}})
    = \exp\left\{\Ga_{y, 0}(t_0;\EE\left[\alpha_y^{\mathbf{u}, \mathbf{t}}\right] )\right\}
    \exp\left\{\widetilde{\Ga}^{\mathbf{u}, \mathbf{t}}_{y, 0}(t_0)\right\}
    \nu_{y, 0}(dt_0; \alpha).
\]
Using \Cref{lem:mgf-estimate1}, \Cref{lem:mgf-estimate2} and \Cref{lem:mean-estimate}, we see that
\begin{align}
\notag 
    & \left[\prod_{j=1}^2\frac{M_y(0)}{M_y(u_j)} \exp\left\{\Ga_{y, 0}(t_j;\EE\left[\alpha_y^{\mathbf{u}, \mathbf{t}}\right] )\right\}\right] E_y(\mathbf{u}, \mathbf{t})\\
\label{eq:extra-density}
    & \qquad = \exp\left\{2C_y\left(|t_1 - t_2|; \OurEpsilon_{y, u_1} \OurEpsilon_{y, u_2} + \OurEpsilon_{y, u_1} +\OurEpsilon_{y, u_2} \right) + O\left( \sum_{j =1}^2\sum_{p \le y} \frac{|f(p)|^3}{p^{\frac{3}{2}}}|\OurEpsilon_{y, u_j}(p)|\right) \right\}
\end{align}
where we have again used the notation
\[C_y\left(h; v\right) := \sum_{p \le y} \frac{|f(p)|^2}{p} v(p) \cos(h \log p).\]
By \Cref{lem:ingredient-bound}, there exists some constant $C>0$ such that
\begin{align*}
    \frac{M_y(0)^2 E_y(\mathbf{u}, \mathbf{t})}{M_y(u_1)M_y(u_2)} \nu_{y,0}(dt_1; \alpha_y^{\mathbf{u}, \mathbf{t}})\nu_{y, 0}(dt_2; \alpha_y^{\mathbf{u}, \mathbf{t}}) 
    &\le C
    \exp\left\{\widetilde{\Ga}^{\mathbf{u}, \mathbf{t}}_{y, 0}(t_1)+ \widetilde{\Ga}^{\mathbf{u}, \mathbf{t}}_{y, 0}(t_2)\right\}
    \nu_{y, 0}(dt_1; \alpha)
    \nu_{y, 0}(dt_2; \alpha)\\
    &\le C
    \exp\left\{2\sup_{\vec{\mathbf{t}}' \in I^3}\left|\widetilde{\Ga}^{\mathbf{u}, \mathbf{t}'}_{y, 0}(t_0')\right|\right\}
    \nu_{y, 0}(dt_1; \alpha)
    \nu_{y, 0}(dt_2; \alpha).
\end{align*}
Similarly, up to a different choice of the constant $C$, we also have
\[\nu_{y, 0}(I; \alpha_{y}^{\mathbf{u}, \mathbf{t}})
\ge C^{-1} \exp\left\{-\sup_{\vec{\mathbf{t}}' \in I^3}\left|\widetilde{\Ga}^{\mathbf{u}, \mathbf{t}'}_{y, 0}(t_0')\right|\right\} \nu_{y, 0}(I; \alpha).\]
But this means
\begin{align*}
\int_{I \times I} & \frac{M_y(0)^2 E_y(\mathbf{u}, \mathbf{t})}{M_y(u_1)M_y(u_2)} \nu_{y,0}(dt_1; \alpha_y^{\mathbf{u}, \mathbf{t}})\nu_{y, 0}(dt_2; \alpha_y^{\mathbf{u}, \mathbf{t}}) 
e^{-L \nu_{y, 0}(I; \alpha_{y}^{\mathbf{u}, \mathbf{t}})}\\
& \qquad \le 
C\exp\left\{2\sup_{\vec{\mathbf{t}}' \in I^3}\left|\widetilde{\Ga}^{\mathbf{u}, \mathbf{t}'}_{y, 0}(t_0')\right|\right\}
\nu_{y, 0}(I; \alpha)^2
\exp\left\{-L C^{-1} \exp\left\{-\sup_{\vec{\mathbf{t}}' \in I^3}\left|\widetilde{\Ga}^{\mathbf{u}, \mathbf{t}'}_{y, 0}(t_0')\right|\right\} \nu_{y, 0}(I; \alpha)\right\}\\
& \qquad \le \frac{4 C^3}{L^2} \exp\left\{4\sup_{\vec{\mathbf{t}}' \in I^3}\left|\widetilde{\Ga}^{\mathbf{u}, \mathbf{t}'}_{y, 0}(t_0')\right|\right\}
\end{align*}
where the last step follows from the elementary inequality $x^2 e^{-Ax} \le 4A^{-2}$ for all $x, A \ge 0$. The last expression has uniformly bounded moments for any fixed order by \Cref{cor:residual-estimate}, from which we deduce the desired uniform integrability.
\end{proof}
\paragraph{Step 2: convergence in probability.} It remains to show that:
\begin{lem}\label{lem:I_y-convergence}
The random variables in \eqref{eq:key-integral-msmm} converge in probability to $\nu_{\infty}(I)^2 e^{-L\nu_\infty(I)}$ as $y \to \infty$.
\end{lem}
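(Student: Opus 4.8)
The plan is to apply the generalised dominated convergence theorem as set up at the start of \Cref{sec:main_lemma}: uniform integrability is Step~1, so only the convergence in probability remains. Recall from Step~1 that, using $\nu_{y, 0}(dt_0; \alpha_y^{\mathbf{u}, \mathbf{t}}) = \exp\{\Ga_{y, 0}(t_0;\EE[\alpha_y^{\mathbf{u}, \mathbf{t}}])\}\exp\{\widetilde{\Ga}^{\mathbf{u}, \mathbf{t}}_{y, 0}(t_0)\}\,\nu_{y, 0}(dt_0; \alpha)$ together with \eqref{eq:extra-density} and \Cref{lem:mgf-estimate1,lem:mgf-estimate2,lem:mean-estimate}, the random variable in \eqref{eq:key-integral-msmm} equals
\begin{align*}
&\int_{I \times I}\exp\Big\{2C_y\big(|t_1-t_2|;\OurEpsilon_{y, u_1}\OurEpsilon_{y, u_2}+\OurEpsilon_{y, u_1}+\OurEpsilon_{y, u_2}\big)+o(1)\Big\}\\
&\qquad\times\exp\big\{\widetilde{\Ga}^{\mathbf{u}, \mathbf{t}}_{y, 0}(t_1)+\widetilde{\Ga}^{\mathbf{u}, \mathbf{t}}_{y, 0}(t_2)\big\}\,e^{-L\nu_{y, 0}(I;\alpha_y^{\mathbf{u}, \mathbf{t}})}\,\nu_{y, 0}(dt_1;\alpha)\,\nu_{y, 0}(dt_2;\alpha),
\end{align*}
where the $o(1)$ is deterministic and uniform in $\mathbf{t} \in I^2$. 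I will split $I \times I$ into the near-diagonal region $D_{\OurEpsilon} := \{|t_1 - t_2| < \OurEpsilon\}$ and its complement, show the $D_{\OurEpsilon}$-contribution is uniformly small in $y$ as $\OurEpsilon \to 0$, and that the $D_{\OurEpsilon}^c$-contribution converges in probability for each fixed $\OurEpsilon > 0$.

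For the near-diagonal part, \Cref{lem:ingredient-bound} gives $C_y(\,\cdot\,;\OurEpsilon_{y, u_1}\OurEpsilon_{y, u_2}+\OurEpsilon_{y, u_1}+\OurEpsilon_{y, u_2}) \ll 1$ uniformly, including on the diagonal, and by \Cref{lem:mean-estimate} and \Cref{lem:ingredient-bound} the density $\exp\{\Ga_{y, 0}(t_0;\EE[\alpha_y^{\mathbf{u}, \mathbf{t}}])\}$ is bounded above and below by positive deterministic constants uniformly in $t_0, \mathbf{t}, y$. Bounding $e^{-L\nu_{y, 0}(I;\alpha_y^{\mathbf{u}, \mathbf{t}})} \le 1$ and $\widetilde{\Ga}^{\mathbf{u}, \mathbf{t}}_{y, 0}(t_j) \le \sup_{\vec{\mathbf{t}}' \in I^3}|\widetilde{\Ga}^{\mathbf{u}, \mathbf{t}'}_{y, 0}(t_0')|$, the $D_{\OurEpsilon}$-contribution is at most $C\exp\{2\sup_{\vec{\mathbf{t}}' \in I^3}|\widetilde{\Ga}^{\mathbf{u}, \mathbf{t}'}_{y, 0}(t_0')|\}\,(\nu_{y, 0}\otimes\nu_{y, 0})(\overline{D_{\OurEpsilon}})$. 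By \Cref{cor:residual-estimate} the exponential factor tends to $1$ in probability; and since $\nu_{y, 0}(\,\cdot\,;\alpha) \xrightarrow[y \to \infty]{p} \nu_\infty$ (the case $u=0$) and $\mu \mapsto \mu \otimes \mu$ is continuous for the weak$^*$ topology on measures on the compact set $I$, we get $(\nu_{y, 0}\otimes\nu_{y, 0})(\overline{D_{\OurEpsilon}}) \xrightarrow[y \to \infty]{p} (\nu_\infty\otimes\nu_\infty)(\overline{D_{\OurEpsilon}})$, using that $\{|t_1-t_2|=\OurEpsilon\}$ is $\nu_\infty\otimes\nu_\infty$-null by the non-atomicity of $\nu_\infty$. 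Finally $(\nu_\infty\otimes\nu_\infty)(\overline{D_{\OurEpsilon}}) \downarrow (\nu_\infty\otimes\nu_\infty)(\{t_1 = t_2\}) = 0$ as $\OurEpsilon \downarrow 0$, again by non-atomicity, so the near-diagonal contribution is uniformly negligible.

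For the off-diagonal part, fix $\OurEpsilon > 0$. On $D_{\OurEpsilon}^c$ all oscillatory terms are tamed: $C_y(|t_1-t_2|;\,\cdot\,) \to 0$ uniformly there by \Cref{lem:ingredient-bound}, so the first exponential tends to $1$ uniformly, and $\exp\{\widetilde{\Ga}^{\mathbf{u}, \mathbf{t}}_{y, 0}(t_1)+\widetilde{\Ga}^{\mathbf{u}, \mathbf{t}}_{y, 0}(t_2)\} \to 1$ uniformly in probability by \Cref{cor:residual-estimate}. The crucial remaining input is the claim
\[ \sup_{\mathbf{t} \in I^2}\big|\nu_{y, 0}(I;\alpha_y^{\mathbf{u}, \mathbf{t}}) - \nu_{y, 0}(I;\alpha)\big| \xrightarrow[y \to \infty]{p} 0, \]
which, via the Lipschitz bound $|e^{-La}-e^{-Lb}| \le L|a-b|$, upgrades to $e^{-L\nu_{y, 0}(I;\alpha_y^{\mathbf{u}, \mathbf{t}})} \to e^{-L\nu_{y, 0}(I;\alpha)}$ uniformly in $\mathbf{t}$ in probability. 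Granting this, the $D_{\OurEpsilon}^c$-contribution equals $e^{-L\nu_{y, 0}(I;\alpha)}\,(\nu_{y, 0}\otimes\nu_{y, 0})(D_{\OurEpsilon}^c)$ up to an error which, after bounding the uniform multiplicative errors against $(\nu_{y, 0}\otimes\nu_{y, 0})(D_{\OurEpsilon}^c) \le \nu_{y, 0}(I;\alpha)^2 = O_{\PP}(1)$, tends to $0$ in probability; hence it converges in probability to $e^{-L\nu_\infty(I)}\,(\nu_\infty\otimes\nu_\infty)(D_{\OurEpsilon}^c)$, where we again use the continuity of $\mu\mapsto\mu\otimes\mu$ and that $D_{\OurEpsilon}^c$ is $\nu_\infty\otimes\nu_\infty$-a.s.\ a continuity set. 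Letting $\OurEpsilon \to 0$ and combining with the previous paragraph yields the limit $\nu_\infty(I)^2 e^{-L\nu_\infty(I)}$, as required.

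It remains to establish the displayed claim, which I expect to be the main obstacle: unlike the oscillatory terms, the shift of mean $\Ga_{y, 0}(t_0;\EE[\alpha_y^{\mathbf{u}, \mathbf{t}}])$ does \emph{not} vanish uniformly as $y\to\infty$ because of its behaviour near the diagonal. By \Cref{lem:mean-estimate} it is the sum of two terms $2C_y(|t_0-t_j|;\OurEpsilon_{y, u_j})$ plus a deterministic vanishing error, and by \Cref{lem:ingredient-bound} these are $\ll 1$ always and $\to 0$ once $|t_0 - t_j|$ is bounded away from $0$. Fixing $\delta > 0$ and splitting the integral defining $\nu_{y, 0}(I;\alpha_y^{\mathbf{u}, \mathbf{t}})$ according to whether $t_0$ lies within distance $\delta$ of $\{t_1, t_2\}$: on the outside the integrand is $(1 + o_{\PP}(1))\,\nu_{y, 0}(dt_0;\alpha)$ with the $o_{\PP}(1)$ uniform in $\mathbf{t}$ (invoking \Cref{cor:residual-estimate} for the residual field), contributing $o_{\PP}(1)\cdot\nu_{y, 0}(I;\alpha)$; on the inside the integrand is bounded by a deterministic constant times $\nu_{y, 0}(dt_0;\alpha)$, contributing at most $2C\sup_{t \in I}\nu_{y, 0}([t-\delta, t+\delta] \cap I;\alpha)$. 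Covering $I$ by finitely many fixed sub-intervals reduces this supremum to the maximum of finitely many $\nu_{y, 0}$-masses, each converging in probability to the $\nu_\infty$-mass of a slightly enlarged interval; by non-atomicity of $\nu_\infty$ these tend to $0$ as $\delta \to 0$, uniformly in the centre. This yields the claimed uniform smallness and completes the proof.
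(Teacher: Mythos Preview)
Your proof is correct and uses the same ingredients as the paper (the deterministic bounds from \Cref{lem:ingredient-bound}, the residual-field control of \Cref{cor:residual-estimate}, and non-atomicity of $\nu_\infty$), but the organisation is genuinely different. The paper partitions $I$ into $2^m$ dyadic subintervals, constructs explicit upper and lower bounds $\Ia_{y,j,k,\pm}^{(m)}$ on the integrand over each rectangle $I_j^{(m)}\times I_k^{(m)}$, and then invokes a probabilistic squeezing lemma (\Cref{lem:plim-squeeze}) in the double limit $y\to\infty$ then $m\to\infty$. In particular the paper never asserts your uniform convergence claim $\sup_{\mathbf{t}}|\nu_{y,0}(I;\alpha_y^{\mathbf{u},\mathbf{t}})-\nu_{y,0}(I;\alpha)|\to 0$; instead it only needs the weaker \emph{lower} bound $\nu_{y,0}(I;\alpha_y^{\mathbf{u},\mathbf{t}})\ge e^{-R_{y,j,k}^{(m)}}\nu_{y,0}(I\setminus(\widetilde I_j^{(m)}\cup\widetilde I_k^{(m)});\alpha)$ obtained by discarding the neighbourhoods of $t_1,t_2$, since this suffices to control $\exp\{-L\nu_{y,0}(I;\alpha_y^{\mathbf{u},\mathbf{t}})\}$ from above.

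Your auxiliary claim is stronger and makes the off-diagonal analysis cleaner (you can replace $e^{-L\nu_{y,0}(I;\alpha_y^{\mathbf{u},\mathbf{t}})}$ directly by $e^{-L\nu_{y,0}(I;\alpha)}$, avoiding matched upper/lower constructions). The paper's route, on the other hand, avoids the extra step of proving uniform convergence and packages the two-parameter limit ($y$ then the resolution parameter) into a single reusable lemma. Your final ``letting $\OurEpsilon\to 0$'' step is implicitly the same two-parameter squeezing argument; if you want to make it fully rigorous it is worth stating a version of \Cref{lem:plim-squeeze} explicitly (with $X_{n,-}^{(m)}$ the off-diagonal integral and $X_{n,+}^{(m)}$ the off-diagonal integral plus your upper bound on the near-diagonal part).
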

The proof of \Cref{lem:I_y-convergence} relies on the following probabilistic squeezing lemma.
\begin{lem}\label{lem:plim-squeeze}
Let $(X_n)_{n \ge 1}$ be a sequence of real-valued random variables. Suppose for each $m \in \NN$ there exist two sequences of random variables $(X_{n, \pm}^{(m)})_{n \ge 1}$ such that $X_{n, -}^{(m)} \le X_n \le X_{n, +}^{(m)}$ and $X_{n, \pm}^{(m)} \xrightarrow[n \to \infty]{p} X_{\infty, \pm}^{(m)}$ where $X_{\infty, \pm}^{(m)}$ satisfies $X_{\infty, \pm}^{(m)} \xrightarrow[m \to \infty]{p} X_\infty$. Then $X_n \xrightarrow[n \to \infty]{p}X_\infty$.
\end{lem}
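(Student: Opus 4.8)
The plan is a routine $\OurEpsilon$--$\delta$ argument: the two-sided sandwich lets one convert a large deviation of $X_n$ from $X_\infty$ into a large deviation of one of the bracketing variables $X_{n,\pm}^{(m)}$, which is then split into an ``$n$-fluctuation'' (controlled by $X_{n,\pm}^{(m)}\xrightarrow[n\to\infty]{p}X_{\infty,\pm}^{(m)}$ for fixed $m$) and an ``$m$-fluctuation'' (controlled by $X_{\infty,\pm}^{(m)}\xrightarrow[m\to\infty]{p}X_\infty$). The only thing one must be careful about is the order in which the two parameters are sent to infinity: $m$ is chosen first and held fixed, and only then is $n$ sent to $\infty$.

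First I would record, for every $m,n\in\NN$ and every $\OurEpsilon>0$, the elementary inclusions obtained from $X_{n,-}^{(m)}\le X_n\le X_{n,+}^{(m)}$:
\[
\{X_n > X_\infty + \OurEpsilon\}\subseteq\{X_{n,+}^{(m)} > X_\infty + \OurEpsilon\}\subseteq\Big\{\big|X_{n,+}^{(m)}-X_{\infty,+}^{(m)}\big|>\OurEpsilon/2\Big\}\cup\Big\{X_{\infty,+}^{(m)}-X_\infty>\OurEpsilon/2\Big\},
\]
and symmetrically
\[
\{X_n < X_\infty - \OurEpsilon\}\subseteq\{X_{n,-}^{(m)} < X_\infty - \OurEpsilon\}\subseteq\Big\{\big|X_{n,-}^{(m)}-X_{\infty,-}^{(m)}\big|>\OurEpsilon/2\Big\}\cup\Big\{X_{\infty,-}^{(m)}-X_\infty<-\OurEpsilon/2\Big\}.
\]
In each line the second inclusion is immediate: if the first ``good'' event fails then $\big|X_{n,\pm}^{(m)}-X_{\infty,\pm}^{(m)}\big|\le\OurEpsilon/2$, which together with the displacement of $X_{n,\pm}^{(m)}$ from $X_\infty$ forces the stated displacement of $X_{\infty,\pm}^{(m)}$.

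Then, given $\OurEpsilon,\delta>0$, I would invoke $X_{\infty,\pm}^{(m)}\xrightarrow[m\to\infty]{p}X_\infty$ to fix one value of $m$ for which $\PP\big(|X_{\infty,+}^{(m)}-X_\infty|>\OurEpsilon/2\big)<\delta/2$ and $\PP\big(|X_{\infty,-}^{(m)}-X_\infty|>\OurEpsilon/2\big)<\delta/2$; with this $m$ now fixed, $X_{n,\pm}^{(m)}\xrightarrow[n\to\infty]{p}X_{\infty,\pm}^{(m)}$ supplies an $N$ such that $\PP\big(|X_{n,\pm}^{(m)}-X_{\infty,\pm}^{(m)}|>\OurEpsilon/2\big)<\delta/2$ for both signs whenever $n\ge N$. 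Applying a union bound in the two displayed inclusions then gives $\PP(|X_n-X_\infty|>\OurEpsilon)<2\delta$ for all $n\ge N$, and since $\OurEpsilon,\delta>0$ were arbitrary this is exactly $X_n\xrightarrow[n\to\infty]{p}X_\infty$. I do not anticipate any genuine obstacle here; the statement is a soft real-analysis fact, and all the content lies in keeping track of the two-parameter limit in the correct order --- precisely the feature for which the hypotheses have been arranged.
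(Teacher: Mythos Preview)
Your proof is correct and follows essentially the same approach as the paper: the paper also uses the sandwich to bound $\PP(X_n - X_\infty > \delta)$ by $\PP(|X_{n,+}^{(m)} - X_{\infty,+}^{(m)}| > \delta/2) + \PP(X_{\infty,+}^{(m)} - X_\infty > \delta/2)$, sends $n\to\infty$ first and then $m\to\infty$, and treats the other tail symmetrically. The only cosmetic difference is that the paper phrases the two-parameter limit via $\limsup_n$ followed by $m\to\infty$, whereas you make an explicit $\OurEpsilon$--$\delta$ choice of $m$ and then $N$.
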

\begin{proof}
Let $\delta > 0$. We have
\begin{align*}
\PP\left(X_n - X_\infty > \delta\right)
 \le \PP\left(X_{n, +}^{(m)} - X_\infty > \delta\right)
& \le \PP\left(X_{n, +}^{(m)} - X_{\infty, +}^{(m)} > \frac{\delta}{2}\right)
+ \PP\left(X_{\infty, +}^{(m)} - X_\infty > \frac{\delta}{2}\right)\\
&\xrightarrow[n \to \infty]{}
\PP\left(X_{\infty, +}^{(m)} - X_\infty > \frac{\delta}{2}\right)
\xrightarrow[m \to \infty]{} 0.
\end{align*}
Similarly, we have
\[\PP\left(X_n - X_\infty < - \delta\right)
\le \PP\left(X_{n, -}^{(m)} - X_{\infty, -}^{(m)} < - \frac{\delta}{2}\right)
+ \PP\left(X_{\infty, -}^{(m)} - X_\infty < - \frac{\delta}{2}\right)
\xrightarrow[n \to \infty, m \to \infty]{} 0\]
which concludes the proof.
\end{proof}
\begin{proof}[Proof of \Cref{lem:I_y-convergence}]
For each $m \in \NN$, partition $I$ into $2^m$ dyadic subintervals $(I_j^{(m)})_{j = 1, \dots, 2^m}$ ordered from left to right, and write $\widetilde{I}_j^{(m)} := \bigcup_{k=j-1}^{j+1}I_{k}^{(m)}$ for the union of the three consecutive dyadic intervals centred\footnote{We are abusing the terminology here, since $I_0^{(m)}$ and $I_{2^m+1}^{(m)}$ when appearing should be interpreted as empty sets.} at $I_j^{(m)}$. We will now construct upper bounds in the sense of \Cref{lem:plim-squeeze}.

If $t_1 \in I_j^{(m)}$ and $t_2 \in I_k^{(m)}$, then
\begin{align}
\notag 
&\nu_{y, 0}(I; \alpha_{y}^{\mathbf{u}, \mathbf{t}})
\ge \int_{I \setminus (I_j^{(m)} \cup I_k^{(m)})} \exp\left\{\EE\left[\Ga_{y, 0}(t_0;\alpha_y^{\mathbf{u}, \mathbf{t}} )\right]\right\} \exp\left\{\widetilde{\Ga}^{\mathbf{u}, \mathbf{t}}_{y, 0}(t_0)\right\}\nu_{y, 0}(dt_0; \alpha)\\
\notag 
& \qquad \ge 
  \exp\left\{-\sup_{t_0' \in I \setminus (\widetilde{I}_j^{(m)} \cup \widetilde{I}_k^{(m)}), \mathbf{t}' \in I_j^{(m)} \times I_k^{(m)}}\EE\left[\Ga_{y, 0}(t_0';\alpha_y^{\mathbf{u}, \mathbf{t}'} )\right]
  -\sup_{\vec{\mathbf{t}}' \in I^3}\left|\widetilde{\Ga}^{\mathbf{u}, \mathbf{t}'}_{y, 0}(t_0')\right|\right\}
  \nu_{y, 0}\left(I \setminus (\widetilde{I}_j^{(m)} \cup \widetilde{I}_k^{(m)}); \alpha\right)\\
\label{eq:upperbound-ingredient1}
& \qquad =: \exp\left\{- R_{y, j, k}^{(m)}\right\}\nu_{y, 0}\left(I \setminus (\widetilde{I}_j^{(m)} \cup \widetilde{I}_k^{(m)}); \alpha\right).
\end{align}
Note that $\sup_{\vec{\mathbf{t}}' \in I^3}\left|\widetilde{\Ga}^{\mathbf{u}, \mathbf{t}'}_{y, 0}(t_0')\right| \xrightarrow[y \to \infty]{p} 0$ by \Cref{lem:residual-field-bound}, and
\[    \sup_{t_0' \in I \setminus (\widetilde{I}_j^{(m)} \cup \widetilde{I}_k^{(m)}), \mathbf{t}' \in I_j^{(m)} \times I_k^{(m)}}\EE\left[\Ga_{y, 0}(t_0';\alpha_y^{\mathbf{u}, \mathbf{t}'} )\right]
    \xrightarrow[y \to \infty]{} 0 \]
by \Cref{lem:ingredient-bound} since $|t_0' - t_1'| \ge 2^{-m}|I|$ and $|t_0' - t_2'| \ge 2^{-m}|I|$ under the condition for evaluating the supremum. This shows that the lower bound in \eqref{eq:upperbound-ingredient1} converges in probability to $\nu_\infty\left(I \setminus (\widetilde{I}_j^{(m)} \cup \widetilde{I}_k^{(m)})\right)$ as $y \to \infty$.

Next we inspect \eqref{eq:extra-density}. By dominated convergence, we have $\sum_{j =1}^2\sum_{p \le y} \frac{|f(p)|^3}{p^{\frac{3}{2}}}|\OurEpsilon_{y, u_j}(p)| \to 0$ as $y \to \infty$. Combined with \Cref{lem:ingredient-bound}, we see that 
\[ D_y(\mathbf{u}, \mathbf{t}) := \left[\prod_{j=1}^2\frac{M_y(0)}{M_y(u_j)} \exp\left\{\EE\left[\Ga_{y, 0}(t_j;\alpha_y^{\mathbf{u}, \mathbf{t}} )\right]\right\}\right] E_y(\mathbf{u}, \mathbf{t})  \]
is uniformly bounded in $y \ge 2$ and $\mathbf{t} \in I^2$ by some constant $C \ge 1$, and in addition $D_y(\mathbf{u}, \mathbf{t}) \to 1$ as $y \to \infty$ uniformly for $|t_1 - t_2| \ge 2^{-m}|I|$. Therefore,
\begin{align*}
& \int_{I_j^{(m)} \times I_k^{(m)}} \frac{M_y(0)^2 E_y(\mathbf{u}, \mathbf{t})}{M_y(u_1)M_y(u_2)} \nu_{y,0}(dt_1; \alpha_y^{\mathbf{u}, \mathbf{t}})\nu_{y, 0}(dt_2; \alpha_y^{\mathbf{u}, \mathbf{t}}) 
\exp\left\{-L \nu_{y, 0}(I; \alpha_{y}^{\mathbf{u}, \mathbf{t}})\right\}\\
& \le \exp\left\{2\sup_{\vec{\mathbf{t}}' \in I^3}\left|\widetilde{\Ga}^{\mathbf{u}, \mathbf{t}'}_{y, 0}(t_0')\right|\right\}  \nu_{y, 0}\left(I_j^{(m)}; \alpha\right)\nu_{y, 0}\left(I_k^{(m)}; \alpha\right) \exp\left\{-L\exp\left\{- R_{y, j, k}^{(m)}\right\}\nu_{y, 0}\left(I \setminus (\widetilde{I}_j^{(m)} \cup \widetilde{I}_k^{(m)}); \alpha\right)\right\}\\ 
& \qquad \times
\begin{cases}
    C & \text{for $|j-k| \le 2$}\\
    \sup_{|t_1' - t_2'| \ge 2^{-m}} D_y(\mathbf{u}, \mathbf{t}') & \text{for $|j - k| \ge 3$}
\end{cases}\\
& =: \Ia_{y, j, k, +}^{(m)}
\xrightarrow[y \to \infty]{p}
\Ia_{\infty, j, k, +}^{(m)}
 :=
\nu_{\infty}\left(I_j^{(m)}\right)
\nu_{\infty}\left(I_k^{(m)}\right)
\exp\left\{-L\nu_{\infty}\left(I \setminus (\widetilde{I}_j^{(m)} \cup \widetilde{I}_k^{(m)})\right)\right\}
\times
\begin{cases}
    C & \text{for $|j-k| \le 2$,}\\
    1 & \text{for $|j - k| \ge 3$.}
\end{cases}
\end{align*}
Obviously $\sum_{j, k=1}^{2^m} \Ia_{\infty, j, k, +}^{(m)} \ge \nu_\infty(I)^2 e^{-L\nu_\infty(I)}$. On the other hand, by separately considering $j - k \in \{-2, -1, 0, 1, 2\}$ it is easy to see that
\[\sum_{|j-k| \le 2} \nu_{\infty}\left(I_j^{(m)}\right)
\nu_{\infty}\left(I_k^{(m)}\right)
\le 5 \sum_{j =1}^{2^m} \nu_\infty\left(I_j^{(m)}\right)^2
\]
by Cauchy--Schwarz. In particular,
\begin{align*}
\sum_{j, k=1}^{2^m} \Ia_{\infty, j, k, +}^{(m)}
&\le \exp\left\{-L\nu_\infty(I)+ L\sup_{j, k \le 2^m}\nu_\infty\left(\widetilde{I}_j^{(m)} \cup \widetilde{I}_k^{(m)}\right)\right\}\\
& \qquad  \times 
\left\{\sum_{|j-k| \ge 3}\nu_{\infty}\left(I_j^{(m)}\right)
\nu_{\infty}\left(I_k^{(m)}\right)
+ C\sum_{|j-k| \le 2} \nu_{\infty}\left(I_j^{(m)}\right)
\nu_{\infty}\left(I_k^{(m)}\right)
\right\}\\
& \le  \exp\left\{-L\nu_\infty(I)+ 6L\sup_{j\le 2^m}\nu_\infty\left(I_j^{(m)}\right)\right\} 
\left[\nu_\infty(I)^2 + 5C \sum_{j=1}^{2^m} \nu_\infty\left(I_j^{(m)}\right)^2\right]\\
& \xrightarrow[m \to \infty]{p} \nu_\infty(I)^2 e^{-L\nu_\infty(I)}
\end{align*}
where the last step follows from the observations that
\[\sup_{j\le 2^m}\nu_\infty\left(I_j^{(m)}\right) \xrightarrow[m \to \infty]{a.s.} 0
\qquad \text{and} \qquad 
 \sum_{j=1}^{2^m} \nu_\infty\left(I_j^{(m)}\right)^2
 \le \nu_\infty(I)\sup_{j\le 2^m}\nu_\infty\left(I_j^{(m)}\right)
 \xrightarrow[m \to \infty]{a.s.} 0  \]
thanks to the non-atomicity of $\nu_\infty$. Thus, we have established $\sum_{j, k=1}^{2^m} \Ia_{\infty, j, k, +}^{(m)} \xrightarrow[m \to \infty]{p} \nu_\infty(I)^2 e^{-L\nu_\infty(I)}$.

A matching lower bound can be obtained similarly, and we conclude our proof by \Cref{lem:plim-squeeze}.
\end{proof}

\subsection{Proof of \texorpdfstring{\Cref{cor:mc-critical}}{Corollary \ref{cor:mc-critical}}}
Fix $u\ge 0$. Let $\alpha_y(n)=\alpha(n) \mathbf{1}_{p\mid n \implies p \le y}$. For $\delta = u / (2\log y)$,
\begin{align*}
\int_{0}^{1} \left|A_y(1/2+\delta+it)\right|^2 dt
&\ll_u \int_{\RR} \bigg|\frac{A_y(1/2+\delta+it)}{1/2+it+\delta}\bigg|^2 dt \\
& =2\pi\int_{0}^{\infty} |\sum_{n\le t}\alpha_y(n)|^2 \frac{dt}{t^{2+2\delta}} \le 2\pi \int_{0}^{\infty} |\sum_{n\le t}\alpha_y(n)|^2 \frac{dt}{t^2}=\int_{\RR} \bigg|\frac{A_y(1/2+it)}{1/2+it}\bigg|^2 dt 
\end{align*}
where the equalities are due to Plancherel's theorem \cite[Theorem~5.4]{MV}. This means for any $n \in \ZZ$,
\[    m_{y, u}([n, n+1]) 
    \overset{d}{=} m_{y, u}([0,1])
    \ll_u \frac{\sqrt{\log \log y}}{\log y} \int_{0}^{1} \left|A_y\left(1/2+ \delta+ it\right)\right|^2 dt \]
has uniformly bounded $q$-th moments for any fixed $q \in [0,1)$ thanks to \Cref{thm:ayintegral} (we used  $\EE[|A_y(1/2+\delta+it)|^2] = \prod_{p \le y}(1-1/p^{1+2\delta})^{-1} \asymp_{u} \log y$ which follows from Mertens' estimate \eqref{eq:mertens4}, see e.g.~\Cref{lem:tshift}). In particular, using the subadditivity of $x \mapsto x^q$, we have for any $n \in \NN$ that
\begin{equation}\label{eq:extend-tail}
    \EE\bigg[\bigg(\int_{|t| \ge n} h(t) m_{y, u}(dt)\bigg)^q\bigg]
    \ll \sum_{k \ge n} k^{-aq} \EE\bigg[\bigg(\int_{k}^{k+1} m_{y, u}(dt)\bigg)^q\bigg] \ll n^{1-aq}
\end{equation}
by choosing $q \in (0, 1)$ sufficiently large so that $aq > 1$, and this vanishes as $n \to \infty$ uniformly in $y \ge 3$.

Now, for any $\OurEpsilon > 0$,
\begin{align*}
&\PP\left(|m_{y, u}(h) - m_\infty(h)| > \OurEpsilon \right)\\
& \qquad \le \PP\left(|m_{y, u}(h\mathbf{1}_{[-n, n]}) - m_\infty(h\mathbf{1}_{[-n, n]})| > \frac{\OurEpsilon}{2} \right)
+ \PP\left(|m_{y, u}(h\mathbf{1}_{[-n, n]^c}) - m_\infty(h\mathbf{1}_{[-n, n]^c})| > \frac{\OurEpsilon}{2} \right)\\
& \qquad \le  \PP\left(|m_{y, u}(h\mathbf{1}_{[-n, n]}) - m_\infty(h\mathbf{1}_{[-n, n]})| > \frac{\OurEpsilon}{2} \right)
+ (2/\OurEpsilon)^q\EE\left[|m_{y, u}(h\mathbf{1}_{[-n, n]^c}) - m_\infty(h\mathbf{1}_{[-n, n]^c})|^q\right]
\end{align*}
by Markov's inequality. The first term on the right-hand side vanishes as $y \to \infty$ by \Cref{thm:mc-critical}. Meanwhile, the second term is bounded by
\[     (2/\OurEpsilon)^q\EE\left[m_{y, u}(h\mathbf{1}_{[-n, n]^c})^q + m_\infty(h\mathbf{1}_{[-n, n]^c})^q\right]  \]
which vanishes uniformly in $y \ge 3$ as $n \to \infty$ by \eqref{eq:extend-tail} and Fatou's lemma. This concludes our proof.

\section{Preparation for proof of \texorpdfstring{\Cref{thm:summain}}{Theorem \ref{thm:summain}}}\label{sec:prepsum}
In practice we prove the following slight generalisation of \Cref{thm:summain}. Given $\weight \colon \RR_{\ge 0}\to \CC$ with compact support, let
\[S^{\weight}_x  :=\frac{(\log \log x)^{\frac{1}{4}}}{\sqrt{x}}\sum_{n\ge 1}\alpha(n) \weight\left(\frac{n}{x}\right).\]
For $\weight = \mathbf{1}_{[0,1]}$ this recovers the sum in \Cref{thm:summain}. For $\Re s >0$ we define
\[K_{\weight}(s) := \int_{0}^{\infty} \weight(x)x^{s-1}dx.\]
For $\weight=\mathbf{1}_{[0,1]}$ we have $K_{\weight}(s)=1/s$. If $\weight$ is step function with compact support then $K_{\weight}(1/2+it)$ satisfies $K_{\weight}(1/2+it)\ll 1/|1/2+it|$ and $\sqrt{2\pi} \|  \weight\|_2=\|K_{\weight}(1/2+it)\|_2$ by Plancherel's theorem.
\begin{thm}\label{thm:summainw}
Let $\weight\colon \RR_{\ge 0}\to \CC$ be a step function with compact support such that $\|\weight\|_2>0$. Then
\[    S^{\weight}_x \xrightarrow[x \to \infty]{d} \sqrt{V^{\weight}_\infty} \ G \]
where 
\begin{equation}\label{eq:Vinfweight}
V^{\weight}_\infty:= \frac{1}{2\pi}\int_{\RR} \big|K_{\weight}\big(\tfrac{1}{2}+it\big)\big|^2 m_\infty(dt)
\end{equation}
is almost surely finite and strictly positive, and is independent of $G \sim \Na_\CC(0, 1)$. 
Moreover, the convergence in distribution is stable in the sense of \Cref{def:stable}.
\end{thm}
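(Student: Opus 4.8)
The plan is to deduce \Cref{thm:summainw} from the martingale central limit theorem (\Cref{lem:mCLT}), which \Cref{thm:summain} is merely the special case $\weight=\mathbf{1}_{[0,1]}$ of. Writing $P(n)$ for the largest prime factor of $n$, decompose
\[
S^{\weight}_x=\sum_{p\le x}Z^{\weight}_{x,p},\qquad
Z^{\weight}_{x,p}:=\frac{(\log\log x)^{1/4}}{\sqrt{x}}\sum_{\substack{n\ge 1\\ P(n)=p}}\alpha(n)\,\weight(n/x),
\]
a finite sum (as $\weight$ has compact support) which is a martingale difference sequence with respect to $\Fa_p=\sigma(\alpha(q):q\le p)$, since $\EE[\alpha(p)^k]=0$ for $k\ge 1$. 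To invoke the version of the martingale CLT that permits a random limiting variance, one must verify (a) a conditional Lindeberg/vanishing-largest-jump condition, which follows from a routine fourth-moment bound on the individual $Z^{\weight}_{x,p}$ (or on their truncated counterparts), and (b) that the bracket process $T^{\weight}_x:=\sum_{p\le x}\EE[\,|Z^{\weight}_{x,p}|^2\mid\Fa_{p^-}]$ converges in probability to $V^{\weight}_\infty$. Granting these, \Cref{lem:mCLT} yields $S^{\weight}_x\xrightarrow{d}\sqrt{V^{\weight}_\infty}\,G$ with $G\sim\Na_\CC(0,1)$ independent of $\sigma(\alpha(p):p)$, which is exactly stable convergence in the sense of \Cref{def:stable}. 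Finiteness of $V^{\weight}_\infty=\frac{1}{2\pi}\int_\RR|K_\weight(\tfrac12+it)|^2 m_\infty(dt)$ a.s.\ follows from \Cref{cor:mc-critical} applied to the test function $h(t)=|K_\weight(\tfrac12+it)|^2\ll|t|^{-2}$, together with the $q$-th moment bounds underlying \Cref{thm:ayintegral}; strict positivity follows from $\|\weight\|_2>0$ and the full support and non-atomicity of $m_\infty$.

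The obstruction to computing $\lim T^{\weight}_x$ directly is that, informally, $T^{\weight}_x\approx\sqrt{\log\log x}\int_2^x|s_{x/t,t}|^2\,dt/(t\log t)$ with $t$ appearing in both indices of $s_{x/t,t}=(x/t)^{-1/2}\sum_{m\le x/t,\,P(m)\le t}\alpha(m)$, which is not of the form to which \Cref{thm:mc-critical} applies. We therefore replace $S^{\weight}_x$ by a truncated sum $S'^{\,\weight}_x$: fix $\OurEpsilon,\delta>0$, set $x_k=x^{\OurEpsilon+k\delta}$, and retain in $S'^{\,\weight}_x$ only those $n$ with $P(n)\in(x_k,x_{k+1}]$ for some $k\ge1$ whose second-largest prime factor is at most $x_k$, discarding all $n$ with $P(n)\le x_1$. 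The martingale differences $Z'^{\,\weight}_{x,p}$ of $S'^{\,\weight}_x$ then have bracket process of the product form $\sqrt{\log\log x}\sum_{k\ge1}\int_{x_k}^{x_{k+1}}|s_{x/t,x_k}|^2\,dt/(t\log t)$; by Plancherel's theorem each inner integral is a weighted integral of $|A_{x_k}(1/2+it)|^2$ near the critical line, with the relevant shift parameter $u\asymp\log(x/t)/\log x_k$ ranging over a bounded set, so \Cref{cor:mc-critical} (with an appropriate nice test function and the universality over $u\ge0$) gives convergence in probability of $T'^{\,\weight}_x$. Summing the contributions over $k$ and then letting $\delta\to0^+$ followed by $\OurEpsilon\to0^+$ assembles the limit into $V^{\weight}_\infty$; this is \Cref{prop:bracket}.

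The main obstacle is the justification of the truncation, i.e.\ \Cref{prop:negdelta}: that $\EE|S^{\weight}_x-S'^{\,\weight}_x|\to0$. In the $L^1$ regime one controls $\EE[|S_x-S'_x|^2]$ via orthogonality $\EE[\alpha(n)\overline{\alpha(m)}]=\delta_{n,m}$ and the sparsity of the discarded integers (a bound $\ll\sum_k\log^2(x_{k+1}/x_k)$ by Mertens), but at criticality this loses a factor $(\log\log x)^{1/2}$ and fails. Instead I would adopt Harper's conditioning device from \cite{Har2020}: introduce a parameter $y$ (to be taken a small power of $x$), condition first on $\Fa_y$ and only then apply Cauchy--Schwarz, giving $\EE|S^{\weight}_x-S'^{\,\weight}_x|\le\EE\big[(\EE[|S^{\weight}_x-S'^{\,\weight}_x|^2\mid\Fa_y])^{1/2}\big]$ by the law of total expectation; bound the inner conditional second moment, using elementary analytic number theory, by (a multiple of) $\int_0^\infty|s_{t,y}|^2\,dt/t$ plus an error term with expectation growing only mildly in $y$; and finally apply \Cref{thm:ayintegral}, whose decay $\EE[(\tfrac{1}{\log y}\int_\RR|A_y(1/2+it)/(1/2+it)|^2\,dt)^q]\ll(\log\log y)^{-q/2}$ for $q$ close to $1$ (together with Plancherel identifying $\int_0^\infty|s_{t,y}|^2\,dt/t$ with that integral) defeats the $(\log\log x)^{1/4}$ normalization provided $y$ is not too small. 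Propagating these estimates through the auxiliary lemmas on $s_{t,y}$ occupies \Cref{sec:trunc}. Once $\EE|S^{\weight}_x-S'^{\,\weight}_x|\to0$, the sums $S^{\weight}_x$ and $S'^{\,\weight}_x$ share the same distributional limit and the theorem follows.
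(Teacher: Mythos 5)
Your proposal follows the paper's architecture exactly: decompose into martingale differences by largest prime factor, truncate as in \eqref{eq:trunc}, apply the martingale CLT to the truncated sum, compute the bracket process via Plancherel and \Cref{cor:mc-critical}, and justify the truncation through Harper's conditioning-then-Cauchy--Schwarz device combined with \Cref{thm:ayintegral}. Two small imprecisions worth noting, neither of which is a gap: first, after the conditioning $\EE|S^{\weight}_x - S^{\weight}_{x,\OurEpsilon,\delta}| \le \EE[(\EE[|\cdot|^2\mid\Fa_y])^{1/2}]$ the Cauchy--Schwarz step forces the exponent $q=1/2$ in \Cref{thm:ayintegral}, yielding decay $(\log\log y)^{-1/4}$ — not ``$q$ close to $1$'' as you write — and it is the auxiliary $\sqrt{\log z/\log x}$ factor from \Cref{lem:smallprimes} that supplies the remaining smallness once $(\log\log x)^{1/4}$ has been neutralised; second, for fixed $\OurEpsilon,\delta$ the truncated sum converges stably to $\sqrt{C_{\OurEpsilon,\delta}V^{\weight}_\infty}\,G$ rather than to $\sqrt{V^{\weight}_\infty}\,G$, so ``$S^{\weight}_x$ and $S'^{\weight}_x$ share the same distributional limit'' is not literally true; the paper instead passes to the formulation \eqref{eq:stable3w}, restricts to bounded Lipschitz test functions, and splits into three terms sent to zero in the iterated order $x\to\infty$, then $\delta\to0^+$, then $\OurEpsilon\to0^+$, using $C_{\OurEpsilon,\delta}\to1$. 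With these details filled in, your outline is the paper's proof.
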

\begin{definition}\label{def:stable}
    Let $(\Omega, \Fa, \PP)$ be a probability space. We say a sequence of random variables $Z_n$ converges stably as $n \to \infty$ if $(Y, Z_n)$ converges in distribution as $n \to \infty$ for any bounded $\Fa$-measurable random variable $Y$.
\end{definition}
Similarly to the discussion in \cite[{\S}B.1]{GWL2}, \Cref{thm:summainw} can be rephrased as
\begin{equation}\label{eq:stable3w}
    \lim_{x \to \infty} \EE\big[Y \widetilde{h}(S^{\weight}_x)\big]
    = \EE\big[Y\widetilde{h}(\sqrt{V^{\weight}_\infty} G) \big]
\end{equation}
for any bounded continuous function $\widetilde{h}\colon \CC \to \RR$, and any bounded $\Fa_\infty$-measurable random variable $Y$ where $\Fa_\infty  := \sigma(\alpha(p), p=2,3,5,\ldots)$. In the end of the introduction we will show how the proof of \Cref{thm:summainw} follows from \Cref{thm:mc-critical} and the results explained in the next subsection. These technical results will in turn be established in the next sections.
\subsection{Technical propositions}
Let $\weight\colon \RR_{\ge 0}\to \CC$ be a function supported on $[0,A]$ for some $A> 1$. Let $P(1) = 1$, and for $n >1$  denote by $P(n)$ the largest prime factor of $n$. We introduce parameters $\OurEpsilon,\delta>0$. For every $k\ge 0$ we define
\[ x_k=x_{k,\OurEpsilon,\delta} := x^{\OurEpsilon+k\delta}, \qquad I_k=I_k(x,\OurEpsilon,\delta) := (x_{k}, x_{k+1}].\]
We let $K$ be the smallest non-negative integer such that $x_{K+1} \ge Ax$. Strictly speaking $K$ depends on $\OurEpsilon$, $\delta$, $A$ and $x$, but once $x$ is sufficiently large (in terms of $\OurEpsilon$, $\delta$ and $A$) $K$ stabilises as $K=\lfloor (1-\OurEpsilon)/\delta \rfloor$. We consider the following truncation of $S^{\weight}_{x}$:
\begin{equation}\label{eq:trunc}
S^{\weight}_{x,\OurEpsilon,\delta} :=\frac{(\log \log x)^{\frac{1}{4}}}{\sqrt{x}}\sum_{k=0}^{K}\sum_{\substack{ P(n)\in I_k\\ P(n/P(n)) \le x_{k}}} \alpha(n) \weight\left(\frac{n}{x}\right).
\end{equation}
In words, we discard from $S^{\weight}_{x}$ integers $n$ satisfying one of the following two conditions: (i) $P(n)\notin \cup_{k=0}^{K} I_k=(x_0,x_{K+1}]$, i.e.~the largest prime factor of $n$ is at most $x_0=x^{\OurEpsilon}$, (ii) $P(n/P(n))$ (the second largest prime factor of $n$, counting with multiplicity) and $P(n)$ (the largest prime factor) belong to the same $I_k$ for some $k$. In particular, if $P(n)^2 \mid n$ (i.e.~$P(n/P(n))=P(n)$), $n$ is discarded.

The same truncation was used in our earlier work \cite{GWL1}, which in turn was directly motivated by a truncation introduced recently in the context of holomorphic multiplicative chaos \cite{NPSV} by Najnudel, Paquette, Simm and Vu \cite{NPSV}. One of the novelties in our paper is showing that this truncation is justified, in the sense of the following proposition.
\begin{proposition}\label{prop:negdelta}
We have $\limsup_{\OurEpsilon\to 0^+} \limsup_{\delta\to 0^+}\limsup_{x \to \infty}\EE\big[|S^{\weight}_x - S^{\weight}_{x,\OurEpsilon,\delta}|\big]=0$ for every step function $\weight\colon \RR_{\ge 0}\to \CC$ with compact support.
\end{proposition}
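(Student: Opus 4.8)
The plan is to follow, as indicated in the introduction, the conditioning argument of Harper \cite{Har2020} — the one underlying his proof of Helson's conjecture and of the moment bounds \eqref{eq:harpermoms} — rather than the cruder second-moment computation that sufficed in the $L^1$-regime \cite{GWL1}, which here loses a factor of $(\log\log x)^{1/4}$. Write $D:=S^{\weight}_x-S^{\weight}_{x,\OurEpsilon,\delta}=\frac{(\log\log x)^{1/4}}{\sqrt{x}}\sum_{n\in\mathcal D}\alpha(n)\weight(n/x)$, where $\mathcal D=\mathcal D_{x,\OurEpsilon,\delta}$ is the set of discarded integers: those that are $x^{\OurEpsilon}$-smooth, or satisfy $P(n)^2\mid n$, or have $P(n/P(n))$ in the same interval $I_k$ as $P(n)$. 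The family with $P(n)^2\mid n$ is harmless: a routine smooth-number estimate gives $\#\{n\le Ax:P(n)^2\mid n\}\ll_A x/\log x$, so by the orthogonality relation \eqref{eq:orth} and Cauchy--Schwarz its contribution to $\EE|D|$ is $\ll_{\weight}(\log\log x)^{1/4}(\log x)^{-1/2}=o(1)$; I therefore focus on the remaining two families, which I continue to denote by $D$.

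Fix a parameter $y=y(x):=\exp\big((\log x)^{2/3}\big)$, so that $y\to\infty$, $\log y=o(\log x)$ and $\log\log y\asymp\log\log x$. Conditioning on $\Fa_y:=\sigma(\alpha(q):q\le y)$ \emph{before} applying Cauchy--Schwarz, the tower property and conditional Jensen give $\EE|D|\le\EE\big[(\EE[|D|^2\mid\Fa_y])^{1/2}\big]$. To estimate the inner conditional second moment, factor each $n\in\mathcal D$ uniquely as $n=mb$ with $m$ its $y$-smooth part and $b$ its $y$-rough part; since $(\alpha(b))_{b\text{ $y$-rough}}$ is independent of $\Fa_y$ and orthonormal by \eqref{eq:orth},
\[
\EE\big[|D|^2\mid\Fa_y\big]=\frac{\sqrt{\log\log x}}{x}\sum_{b\text{ $y$-rough}}\Big|\sum_{\substack{m\text{ $y$-smooth}\\ mb\in\mathcal D}}\alpha(m)\,\weight(mb/x)\Big|^2 .
\]
The heart of the matter, packaged as \Cref{lem:conditioning}, is a number-theoretic analysis of the right-hand side. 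Carrying out the sum over the rough parts $b$ — which carry the two large prime factors $p,q\in I_k$ for the family with $P(n/P(n))\in I_k\ni P(n)$, and carry all prime factors, lying in $(y,x^{\OurEpsilon}]$, for the smooth family — and using that $t\mapsto|s_{t,y}|^2$ varies slowly over short multiplicative ranges, one bounds $\EE[|D|^2\mid\Fa_y]$ by
\[
c_{\OurEpsilon,\delta}\,\sqrt{\log\log x}\cdot\frac{1}{\log y}\int_0^\infty|s_{t,y}|^2\,\frac{dt}{t}\;+\;R_{x,\OurEpsilon,\delta,y},
\]
where $c_{\OurEpsilon,\delta}$ is the density weight of $\mathcal D$ — which is $\ll\delta/\OurEpsilon$ for the family with two primes in one interval (by Mertens, as in the heuristic after \eqref{eq:diff}) and is comparable to the density of $x^{\OurEpsilon}$-smooth integers for the smooth family, hence $\to0$ first as $\delta\to0^+$ and then as $\OurEpsilon\to0^+$ — and where $R_{x,\OurEpsilon,\delta,y}\ge0$ has expectation increasing in $y$ but $=o\big(x(\log\log x)^{-1/2}\big)$ for our choice $\log y=(\log x)^{2/3}$. (For a step function $\weight$ the inner sums reduce, by linearity, to $\weight=\mathbf{1}_{[0,c]}$, where $\sum_{m\le t,\,P(m)\le y}\alpha(m)=\sqrt{t}\,s_{t,y}$.)

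It remains to feed in \Cref{thm:ayintegral}. By Plancherel's theorem for Dirichlet series, exactly as in the proof of \Cref{cor:mc-critical}, one has $\frac{1}{\log y}\int_0^\infty|s_{t,y}|^2\,\frac{dt}{t}=\frac{1}{2\pi\log y}\int_{\RR}\big|\frac{A_y(1/2+it)}{1/2+it}\big|^2dt$, so by subadditivity of $\sqrt{\,\cdot\,}$ and \eqref{eq:Harper} with $q=\tfrac12$,
\[
\EE\bigg[\Big(c_{\OurEpsilon,\delta}\,\sqrt{\log\log x}\cdot\frac{1}{\log y}\int_0^\infty|s_{t,y}|^2\,\frac{dt}{t}\Big)^{1/2}\bigg]\ll c_{\OurEpsilon,\delta}^{1/2}\,(\log\log x)^{1/4}(\log\log y)^{-1/4}\ll c_{\OurEpsilon,\delta}^{1/2},
\]
the last bound using $\log\log y\asymp\log\log x$. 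The decisive point is that the $(\log\log y)^{-1/4}$-decay of the half-moment of the Euler-product integral cancels the $(\log\log x)^{1/4}$-normalisation, turning the divergent naive bound into one proportional to $c_{\OurEpsilon,\delta}^{1/2}$. Since also $\EE[R_{x,\OurEpsilon,\delta,y}^{1/2}]\le(\EE[R_{x,\OurEpsilon,\delta,y}])^{1/2}=o(1)$, we obtain $\limsup_{x\to\infty}\EE|D|\ll c_{\OurEpsilon,\delta}^{1/2}$; letting $\delta\to0^+$ and then $\OurEpsilon\to0^+$ gives $\limsup_{\OurEpsilon\to0^+}\limsup_{\delta\to0^+}\limsup_{x\to\infty}\EE\big[|S^{\weight}_x-S^{\weight}_{x,\OurEpsilon,\delta}|\big]=0$, as required.

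The main obstacle is \Cref{lem:conditioning}. Its proof needs the same analytic-number-theory tools as Helson's conjecture, the delicate parts being (a) replacing the inner sums over $y$-smooth $m$ — for each family of discarded integers, and uniformly in the rough part $b$ — by the weighted Euler-product integral $\frac{1}{\log y}\int_0^\infty|s_{t,y}|^2\,dt/t$ with the correct density weight $c_{\OurEpsilon,\delta}$, and (b) keeping the complementary term $R_{x,\OurEpsilon,\delta,y}$ explicit enough to verify its negligibility for a $y$ that simultaneously satisfies $\log\log y\asymp\log\log x$ (so that \Cref{thm:ayintegral} can be applied) and $\log y=o(\log x)$ (so that $R_{x,\OurEpsilon,\delta,y}$ stays small). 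Everything else above is soft.
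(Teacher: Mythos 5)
Your strategy is exactly the paper's: discard the $P(n)^2\mid n$ family by a crude second moment, condition on $\Fa_y$ \emph{before} applying Cauchy--Schwarz (the decisive Harper-style move that avoids the $(\log\log x)^{1/4}$ loss of a direct $L^2$ computation), reduce the conditional second moment to a weighted integral of $|s_{t,y}|^2\,dt/t$, pass by Plancherel to the Euler-product integral, and feed in \Cref{thm:ayintegral} with $q=\tfrac12$ so that the $(\log\log y)^{-1/4}$ decay cancels the $(\log\log x)^{1/4}$ normalisation, leaving only the density weight $c_{\OurEpsilon,\delta}^{1/2}$ which dies under the iterated limits. This is precisely the paper's architecture.

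Where you diverge is in presenting a single ``unified'' conditioning bound for the whole discarded set $\mathcal D$ as though that is \Cref{lem:conditioning}; in fact \Cref{lem:conditioning} is a clean statement about $\EE[|s_{x,z}|^2\mid\Fa_y]$ alone, and the paper deploys it twice in genuinely different ways (with $z=x^{\OurEpsilon}$ for the smooth family in \Cref{lem:smallprimes}; with conditional orthogonality pulling out the factor $\sum_k(\sum_{p\in I_k}1/p)^2$ for the same-interval family in \Cref{lem:closeprimes}), after first interposing the intermediate truncation $S^{\weight}_{x,\OurEpsilon}$. More significantly, the paper introduces an additional truncation parameter $T$ to split off integers with $pq>x/T$ (the term $b_3$), because when the rough part $pq$ is close to $x$ the cofactor $x/(pq)$ is too small for the ``slowly varying'' heuristic and the conditioning argument to be available at all; that regime is disposed of by a separate second-moment estimate (\Cref{lem:closeprimes2}). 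Your schematic remainder $R$ would have to silently absorb this, which is not obvious. Two small slips: the stated size of $\EE[R]$ should be $o(1)$, not $o(x(\log\log x)^{-1/2})$, given your normalisation of $D$; and the density weight for the smooth family is the $\log z/\log x=\OurEpsilon$ factor from \Cref{lem:conditioning}, not the Dickman density $\rho(1/\OurEpsilon)$ as you suggest (both vanish, but the former is what the method actually produces). None of this changes the verdict that the overall approach and the two key ingredients (conditioning before Cauchy--Schwarz, and \Cref{thm:ayintegral} with $q=\tfrac12$) are identified correctly.
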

Since any $n$ in the inner sum in the right-hand side of \eqref{eq:trunc}  satisfies $P(n)> x_{0}=x^{\OurEpsilon}$, we may write
\[S^{\weight}_{x,\OurEpsilon,\delta} = \sum_{p>x^{\OurEpsilon}} Z'_{x,p}\]
where, if $p \in I_k$ for some (unique) $k \ge 0$, we define
\[ Z'_{x,p} := \frac{(\log \log x)^{\frac{1}{4}}}{\sqrt{x}}\sum_{\substack{\substack{P(n)=p \\ P(n/P(n)) \le x_{k}}}} \alpha(n) \weight\left(\frac{n}{x}\right).\]
Note that $Z'_{x,p} \equiv 0$ if $p> Ax$.
The following lemma is related to the \textit{Lindeberg condition} appearing in the martingale central limit theorem.
\begin{lem}\label{lem:lind}
We have $\sum_{p>x^{\OurEpsilon}} \EE [|Z'_{x,p}|^4] \ll(\log x)^5/x^{\OurEpsilon}$ for every bounded $\weight\colon \RR_{\ge 0}\to\CC$ with compact support.
\end{lem}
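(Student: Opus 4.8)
The plan is to compute $\EE[|Z'_{x,p}|^4]$ via the orthogonality relation \eqref{eq:orth}, which reduces the whole estimate to a classical count of solutions of a multiplicative equation $m_1 m_3 = m_2 m_4$.

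First I would factor each $n$ appearing in the defining sum of $Z'_{x,p}$ (for $p\in I_k$) as $n = pm$. The condition $p^2\nmid n$ forces $p\nmid m$, the condition $P(n/P(n))\le x_k$ forces $P(m)\le x_k$, and the compact support of $\weight$ (say on $[0,A]$) forces $m\le Ax/p$. Since $\alpha$ is completely multiplicative, $\alpha(n)=\alpha(p)\alpha(m)$, and because $|\alpha(p)|=1$ the factor $\alpha(p)$ drops out of the fourth power of the modulus, giving
\[ |Z'_{x,p}|^4 = \frac{\log\log x}{x^2}\,\Big|\sum_{\substack{P(m)\le x_k,\ p\nmid m\\ m\le Ax/p}} \alpha(m)\,\weight(pm/x)\Big|^4. \]
Expanding the fourth power as a sum over quadruples $(m_1,m_2,m_3,m_4)$ and taking expectations, complete multiplicativity together with \eqref{eq:orth} shows that $\EE[\alpha(m_1)\alpha(m_3)\overline{\alpha(m_2)\alpha(m_4)}] = \EE[\alpha(m_1 m_3)\overline{\alpha(m_2 m_4)}]$ vanishes unless $m_1 m_3 = m_2 m_4$. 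Since $\weight$ is bounded, this yields, with $N := Ax/p$,
\[ \EE[|Z'_{x,p}|^4] \ll \frac{\log\log x}{x^2}\,\#\{(m_1,m_2,m_3,m_4) : m_i\le N,\ m_1 m_3 = m_2 m_4\}, \]
where the arithmetic constraints $P(m_i)\le x_k$ and $p\nmid m_i$ have been discarded; this is harmless since it only enlarges the set of quadruples being counted (and all surviving summands are dominated in absolute value by the corresponding term of the enlarged sum).

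Next I would bound the counting function by a divisor second moment: grouping quadruples by the common value $\ell = m_1 m_3 = m_2 m_4 \le N^2$, the number of such quadruples is at most $\sum_{\ell\le N^2} d(\ell)^2 \ll N^2\log^3(N^2+2) \ll (x/p)^2\log^3 x$, where $d$ is the divisor function and we use the classical estimate $\sum_{\ell\le X} d(\ell)^2 \ll X\log^3(X+2)$. Hence $\EE[|Z'_{x,p}|^4] \ll (\log\log x)(\log^3 x)/p^2$, and summing over $p>x^{\OurEpsilon}$ together with $\sum_{p>x^{\OurEpsilon}} p^{-2} \ll x^{-\OurEpsilon}$ gives
\[ \sum_{p>x^{\OurEpsilon}} \EE[|Z'_{x,p}|^4] \ll \frac{(\log\log x)(\log x)^3}{x^{\OurEpsilon}} \ll \frac{(\log x)^5}{x^{\OurEpsilon}}, \]
which is the claimed bound, in fact with room to spare.

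I do not expect any genuine obstacle here: the argument is entirely elementary once \eqref{eq:orth} is invoked, and the only nontrivial ingredient is the standard divisor-moment estimate $\sum_{\ell\le X}d(\ell)^2\ll X\log^3 X$. The one point that needs mild care is the bookkeeping of the factorisation $n=pm$ --- checking that $p\nmid m$ so that the unimodular factor $\alpha(p)$ pulls cleanly out of $|Z'_{x,p}|^4$, and checking that dropping the remaining constraints on the $m_i$ is legitimate for an upper bound.
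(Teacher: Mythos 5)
Your proof is correct and follows essentially the same route as the paper: orthogonality reduces $\EE[|Z'_{x,p}|^4]$ to a count of quadruples with $m_1 m_3 = m_2 m_4$, which is bounded by the divisor second moment $\sum_{\ell\le N^2}d(\ell)^2$, and then $\sum_{p>x^{\OurEpsilon}}1/p^2\ll x^{-\OurEpsilon}$ finishes. The only cosmetic differences are that the paper works with the common product $m=ab=cd=p^2m_1m_3\le A^2x^2$ and the weight $\tau^2(m)$ without first stripping off $p$, and it uses the slightly cruder Mertens-derived bound $\sum_{n\le X}\tau^2(n)\ll X\log^4 X$ in place of your $\ll X\log^3 X$; both comfortably give the stated $(\log x)^5/x^{\OurEpsilon}$.
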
 
We introduce the $\sigma$-algebra $\Fa_{y^-} := \sigma(\alpha(p), p <y)$ generated by $\{\alpha(p): p < y\}$. 
\begin{proposition}\label{prop:bracket}
Let $\weight\colon \RR_{\ge 0}\to \CC$ be a step function with compact support. Then
\[T_{x,\OurEpsilon,\delta} :=\sum_{p>x^{\OurEpsilon}} \EE\left[ |Z'_{x,p}|^2 \mid \Fa_{p^-}\right]   \xrightarrow[x \to \infty]{p} C_{\OurEpsilon,\delta} V^{\weight}_\infty\]
where $V^{\weight}_\infty$ is defined in \eqref{eq:Vinfweight} and $\lim_{\OurEpsilon\to0^+}\lim_{\delta\to0^+} C_{\OurEpsilon,\delta}=1$.
\end{proposition}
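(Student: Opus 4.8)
The plan is to reduce $T_{x,\OurEpsilon,\delta}$ to a finite sum of integrals of the shape \eqref{eq:planc} and then feed each of them into \Cref{lem:plancherelapp} and \Cref{cor:sandwich}. \emph{First}, I would evaluate the conditional expectation. Fix $p\in I_k$. Every $n$ in the defining sum of $Z'_{x,p}$ factors uniquely as $n=pm$ with $P(m)\le x_k$; in particular $p\nmid m$, so the condition $p^2\nmid n$ is automatic, and $\alpha(n)=\alpha(p)\alpha(m)$. Since $|\alpha(p)|=1$ while $\sum_{m:\,P(m)\le x_k}\alpha(m)\weight(pm/x)$ is $\Fa_{x_k}$-measurable, hence $\Fa_{p^-}$-measurable, the conditioning is trivial:
\[
\EE\!\left[|Z'_{x,p}|^2\mid\Fa_{p^-}\right]
=\frac{\sqrt{\log\log x}}{x}\Big|\sum_{m:\,P(m)\le x_k}\alpha(m)\,\weight\!\Big(\tfrac{pm}{x}\Big)\Big|^2
=\sqrt{\log\log x}\,\frac1p\,\big|s^{\weight}_{x/p,\,x_k}\big|^2,
\]
where $s^{\weight}_{r,y}:=r^{-1/2}\sum_{m:\,P(m)\le y}\alpha(m)\weight(m/r)$ is the $\weight$-weighted analogue of $s_{r,y}$. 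Summing over $p$ gives $T_{x,\OurEpsilon,\delta}=\sqrt{\log\log x}\sum_{k=0}^{K}\sum_{p\in I_k}p^{-1}\,|s^{\weight}_{x/p,\,x_k}|^2$.

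\emph{Next}, for each fixed $k$ I would replace the inner prime sum by the integral $\int_{x_k}^{x_{k+1}}|s^{\weight}_{x/t,\,x_k}|^2\,\frac{dt}{t\log t}$, the error being negligible even after multiplication by $\sqrt{\log\log x}$. This is Abel summation using the prime number theorem $\pi(t)=\mathrm{Li}(t)+O(t e^{-c\sqrt{\log t}})$, combined with an a priori bound on $\int_{x_k}^{x_{k+1}}|s^{\weight}_{x/t,x_k}|^2\,\frac{dt}{t}$, and on the oscillation of $t\mapsto|s^{\weight}_{x/t,x_k}|^2$ on $I_k$, obtained from Plancherel's identity — which, as in \Cref{cor:mc-critical}, relates $|s^{\weight}_{x/t,x_k}|^2$ to $|A_{x_k}(1/2+it)|^2$ — together with the sharp moment estimate of \Cref{thm:ayintegral} (equivalently, a fourth-moment bound for short $\alpha$-sums). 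One thus arrives at
\[
T_{x,\OurEpsilon,\delta}=\sqrt{\log\log x}\sum_{k=0}^{K}\int_{x_k}^{x_{k+1}}\big|s^{\weight}_{x/t,\,x_k}\big|^2\,\frac{dt}{t\log t}+o_{\PP}(1).
\]

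\emph{Then}, writing $x_k=x^{a_k}$ with $a_k=\OurEpsilon+k\delta$, a change of variables turns the $k$-th integral into $\int_0^\infty q_k\big(t^{1/\log x}\big)\,\frac{|s^{\weight}_{x/t,\,x^{a_k}}|^2}{t\log x}\,dt$, with $q_k(\eta):=\frac1{\log\eta}\,\mathbf{1}_{(e^{a_k},\,e^{a_{k+1}}]}(\eta)$ a nonnegative function of bounded variation supported away from the origin; this is precisely an expression of the form \eqref{eq:planc} with the fixed parameter $a=a_k$. \Cref{lem:plancherelapp} and \Cref{cor:sandwich} then give $\sqrt{\log\log x}\int_0^\infty q_k(t^{1/\log x})\,\frac{|s^{\weight}_{x/t,x^{a_k}}|^2}{t\log x}\,dt\xrightarrow[x\to\infty]{p}c(a_k,q_k)\,V^{\weight}_\infty$ for an explicit constant $c(a_k,q_k)$. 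Since $K=K(\OurEpsilon,\delta)$ is a fixed integer once $\OurEpsilon,\delta$ are fixed, summing the finitely many convergences yields $T_{x,\OurEpsilon,\delta}\xrightarrow[x\to\infty]{p}C_{\OurEpsilon,\delta}V^{\weight}_\infty$ with $C_{\OurEpsilon,\delta}:=\sum_{k=0}^{K}c(a_k,q_k)$. Plugging in the explicit shape of $c(a,q)$, the sum $\sum_{k=0}^{K}c(a_k,q_k)$ is a Riemann-type sum whose limit as $\delta\to0^+$ is some integral $\mathcal I(\OurEpsilon)$, with $\mathcal I(\OurEpsilon)\to1$ as $\OurEpsilon\to0^+$; this final bookkeeping — which is what \Cref{cor:sandwich} is designed to package — is where the normalisation $1$ appears, consistent with the fact that the truncation removes only a negligible family of integers (cf.\ \Cref{prop:negdelta}).

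\emph{The main obstacle} is the discretisation in the second step: because $|s^{\weight}_{x/t,x_k}|^2$ is a random function with heavy, critical-chaos-type tails, a crude bound on the difference between the prime sum and the integral would destroy the $\sqrt{\log\log x}$ normalisation, so one must invoke \Cref{thm:ayintegral} (and it is here that Harper's better-than-squareroot phenomenon genuinely enters). A secondary technical point is ensuring that \Cref{lem:plancherelapp} is available in the $\weight$-weighted form and sufficiently uniformly over the family $(q_k)_{k\le K}$, so that the limit is precisely $c(a_k,q_k)\,V^{\weight}_\infty$ with $V^{\weight}_\infty=\frac1{2\pi}\int_\RR|K_{\weight}(\tfrac12+it)|^2\,m_\infty(dt)$ rather than some finer functional of $m_\infty$.
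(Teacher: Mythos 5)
Your overall decomposition matches the paper's: the conditioning step reduces to an unconditional square via orthogonality exactly as you describe (this is \eqref{eq:orthgeneral}), the prime sum is replaced by an integral, the change of variables $t=x^c$ feeds \Cref{lem:plancherelapp} and \Cref{cor:sandwich}, and the constant $C_{\OurEpsilon,\delta}$ is assembled at the end. The place where you diverge, and where there is a genuine gap in your sketch, is the discretisation of the prime sum into an integral.

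You propose Abel summation against $\pi(t)=\mathrm{Li}(t)+O(te^{-c\sqrt{\log t}})$, with the oscillation of $t\mapsto|s_{x/t,x_k}|^2$ controlled via Plancherel and \Cref{thm:ayintegral} (``equivalently, a fourth-moment bound''). As written this does not work: $t\mapsto|s_{x/t,x_k}|^2$ is a random step function that jumps whenever $x/t$ crosses an integer, so Abel summation against the PNT error term requires a quantitative modulus-of-continuity estimate — precisely the thing one has to produce. Moreover \Cref{thm:ayintegral} bounds low fractional moments of the \emph{size} of $\int_\RR |A_y(1/2+it)/(1/2+it)|^2\,dt$; it says nothing about how $|s_{x/t,y}|^2$ \emph{varies} with $t$, and the paper's proof of this step uses no fourth-moment input. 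Instead, \Cref{lem:close} is built on the $L^2$ Lipschitz-type estimate \eqref{eq:lip},
\[
|s_{x_1,y}-s_{x_2,y}|\le|s_{x_2,y}|\big(\sqrt{x_2/x_1}-1\big)+X_{x_1,x_2}/\sqrt{x_1},\qquad \EE|X_{x_1,x_2}|^2\ll x_2-x_1+1,
\]
which follows directly from orthogonality. Each $|s_{x/p,x^a}|^2$ is replaced by its average over the short interval $[p-h(p),p]$ with $h(p)=p/\log p$; summing over primes turns $R_{a,b}$ into an integral against a kernel $M(t)$ counting primes in short intervals, bounded crudely above by the sieve inequality \eqref{eq:brun} and asymptotically evaluated by the PNT with error term \eqref{eq:pnt}. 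The error $Z_{a,b}$ is controlled in $L^1$ by \eqref{eq:Ybnd}, and the $\sqrt{\log\log x}$ survives because these errors decay like negative powers of $\log x$.

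Two smaller points. Your $s^{\weight}_{r,y}$ is the paper's $s_{r,y}$ from \eqref{eq:sxy}, which already carries the weight $\weight$, so the superscript is redundant. And the limit $\lim_{\OurEpsilon\to0^+}\lim_{\delta\to0^+}C_{\OurEpsilon,\delta}=1$ is not quite the ``bookkeeping'' packaged by \Cref{cor:sandwich}: one must first identify $C_{\OurEpsilon,\delta}=\int_0^{1/\OurEpsilon-1}\rho(v)G_{\OurEpsilon,\delta}(v)\,dv$ in terms of the Dickman function $\rho$ (\eqref{eq:CG}--\eqref{eq:Gsimp}), then invoke the equidistribution lemma \Cref{lem:equid} to take $\delta\to0^+$, and finally use $\rho(v)/(1+v)=-\rho'(v+1)$ to get $\int_0^{1/\OurEpsilon-1}\rho(v)(1+v)^{-1}dv=1-\rho(1/\OurEpsilon)\to1$.
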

\subsection{Lindeberg condition: proof of \texorpdfstring{\Cref{lem:lind}}{Lemma \ref{lem:lind}}}
This is similar to \cite[Lemma~3.5]{GWL1} but we give a standalone proof. Let $\weight\colon \RR_{\ge 0}\to\CC$ be bounded and supported on $[0,A]$. By \eqref{eq:orth},
\[\sum_{p>x^{\OurEpsilon}} \EE\left[|Z'_{x,p}|^4\right]=\frac{\log \log x}{x^2}\sum_{k=0}^{K}\sum_{p\in I_k}G(x,p,k)\]
for 
\[ G(x,p,k):=\sum_{\substack{ab=cd\\ P(a)=P(b)=P(c)=P(d)=p\\   P(a/p),P(b/p),P(c/p),P(d/p)\le x_k}} \weight(a/x)\weight(b/x)\overline{\weight(c/x)\weight(d/x)}.\]
One can write $G(x,p,k)$ as
\[G(x,p,k)=\sum_{\substack{P(m)=p,\, p^2 \mid m\\ P(m/p^2)\le x_k}}h(m), \qquad h(m):=\bigg|\sum_{\substack{ab=m\\ P(a)=P(b)}}\weight(a/x)\weight(b/x)\bigg|^2,\]
where $m$ stands for the common value of $ab$ and $cd$. Let $\tau(n) =\sum_{ab=n}1$ be the divisor function. We have the pointwise bound
$h \le \tau^2 \cdot (\sup |\weight|)^4$ so that
\[ G(x,p,k) \ll \sum_{\substack{p^2  \mid m \le A^2 x^2\\P(m)=p\\ P(m/p^2)\le x_k}}\tau^2(m).\]
It follows that 
\begin{equation}\label{eq:magicupper2}
\sum_{p>x^{\OurEpsilon}} \EE\left[|Z'_{x,p}|^4\right]\ll \frac{\log \log x}{x^2} \sum_{k=0}^{K} \sum_{p\in I_k} \sum_{\substack{p^2  \mid m \le A^2 x^2 \\ P(m)=p\\P(m/p^2)\le x_k}} \tau^2(m)\le \frac{\log \log x}{x^2}\sum_{p>x^{\OurEpsilon}} \sum_{\substack{p^2 \mid \mid m \le A^2 x^2 \\ P(m)=p}} \tau^2(m),
\end{equation}
where we replaced the condition on $P(m/p^2)$ with the less strict condition $p^2 \mid \mid m$, that indicates that $p^2$ divides $m$ but $p^3$ does not. We have 
\begin{equation}\label{eq:magicupper}
\sum_{p>x^{\OurEpsilon}}\sum_{\substack{p^2 \mid \mid m \le A^2x^2\\ P(m)= p}} \tau^2(m) = \sum_{p>x^{\OurEpsilon}}\tau^2(p^2) \sum_{\substack{m' \le A^2 x^2/p^2\\ P(m')<p}}\tau^2(m')
\end{equation}
where $m'$ stands for $m/p^2$, which is coprime to $p$. Next,
\[\sum_{\substack{m' \le A^2x^2/p^2\\ P(m')<p}}\tau^2(m') \le \sum_{m' \le A^2x^2/p^2}\tau^2(m') \ll (x/p)^2 (\log x)^4\]
since 
\[\sum_{n\le x}\tau^2(n) \le x\sum_{n\le x} \frac{\tau^2(n)}{n} \le x \prod_{p \le x} \left(\sum_{i=0}^{\infty}\frac{\tau^2(p^i)}{p^i}\right) \le x\prod_{p \le x}(1-1/p)^{-4} \ll   x \log^4 x\]
by Mertens' estimate, see \eqref{eq:mertens3}. This implies that
 \begin{equation}\label{eq:Hb}
 \sum_{p>x^{\OurEpsilon}}\tau^2(p^2) \sum_{\substack{m' \le A^2 x^2/p^2\\ P(m')<p}}\tau^2(m')\ll  x^2 (\log x)^{4}\sum_{p>x^{\OurEpsilon}}\frac{1}{p^2}\ll x^{2-\OurEpsilon}(\log x)^4.
 \end{equation}
We conclude by plugging \eqref{eq:Hb} in \eqref{eq:magicupper} and then \eqref{eq:magicupper} in \eqref{eq:magicupper2}.

\subsection{Proof of \texorpdfstring{\Cref{thm:summainw}}{Theorem \ref{thm:summainw}} under \texorpdfstring{\Cref{prop:negdelta}}{Proposition \ref{prop:negdelta}}, \texorpdfstring{\Cref{lem:lind}}{Lemma \ref{lem:lind}} and \texorpdfstring{\Cref{prop:bracket}}{Lemma \ref{prop:bracket}}}
Fix $\OurEpsilon,\delta > 0$. Let $\weight$ be a step function with compact support. We shall use the martingale central limit theorem in the following form.
\begin{lem}[{Theorem 3.2 and Corollary 3.1 of \cite{HH1980} (cf.~Lemma 2.1 of \cite{GWL2})}]\label{lem:mCLT}
For each $n$, let $(M_{n,j})_{j\le k_n}$ be a complex-valued, mean-zero and square integrable martingale with respect to the filtration $(\Fa_{j})_j$, and $\Delta_{n, j} := M_{n, j} - M_{n, j-1}$ be the corresponding martingale differences. Suppose the following conditions are satisfied.
\begin{itemize}
    \item[(a)] The conditional covariances converge: $\sum_{j =1}^{k_n} \EE\left[\Delta_{n, j}^2 | \Fa_{j-1}\right] \xrightarrow[n \to \infty]{p} 0$ and
    \[ \sum_{j =1}^{k_n} \EE\left[|\Delta_{n, j}|^2 | \Fa_{j-1}\right]  \xrightarrow[n \to \infty]{p} V_\infty.\]
    \item[(b)] The conditional Lindeberg condition holds: for any $\delta > 0$,
 $\sum_{j = 1}^{k_n} \EE\left[|\Delta_{n, j}|^2 \mathbf{1}_{\{|\Delta_{n, j}| > \delta\}} | \Fa_{j-1}\right] \xrightarrow[n \to \infty]{p} 0$.
\end{itemize}
Then $M_{n, k_n} \xrightarrow[n \to \infty]{d} \sqrt{V_\infty} \ G$ where $G \sim \Na_{\CC}(0,1)$ is independent of $V_\infty$, and the convergence in distribution is stable.
\end{lem}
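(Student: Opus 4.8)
The plan is to reduce this complex, random-variance, stable statement to the classical real-valued martingale central limit theorem of Hall and Heyde by projecting onto arbitrary complex directions (a Cram\'er--Wold step) and exploiting the characteristic-function description of stable convergence. It suffices to prove that, for every $\lambda\in\CC$ and every bounded $\Fa$-measurable random variable $Y$,
\[
    \EE\big[Y\exp\big(i\,\Re(\overline{\lambda} M_{n,k_n})\big)\big]
    \xrightarrow[n \to \infty]{}\ \EE\big[Y\exp\big(-\tfrac{|\lambda|^2}{4}V_\infty\big)\big],
\]
because the right-hand side equals $\EE\big[Y\exp\big(i\Re(\overline{\lambda}\sqrt{V_\infty}G)\big)\big]$ once $G\sim\Na_\CC(0,1)$ is realised independently of $V_\infty$, and a sequence of $\CC$-valued random variables converges stably (in the sense of \Cref{def:stable}) to a limit $Z$ precisely when such mixed characteristic functions converge for all $\lambda$ and all bounded $\Fa$-measurable $Y$. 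Since $M_{n,k_n}$ and $V_\infty$ are measurable with respect to the (fixed, $n$-independent) filtration, one may further reduce to $Y$ measurable for $\bigvee_j\Fa_j$.

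Next I would fix $\lambda=a+ib$ and study the real-valued process $N_{n,j}:=\Re(\overline{\lambda} M_{n,j})$, which is a mean-zero, square-integrable martingale for $(\Fa_j)_j$ with differences $r_{n,j}:=\Re(\overline{\lambda}\Delta_{n,j})=a\,\xi_{n,j}+b\,\zeta_{n,j}$, where $\xi_{n,j}:=\Re\Delta_{n,j}$ and $\zeta_{n,j}:=\Im\Delta_{n,j}$. Using the identities $\Re(\Delta_{n,j}^2)=\xi_{n,j}^2-\zeta_{n,j}^2$, $\Im(\Delta_{n,j}^2)=2\xi_{n,j}\zeta_{n,j}$ and $|\Delta_{n,j}|^2=\xi_{n,j}^2+\zeta_{n,j}^2$, hypothesis (a) gives, after summing in $j$ and separating real and imaginary parts,
\[
    \sum_{j}\EE[\xi_{n,j}^2\mid\Fa_{j-1}]\xrightarrow[n \to \infty]{p}\tfrac{V_\infty}{2},\qquad
    \sum_{j}\EE[\zeta_{n,j}^2\mid\Fa_{j-1}]\xrightarrow[n \to \infty]{p}\tfrac{V_\infty}{2},\qquad
    \sum_{j}\EE[\xi_{n,j}\zeta_{n,j}\mid\Fa_{j-1}]\xrightarrow[n \to \infty]{p}0,
\]
and therefore $\sum_j\EE[r_{n,j}^2\mid\Fa_{j-1}]\xrightarrow{p}\tfrac{a^2+b^2}{2}V_\infty=\tfrac{|\lambda|^2}{2}V_\infty$. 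Moreover $|r_{n,j}|\le|\lambda|\,|\Delta_{n,j}|$, so the conditional Lindeberg condition (b) transfers verbatim to $(r_{n,j})_j$. Hence $(N_{n,j})_{j\le k_n}$ satisfies all the hypotheses of the real-valued martingale central limit theorem with random limiting conditional variance $\tfrac{|\lambda|^2}{2}V_\infty$. I would then invoke \cite[Theorem~3.2 and Corollary~3.1]{HH1980}: since the filtration $(\Fa_j)_j$ does not vary with $n$, the nesting hypothesis underlying stable convergence is automatic, and we obtain that $N_{n,k_n}$ converges stably to a centred real Gaussian with conditional variance $\tfrac{|\lambda|^2}{2}V_\infty$; in particular $\EE[Y e^{iN_{n,k_n}}]\to\EE[Y\,e^{-|\lambda|^2 V_\infty/4}]$ for every bounded $\Fa$-measurable $Y$, which is exactly the convergence needed in the first paragraph. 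As this holds for all $\lambda$ and all such $Y$, the stable convergence $M_{n,k_n}\xrightarrow{d}\sqrt{V_\infty}\,G$ with $G\sim\Na_\CC(0,1)$ independent of $V_\infty$ follows.

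The routine ingredients are the algebraic identities relating $\Re\Delta_{n,j},\Im\Delta_{n,j}$ to $\Delta_{n,j}^2$ and $|\Delta_{n,j}|^2$, and checking that the Lindeberg condition survives the projection. The one substantive point — and the reason hypothesis (a) carries the extra condition $\sum_j\EE[\Delta_{n,j}^2\mid\Fa_{j-1}]\to 0$ in addition to the usual modulus-squared condition — is that the vanishing of this ``pseudo-variance'' is exactly what equalises the limiting variances of $\Re M_{n,k_n}$ and $\Im M_{n,k_n}$ and decorrelates them, forcing the limit to be a \emph{standard} (circularly symmetric) complex Gaussian rather than a general one; this is the only place where care is needed. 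A minor bookkeeping point is to reconcile the notion of stability in \Cref{def:stable} with the one delivered by Hall--Heyde, which is clean here since the filtration is fixed and the auxiliary randomness on the probability space (used only in Part~1) plays no role in this part.
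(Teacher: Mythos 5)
The paper states this lemma as a citation of Hall--Heyde (Theorem 3.2 and Corollary 3.1) and of Lemma 2.1 in \cite{GWL2}; it does not supply its own proof. Your derivation is the standard and correct way to pass from the real-valued Hall--Heyde result to the complex version: project onto $\Re(\overline{\lambda}\,\cdot\,)$, use the identity $\Delta_{n,j}^2 = (\Re\Delta_{n,j})^2-(\Im\Delta_{n,j})^2 + 2i(\Re\Delta_{n,j})(\Im\Delta_{n,j})$ together with $|\Delta_{n,j}|^2=(\Re\Delta_{n,j})^2+(\Im\Delta_{n,j})^2$ so that hypothesis (a) pins down the limiting conditional variance $\tfrac{|\lambda|^2}{2}V_\infty$ of the projection (and the extra condition $\sum_j\EE[\Delta_{n,j}^2\mid\Fa_{j-1}]\to 0$ is precisely what forces circular symmetry, as you observe), note $|r_{n,j}|\le|\lambda||\Delta_{n,j}|$ to transfer the Lindeberg condition, invoke Hall--Heyde (nesting automatic for a fixed filtration), and assemble the $\lambda$-indexed family of stable limits via mixed characteristic functions. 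The argument is sound; the only points worth being slightly more explicit about in a polished write-up are (i) that the characterisation of stable convergence via $\lambda\mapsto\EE[Y\exp(i\Re(\overline{\lambda}Z_n))]$ requires reducing to $Y\ge 0$ and normalising to invoke Lévy continuity for the $Y$-biased law, and (ii) that $G$ lives on an extension of the probability space and is independent of the whole $\sigma$-algebra $\Fa_\infty$ (hence of $Y$ and $V_\infty$ simultaneously), which is what Hall--Heyde's Corollary~3.1 delivers under nesting.
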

We claim one may apply \Cref{lem:mCLT} to the martingale sequence associated to $S^{\weight}_{x,\OurEpsilon,\delta} =\sum_{p>x^{\OurEpsilon}}Z'_{x,p}$. Indeed, since $Z'_{x, p}$ is linear in $\alpha(p)$ by construction, for any $x \ge 3$ we automatically obtain \[\sum_{p>x^{\OurEpsilon}} \EE[(Z'_{x, p})^2 | \Fa_{p^-}] =  0\]
almost surely. By Cauchy--Schwarz and \Cref{lem:lind}, the conditional Lindeberg condition holds. Combining this with \Cref{prop:bracket} and applying \Cref{lem:mCLT}, we obtain that
$S^{\weight}_{x, \OurEpsilon,\delta} \xrightarrow[x \to \infty]{d}  \sqrt{C_{\OurEpsilon,\delta} V^{\weight}_\infty} G$ where the distributional convergence is also stable.

To establish the stable convergence of $S^{\weight}_{x}$ we use the formulation \eqref{eq:stable3w}: we would like to show that
\[    \lim_{x \to \infty} \EE\big[Y \widetilde{h}(S^{\weight}_x)\big]
    = \EE\big[Y\widetilde{h}(\sqrt{V^{\weight}_\infty} G) \big] \]
holds for any bounded $\Fa_\infty$-measurable random variable $Y$ and any bounded continuous function $\widetilde{h}\colon \CC \to \RR$. By a density argument, it suffices to establish this claim for bounded Lipschitz functions $\widetilde{h}$. Consider
\begin{align*}
    \left|\EE\left[Y \widetilde{h}(S^{\weight}_x)\right] - 
    \EE\left[Y\widetilde{h}(\sqrt{V^{\weight}_\infty} G) \right]\right|
    & \le \left|\EE\left[Y \widetilde{h}(S^{\weight}_{x, \OurEpsilon,\delta})\right] - 
    \EE\left[Y\widetilde{h}(\sqrt{C_{\OurEpsilon,\delta} V^{\weight}_\infty} G) \right]\right|\\
    & \qquad + \EE\left[Y \left|\widetilde{h}(S^{\weight}_x) - \widetilde{h}(S^{\weight}_{x, \OurEpsilon,\delta})\right|\right]
    + \EE\left[Y \left|\widetilde{h}(\sqrt{V^{\weight}_\infty} G) - \widetilde{h}(\sqrt{C_{\OurEpsilon,\delta} V^{\weight}_\infty} G)\right|\right].
\end{align*}
We see that the first term on the right-hand side converges to $0$ as $x \to \infty$ as a consequence of the stable convergence of $S^{\weight}_{x, \OurEpsilon,\delta}$, whereas the third term converges to $0$ as $\delta \to 0^+$ and then $\OurEpsilon \to 0^+$ as a consequence of dominated convergence. Meanwhile, the middle term is bounded by
$\|\widetilde{h}\|_{\mathrm{Lip}} \EE[|Y| \big|S^{\weight}_x - S^{\weight}_{x, \OurEpsilon,\delta}\big|\big] $ which goes to $0$ in the limit as $x \to \infty$, then $\delta \to 0^+$ and then $\OurEpsilon\to 0^+$ by \Cref{prop:negdelta}. Finally, $V_\infty$ is a.s.~finite because of the uniform estimate \eqref{eq:harpermoms}; it is a.s.~positive since $|K_{\weight}(1/2+it)|^2$ is strictly positive in some non-empty open interval $I$ and $m_\infty(I) > 0$ by its support property. This completes the proof.
\section{Truncation: proof of \texorpdfstring{\Cref{prop:negdelta}}{Proposition \ref{prop:negdelta}}}\label{sec:trunc}
By the triangle inequality, the general case of \Cref{prop:negdelta} is reduced to the case  $\weight = \mathbf{1}_{[0,A]}$
for some $A>0$. Throughout we suppose $\weight=\mathbf{1}_{[0,A]}$ and implied constants will depend on $A$. We introduce a simpler truncation of $S_x^{\weight}$:
\[S^{\weight}_{x,\OurEpsilon} :=\frac{(\log \log x)^{\frac{1}{4}}}{\sqrt{x}}
 \sum_{P(n)> x^{\OurEpsilon},\,P(n)^2 \nmid n} \alpha(n) \weight(n/x).\]
By the triangle inequality it suffices to establish the following two lemmas.
\begin{lem}\label{lem:smallp}
For $\weight = \mathbf{1}_{[0,A]}$, we have
\[\limsup_{\OurEpsilon\to 0^+} \limsup_{x \to \infty}\EE\left[|S^{\weight}_x - S^{\weight}_{x,\OurEpsilon}|\right]=0.\]
\end{lem}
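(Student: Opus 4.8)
The goal is to bound the expected contribution of integers $n$ with $P(n) \le x^{\OurEpsilon}$ (the $\OurEpsilon$-smooth numbers), together with those $n$ for which $P(n)^2 \mid n$; the latter are genuinely sparse and contribute $o(\sqrt{x})$ even after the $(\log\log x)^{1/4}$ factor, so I would dispose of them quickly by the orthogonality relation \eqref{eq:orth} and a divisor-function bound as in \Cref{lem:lind} (the count of $n \le Ax$ with $P(n)^2 \mid n$ is $O(x/\log x)$-ish, certainly small enough). The heart of the matter is the smooth part $S_x^{\weight,\mathrm{sm}} := \frac{(\log\log x)^{1/4}}{\sqrt{x}} \sum_{P(n) \le x^{\OurEpsilon}} \alpha(n) \weight(n/x)$.

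The plan is to follow the conditioning philosophy of Harper \cite{Har2020} that is foreshadowed in the introduction: rather than estimate $\EE|S_x^{\weight,\mathrm{sm}}|$ by Cauchy--Schwarz directly (which would reintroduce the lossy $(\log\log x)^{1/4}$), first condition on $\Fa_y := \sigma(\alpha(p): p \le y)$ for a well-chosen $y \le x^{\OurEpsilon}$, and only then apply Cauchy--Schwarz via the law of total expectation:
\[
\EE\big[|S_x^{\weight,\mathrm{sm}}|\big] \le \EE\Big[\big(\EE[\,|S_x^{\weight,\mathrm{sm}}|^2 \mid \Fa_y]\big)^{1/2}\Big].
\]
Expanding $|S_x^{\weight,\mathrm{sm}}|^2$ and using $\EE[\alpha(n)\overline{\alpha(m)}\mid\Fa_y]$, one writes every $y$-smooth integer $n$ with $P(n)\le x^{\OurEpsilon}$ as $n = ab$ with $b$ being $y$-smooth and $a$ having all prime factors in $(y, x^{\OurEpsilon}]$; the conditional second moment then collapses (by orthogonality in the $a$-variable) to a weighted sum of $|A_y(1/2+it)|^2$-type quantities against the $a$-sum, in the spirit of \Cref{lem:conditioning} alluded to in the introduction. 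Concretely, the conditional second moment should be controlled by a constant multiple of
\[
(\log\log x)^{1/2}\,\Big(\int_0^\infty |s_{t,y}|^2\,\frac{dt}{t}\Big)\cdot(\text{density factor for }a) + (\text{smaller terms}),
\]
where $s_{t,y} = t^{-1/2}\sum_{n\le t,\,P(n)\le y}\alpha(n)$, and by Plancherel's theorem $\int_0^\infty |s_{t,y}|^2\,dt/t$ equals a weighted integral of $|A_y(1/2+it)|^2$. Taking square roots and using concavity ($\sqrt{\cdot}$), then invoking \Cref{thm:ayintegral} with exponent $q=1/2$,
\[
\EE\Big[\Big(\tfrac{1}{\log y}\int_\RR \big|\tfrac{A_y(1/2+it)}{1/2+it}\big|^2\,dt\Big)^{1/2}\Big] \ll (\log\log y)^{-1/4},
\]
the $(\log\log x)^{1/4}$ in $S_x^{\weight,\mathrm{sm}}$ is cancelled by $(\log\log y)^{-1/4}$, provided $y$ is not too small relative to $x$ — e.g.\ $\log\log y \gtrsim \log\log x$, which holds as soon as $\log y \ge (\log x)^c$ for fixed $c>0$. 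The remaining $a$-density factor, roughly $\sum_{P(a)\le x^{\OurEpsilon},\, p\mid a \Rightarrow p>y} 1/a$ restricted to $a \le Ax$, is $O\big(\prod_{y<p\le x^{\OurEpsilon}}(1-1/p)^{-1}\big) = O(\log(x^{\OurEpsilon})/\log y)$ by Mertens, so after optimising (take, say, $y = \exp((\log x)^{1/2})$, so $\log y/\log x \to 0$ while $\log\log y/\log\log x \to 1$) the whole bound is $O\big(\OurEpsilon^{1/2} \cdot (\text{const})\big)$ plus terms vanishing as $x\to\infty$. Sending $x\to\infty$ and then $\OurEpsilon\to 0^+$ kills it.

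The main obstacle — and the step I expect to require the most care — is the bookkeeping in the conditional second moment: one must factor $y$-smooth-times-rough cleanly, track the weight $\weight(ab/x)$ (a step function, hence bounded with bounded support, so $|\weight(ab/x)|\le \|\weight\|_\infty \mathbf{1}_{ab\le Ax}$ suffices for an upper bound), and identify the Plancherel-type identity relating $\sum_a$ of $|\sum_{b\le t/a,\,P(b)\le y}\alpha(b)|^2$ to $\int |A_y(1/2+it)|^2$ against $|K_\weight|^2$-type kernels, all while keeping the error terms genuinely lower order. This is precisely the ``same kind of tools required in the proof of Helson's conjecture'' that the introduction promises; modulo invoking \Cref{thm:ayintegral} as a black box, the number-theoretic input is Mertens' theorem and the orthogonality relation, and the probabilistic input is the law of total expectation plus Cauchy--Schwarz and concavity of the square root.
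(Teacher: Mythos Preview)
Your overall strategy matches the paper's: dispose of the $P(n)^2\mid n$ contribution by a second-moment count (this is \Cref{lem:repeated}), and for the $x^\OurEpsilon$-smooth part condition on $\Fa_y$, apply Cauchy--Schwarz inside the conditional expectation, relate the resulting conditional second moment to $\int_0^\infty|s_{t,y}|^2\,dt/t$, and finish with Plancherel and \Cref{thm:ayintegral} at $q=1/2$. But your claimed factorisation of the conditional second moment is not valid, and the density factor you write down is off by a crucial factor of $\log x$ --- with the consequence that your final bound diverges rather than being $O(\sqrt\OurEpsilon)$.

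After conditioning on $\Fa_y$ and using orthogonality in the $(y,x^\OurEpsilon]$-supported part $a$, the conditional second moment is (up to the harmless $(\log\log x)^{1/2}$) exactly $\sum_{a}\tfrac{1}{a}|s_{x/a,y}|^2$. There is no reason this should be bounded by $\bigl(\sum_a 1/a\bigr)\cdot\int_0^\infty|s_{t,y}|^2\,\tfrac{dt}{t}$. The paper (\Cref{lem:conditioning}) instead converts $\sum_a \tfrac{1}{a}|s_{x/a,y}|^2$ into an integral in $t=x/a$ by grouping the $a$ according to the value of $r=\lfloor Ax/a\rfloor$; the conversion rate is the \emph{local density} of $y$-rough, $x^\OurEpsilon$-smooth integers in short intervals near $x$, which by Shiu's theorem (\Cref{lem:upperbnd}) is $\ll\tfrac{\log(x^\OurEpsilon)}{\log x\,\log y}$, not the $\tfrac{\log(x^\OurEpsilon)}{\log y}$ that your Mertens bound on $\sum_a 1/a$ produces. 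That extra factor of $1/\log x$ is precisely what yields $\sqrt{\log(x^\OurEpsilon)/\log x}=\sqrt\OurEpsilon$ after the square root. With your density factor and any choice of $y$ satisfying $\log\log y\asymp\log\log x$, your bound comes out as $(\log\log x)^{1/4}\sqrt{\OurEpsilon\log x}\,(\log\log y)^{-1/4}\asymp\sqrt{\OurEpsilon\log x}$, which blows up. Mertens alone is not enough here; you need the Shiu-type short-interval estimate.
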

\begin{lem}\label{lem:truncatedelta}
For $\weight = \mathbf{1}_{[0,A]}$, we have
\[\limsup_{\delta\to 0^+}\limsup_{x \to \infty}\EE\left[|S^{\weight}_{x,\OurEpsilon} - S^{\weight}_{x,\OurEpsilon,\delta}|\right]=0.\]
\end{lem}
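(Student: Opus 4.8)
The plan is to run Harper's conditioning argument — the same one behind \Cref{prop:negdelta} — on the difference, exploiting that the discarded integers form a set of relative density $O_\OurEpsilon(\delta\log(1/\OurEpsilon))$, which vanishes with $\delta$. First I would decompose
\[ S^{\weight}_{x,\OurEpsilon}-S^{\weight}_{x,\OurEpsilon,\delta}=\sum_{k\ge 0}D_{x,k}, \]
where $D_{x,k}$ collects the integers $n$ with $P(n)\in I_k$, $P(n)^2\nmid n$ and $P(n/P(n))\in I_k$, i.e.\ the two largest prime factors of $n$ lie in the same block. Such an $n$ is divisible by two distinct primes exceeding $x_k$, so $n>x_k^2$; hence $D_{x,k}\equiv 0$ once $x_k^2>Ax$ and only $O_A(1/\delta)$ blocks contribute. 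Moreover $D_{x,k}$ is linear in $\alpha(P(n))$, and for $k\ne k'$ the variables $D_{x,k},D_{x,k'}$ remain orthogonal after conditioning on $\Fa_{x_0}$ (their integers have largest prime factor in disjoint intervals, so the relevant $\EE[\alpha(n)\overline{\alpha(m)}\mid\Fa_{x_0}]$ vanish). Setting $y:=x_0=x^\OurEpsilon$ and using conditioning together with Cauchy--Schwarz (conditional Jensen),
\[ \EE\big[\,|S^{\weight}_{x,\OurEpsilon}-S^{\weight}_{x,\OurEpsilon,\delta}|\,\big]\;\le\;\EE\Big[\Big(\sum_{k}\EE\big[\,|D_{x,k}|^2\mid\Fa_{y}\,\big]\Big)^{1/2}\Big]. \]

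Next I would compute the conditional second moments. Writing each admissible $n=gs$ with $g$ its $y$-rough part and $s$ its $y$-smooth part, the orthogonality relation $\EE[\alpha(n)\overline{\alpha(m)}]=\delta_{n,m}$ gives $\EE[\,|D_{x,k}|^2\mid\Fa_y\,]=\tfrac{\sqrt{\log\log x}}{x}\sum_{g}\big|\sum_{s}\alpha(s)\weight(gs/x)\big|^2$, where $g$ ranges over integers with arbitrary prime factors in $(x_0,x_k]$ and at least two in $I_k$ (the largest appearing exactly once). Since $\weight$ is a step function, Cauchy--Schwarz over its finitely many jumps reduces the inner square to a bounded number of genuine partial sums $\big|\sum_{s\le T,\,P(s)\le y}\alpha(s)\big|^2$, at which point the number-theoretic input of \Cref{lem:conditioning} applies: Perron's formula and partial summation in $g$, together with the Mertens-type estimate $\sum_{g}1/g\asymp\tfrac{\log x_k}{\log x_0}\big(\sum_{p\in I_k}1/p\big)^2\asymp\tfrac{\delta^2}{\OurEpsilon(\OurEpsilon+k\delta)}$, bound $\sum_k\EE[\,|D_{x,k}|^2\mid\Fa_y\,]$ by
\[ C_\OurEpsilon\,\eta(\delta,\OurEpsilon)\Big(\sqrt{\log\log x}\cdot\frac{1}{\log y}\int_{\RR}\Big|\frac{A_y(\tfrac12+it)}{\tfrac12+it}\Big|^2 dt\;+\;1\Big),\qquad \eta(\delta,\OurEpsilon):=\frac{\delta}{\OurEpsilon}\log\frac1\OurEpsilon, \]
the trailing $1$ absorbing the secondary ``$y$-growing'' term of \Cref{lem:conditioning}, which stays bounded because $y=x^\OurEpsilon$ is a fixed power of $x$. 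Taking square roots, using $\sqrt{a+b}\le\sqrt a+\sqrt b$ and then \Cref{thm:ayintegral} with $q=\tfrac12$ — so that $\EE\big[(\tfrac1{\log y}\int_{\RR}|A_y(\tfrac12+it)/(\tfrac12+it)|^2 dt)^{1/2}\big]\ll(\log\log y)^{-1/4}\asymp(\log\log x)^{-1/4}$ since $y=x^\OurEpsilon$ — the factors $(\log\log x)^{1/4}$ and $(\log\log x)^{-1/4}$ cancel and one obtains $\EE[\,|S^{\weight}_{x,\OurEpsilon}-S^{\weight}_{x,\OurEpsilon,\delta}|\,]\ll_\OurEpsilon\eta(\delta,\OurEpsilon)^{1/2}$ uniformly in $x$. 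Letting $x\to\infty$ and then $\delta\to 0^+$ (with $\OurEpsilon$ fixed), $\eta(\delta,\OurEpsilon)\to 0$ and the lemma follows.

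The crux is the conditional second-moment estimate, i.e.\ the specialisation of \Cref{lem:conditioning} to the thinned set of discarded integers, and I expect two points there to require care. First, one must verify that $\sum_g|\sum_s\alpha(s)\weight(gs/x)|^2$ genuinely reproduces the Plancherel integral $\tfrac1{\log y}\int_{\RR}|A_y(\tfrac12+it)/(\tfrac12+it)|^2 dt$ up to the density factor; this relies on the $g$-variable sweeping through many multiplicative scales, which it does because an admissible $n$ carries a variable number of (and otherwise unconstrained) prime factors throughout $(x_0,x_k]$, so the $g$-sum effectively averages $|s_{t,y}|^2$ against $dt/(t\log t)$, exactly as in Harper's treatment of Helson's conjecture. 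Second, one must check that the secondary term inherits the sparsity gain $\eta(\delta,\OurEpsilon)$ rather than merely an $O(1)$ loss — this is where the constraint ``two prime factors in one block'' must be tracked carefully through the bookkeeping. Finally, the uniformity in $x$ — what the whole argument is really about — is forced by the two-sided pressure on $y$: large enough ($y=x^\OurEpsilon$) that $\log\log y\sim\log\log x$ so the $(\log\log)$ powers cancel, yet small enough that the secondary term stays bounded.
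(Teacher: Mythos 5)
Your approach founders on the size of the conditioning parameter $y$, and the gap is fatal rather than cosmetic. With $y=x^{\OurEpsilon}$, the secondary (smooth-number) error produced by Harper's conditioning — the term $X$ in \Cref{lem:conditioning}, coming from $\Psi(Ax/m,y)\ll (x/m)\,e^{-c\log(x/m)/\log y}$ — is of size $e^{-c/\OurEpsilon}$: a \emph{constant} in $x$, not $o(1)$. Your own display hides this by writing $\sum_k\EE[|D_{x,k}|^2\mid\Fa_y]\ll C_\OurEpsilon\eta\bigl(\sqrt{\log\log x}\cdot\tfrac1{\log y}\int|A_y/s|^2+1\bigr)$, but the $\sqrt{\log\log x}$ factor attached to $D_{x,k}$ multiplies the \emph{entire} right-hand side of the conditioning estimate, not just the Plancherel main term. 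The correct shape is $C_\OurEpsilon\eta\sqrt{\log\log x}\bigl(\tfrac1{\log y}\int|A_y/s|^2+e^{-c/\OurEpsilon}\bigr)$, and after the square root and expectation the secondary contribution is $(\log\log x)^{1/4}\sqrt{\eta}\,e^{-c/(2\OurEpsilon)}$, which diverges as $x\to\infty$. So the $\limsup_{x\to\infty}$ is not even finite with your choice of $y$, let alone controlled by $\eta(\delta,\OurEpsilon)^{1/2}$.

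The paper repairs exactly this by adding two ingredients your decomposition omits. First, a new parameter $T=x^{1/\log\log x}$ and a further split of the discarded terms according to whether $pq\le x/T$ (the $b_{k,2}$ piece) or $pq>x/T$ (the $b_3$ piece); the latter is disposed of by a crude orthogonality/second-moment bound (\Cref{lem:closeprimes2}) which already decays like $\sqrt{\log T/\log x}\asymp(\log\log x)^{-1/2}$ and thus beats the $(\log\log x)^{1/4}$ prefactor. Second, for the remaining $b_{k,2}$ piece the two largest prime factors $p,q$ are factored out \emph{explicitly} (see \eqref{eq:bydef}), so one applies \Cref{lem:conditioning} to $s_{x/(pq),\,q-1}$ with $x$ replaced by $x/(pq)\ge T$ and conditions on $\Fa_y$ with the far smaller $y:=T^{1/\log\log T}$. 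This $y$ is simultaneously small enough that the smooth-number error is $e^{-c\log T/\log y}=e^{-c\log\log T}\to 0$, and large enough that $\log\log y\sim\log\log x$ so \Cref{thm:ayintegral} still supplies the $(\log\log x)^{-1/4}$ needed to cancel the normalisation. Your single choice $y=x^\OurEpsilon$ cannot satisfy both requirements, which is why the two-parameter $(T,y)$ structure is forced on the argument.

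A secondary point: the explicit factoring of $p,q$ is also what lets the paper apply the \emph{standard} Shiu bound. In your scheme the $y$-rough part $g$ is constrained to carry two primes in $I_k$, which is not a multiplicative condition; a ``specialisation of \Cref{lem:conditioning} to the thinned set'' would require a Shiu-type estimate for that constrained set, uniform in $k,\delta,\OurEpsilon$. By writing $b_{k,2}$ as a double sum over $p,q\in I_k$ (with the remaining variable a genuine $y$-rough $(q-1)$-smooth integer), the paper sidesteps this entirely. Relatedly, your $D_{x,k}$ also lumps in the case where the second-largest prime factor $q$ is repeated ($q^2\mid n$); the paper isolates this as $b_{k,1}$ and kills it by a direct second moment (\Cref{lem:repeateddelta}), since it cannot be handled by the same conditioning.
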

The proofs will use the notation
\begin{equation}\label{eq:sxy}
s_{x,y} := \frac{1}{\sqrt{x}} \sum_{P(n)\le  y} \alpha(n)\weight(n/x).
\end{equation}
\subsection{Conditioning}
An important idea in the proofs of \Cref{lem:smallp} and \Cref{lem:truncatedelta} is the following conditioning lemma. Given $y\ge 2$ let \[\Fa_y:=\sigma(\alpha(p),p\le y)\]
be the $\sigma$-algebra generated by $\{\alpha(p): p \le y\}$. 
\begin{lem}\label{lem:conditioning}
Let $\weight=\mathbf{1}_{[0,A]}$. Uniformly for $2\le y <z\le Ax$ we have
\begin{equation}\label{eq:conditioning} 
\EE\left[ |s_{x,z}|^2 \mid \Fa_y\right] \ll \frac{\log z}{\log x\log y} \int_{0}^{\infty} |s_{t,y}|^2 \frac{dt}{t} + X
\end{equation}
where $X$ is a non-negative random variable with $\EE \left[X\right]\ll e^{-c \frac{\log x}{\log y}}$, and  $c>0$ is an absolute constant.
\end{lem}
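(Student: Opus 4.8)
\textbf{Proof plan for \Cref{lem:conditioning}.}
The plan is to expand $s_{x,z}$ according to the largest prime factor, separating the primes $\le y$ from those in $(y,z]$, and then condition on $\Fa_y$. Write $n = mp^a r$ where $p = P(n)$, $P(m) \le y < p$, and $r$ is supported on primes in $(y, z]$ (so $n$ with $P(n) \le y$ are the $p$-free part). More precisely, every $n$ with $P(n) \le z$ can be written uniquely as $n = n_1 n_2$ with $P(n_1) \le y$ and $n_2$ composed only of primes in $(y, z]$; thus $s_{x,z} = \sum_{n_2 : p \mid n_2 \implies y < p \le z} \alpha(n_2) \frac{1}{\sqrt{x}} \sum_{n_1 : P(n_1) \le y} \alpha(n_1) \weight(n_1 n_2 / x)$. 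Conditioning on $\Fa_y$ makes the $\alpha(n_1)$ known and the $\alpha(n_2)$ still uniform and independent, so by the orthogonality relation \eqref{eq:orth} applied to the primes in $(y,z]$,
\[
\EE\left[ |s_{x,z}|^2 \mid \Fa_y \right] = \sum_{\substack{n_2 \ge 1 \\ p \mid n_2 \implies y < p \le z}} \Bigg| \frac{1}{\sqrt{x}}\sum_{\substack{n_1 \ge 1 \\ P(n_1) \le y}} \alpha(n_1) \weight\Big(\frac{n_1 n_2}{x}\Big) \Bigg|^2 = \sum_{\substack{n_2 \ge 1 \\ p \mid n_2 \implies y < p \le z}} \frac{x}{x}\, |s_{x/n_2, y}^{\,}|^2 \cdot \frac{1}{n_2}\cdot n_2 ,
\]
which after using the definition \eqref{eq:sxy} of $s_{t,y}$ becomes $\sum_{n_2} \frac{1}{n_2}|s_{x/n_2, y}|^2$ up to bookkeeping of the scaling in $\weight$. (I will need to be careful that $s_{x/n_2,y}$ here uses the rescaled weight $\weight(\cdot\, n_2/x)$ applied at argument $x/n_2$; since $\weight$ is bounded with compact support this is harmless.)

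Next I would convert the sum over $n_2$ into an integral against the measure counting $y$-rough integers. The number of integers up to $T$ composed only of primes in $(y, z]$ grows like a power of $\log$, and more usefully one has $\sum_{n_2 \le T,\ p \mid n_2 \Rightarrow y < p \le z} 1/n_2 \ll \prod_{y < p \le z}(1 - 1/p)^{-1} \asymp \log z / \log y$ by Mertens \eqref{eq:mertens4}. Writing $\sum_{n_2} \frac{1}{n_2}|s_{x/n_2,y}|^2$ as a Stieltjes integral $\int |s_{x/t,y}|^2 \, dN(t)$ where $N(t) = \sum_{n_2 \le t,\ \ldots} 1/n_2$, and bounding $dN(t) \ll \frac{\log z}{\log x \log y}\frac{dt}{t}$ on the bulk range (i.e. the density of $y$-rough numbers in dyadic blocks is $\ll \frac{\log z}{\log y}\cdot\frac{1}{\log x}$ per unit of $\log t$, since the largest such $n_2$ is $\le z^{O(\log x/\log y)}$... here I should instead observe directly that the number of $n_2 \le x^{O(1)}$ with all prime factors in $(y,z]$ and contributing is controlled, giving the stated $\frac{\log z}{\log x \log y}$ after the change of variables $t \mapsto x/t$), we get the first term on the right of \eqref{eq:conditioning}, namely $\frac{\log z}{\log x \log y}\int_0^\infty |s_{t,y}|^2 \frac{dt}{t}$. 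The contribution of the remaining range — where $n_2$ is so large that $x/n_2$ is tiny and the smoothing function $\weight$ forces very few terms, or conversely where $n_2$ has an atypically large number of prime factors — is collected into the error random variable $X$.

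To control $\EE[X]$ I would argue that $X$ only receives contributions from $n_2$ with $\Omega(n_2) \ge C\log x/\log y$ prime factors (equivalently, $n_2 \ge y^{C \log x/\log y}$), because all prime factors of $n_2$ exceed $y$ while $n_2 \le Ax$ forces $\Omega(n_2) \le \log(Ax)/\log y$; the "typical" range is $\Omega(n_2)$ of size $O(\log\log x)$ or so, and the excess is where Rankin's trick gives exponential savings. Taking expectations and using $\EE|s_{x/n_2,y}|^2 = \frac{1}{x/n_2}\sum_{n_1 \le x/n_2,\, P(n_1)\le y} |\weight(\cdot)|^2 \ll n_2/x \cdot$ (something polynomial), one bounds $\EE[X] \ll \frac{1}{x}\sum_{n_2 \ge y^{C\log x/\log y}} (\text{poly}) \ll e^{-c\log x/\log y}$ by Rankin's trick applied to the sum over $y$-rough $n_2$ (choosing a parameter $\sigma$ slightly less than $1$: $\sum_{p \mid n_2 \Rightarrow p > y} n_2^{-\sigma} = \prod_{p > y}(1 - p^{-\sigma})^{-1}$, which is finite and small once $\sigma$ is bounded away from $1$ by $\asymp 1/\log y$, yielding the claimed $e^{-c\log x/\log y}$). \textbf{The main obstacle} I anticipate is the careful separation of the "bulk" and "tail" ranges of $n_2$ so that the bulk contributes exactly the clean Plancherel-ready integral $\frac{\log z}{\log x \log y}\int_0^\infty |s_{t,y}|^2 \frac{dt}{t}$ (uniformly in $y < z \le Ax$) while the tail has exponentially small expectation; in particular one must ensure the change of variables and the Mertens-type density estimate for $y$-rough integers are applied on exactly the right dyadic ranges, and that the compact support of $\weight$ is used to truncate $t$ without losing uniformity. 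Once that combinatorial/analytic bookkeeping is set up, both pieces follow from orthogonality plus Mertens plus Rankin's trick.
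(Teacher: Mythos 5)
Your opening move — the decomposition $n=n_1 n_2$ with $n_1$ $y$-smooth and $n_2$ supported on primes in $(y,z]$, followed by conditioning on $\Fa_y$ and applying orthogonality to land on $\EE[|s_{x,z}|^2\mid\Fa_y]=\sum_{n_2}\frac{1}{n_2}|s_{x/n_2,y}|^2$ — is exactly what the paper does (there the $y$-rough, $z$-smooth variable is called $m$). The problem is the bulk/error split you propose is inverted, and neither of the mechanisms you invoke to control the two pieces actually works in the range you assign them to.

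The \emph{main term} in \eqref{eq:conditioning} comes from the range of \emph{large} $n_2$ (the paper takes $n_2>x^{3/4}$, so $x/n_2<x^{1/4}$). There $\log n_2\asymp\log x$, and the count of $y$-rough, $z$-smooth integers in a short window around $n_2$ is $\ll\frac{\log z}{\log n_2\,\log y}\asymp\frac{\log z}{\log x\,\log y}$ times the window length; this is the content of \Cref{lem:upperbnd} (Shiu's theorem for short intervals, which needs the window to have length $\ge x^{1/4}$). Your proposed density bound $dN(t)\ll\frac{\log z}{\log x\log y}\frac{dt}{t}$ is simply false for small $t$: when $t<z$ every $y$-rough integer near $t$ is automatically $z$-smooth, so the density is $\asymp 1/\log y$, which can be far larger than $\frac{\log z}{\log x\log y}$. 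So the "bulk" estimate as you set it up fails.

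Conversely, your \emph{error} estimate also fails. You put large $n_2$ (where $x/n_2$ is tiny) into $X$ and try to bound $\EE[X]$ by Rankin. But $\EE|s_{x/n_2,y}|^2\asymp 1$ there, and $\sum_{x^{3/4}<n_2\le Ax}1/n_2$ over $y$-rough, $z$-smooth $n_2$ is $\asymp\frac{\log z}{\log y}$ when $z\asymp x$ — nowhere near $e^{-c\log x/\log y}$. Rankin with $\sigma-1\asymp 1/\log y$ doesn't rescue this: the Euler factor $\prod_{y<p\le z}(1-p^{\sigma-1-1})^{-1}$ contains $p^{(\sigma-1)}=e^{c\log p/\log y}$ which grows like $e^{c\log z/\log y}$ for $p$ near $z$, so the product blows up superexponentially in $\log z/\log y$ and swamps the factor $N^{-(\sigma-1)}$. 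The range that is actually exponentially small is the \emph{small}-$n_2$ range ($n_2\le x^{3/4}$, so $x/n_2\ge x^{1/4}$): there one takes expectations and uses de Bruijn's smooth-number bound $\Psi(T,y)\ll T\,e^{-c\log T/\log y}$ (equation \eqref{eq:smooth}) to see that $\EE|s_{x/n_2,y}|^2\ll e^{-c\log(x/n_2)/\log y}$, and then sums over $y$-rough $n_2\le x^{3/4}$. In short: the two essential inputs are a short-interval sieve bound (Shiu) for the main term and a smooth-number count (de Bruijn) for the error, not Mertens plus Rankin; and the two regimes need to be swapped relative to your proposal.
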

\Cref{lem:conditioning} is a generalisation of an argument appearing in Harper's works \cite{Har2020,harper2023typical} (see \cite[Lemma~7.5]{NPS} for a similar argument in the HMC setting and \cite[Lemma~1.2]{GWBLMS} for an explicit reference), which corresponds to the $z=x$ case of \Cref{lem:conditioning}. By Plancherel's theorem (see \eqref{eq:plancherel} below), the integral in the right-hand side of \eqref{eq:conditioning} equals an integral involving $|A_y(1/2+it)|^2$. This relation leads us to another required ingredient in our proofs, namely the bound given in \Cref{thm:ayintegral} (only needed with $q=1/2$).
\subsection{Proof of \texorpdfstring{\Cref{lem:smallp}}{Lemma \ref{lem:smallp}}}
We break the proof into two parts.
\begin{lem}\label{lem:repeated}
Let $\weight=\mathbf{1}_{[0,A]}$. Let $X$ be a set of positive integers $n$ with $P(n)^2 \mid n$. Then we have the bound $\EE[| \sum_{n \in X} \alpha(n) \weight(n/x)|^2] \ll x/\log x$.
\end{lem}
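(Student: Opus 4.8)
The plan is to use the orthogonality relation \eqref{eq:orth} to reduce the second moment to a counting problem over integers divisible by the square of their largest prime factor, and then to bound that count by a Dirichlet-series/Mertens computation. First I would expand
\[
\EE\Bigg[\Bigg|\sum_{n \in X} \alpha(n)\weight(n/x)\Bigg|^2\Bigg]
= \sum_{n, m \in X} \weight(n/x)\overline{\weight(m/x)}\,\EE[\alpha(n)\overline{\alpha(m)}]
= \sum_{n \in X} |\weight(n/x)|^2
\ll \sum_{\substack{n \le Ax \\ P(n)^2 \mid n}} 1,
\]
using that $\weight$ is bounded and supported on $[0,A]$. So the task is purely to show that the number of $n \le Ax$ with $P(n)^2 \mid n$ is $O(x/\log x)$.

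Next I would parametrise such $n$: writing $p = P(n)$ and $n = p^2 m$ with $P(m) \le p$, the count becomes $\sum_{p} \#\{ m \le Ax/p^2 : P(m) \le p\} \le \sum_{p \le \sqrt{Ax}} Ax/p^2 \ll x \sum_{p} 1/p^2 \ll x$, which is off by a logarithm. To save the extra $\log x$, I would instead split according to the size of $p$. For $p > \sqrt{x}/(\log x)$, say, the inner count is $\le Ax/p^2 \ll (\log x)^2$ while the number of such primes is $\ll \sqrt{x}/\log x$ by Chebyshev, giving a total of $\ll \sqrt{x}\log x = o(x/\log x)$. For the remaining range $p \le \sqrt{x}/(\log x)$ I would use the smoothness constraint $P(m) \le p$: the number of $p^2$-smooth-capped integers up to a bound $z$ with all prime factors $\le p$ where $p$ is small compared to $z$ is governed by $\Psi(z,p)$, and crucially $\sum_{m \le z, P(m) \le p} 1 \ll z \prod_{q \le p}(1 - 1/q)^{-1} \cdot (\text{something})$ — more usefully, $\sum_{m \le z,\, P(m)\le p} 1/m \ll \prod_{q\le p}(1-1/q)^{-1} \ll \log p$ by Mertens (cf.~\eqref{eq:mertens3}), so $\sum_{m\le z,\, P(m)\le p} 1 \le z\sum_{m\le z} 1/m \cdot \mathbf{1}[\cdots]$ — here it is cleaner to simply bound $\#\{m \le Ax/p^2\} \le Ax/p^2$ and then sum $\sum_{p} 1/p^2$, which as noted only gives $O(x)$.

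The honest fix, and the step I expect to be the main (mild) obstacle, is to get the logarithmic saving: one writes $n = p^2 m$ and sums $\sum_{p \le \sqrt{Ax}} \#\{m \le Ax/p^2\}$, but replaces the trivial bound $Ax/p^2$ for $m$ by the observation that we additionally need $P(m)\le p$, and then uses
\[
\sum_{p \le \sqrt{Ax}} \#\{m \le Ax/p^2 : P(m)\le p\}
\le \sum_{m}\ \#\{p : p^2 m \le Ax,\ p \ge P(m)\}
\le \sum_{m \le Ax} \frac{\sqrt{Ax/m}}{\log \sqrt{Ax/m}},
\]
by Chebyshev's bound on the number of primes up to $\sqrt{Ax/m}$ (the interval $[P(m), \sqrt{Ax/m}]$ is contained in $[2,\sqrt{Ax/m}]$). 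Splitting the sum over $m$ at $m = \sqrt{x}$: for $m \le \sqrt{x}$ the summand is $\ll \sqrt{x/m}/\log x$, and $\sum_{m \le \sqrt{x}} \sqrt{x/m} \ll x$, contributing $\ll x/\log x$; for $\sqrt{x} < m \le Ax$ the summand is $\ll \sqrt{Ax/m}$ and $\sum_{\sqrt{x} < m \le Ax} \sqrt{Ax/m} \ll x$ — but this last piece is only $O(x)$, not $O(x/\log x)$, so I would instead handle large $m$ by noting $p \le \sqrt{Ax/m} \le (Ax)^{1/4}$ there and bounding $\sum_{p \le (Ax)^{1/4}} Ax/p^2 \ll x$, still losing the log. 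Consequently the clean route is: for the range $p^2 m \le Ax$ with $m \ge \sqrt{x}$ we have $p \le (Ax)^{1/4}$, so this contributes $\le \sum_{p \le (Ax)^{1/4}}\#\{m \le Ax/p^2\} \ll x\sum_p 1/p^2 \ll x$; this is acceptable only if we are willing to accept $O(x)$ there — which we are not. I will therefore present the argument via the first splitting (on $p$, not $m$): large primes $p > \sqrt{x}/\log x$ contribute $\ll \sqrt{x}\log x$ by Chebyshev, and for $p \le \sqrt{x}/\log x$ I bound the number of $m \le Ax/p^2$ with $P(m) \le p$ using that such $m$ have at most $O(\log x/\log p)$ prime factors, or more simply use $\Psi(Ax/p^2, p) \ll (Ax/p^2)\rho(\log(x/p^2)/\log p)$; since $\log(x/p^2)/\log p \ge 1$ throughout this range, $\rho \le 1$ and we recover $\ll x\sum_p 1/p^2$. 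Given the paper only quotes Mertens and Chebyshev-level inputs, I expect the intended proof is in fact the crude one: $\sum_{P(n)^2\mid n,\, n\le Ax} 1 = \sum_{p}\Psi(Ax/p^2,p)$, and using $\Psi(z,p) \le z/\log p$ for $p \le z$ (valid since a $p$-smooth integer below $z$ is determined by its factorisation into primes $\le p$, and a short sieve/Rankin bound gives the $1/\log p$ gain) yields $\sum_{p \le \sqrt{Ax}} (Ax/p^2)/\log p \ll x\sum_{p\le\sqrt{x}} 1/(p^2\log p) \ll x/\log x$ after isolating $p$ near $\sqrt{x}$, which is exactly the saving we want. I will write it up this way, with the main technical point being the uniform smooth-number bound $\Psi(z,p) \ll z/\log p$ and the dyadic summation over $p$ that extracts the final $1/\log x$.
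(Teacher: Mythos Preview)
Your reduction via orthogonality to counting $n \le Ax$ with $P(n)^2 \mid n$, and the parametrisation as $\sum_{p \le \sqrt{Ax}} \Psi(Ax/p^2, p)$, are both correct and match the paper. The gap is in your final estimate: the bound $\Psi(z,p) \ll z/\log p$ only yields
\[
\sum_{p \le \sqrt{Ax}} \frac{Ax}{p^2 \log p} \asymp x,
\]
not $x/\log x$, because $\sum_p 1/(p^2 \log p)$ is a positive constant dominated by $p=2,3,5$. There is no way to ``isolate $p$ near $\sqrt{x}$'' to recover the logarithm here, since the sum is concentrated on the \emph{smallest} primes.

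The missing idea is that the splitting point must be at $p \approx \log x$, not $\sqrt{x}/\log x$. For $p \ge \log x$ the trivial bound $\Psi(Ax/p^2,p) \le Ax/p^2$ already gives $\sum_{p \ge \log x} Ax/p^2 \ll x/\log x$. For $p < \log x$ one needs a genuinely strong smooth-number input: the paper invokes de Bruijn's bound $\Psi(z,y) \ll z\, e^{-c\log z/\log y}$ (equation \eqref{eq:smooth}), which gives
\[
\sum_{p < \log x} \Psi(Ax/p^2, p) \ll x \sum_{p < \log x} p^{-2} e^{-c\log x/\log p}
\ll x\, e^{-c\log x/\log\log x} \ll \frac{x}{\log x}.
\]
You actually came close when you wrote $\Psi(Ax/p^2,p) \ll (Ax/p^2)\rho(\log(x/p^2)/\log p)$, but then discarded it with ``$\rho \le 1$'' --- that is exactly the step where the argument needs the decay of $\rho$ (or equivalently de Bruijn) rather than the trivial bound.
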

\begin{lem}\label{lem:smallprimes}
Let $\weight=\mathbf{1}_{[0,A]}$. Uniformly for $2 \le y< z \le Ax$ we have
		\[ \EE \left[|s_{x,z}|\right] \ll \sqrt{\frac{\log z}{\log x}} \EE\left[\left( \frac{1}{\log y}\int_{\RR} \left|\frac{A_y(1/2+it)}{1/2+it}\right|^2 dt\right)^{1/2}\right]  + e^{-c\frac{\log x}{\log y}} \]
for some absolute constant $c>0$.
\end{lem}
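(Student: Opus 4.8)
The plan is to condition on $\Fa_y$, apply Cauchy--Schwarz in the form of conditional Jensen for the concave map $w\mapsto\sqrt{w}$, invoke the conditioning estimate \Cref{lem:conditioning}, and then convert the resulting mean-square of $s_{t,y}$ into a mean-square of $A_y$ on the critical line via Plancherel. Since $\weight$ has compact support, $s_{x,z}=x^{-1/2}\sum_{P(n)\le z}\alpha(n)\weight(n/x)$ is a finite sum of bounded random variables, so $\EE[|s_{x,z}|]<\infty$ and by the law of total expectation together with conditional Jensen,
\[ \EE\left[|s_{x,z}|\right]=\EE\left[\EE\left[|s_{x,z}|\,\middle|\,\Fa_y\right]\right]\le \EE\left[\left(\EE\left[|s_{x,z}|^2\,\middle|\,\Fa_y\right]\right)^{1/2}\right]. \]
Substituting the bound of \Cref{lem:conditioning}, using $\sqrt{a+b}\le\sqrt a+\sqrt b$ for $a,b\ge 0$, and bounding $\EE[X^{1/2}]\le(\EE X)^{1/2}\ll e^{-\frac{c}{2}\frac{\log x}{\log y}}$, we obtain (after renaming the constant $c$)
\[ \EE\left[|s_{x,z}|\right]\ll \sqrt{\frac{\log z}{\log x\log y}}\,\EE\left[\left(\int_{0}^{\infty}|s_{t,y}|^2\,\frac{dt}{t}\right)^{1/2}\right]+e^{-c\frac{\log x}{\log y}}. \]

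It remains to identify $\int_{0}^{\infty}|s_{t,y}|^2\,dt/t$. Writing $g(t):=\sqrt{t}\,s_{t,y}=\sum_{P(n)\le y}\alpha(n)\weight(n/t)$, whose Dirichlet series (in $n$) is $A_y$, Mellin inversion gives $g(t)=\frac{1}{2\pi}\int_{\RR}A_y(\tfrac{1}{2}+it')K_{\weight}(\tfrac{1}{2}+it')\,t^{1/2+it'}\,dt'$, so after the change of variables $t=e^u$ the function $u\mapsto g(e^u)e^{-u/2}$ is (up to normalisation) the inverse Fourier transform of $t'\mapsto A_y(\tfrac{1}{2}+it')K_{\weight}(\tfrac{1}{2}+it')$. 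Since $\weight$ is a compactly supported step function, $K_{\weight}(\tfrac{1}{2}+it')\ll 1/(1+|t'|)$, while $|A_y(\tfrac{1}{2}+it')|$ is bounded uniformly in $t'$ (it is a finite Euler product with no poles on $\Re s=\tfrac12$); in particular $A_y(\tfrac{1}{2}+\cdot)K_{\weight}(\tfrac{1}{2}+\cdot)\in L^2(\RR)$, and Plancherel's theorem (cf.\ \eqref{eq:plancherel} and \cite[Theorem~5.4]{MV}) yields
\[ \int_{0}^{\infty}|s_{t,y}|^2\,\frac{dt}{t}=\int_{0}^{\infty}|g(t)|^2\,\frac{dt}{t^{2}}=\frac{1}{2\pi}\int_{\RR}\left|A_y(\tfrac{1}{2}+it)K_{\weight}(\tfrac{1}{2}+it)\right|^2\,dt. \]
Plugging this into the previous display and extracting a factor $\sqrt{\log y}$ so as to form the normalised integral $\frac{1}{\log y}\int_{\RR}|A_y K_{\weight}|^2$, the prefactor becomes $\sqrt{\frac{\log z}{\log x\log y}}\cdot\sqrt{\log y}=\sqrt{\frac{\log z}{\log x}}$, which is exactly the claimed estimate.

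The only genuinely delicate input is the conditioning estimate \Cref{lem:conditioning} (a generalisation of Harper's argument in \cite{Har2020,harper2023typical}), which is assumed here and established separately. Everything else is bookkeeping: the passage from $\EE|s_{x,z}|$ to $\EE[(\EE[|s_{x,z}|^2\mid\Fa_y])^{1/2}]$, the elementary inequalities $\sqrt{a+b}\le\sqrt a+\sqrt b$ and $\EE[X^{1/2}]\le(\EE X)^{1/2}$, and the Mellin--Plancherel identity for the smoothed sum $\sum_{P(n)\le y}\alpha(n)\weight(n/t)$ — for the latter one should double-check the $L^{2}$ membership (valid for step-function weights $\weight$ by the $1/(1+|t|)$ decay of $K_{\weight}$ on the critical line) that legitimises the Plancherel step, and note that the Mellin inversion is a principal-value identity holding at the continuity points of $\weight$, which suffices after integrating in $t$.
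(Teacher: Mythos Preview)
Your proof is correct and follows essentially the same approach as the paper: condition on $\Fa_y$, apply conditional Cauchy--Schwarz, invoke \Cref{lem:conditioning}, split via $\sqrt{a+b}\le\sqrt a+\sqrt b$, bound $\EE[X^{1/2}]\le(\EE X)^{1/2}$, and identify the $t$-integral with the critical-line integral of $|A_yK_{\weight}|^2$ via Plancherel. The extra care you take in justifying the $L^2$ membership for the Plancherel step is a welcome elaboration but not a substantive departure.
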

\Cref{lem:smallprimes} is a generalisation of a bound of Harper \cite[Proposition 1]{Har2020}, which corresponds to $z=x$. We explain why these two lemmas together imply 
\[\limsup_{x \to \infty}\EE\left[|S^{\weight}_x - S^{\weight}_{x,\OurEpsilon}|\right] \ll \sqrt{\OurEpsilon}\]
and so \Cref{lem:smallp} follows. Observe that \[ S_x^{\weight}-S_{x,\OurEpsilon}^{\weight}=(\log \log x)^{\frac{1}{4}} s_{x,x^{\OurEpsilon}} + \frac{(\log \log x)^{\frac{1}{4}} }{\sqrt{x}}\sum_{n \in X} \alpha(n)\weight(n/x)\]
for $X:=\{ n\ge 1: P(n)^2 \mid n \text{ and } P(n)>x^{\OurEpsilon}\}$, so by the triangle inequality
\[ \EE |S_x^{\weight}-S_{x,\OurEpsilon}^{\weight}|\le (\log \log x)^{\frac{1}{4}} \EE|s_{x,x^{\OurEpsilon}}| + \frac{(\log \log x)^{\frac{1}{4}} }{\sqrt{x}}\EE|\sum_{n \in X} \alpha(n)\weight(n/x)|.\]
The second term in the right-hand side goes to $0$ in the limit as $x\to \infty$ by Cauchy--Schwarz and \Cref{lem:repeated}. For $x$ large enough (in terms of $\OurEpsilon$), the first expectation can be bounded by \Cref{lem:smallprimes} (with $z=x^{\OurEpsilon}$) as
\[(\log \log x)^{\frac{1}{4}}\EE|s_{x,x^{\OurEpsilon}}|  \ll \sqrt{\OurEpsilon} (\log \log x)^{\frac{1}{4}}\EE\bigg[\bigg( \frac{1}{\log y}\int_{\RR} \left|\frac{A_y(1/2+it)}{1/2+it}\right|^2 dt\bigg)^{1/2}\bigg]  + (\log x)^{-c} \]
for some $c>0$, by taking $y=x^{1/\log \log x}$. Since $\log \log y \asymp \log \log x$, the proof of \Cref{lem:smallp} is concluded by appealing to \eqref{eq:Harper} with $q=1/2$.
\subsection{Proof of \texorpdfstring{\Cref{lem:truncatedelta}}{Lemma \ref{lem:truncatedelta}}}
Let $\weight=\mathbf{1}_{[0,A]}$. Recall $I_k=(x_k,x_{k+1}]$ where $x_k=x^{\OurEpsilon+k\delta}$.  Let $T=T(x)$ be a function of $x$ that tends to $\infty$ slower than any power of $x$. Let
\begin{align*}
b_{k,1}&:=\frac{1}{\sqrt{x}} \sum_{p \in I_k} \sum_{q \in (x_{k},p)}\sum_{\substack{ P(n)=p\\ P(n/p)=q,\,q^2 \mid n}} \alpha(n)\weight(n/x),\\
b_{k,2}&:=\frac{1}{\sqrt{x}} \sum_{p \in I_k} \sum_{\substack{q \in (x_{k},p) \\ pq\le x/T}}\sum_{\substack{P(n)=p\\ P(n/p)=q,\, q^2 \nmid n}}\alpha(n)\weight(n/x).
\end{align*}
Observe that 
\begin{equation}\label{eq:sdiffvia} S_{x,\OurEpsilon}^{\weight}-S_{x,\OurEpsilon,\delta}^{\weight}=(\log \log x)^{\frac{1}{4}} (\sum_{k=0}^{K} (b_{k,1}+b_{k,2})+b_3)
\end{equation}
where
\[ b_3=\frac{1}{\sqrt{x}}\sum_{\substack{0\le k\le K,\\ q<p\in I_k,\, pq>x/T\\ pq \mid n}}\alpha(n)\weight(n/x)\]
if $x$ is sufficiently large (in terms of $\OurEpsilon$). Here we used the fact that if $n\le Ax$ is divisible by two primes $p>q>x^{\OurEpsilon}$ with $pq>x/T$ then $n/(pq)$ is at most $AT$ and in particular satisfies $P(n/(pq))<q$ (if $x$ is sufficiently large). Moreover, if $x$ is large enough then there can be at most one pair of primes satisfying these conditions for a given $n\le Ax$ (a pair of primes $p>q>x^{\OurEpsilon}$ with $pq>x/T$ such that $pq$ divides $n$).

We break the proof of \Cref{lem:truncatedelta} into three parts.
\begin{lem}\label{lem:repeateddelta}
Let $\weight=\mathbf{1}_{[0,A]}$. We have $\EE[|b_{k,1}|^2]\ll_{\OurEpsilon,\delta} 1/x_k$ for all $0 \le k \le K$.
\end{lem}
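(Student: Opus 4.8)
The plan is to estimate $\EE[|b_{k,1}|^2]$ directly via the orthogonality relation \eqref{eq:orth}, reducing the task to a counting problem about the sparse set of integers appearing in the definition of $b_{k,1}$. Recall that $b_{k,1}$ is a sum of $\alpha(n)\weight(n/x)/\sqrt{x}$ over integers $n$ with $P(n)=p\in I_k$, $P(n/p)=q\in(x_k,p)$, and $q^2\mid n$. Expanding the square and applying $\EE[\alpha(n)\overline{\alpha(m)}]=\delta_{n,m}$, we get
\[
\EE[|b_{k,1}|^2] = \frac{1}{x}\sum_{n} |\weight(n/x)|^2 \ll \frac{1}{x} \#\{ n \le Ax : n \in X_{k}\},
\]
where $X_k$ is the set of admissible $n$ for index $k$, using boundedness of $\weight$ and its support in $[0,A]$.

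So it remains to bound the count of such $n\le Ax$. First I would factor out the large prime structure: write $n = p \cdot q^2 \cdot m$ where $p\in I_k$ is the largest prime factor, $q\in(x_k,p)$ (so in particular $q>x_k$), and $m$ collects the rest (with $P(m)\le q$, and possibly $q\mid m$ or $q^2\mid m$ if $q$ appears with higher multiplicity, though keeping $q^2\mid n$ suffices for the bound). Since $q > x_k = x^{\OurEpsilon+k\delta}$ and $p > x_k$ as well, we have $pq^2 > x_k^3$, so $m \le Ax/(pq^2) \le A x / x_k^3$. Summing trivially over $m$ (at most $\lceil Ax/(pq^2)\rceil$ choices), then over the primes $p\in I_k$ and $q\in(x_k, x_{k+1}]$ (we can relax $q<p$ to $q\le x_{k+1}$), we obtain
\[
\#\{n\le Ax: n\in X_k\} \ll x \sum_{p \in I_k} \frac{1}{p}\sum_{x_k < q \le x_{k+1}} \frac{1}{q^2} + (\text{lower order}).
\]
The sum $\sum_{p\in I_k} 1/p \ll \log(x_{k+1}/x_k) = \delta\log x \ll_{\OurEpsilon,\delta} 1$ by Mertens' theorem (\eqref{eq:mertens35}), while $\sum_{q > x_k} 1/q^2 \ll 1/(x_k\log x_k) \ll 1/x_k$ (or even just $\ll 1/x_k$ by comparison with $\int_{x_k}^\infty t^{-2}dt$). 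This yields $\#\{n\le Ax: n\in X_k\} \ll_{\OurEpsilon,\delta} x/x_k$, hence $\EE[|b_{k,1}|^2]\ll_{\OurEpsilon,\delta} 1/x_k$ as claimed; one should double-check that the "lower order" terms from the ceiling functions $\lceil Ax/(pq^2)\rceil$ contribute at most $O(x_{k+1}^2) = o(x/x_k)$ since $x_{k+1} \le Ax$ and $x_k \le x_{k+1}$, which is easily absorbed (using $k\le K$ so $x_k$ is a genuine power of $x$ bounded away from $x$).

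I do not expect any real obstacle here: the only mild subtlety is being careful that the multiplicity of $q$ in $n$ does not cause over- or under-counting, but since we only need an upper bound, writing $n = p\cdot q^2 \cdot m$ with $q \nmid$ or $q\mid m$ allowed (i.e.\ dropping all constraints except $q^2\mid n$, $P(n)=p\in I_k$, $q\in(x_k,p)$) gives a valid and sufficiently sharp overcount. The dependence of the implied constant on $\OurEpsilon,\delta$ is harmless because in \Cref{lem:truncatedelta} one fixes $\OurEpsilon,\delta$ and sends $x\to\infty$ first; indeed summing $\EE[|b_{k,1}|^2]\ll_{\OurEpsilon,\delta} 1/x_k$ over $0\le k\le K$ gives a bound $\ll_{\OurEpsilon,\delta} 1/x_0 = x^{-\OurEpsilon}$, which tends to $0$, exactly what is needed downstream.
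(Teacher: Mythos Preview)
Your approach is correct and essentially identical to the paper's: apply orthogonality \eqref{eq:orth} to reduce to counting $n\le Ax$ with $n=pq^2m$, bound the $m$-count trivially, and conclude via $\sum_{q>x_k}1/q^2\ll 1/x_k$ and Mertens for $\sum_{p\in I_k}1/p$. One small slip: the number of $m\in[1,Ax/(pq^2)]$ is $\lfloor Ax/(pq^2)\rfloor\le Ax/(pq^2)$, not $\lceil\cdot\rceil$, so there is no ``lower order'' term to worry about; your claimed bound $O(x_{k+1}^2)=o(x/x_k)$ on that term is actually false for $k$ near $K$, but once you use floor the issue disappears (the paper tracks the exact multiplicity $i\ge 2$ of $q$ and bounds $\sum_m 1\le Ax/(pq^i)$ directly, arriving at the same $\sum_{p\in I_k}\frac{1}{p}\sum_{q>x_k}\frac{1}{q^2}$).
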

\begin{lem}\label{lem:closeprimes}
Let $\weight=\mathbf{1}_{[0,A]}$. If $x$ is sufficiently large in terms of $\OurEpsilon$, then
\[ \EE \bigg[\big|\sum_{k=0}^{K}b_{k,2}\big|\bigg] \ll \bigg(\sum_{k=0}^{K}\bigg(\sum_{p\in I_k}\frac{1}{p}\bigg)^2\bigg)^{1/2} \bigg(\EE\bigg[\bigg( \frac{1}{\log y}\int_{\RR} \left|\frac{A_y(1/2+it)}{1/2+it}\right|^2 dt\bigg)^{1/2}\bigg]  +e^{-c\frac{\log T}{\log  y}} \bigg)\]
holds uniformly for $y \in [2,T)$ for some absolute constant $c>0$.
\end{lem}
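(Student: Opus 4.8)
The plan is to run the conditioning-then-Cauchy--Schwarz argument (as in the proof of \Cref{lem:smallp}, originating in Harper's work) at the level of the $\Fa_y$-conditional second moment, and then to invoke \Cref{lem:conditioning}. Since $\EE|\sum_k b_{k,2}|=\EE\big[\EE[|\sum_k b_{k,2}|\mid\Fa_y]\big]\le\EE\big[(\EE[|\sum_k b_{k,2}|^2\mid\Fa_y])^{1/2}\big]$, it suffices to estimate the conditional second moment. Write each $n$ contributing to $b_{k,2}$ as $n=pqm$ with $p=P(n)\in I_k$, $q=P(n/p)\in(x_k,p)$ and $P(m)<q$ (so $q^2\nmid n$ is automatic), and split $m=m_1m_2$ into its $y$-smooth part $m_1$ and the part $m_2$ with all prime factors in $(y,q)$. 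As $y<T<x^{\OurEpsilon}<q\le p$ for $x$ large, $\alpha(m_1)$ is $\Fa_y$-measurable while $\alpha(p)\alpha(q)\alpha(m_2)=\alpha(pqm_2)$ is independent of $\Fa_y$. Expanding the square and using the orthogonality \eqref{eq:orth} for primes $>y$: a pair of tuples $(p,q,m_2),(p',q',m_2')$ contributes only when $pqm_2=p'q'm_2'$, and since $p$ is the largest prime factor there, $q$ the second, and $P(m_2)<q$, this forces $(p,q,m_2)=(p',q',m_2')$ (in particular they lie in the same block $I_k$). Re-assembling the $m_1$-sums in the notation \eqref{eq:sxy}, one obtains the exact identity
\[ \EE\Big[\big|\textstyle\sum_{k}b_{k,2}\big|^2\ \Big|\ \Fa_y\Big]=\sum_{k=0}^{K}\sum_{p\in I_k}\sum_{\substack{q\in(x_k,p)\\ pq\le x/T}}\frac{1}{pq}\,\EE\big[\,|s_{x/(pq),q^-}|^2\ \big|\ \Fa_y\,\big], \]
where $s_{x/(pq),q^-}$ denotes $s_{x/(pq),z}$ with $z$ the largest prime below $q$.

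Next, apply \Cref{lem:conditioning} to each inner expectation, with scaling parameter $x/(pq)$ and cutoff $z:=\min\{q^-,\ \text{largest prime}\le A\cdot x/(pq)\}$. The second alternative handles the regime $pq^2>Ax$: there $n\le A\,x/(pq)<q$ already forces $P(n)<q$, so $s_{x/(pq),q^-}$ is the full (unrestricted) sum and any such cutoff is legitimate. In both cases $y<z\le A\cdot x/(pq)$ (using $y<T\le x/(pq)$, and $x/(pq)\to\infty$), so \Cref{lem:conditioning} yields
\[ \EE\big[|s_{x/(pq),q^-}|^2\mid\Fa_y\big]\ll\frac{\log z}{\log(x/(pq))\,\log y}\int_0^\infty|s_{t,y}|^2\,\frac{dt}{t}+X_{p,q},\qquad \EE[X_{p,q}]\ll e^{-c\log(x/(pq))/\log y}\le e^{-c\log T/\log y}, \]
the last step using $pq\le x/T$. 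Since $\log z/\log(x/(pq))\ll1$ and $\sum_{p\in I_k}\sum_{q\in(x_k,p)}\tfrac1{pq}\le(\sum_{p\in I_k}\tfrac1p)^2$, summing over $k,p,q$ gives, with $\mathcal{A}:=\sum_{k=0}^{K}(\sum_{p\in I_k}\tfrac1p)^2$,
\[ \EE\Big[\big|\textstyle\sum_k b_{k,2}\big|^2\ \Big|\ \Fa_y\Big]\ll\mathcal{A}\,\frac{1}{\log y}\int_0^\infty|s_{t,y}|^2\frac{dt}{t}+X,\qquad X:=\sum_k\sum_{p}\sum_{q}\frac{X_{p,q}}{pq},\quad \EE[X]\ll\mathcal{A}\,e^{-c\log T/\log y}. \]

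Finally, take square roots and expectations. From $(\mathcal{A} B+X)^{1/2}\le\mathcal{A}^{1/2}B^{1/2}+X^{1/2}$ and $\EE[X^{1/2}]\le\EE[X]^{1/2}\ll\mathcal{A}^{1/2}e^{-(c/2)\log T/\log y}$ we obtain
\[ \EE\Big[\big|\textstyle\sum_k b_{k,2}\big|\Big]\ll\mathcal{A}^{1/2}\bigg(\EE\Big[\Big(\tfrac{1}{\log y}\int_0^\infty|s_{t,y}|^2\tfrac{dt}{t}\Big)^{1/2}\Big]+e^{-(c/2)\log T/\log y}\bigg), \]
and by Plancherel's theorem \eqref{eq:plancherel}, $\int_0^\infty|s_{t,y}|^2\tfrac{dt}{t}=\tfrac1{2\pi}\int_{\RR}|A_y(\tfrac12+it)K_{\weight}(\tfrac12+it)|^2\,dt$, which rewrites the right-hand side in the form claimed in \Cref{lem:closeprimes} (with the constant $c/2$ in place of $c$).

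All of the genuine analytic difficulty sits inside \Cref{lem:conditioning} — the Harper-type conditioning estimate — which is established separately. Within the present argument the two points needing care are: (i) the \emph{exact} collapse of the $\Fa_y$-conditional second moment onto the diagonal $\sum\tfrac1{pq}\EE[|s_{x/(pq),q^-}|^2\mid\Fa_y]$, which hinges on $p$ and $q$ being the two largest prime factors so that the rough part $pqm_2$ pins down $(p,q,m_2)$; and (ii) the choice of cutoff $z$ in \Cref{lem:conditioning} needed to accommodate the regime $pq^2>Ax$, together with the routine bookkeeping that converts the constraint $pq\le x/T$ — present precisely because $b_3$ in \eqref{eq:sdiffvia} absorbs the pairs with $pq>x/T$ — into the advertised $e^{-c\log T/\log y}$ saving.
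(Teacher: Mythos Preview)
Your proof is correct and follows essentially the same route as the paper: compute the $\Fa_y$-conditional second moment of $\sum_k b_{k,2}$, collapse it onto the diagonal $\sum_{p,q}\tfrac{1}{pq}\EE[|s_{x/(pq),\cdot}|^2\mid\Fa_y]$ via orthogonality of the $y$-rough parts, apply \Cref{lem:conditioning} termwise (using $pq\le x/T$ to get the $e^{-c\log T/\log y}$ saving), sum using $\sum_{p,q\in I_k}\tfrac{1}{pq}\le(\sum_{p\in I_k}\tfrac{1}{p})^2$, take square roots, and invoke Plancherel. The only cosmetic difference is that the paper writes $b_{k,2}=\sum_{p,q}\frac{\alpha(p)\alpha(q)}{\sqrt{pq}}s_{x/(pq),m}$ with the single cutoff $m:=\min\{Ax/(pq),q-1\}$ from the outset---this absorbs your case split ``$pq^2>Ax$'' into the definition and makes the hypothesis $y<z\le A\cdot\tfrac{x}{pq}$ of \Cref{lem:conditioning} immediate---but the content is identical.
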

\begin{lem}\label{lem:closeprimes2}
Let $\weight=\mathbf{1}_{[0,A]}$. Let $Y$ be any set of positive integers $n$ of the form $pqm$ where $p,q$ are primes with $p>q\ge x^{1/4}$ and $pq>x/T$. Then
\[\EE\big[\big| \sum_{n \in Y} \alpha(n) \weight(n/x)\big|^2\big] \ll \frac{\log T}{\log x}.\]
\end{lem}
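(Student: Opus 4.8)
The plan is a second-moment computation followed by an elementary count. First I would expand the square and apply the orthogonality relation \eqref{eq:orth}: since $\EE[\alpha(n)\overline{\alpha(m)}]=\delta_{n,m}$,
\[ \EE\Big[\Big|\sum_{n\in X}\alpha(n)\weight(n/x)\Big|^2\Big]=\sum_{n\in X}|\weight(n/x)|^2\ll \#\{n\in X:n\le Ax\}, \]
using that $\weight$ is bounded and supported on $[0,A]$ (so only $n\le Ax$ contribute), exactly as in \Cref{lem:repeated}. Thus the task reduces to counting the integers of $X$ below $Ax$, and the claim amounts to showing this count is $\ll x\log T/\log x$ (the factor of $x$ being the same one present throughout, cf.\ \Cref{lem:repeated}).

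The key structural observation is that the hypothesis $pq>x/T$ forces the cofactor to be small: if $n=pqm\le Ax$ then $m=n/(pq)<Ax/(x/T)=AT$. Overcounting each $n$ by its admissible factorisations, I would bound
\[ \#\{n\in X:n\le Ax\}\le\sum_{m<AT}\#\big\{(p,q):p>q\ge x^{1/4}\text{ prime},\ pq\le Ax/m\big\}. \]
For the inner count, fix $q\in[x^{1/4},\sqrt{Ax/m}\,]$ and bound the number of primes $p\in(q,Ax/(mq)]$ by Chebyshev's estimate $\pi(z)\ll z/\log z$; since $Ax/m>x/T$ and $T$ grows slower than any power of $x$, one has $\log\big(Ax/(mq)\big)\gg\log x$ throughout, so this is $\ll Ax/(mq\log x)$. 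Summing over $q$ via Mertens' theorem \eqref{eq:mertens35} — on the range $[x^{1/4},\sqrt{Ax/m}\,]$ the logarithms of the endpoints have bounded ratio, so $\sum_q 1/q=O(1)$ — gives an inner count of size $\ll Ax/(m\log x)$. Finally $\sum_{m<AT}1/m\ll\log T$ yields $\#\{n\in X:n\le Ax\}\ll x\log T/\log x$, completing the argument.

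I expect no genuine obstacle here; the proof is a routine combination of \eqref{eq:orth} with elementary prime counting. The one point that needs care is the order of operations: one must first use $pq>x/T$ to confine $m$ to $[1,AT)$, and only afterwards discard the lower bound on $pq$ when counting the pairs $(p,q)$ — dropping $pq>x/T$ at the level of $n$ would allow $m$ up to roughly $x^{1/2}$ and destroy the estimate. Beyond that one should check that the Mertens estimates keep the correct dependence on the auxiliary parameter $T$, which is where $\log T=o(\log x)$ enters (to guarantee $\log(Ax/(mq))\asymp\log x$ on the relevant ranges).
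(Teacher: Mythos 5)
Your proposal is correct and the approach is essentially the one in the paper: apply the orthogonality relation \eqref{eq:orth} to reduce to counting $n \in X$ with $n \le Ax$, then use elementary prime-counting. The bookkeeping differs slightly. You decompose $n=pqm$, use $pq>x/T$ to confine $m<AT$, and for each $m$ count pairs $(p,q)$ with $pq\le Ax/m$ via Chebyshev (for the $p$-range) and Mertens (for the $q$-sum). The paper instead sums directly over pairs $(p,q)$ with $x/T<pq\le Ax$, bounds $\#\{n\le Ax: pq\mid n\}\le Ax/(pq)$, and evaluates $\sum 1/(pq)$ via Mertens alone, splitting into $q\ge\sqrt{x/T}$ and $q<\sqrt{x/T}$. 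Both organisations give the same estimate; yours trades the case split for an explicit cofactor sum and an appeal to Chebyshev.

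One point worth flagging: you derive $\ll x\log T/\log x$ for the count (hence for the second moment), whereas the lemma as stated has $\ll \log T/\log x$. Your version is the correct one — the paper's own proof concludes with $\ll x(\log T/\log x)$, and that is exactly what is needed when the lemma is applied to $b_3$ (which carries an additional $1/\sqrt{x}$ normalisation). So the stated bound is missing a factor of $x$, and your parenthetical remark identifying this is right.
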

Let $\OurEpsilon\in (0,1/2)$. Suppose $x$ is sufficiently large in terms of $\OurEpsilon$. We choose $T$ to be $T=x^{1/\log \log x}$ and choose $y$ (from \Cref{lem:closeprimes}) to be $y=T^{1/\log \log T}$. From \Cref{lem:repeateddelta}, \Cref{lem:closeprimes2} and Cauchy--Schwarz,
\[ \sum_{k=0}^{K} \EE|b_{k,1}|+\EE|b_3|\ll_{\OurEpsilon,\delta}  x^{-\OurEpsilon/2} + \sqrt{\frac{\log T}{\log x}} ,\]
implying, by our choice of $T$, that 
\[ \limsup_{x \to \infty}(\log \log x)^{\frac{1}{4}}\EE\bigg[|\sum_{k=0}^{K} b_{k,1}+b_3|\bigg] = 0.\]
To conclude the proof, given \eqref{eq:sdiffvia} and the last limit it remains to show that
\[\limsup_{\delta\to 0^+}\limsup_{x \to \infty}(\log \log x)^{\frac{1}{4}}\EE\bigg[ \big|\sum_{k=0}^{K} b_{k,2}\big|\bigg]=0.\]
Applying \Cref{lem:closeprimes} with our choices of $T$ and $y$, and simplifying the bound using \eqref{eq:Harper} with $q=1/2$, it follows that
\[ (\log \log x)^{\frac{1}{4}}\EE\bigg[ \big|\sum_{k=0}^{K}b_{k,2}\big|\bigg] \ll  \bigg(\sum_{k=0}^{K}\bigg(\sum_{p\in I_k}\frac{1}{p}\bigg)^2\bigg)^{1/2}\]
for large enough $x$, so it suffices to show that
\begin{equation}\label{eq:sufficessumk}
 \limsup_{\delta\to 0^+}\limsup_{x \to \infty}  \bigg(\sum_{k=0}^{K}\bigg(\sum_{p\in I_k}\frac{1}{p}\bigg)^2\bigg)  = 0.
\end{equation}
We now invoke Mertens' theorem in the form 
\begin{equation}\label{eq:mer}
\sum_{p \le x} \frac{1}{p} =\log \log x + M + O(1/\log x)
\end{equation}
for $x\ge 2$, see \eqref{eq:mertens35}, to obtain that
\[ \limsup_{x\to \infty} \sum_{k=0}^{K}\bigg(\sum_{p\in I_k}\frac{1}{p}\bigg)^2 \le \sum_{k\ge 0}\log^2 \bigg(\frac{\OurEpsilon+(k+1)\delta}{\OurEpsilon+k\delta}\bigg)=\sum_{k\ge 0}\log^2\left(1+\frac{\delta}{\OurEpsilon+k\delta}\right). \]
Using $\log(1+t)\ll t$ for $t\ge 0$ and considering $k\delta \le \OurEpsilon$ and $k\delta > \OurEpsilon$ separately, we find that
\[ \sum_{k= 0}^{K}\bigg(\sum_{p\in I_k}\frac{1}{p}\bigg)^2 \ll \frac{\delta}{\OurEpsilon} + \frac{\delta^2}{\OurEpsilon^2} \]
and so \eqref{eq:sufficessumk} holds. This finishes the proof of \Cref{lem:truncatedelta}.
\subsection{Proofs of  \texorpdfstring{\Cref{lem:conditioning}}{Lemma \ref{lem:conditioning}},  \texorpdfstring{\Cref{lem:smallprimes}}{Lemma \ref{lem:smallprimes}} and \texorpdfstring{\Cref{lem:closeprimes}}{Lemma \ref{lem:closeprimes}}: lemmas involving conditioning}
\subsubsection{Proof of \texorpdfstring{\Cref{lem:conditioning}}{Lemma \ref{lem:conditioning}}}
We need the notions of $y$-smooth and $y$-rough integers. A positive integer is called $y$-smooth (resp.~$y$-rough) if $p\mid n \implies p \le y$ (resp.~$p\mid n\implies p>y$). We write $\Psi(x,y)$ for the number of $y$-smooth integers less than or equal to $x$. De Bruijn proved the bound \cite[Equation~(1.9)]{debruijn}
\begin{equation}\label{eq:smooth}
\Psi(x,y) \ll x e^{-c\frac{\log x}{\log y}}
\end{equation}
for some absolute $c>0$, uniformly for $x,y\ge 2$. We have the following standard bound:
\begin{equation}\label{eq:brunprim}
\sum_{n \le x,\, n\text{ is }y\text{-rough}} 1 \ll \frac{x}{\log y}
\end{equation}
holds for $x\ge y \ge 2$. It follows by taking $f(n)=\mathbf{1}_{n\text{ is }y\text{-rough}}$ in \cite[Corollary~2.15]{MV} and invoking Mertens' theorem \eqref{eq:mertens35}.

We now proceed with the body of the proof. Let $\weight=\mathbf{1}_{[0,A]}$. Any  $z$-smooth integer can be written uniquely as $m m'$ where $m$ is $y$-rough and simultaneously $z$-smooth, and  $m'$ is $y$-smooth. It allows us to express $s_{x,z}$ as
\[s_{x,z} = \sum_{\substack{1 \le m\le Ax\\ 
 m \text{ is }y\text{-rough}\\ \text{and }z\text{-smooth}}} \frac{\alpha(m)}{\sqrt{m}} s_{x/m,y}.\]
If $m_1,m_2$ are $y$-rough integers and $n_1,n_2$ are  $y$-smooth integers, then
\begin{equation}\label{eq:orthgeneral}
\EE\left[ \alpha(m_1 n_1) \overline{\alpha(m_2 n_2)}\mid \Fa_y\right] = \alpha(n_1)\overline{\alpha(n_2)}\mathbf{1}_{m_1=m_2}
\end{equation}
holds. It follows that
\begin{equation}\label{eq:orthapp}
\EE\left[ |s_{x,z}|^2 \mid \Fa_y\right] = \sum_{\substack{1\le m\le Ax\\ m \text{ is }y\text{-rough}\\ \text{and }z\text{-smooth}}} m^{-1} |s_{x/m,y}|^2.
\end{equation}
We split the terms in the $m$-sum in \eqref{eq:orthapp} according to whether $m\le x^{3/4}$ holds or not, and write the sum as $X+Y$ where
\[X := \sum_{\substack{m\le x^{3/4} \\ m \text{ is }y\text{-rough} \\ \text{and }z\text{-smooth}}}m^{-1}|s_{x/m,y}|^2,\qquad 
Y := \sum_{\substack{x^{3/4}<m\le Ax\\ m \text{ is }y\text{-rough}\\\text{and }z\text{-smooth}}}m^{-1} |s_{x/m,y}|^2.\]
The expectation of $X$ satisfies
\begin{align*} 
\EE \left[X\right] &= x^{-1}\sum_{\substack{m\le x^{3/4}\\ m \text{ is }y\text{-rough}\\ \text{and }z\text{-smooth}}} \sum_{m'\text{ is }y\text{-smooth}} |\weight(m'm/x)|^2\\
&=  x^{-1}\sum_{\substack{m\le x^{3/4}\\ m \text{ is }y\text{-rough}\\ \text{and }z\text{-smooth}}} \Psi(Ax/m,y)\ll \sum_{\substack{m\le x^{3/4}\\ m\text{ is }y\text{-rough}}} m^{-1} e^{-c\frac{\log(x/m)}{\log y}} 
\end{align*}
by \eqref{eq:orth} and \eqref{eq:smooth}. To treat the last sum we group elements $m$ according to the interval of the shape $[e^k,e^{k+1})$ to which they belong:
\[\sum_{\substack{m\le x^{3/4}\\ m\text{ is }y\text{-rough}}} m^{-1} e^{-c\frac{\log(x/m)}{\log y}}=   e^{-c\frac{\log x}{\log y}}\sum_{\substack{m\le x^{3/4}\\ m\text{ is }y\text{-rough}}} m^{-1} e^{c\frac{\log m}{\log y}} \\
 \ll  e^{-c\frac{\log x}{\log y}}\sum_{k\ge 0:\, e^k\le ex^{3/4}} e^{c\frac{k}{\log y}} \sum_{\substack{m \in [e^k,e^{k+1})\\m \text{ is }y\text{-rough}}} 1 /e^k.\]
 The inner $m$-sum in the right-hand side is $1$ if $k=0$ because $1$ is $y$-rough. It is $0$ if $k\in (0,\log (y/e)]$ because there are no $y$-rough integers in $(1,y]$. It is $\ll e^k/\log y$ otherwise, by \eqref{eq:brunprim}. We obtain
 \[ \EE \left[X\right] \ll e^{-c\frac{\log x}{\log y}} \bigg( 1+ \frac{1}{\log y}\sum_{k:\, y/e< e^k\le ex^{3/4}} e^{c\frac{k}{\log y}}\bigg) \ll e^{-c\frac{\log (x/x^{3/4})}{\log y}} \]
 by summing a geometric progression. It remains to bound $Y$. Since we assume $\weight=\mathbf{1}_{[0,A]}$ it follows that $t\mapsto s_{t,y}\sqrt{t}=\sum_{P(n)\le y,\, n\le At} \alpha(n)$ is a function of $\lfloor At\rfloor$ only, that is, $s_{t,y}\sqrt{t} = s_{\lfloor At\rfloor/A, y}\sqrt{\lfloor At\rfloor/A} \asymp s_{\lfloor At\rfloor/A,y} \sqrt{t}$. Setting $r:=\lfloor Ax/m\rfloor$ we may write
\begin{equation}\label{eq:t2upp}
Y\ll \sum_{1\le r <Ax^{1/4}}|s_{r/A,y}|^2 \sum_{\substack{x^{3/4}<m \le Ax \\ m \text{ is }y\text{-rough} \\ \text{ and }z\text{-smooth}\\ m \in (Ax/(r+1),Ax/r]}}m^{-1} \le \sum_{1\le r <Ax^{1/4}} |s_{r/A,y}|^2 (Ax/(r+1))^{-1} \sum_{\substack{x^{3/4}<m \le Ax \\ m \text{ is }y\text{-rough} \\ \text{ and }z\text{-smooth}\\ m \in (Ax/(r+1),Ax/r]}}1.
\end{equation}
To treat the $m$-sum in the right-hand side of \eqref{eq:t2upp} we need the following lemma, a very special case of the work of Shiu \cite[Theorem~1]{Shiu}; it will be discussed further in \Cref{sec:nt}.
\begin{lem}[Shiu]\label{lem:upperbnd}
Let $I=[x,x+h]$ be an interval with $2\le h\le x$. Suppose $2\le y <z$. If $h \ge x^{1/4}$ then
\[ \sum_{\substack{m \in I \\ m \text{ is }y\text{-rough} \\ \text{ and }z\text{-smooth}}}1 \ll \frac{\log z}{\log x\log y}|I|.\]
\end{lem}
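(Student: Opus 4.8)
The plan is to deduce \Cref{lem:upperbnd} directly from Shiu's Brun--Titchmarsh theorem for multiplicative functions \cite{Shiu}. Introduce the non-negative multiplicative function
\[ f(n) := \mathbf{1}\bigl[\text{every prime factor $p$ of $n$ satisfies $y < p \le z$}\bigr], \]
which is exactly the indicator that $n$ is simultaneously $y$-rough and $z$-smooth, so that the quantity to be bounded is $\sum_{m \in I} f(m)$. This $f$ trivially meets the hypotheses needed for Shiu's theorem with absolute constants: $f(p^{\ell}) \le 1$ for every prime power $p^{\ell}$, and $f(n) \le 1 \le n^{\varepsilon}$ for every $n \ge 1$ and $\varepsilon > 0$.

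The main step is then to invoke \cite[Theorem~1]{Shiu} with trivial modulus $q = 1$ and with exponent $\beta = \tfrac14$, so that the admissibility condition on the interval length is precisely the hypothesis $h \ge x^{1/4}$ (and $q = 1 < h^{1-\beta}$ holds automatically). This yields, uniformly in the stated range $x^{1/4} \le h \le x$,
\[ \sum_{m \in I} f(m) \ll \frac{|I|}{\log x}\exp\!\left( \sum_{p \le x} \frac{f(p)}{p} \right) = \frac{|I|}{\log x}\exp\!\left( \sum_{\substack{p \le x \\ y < p \le z}} \frac{1}{p} \right). \]
To finish, I would estimate the exponential factor by Mertens' theorem \eqref{eq:mer}: since $\sum_{y < p \le w} 1/p = \log\log w - \log\log y + O(1/\log y)$ for any $2 \le y < w$ with the error term absolutely bounded, one obtains $\exp\!\left(\sum_{p \le x,\, y < p \le z} 1/p\right) \le \exp\!\left(\sum_{y < p \le z} 1/p\right) \ll \log z / \log y$; this bound is $\ge 1$, which also disposes of the degenerate case $y \ge x$ where the exponent is an empty sum. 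Combining the two displays gives $\sum_{m \in I} f(m) \ll \frac{\log z}{\log x \log y}\,|I|$, which is the assertion.

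I do not expect a serious obstacle here; the proof is essentially a matter of bookkeeping. The two points requiring genuine care are: (i) confirming that $f$ satisfies the precise hypotheses of the version of Shiu's theorem being cited, and that the admissible ranges for the interval length and modulus in that version comfortably contain $q = 1$ and $h \ge x^{1/4}$; and (ii) keeping the parameters $y, z$ (primes delimiting the support of $f$) notationally separate from the variables playing the role of the location ``$x$'' and the interval length ``$h$'' in Shiu's statement — in the later application \eqref{eq:t2upp} the role of the location is played by a quantity of size $\asymp x/r$, and one must check that the relevant interval length still exceeds its fourth power. I would record the exact statement of Shiu's theorem invoked, along with these uniformity ranges, in \Cref{sec:nt}.
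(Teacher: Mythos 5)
Your proposal is correct and matches the paper's intended derivation exactly: in \Cref{sec:nt} the paper states that \Cref{lem:upperbnd} is obtained by applying Shiu's \cite[Theorem~1]{Shiu} to $f(n) = \mathbf{1}_{p\mid n \implies p \in (y,z]}$, with Koukoulopoulos' exposition cited to confirm uniformity in the parameters $y,z$. The paper itself does not spell out the bookkeeping you carry through, opting instead to record a self-contained but weaker toy version (\Cref{lem:suffice}) via Wirsing's trick under the additional hypotheses $y \ge \log^2 x$ and $h \ge yz$, which is all that is actually needed in its applications.
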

We apply the lemma with $I=[Ax/(r+1),Ax/r]$. It follows that the inner sum in the right-hand side of \eqref{eq:t2upp} is $\ll\frac{\log z}{\log x\log y} \frac{x}{r^2}$, and so we can bound the right-hand side of \eqref{eq:t2upp} by 
\[ Y \ll \frac{\log z}{\log x\log y}\sum_{1\le r <Ax^{1/4}}\frac{|s_{r/A,y}|^2}{r} \ll\frac{\log z}{\log x\log y} \int_{0}^{Ax^{1/4}+1} |s_{t/A,y}|^2 \frac{dt}{t}\ll \frac{\log z}{\log x\log y} \int_{0}^{\infty} |s_{t,y}|^2 \frac{dt}{t}\]
as needed.

\subsubsection{Proof of \texorpdfstring{\Cref{lem:smallprimes}}{Lemma \ref{lem:smallprimes}}}
By \Cref{lem:conditioning}, Cauchy--Schwarz and the inequality $\sqrt{a+b}\le \sqrt{a}+\sqrt{b}$,
\[\EE \left[|s_{x,z}| \mid \Fa_{y}\right] \le (\EE \left[|s_{x,z}|^2 \mid \Fa_{y}\right])^{1/2} \ll \bigg( \frac{\log z}{\log x\log y} \int_{0}^{\infty} |s_{t,y}|^2 \frac{dt}{t} \bigg)^{1/2}  + X^{1/2}\]
where $X$ is a non-negative random variable with $\EE \left[X\right]\ll e^{-c \frac{\log x}{\log y}}$. By the law of total expectation and Cauchy--Schwarz,
\begin{align*}
\EE \left[|s_{x,z}| \right]
&\ll \EE\bigg[\bigg( \frac{\log z}{\log x\log y} \int_{0}^{\infty} |s_{t,y}|^2 \frac{dt}{t} \bigg)^{1/2}\bigg]  +(\EE \left[X\right])^{1/2}\\
& \ll \EE\bigg[\bigg( \frac{\log z}{\log x\log y} \int_{0}^{\infty} |s_{t,y}|^2 \frac{dt}{t} \bigg)^{1/2} \bigg] + \left[e^{-c\frac{\log x}{\log y}}\right]^{1/2}.
\end{align*}
By Plancherel’s theorem in the form given in \cite[Theorem~5.4]{MV},
\begin{equation}\label{eq:plancherel}
\int_{0}^{\infty} |s_{t,y}|^2 \frac{dt}{t}=\frac{1}{2\pi}\int_{\RR} \left|A_y(1/2+it)K_{\weight}(1/2+it)\right|^2 dt
\end{equation}
holds, which gives the result since $|K_{\weight}(1/2+it)| \asymp 1/|1/2+it|$ when $\weight=\mathbf{1}_{[0,A]}$.

\subsubsection{Proof of \texorpdfstring{\Cref{lem:closeprimes}}{Lemma \ref{lem:closeprimes}}}
We denote $m=m(x,p,q):=\min\{Ax/(pq),q-1\}$. By definition
\begin{equation}\label{eq:bydef}
    b_{k,2}=\sum_{\substack{p,q \in I_k\\ q<p,\, pq\le x/T}} \frac{\alpha(p)\alpha(q)}{\sqrt{pq}}  s_{\frac{x}{pq},q-1} =  \sum_{\substack{p,q \in I_k\\ q<p,\, pq\le x/T}}\frac{\alpha(p)\alpha(q)}{\sqrt{pq}}s_{\frac{x}{pq},m}.
\end{equation}
We now take $y\ge 2$ which satisfies $y<m$ for all $p,q$ in our sum. If $x$ is large enough in terms of $\OurEpsilon$ then any $y<T$ works (recall $T=T(x)$ is a fixed function of $x$ that grows slower than any power of $x$). From \eqref{eq:bydef} and \eqref{eq:orthgeneral},
\[	\EE\bigg[ \big|\sum_{k=0}^{K}b_{k,2}\big|^2 \mid \Fa_y\bigg] =	\sum_{k=0}^{K}\sum_{\substack{p,q \in I_k\\ q<p,\, pq\le x/T}}  \frac{1}{pq} \EE\left[ |s_{\frac{x}{pq},m}|^2 \mid \Fa_y\right].\]
By \Cref{lem:conditioning}
\[ \EE\left[ |s_{\frac{x}{pq},m}|^2 \mid \Fa_y\right] \ll\frac{\log (2+m)}{\log (2+x/(pq))\log y} \int_{0}^{\infty} |s_{t,y}|^2 \frac{dt}{t} + X_{p,q}\]
where $X_{p,q}$ is a non-negative random variable with $\EE \left[X_{p,q}\right]\ll e^{-c \frac{\log (x/(pq))}{\log y}}$. Summing the last inequality over $0\le k\le K$ and then over $p,q\in I_k$ with $q<p$ and $pq\le x/T$ , it follows that
\begin{equation}\label{eq:bydefimp}
    	\EE\bigg[ \big|\sum_{k=0}^{K} b_{k,2}\big|^2 \mid \Fa_y\bigg] \ll \frac{D}{\log y}\int_{0}^{\infty} |s_{t,y}|^2 \frac{dt}{t} + X
\end{equation}
where
\[ D\le \sum_{k=0}^{K}\sum_{\substack{p,q \in I_k\\ q<p,\, pq\le x/T}}\frac{1}{pq} \frac{\log (2+m)}{\log(2+x/(pq))}\ll \sum_{k=0}^{K}\bigg(\sum_{p\in I_k}\frac{1}{p}\bigg)^2 \]
and $X$ is a non-negative random variable with 
\[\EE \left[X\right]\ll\sum_{k=0}^{K} \sum_{\substack{p,q \in I_k\\ q<p,\, pq\le x/T}}\frac{1}{pq} e^{-c \frac{\log (x/(pq))}{\log y}} \ll \sum_{k=0}^{K}\bigg(\sum_{p\in I_k}\frac{1}{p}\bigg)^2 e^{-c \frac{\log T}{\log y}}.\]
By \eqref{eq:bydefimp}, Cauchy--Schwarz and the inequality $\sqrt{a+b}\le \sqrt{a}+\sqrt{b}$,
\[\EE \bigg[\big|\sum_{k=0}^{K}b_{k,2}\big| \mid \Fa_{y}\bigg] \le \bigg(\EE \bigg[\big|\sum_{k=0}^{K}b_{k,2}\big|^2 \mid \Fa_{y}\bigg]\bigg)^{1/2} \ll \bigg(\sum_{k=0}^{K}\bigg(\sum_{p\in I_k}\frac{1}{p}\bigg)^2\bigg)^{1/2}\bigg(\frac{1}{\log y}\int_{0}^{\infty} |s_{t,y}|^2 \frac{dt}{t} \bigg)^{1/2}  + X^{1/2}.\]
By the law of total expectation and Cauchy--Schwarz, the last equation implies
\[\EE \left[\big|\sum_{k=0}^{K} b_{k,2}\big| \right] \ll   \bigg(\sum_{k=0}^{K}\bigg(\sum_{p\in I_k}\frac{1}{p}\bigg)^2\bigg)^{1/2} \left( \EE\left[\bigg( \frac{1}{\log y} \int_{0}^{\infty} |s_{t,y}|^2 \frac{dt}{t} \bigg)^{1/2} \right] + e^{-c \frac{\log T}{2\log y}} \right).\]
To conclude we apply \eqref{eq:plancherel} to replace the integral $\int_{0}^{\infty} |s_{t,y}|^2dt/t$ with $\int_{\RR} \left|A_y(1/2+it)K_{\weight}(1/2+it)\right|^2 dt$; since $\weight=\mathbf{1}_{[0,A]}$ we have $|K_{\weight}(1/2+it)|\asymp 1/|1/2+it|$.

\subsection{Proofs of \texorpdfstring{\Cref{lem:repeated}}{Lemma \ref{lem:repeated}}, \texorpdfstring{\Cref{lem:repeateddelta}}{Lemma \ref{lem:repeateddelta}} and \texorpdfstring{\Cref{lem:closeprimes2}}{Lemma \ref{lem:closeprimes2}}: anatomy lemmas}
\subsubsection{Proof of \texorpdfstring{\Cref{lem:repeated}}{Lemma \ref{lem:repeated}}}
Let $\weight=\mathbf{1}_{[0,A]}$. By \eqref{eq:orth},
\[\EE\big[\big| \sum_{n \in X} \alpha(n) \weight(n/x)\big|^2\big] = \sum_{n\in X} |\weight(n/x)|^2= \sum_{n \in X\cap [1,Ax]} 1.\]
The last sum is at most
\[ \sum_{n \in X\cap [1,Ax]} 1\le \sum_{p\le \sqrt{Ax}} \sum_{\substack{n \le Ax:\\ p^2 \mid n,\, P(n)\le p}} 1  = \sum_{p \le \sqrt{Ax}} \Psi(Ax/p^2,p).\]
The contribution of $p\ge \log x$ to the last sum is $\ll x/\log x$ using the trivial bound $\Psi(Ax/p^2,p)\le Ax/p^2$. To bound the contribution of $p< \log x$ we use \eqref{eq:smooth} to obtain
\[ \sum_{n \in X\cap [1,Ax]} 1 \ll \frac{x}{\log x} +x\sum_{p<\log x} p^{-2} e^{-c\log x/\log p}\ll \frac{x}{\log x}\]
because $\sum_{p<\log x} p^{-2} e^{-c\log x/\log p}\ll \sum_{p} p^{-2} \cdot \max_{p<\log x} e^{-c\log x/\log p}  \ll e^{-c\log x /\log \log x}\ll 1/\log x$. 
\subsubsection{Proof of \texorpdfstring{\Cref{lem:repeateddelta}}{Lemma \ref{lem:repeateddelta}}}
Let $\weight=\mathbf{1}_{[0,A]}$. By \eqref{eq:orth},
\[ \EE [|b_{k,1}|^2]= x^{-1}\sum_{p \in I_k} \sum_{q \in (x_{k},p)}\sum_{\substack{ P(n)=p\\ P(n/p)=q,\,q^2 \mid n}}|\weight(n/x)|^2= x^{-1}\sum_{p \in I_k} \sum_{q \in (x_{k},p)}\sum_{\substack{ n\le Ax,\,P(n)=p\\ P(n/p)=q,\,q^2 \mid n}}1\]
by \eqref{eq:orth}. If we denote the multiplicity of $q$ in $n$ by $i$ then we can write
\[\EE [|b_{k,1}|^2] = x^{-1} \sum_{p \in I_k}\sum_{q\in (x_k,p)}  \sum_{i \ge 2}  \sum_{\substack{m \le Ax/(pq^i)\\ P(m)<q}}1\le x^{-1} \sum_{p \in I_k}\sum_{q\in (x_k,p)}  \sum_{i \ge 2}  \sum_{m \le Ax/(pq^i)}1\]
where $m$ stands for $n/(pq^i)$, and in the inequality we discarded the condition $P(m)<q$. The inner sum is $\le Ax/(pq^i)$, so that
\[\EE [|b_{k,1}|^2] \le A  \sum_{p \in I_k}\frac{1}{p}\sum_{\substack{ q \in (x_k,p) \\ i \ge 2: \, pq^i \le Ax}} \frac{1}{q^i}\le  A\sum_{p \in I_k}\frac{1}{p} \sum_{\substack{q > x_k\\  i \ge 2: \, q^i \le Ax/x_k}}\frac{1}{q^i}\ll  \sum_{p \in I_k}\frac{1}{p} \sum_{q > x_k}\frac{1}{q^2} \ll \frac{1}{x_k}  \sum_{p \in I_k}\frac{1}{p}.\]
To conclude, observe $\sum_{p \in I_k} 1/p \ll_{\OurEpsilon,\delta} 1$ by Mertens' estimate $\sum_{p \le x} 1/p= \log \log x + O(1)$, see \eqref{eq:mertens1}.

\subsubsection{Proof of \texorpdfstring{\Cref{lem:closeprimes2}}{Lemma \ref{lem:closeprimes2}}}
Let $\weight=\mathbf{1}_{[0,A]}$. By \eqref{eq:orth},
\begin{multline}\label{eq:twolargeprimes}
\EE\big[\big| \sum_{n \in Y} \alpha(n) \weight(n/x)\big|^2\big] = \sum_{n\in Y} |\weight(n/x)|^2= \sum_{n \in Y\cap [1,Ax]}1 \le \sum_{\substack{p>q\ge x^{1/4} \\ Ax\ge pq>x/T}} \sum_{n\le Ax:\, pq\mid n}1 \\
 \le Ax \sum_{\substack{p>q\ge x^{1/4}\\Ax\ge pq>x/T}}\frac{1}{pq}= Ax\sum_{x^{1/4}\le q<\sqrt{Ax}}\frac{1}{q}\sum_{Ax/q\ge p>\max\{q,x/(Tq)\}}\frac{1}{p}.
\end{multline}
If $q\ge \sqrt{x/T}$ then the range in the innermost sum in the right-hand side of \eqref{eq:twolargeprimes} is $p\in (q,Ax/q]$, so this $p$-sum is of size $O(1)$ by \eqref{eq:mer}. Thus the total contribution of these $q\ge \sqrt{x/T}$ is $\ll x \sum_{\sqrt{Ax} > q\ge \sqrt{x/T}}1/q \ll x\log T/\log x$, by \eqref{eq:mer} as well.

If $q<\sqrt{x/T}$ then the range in the $p$-sum is $p\in (x/(Tq),Ax/q]$, so this $p$-sum is $O(\log T/\log x)$ by \eqref{eq:mer} and the contribution of these $q$-s is at most $\ll x(\log T/\log x)\sum_{\sqrt{Ax} > q\ge x^{1/4}}1/q$. This is  $\ll x(\log T/\log x)$ by \eqref{eq:mer} which gives the desired conclusion.
\section{Bracket process: proof of \texorpdfstring{\Cref{prop:bracket}}{Proposition \ref{prop:bracket}}}\label{sec:bracket}
This section is quite similar to the proof of \cite[Proposition~3.6]{GWL1} in the $L^1$-regime, with the main change being the proof of \Cref{lem:close}. The following lemma is proved in \Cref{sec:nt} as an easy consequence of Mertens' theorem.
\begin{lem}\label{lem:tshift}
For any $t\ge 0$, we have
\[ \sum_{P(n)\le y} \frac{1}{n^{1+t/\log y}} \sim \exp\bigg( \gamma - \int_{0}^{t} \frac{1-e^{-s}}{s}ds\bigg) \log y, \qquad y\to \infty.\]
\end{lem}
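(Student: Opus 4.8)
The plan is to start from the Euler product for the generating Dirichlet series of $y$-smooth integers and compare it with a known asymptotic evaluation at $s = 1 + t/\log y$. Write $s = 1 + t/\log y$ with $t \ge 0$ fixed. Since $n$ ranges over $y$-smooth integers,
\[
\sum_{P(n)\le y} \frac{1}{n^{s}} = \prod_{p \le y} \left(1 - \frac{1}{p^{s}}\right)^{-1}.
\]
Taking logarithms, $\log \prod_{p \le y} (1 - p^{-s})^{-1} = \sum_{p \le y} p^{-s} + \sum_{p\le y}\sum_{k \ge 2} \tfrac{1}{k} p^{-ks}$. The second (bracketed higher-power) sum converges absolutely and, since $s \ge 1$, is $\sum_p \sum_{k\ge2} \tfrac{1}{k}p^{-k} + o(1) = \log\prod_p(1-p^{-1})^{-1} - \sum_p p^{-1} \cdot(\text{formally})$; more cleanly, it tends as $y \to \infty$ to a constant, and combined with Mertens' third theorem in the form $\prod_{p \le y}(1 - 1/p)^{-1} \sim e^\gamma \log y$ (see \eqref{eq:mertens35}), the whole point reduces to understanding $\sum_{p \le y} p^{-s} - \sum_{p \le y} p^{-1}$.

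First I would handle this difference via partial summation against the prime counting function, or more efficiently against $\sum_{p \le v} 1/p = \log\log v + M + O(1/\log v)$ from \eqref{eq:mer}. Writing
\[
\sum_{p \le y} \frac{1}{p^{s}} - \sum_{p\le y}\frac{1}{p}
= \sum_{p \le y} \frac{p^{-t/\log y} - 1}{p},
\]
I substitute the Mertens estimate and convert the sum to a Riemann--Stieltjes integral: with $L(v) := \log\log v$, the main term is
\[
\int_{2}^{y} \big(v^{-t/\log y} - 1\big)\, dL(v) = \int_{2}^{y} \frac{v^{-t/\log y}-1}{v \log v}\, dv,
\]
and the error incurred by replacing $\sum_{p\le v}1/p$ with $L(v)+M$ is $O(1/\log y)$ after another integration by parts (the correction $O(1/\log v)$ is integrated against a bounded, slowly varying integrand and the boundary terms vanish). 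In the main integral I change variables $v = y^{x}$, i.e. $\log v = x \log y$, $dv/v = \log y\, dx$, so that $v^{-t/\log y} = e^{-tx}$ and $dv/(v\log v) = dx/x$, giving
\[
\int_{\log 2/\log y}^{1} \frac{e^{-tx}-1}{x}\, dx \xrightarrow[y\to\infty]{} \int_{0}^{1}\frac{e^{-tx}-1}{x}\, dx,
\]
the integrand being continuous at $0$ (it tends to $-t$) so the limit is finite; substituting $s = tx$ this is exactly $-\int_0^t \frac{1-e^{-s}}{s}\,ds$.

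Assembling the pieces: $\log\sum_{P(n)\le y} n^{-s} = \sum_{p\le y}p^{-1} + (\text{higher powers}) + \big(\sum_{p\le y}p^{-s} - \sum_{p\le y}p^{-1}\big)$, and exponentiating,
\[
\sum_{P(n)\le y}\frac{1}{n^{s}} = \Big[\prod_{p\le y}\big(1-\tfrac1p\big)^{-1}\Big]\cdot \exp\!\Big(\sum_{p\le y}\tfrac{1}{p^{s}}-\tfrac1p + o(1)\Big) \sim e^{\gamma}\log y \cdot \exp\!\Big(-\int_0^t\frac{1-e^{-s}}{s}\,ds\Big),
\]
which is the claim. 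The only mildly delicate point is ensuring the higher-prime-power contribution and the Mertens error term genuinely produce $o(1)$ in the exponent uniformly in the relevant range; I expect this to be the main (though routine) obstacle, handled by splitting the higher-power sum at $p$ of order $\log y$ and using $s \ge 1$ to bound the tail, exactly as in the standard proof of Mertens' theorems, cf.\ the references cited for \eqref{eq:mertens35}.
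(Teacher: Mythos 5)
Your proposal is correct and follows essentially the same route as the paper: pass to the Euler product, take logarithms, peel off the higher-prime-power tail (the paper does this via the mean value theorem applied to $s\mapsto\sum_{p\le y}(\log(1-p^{-1-s})+p^{-1-s})$, you via direct expansion — equivalent bookkeeping), reduce to $\sum_{p\le y}p^{-1}\big(p^{-t/\log y}-1\big)$, convert to a Stieltjes integral against Mertens' $\log\log v + M + \Delta(v)$, and change variables to land on $-\int_0^t\frac{1-e^{-s}}{s}\,ds$. One small inaccuracy: your claim that the error from the Mertens remainder $\Delta(v)=O(1/\log v)$ is $O(1/\log y)$ after integration by parts is a touch optimistic — the integral term $\frac{t}{\log y}\int_2^y\Delta(r)r^{-t/\log y-1}\,dr$ is only $O\!\big(\tfrac{t\log\log y}{\log y}\big)$ in general (the paper records exactly this bound) — but this is still $o(1)$ and does not affect the conclusion.
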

We use \Cref{thm:mc-critical} and  \Cref{lem:tshift} to deduce the following result, proved below. We use the notation $s_{x,y}$ defined in \eqref{eq:sxy}.
\begin{lem}\label{lem:plancherelapp}
Let $\weight\colon \RR_{\ge 0}\to\CC$ be a step function with compact support. Then, for every $r>0$,
\[\sqrt{\log \log y}(\log y)^{-1} \int_{0}^{\infty}  \frac{|s_{t,y}|^2 dt}{t^{1+r/\log y}}\xrightarrow[y \to \infty]{p}    \exp\bigg( \gamma - \int_{0}^{r} \frac{1-e^{-s}}{s}ds\bigg) \, V^{\weight}_\infty.\]
\end{lem}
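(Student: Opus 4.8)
The plan is to connect the integral $\int_0^\infty |s_{t,y}|^2 t^{-1-r/\log y}\,dt$ to an integral of $|A_y(\sigma+it)|^2$ against a kernel via Plancherel, rewrite that as a pairing of the measure $m_{y,u}$ (with $\sigma = \sigma_y(u) = \tfrac12 + \tfrac{u}{2\log y}$, i.e. $u = r/2$ after matching the shift, since $t^{-1-r/\log y}$ corresponds to moving the line by $r/(2\log y)$ in the $s$-variable), and then invoke \Cref{cor:mc-critical} together with the normalising constant computed from \Cref{lem:tshift}. First I would apply Plancherel's theorem in the form \eqref{eq:plancherel} (cf.~\cite[Theorem~5.4]{MV}), generalised to the shifted exponent: one gets
\[
    \int_0^\infty \frac{|s_{t,y}|^2\,dt}{t^{1+r/\log y}} = \frac{1}{2\pi}\int_{\RR} \left|A_y\!\left(\tfrac12 + \tfrac{r}{2\log y} + it\right) K_\weight\!\left(\tfrac12 + \tfrac{r}{2\log y} + it\right)\right|^2 dt,
\]
by absorbing the $t^{-r/\log y}$ factor into a horizontal shift of the Dirichlet series $A_y \cdot K_\weight$. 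Next I would divide and multiply by $\EE[|A_y(\sigma_y(u)+it)|^2] \asymp_u \log y$ (from Mertens, or more precisely \Cref{lem:tshift} with $u=r/2$ giving the exponential constant), so that the left-hand side becomes, up to the $\sqrt{\log\log y}/\log y$ prefactor,
\[
    \frac{1}{2\pi}\int_\RR \big|K_\weight\big(\tfrac12 + \tfrac{r}{2\log y}+it\big)\big|^2\, m_{y, r/2}(dt),
\]
where the normalisation factor $\EE[|A_y(\sigma_y(r/2)+it)|^2]/(\log y)$ converges to $\exp(\gamma - \int_0^{r/2}\frac{1-e^{-s}}{s}ds)^{?}$ — here I need to be a little careful: \Cref{lem:tshift} gives $\sum_{P(n)\le y} n^{-1-t/\log y} \sim \exp(\gamma - \int_0^t \frac{1-e^{-s}}{s}ds)\log y$, and $\EE[|A_y(\tfrac12+\tfrac{u}{\log y}+it)|^2] = \sum_{P(n)\le y} n^{-1-2u/\log y}$ by orthogonality \eqref{eq:orth}, so with $2u = r$ (i.e. $u = r/2$) this is $\sim \exp(\gamma - \int_0^{r}\frac{1-e^{-s}}{s}ds)\log y$, which is exactly the target constant.

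The remaining step is to pass to the limit. Since $K_\weight(\tfrac12 + \tfrac{r}{2\log y}+it) \to K_\weight(\tfrac12+it)$ pointwise (and uniformly on compacts) as $y\to\infty$, and $|K_\weight(\tfrac12+\delta+it)|^2 \ll 1/|\tfrac12+it|^2$ uniformly for $\delta$ in a bounded range when $\weight$ is a step function, I would apply \Cref{cor:mc-critical} with the test function $h(t) = |K_\weight(\tfrac12+it)|^2 \ll |t|^{-2}$ (so $a=2>1$ is admissible), together with a short argument to replace the $y$-dependent kernel $|K_\weight(\tfrac12+\tfrac{r}{2\log y}+it)|^2$ by the limiting kernel $|K_\weight(\tfrac12+it)|^2$ — this is handled by splitting into a compact region (where uniform convergence of the kernel and \Cref{thm:mc-critical} apply) and a tail region (controlled uniformly in $y$ via the moment bound \eqref{eq:extend-tail}-type estimate from the proof of \Cref{cor:mc-critical}, using $\EE[m_{y,u}([n,n+1])^q]\ll(\log\log y)^{-q/2}$ from \Cref{thm:ayintegral}). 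Combining the convergence $\tfrac{1}{2\pi}\int_\RR |K_\weight(\tfrac12+it)|^2 m_{y,r/2}(dt) \xrightarrow{p} \tfrac{1}{2\pi}\int_\RR |K_\weight(\tfrac12+it)|^2 m_\infty(dt) = V_\infty^\weight$ with the convergence of the deterministic normalising constant to $\exp(\gamma - \int_0^r \frac{1-e^{-s}}{s}ds)$ via Slutsky yields the claim; note $m_\infty$ here does not depend on $r$ precisely because of the $u$-independence in \Cref{thm:mc-critical}, which is what makes the shift $r/(2\log y)$ invisible in the limit.

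The main obstacle I anticipate is the interchange of the limit $y\to\infty$ with the integral over all of $\RR$ — that is, controlling the contribution of large $|t|$ uniformly in $y$. The measures $m_{y,u}$ have no first moment in the limit ($\EE[m_{y,u}(I)] \asymp \sqrt{\log\log y} \to \infty$), so one cannot naively bound tails in $L^1$; one must instead use the fractional-moment decay from \Cref{thm:ayintegral}, exactly as in the proof of \Cref{cor:mc-critical}, choosing $q\in(0,1)$ with $2q>1$ so that $\sum_{k\ge n} k^{-2q} < \infty$, which forces the weight $h(t)\ll |t|^{-a}$ with $a>1$ — and this is why the hypothesis on $\weight$ being a step function (ensuring $K_\weight(\tfrac12+it)\ll 1/|\tfrac12+it|$) is used. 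Everything else is routine: Plancherel, Mertens/\Cref{lem:tshift} for the constant, and \Cref{cor:mc-critical} for the main term, with a Slutsky-type argument to combine the random and deterministic factors.
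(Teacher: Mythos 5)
Your plan follows the paper's proof essentially step for step: Plancherel converts the shifted integral into an $A_y$-integral, Lemma \ref{lem:tshift} (via \eqref{eq:directcomp}) gives the normalising constant, and \Cref{cor:mc-critical} gives the limit of the chaos integral, with the $u$-independence of $m_\infty$ making the result $r$-free. One arithmetic slip: matching the Plancherel shift $\tfrac{r}{2\log y}$ against $\sigma_y(u) - \tfrac12 = \tfrac{u}{2\log y}$ gives $u = r$, not $u = r/2$; the pairing is against $m_{y,r}$. You already recover the correct normaliser $\sum_{P(n)\le y} n^{-1-r/\log y}$ in your self-correcting paragraph, so the conclusion is unaffected (both $m_{y,r}$ and $m_{y,r/2}$ converge to the same $m_\infty$), but the intermediate identity with $m_{y,r/2}$ does not balance as written. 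Also, for the kernel replacement you invoke the full compact-plus-tail machinery with fractional moments, but this step is lighter than you fear: by the mean value theorem $\big||K_\weight(\sigma_y(r)+it)|^2 - |K_\weight(\tfrac12+it)|^2\big| \ll (1+|t|)^{-2}\tfrac{r}{\log y}$, and since $\EE[m_{y,r}(dt)] = \sqrt{\log\log y}\,dt$, the first moment of the error integral is $\ll \sqrt{\log\log y}/\log y \to 0$ — the fractional moments of \Cref{thm:ayintegral} are only needed inside the proof of \Cref{cor:mc-critical} for the fixed kernel $|K_\weight(\tfrac12+it)|^2$, which you correctly identify as the reason the step-function hypothesis is used.
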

Recall the Dickman function $\rho\colon (0,\infty)\to(0,\infty)$. For $t\le 1$ it is given by $\rho(t)=1$ and for $t> 1$ by $t\rho(t)= \int_{t-1}^{t}\rho(v)dv$. It satisfies
\[ \int_{0}^{\infty} e^{-tv}\rho(v)dv= \exp\bigg(  \gamma -  \int_{0}^{t} \frac{1-e^{-s}}{s}ds\bigg)\]
for $t\ge 0$ \cite[Equation~(2.7)]{debruijn}, so \Cref{lem:plancherelapp} can be written as
\begin{equation}\label{eq:dickmanequiv}
\sqrt{\log \log y}(\log y)^{-1} \int_{0}^{\infty}  \frac{|s_{t,y}|^2 dt}{t^{1+r/\log y}}\xrightarrow[y \to \infty]{p}   \bigg(\int_{0}^{\infty} e^{-rv}\rho(v)dv \bigg)\, V^{\weight}_\infty.
\end{equation}
For any given $a>0$, we substitute $y=x^a$, $t=x^{1-c}$ and $r=ka$ in \eqref{eq:dickmanequiv}, obtaining
\begin{equation}\label{eq:poly}
\sqrt{\log \log x}\int_{\RR} e^{k(c-1)}|s_{x^{1-c},x^a}|^2 dc \xrightarrow[x \to \infty]{p}   \bigg(a\int_{0}^{\infty} e^{-kav}\rho(v)dv \bigg) V^{\weight}_\infty
\end{equation}
for $k=1,2,\ldots$. By a argument based on the Weierstrass approximation theorem, the following corollary (proved below) is obtained from \eqref{eq:poly}.
\begin{cor}\label{cor:sandwich}
Let $\weight\colon \RR_{\ge 0}\to\CC$ be a step function with compact support. Fix $a>0$ and $0<a_0<a_1$. If $Q\colon \RR_{>0}\to \RR_{\ge 0}$ is equal to a continuous function times the indicator $\mathbf{1}_{[a_0,a_1]}$, then
\[\sqrt{\log \log x} \int_{\RR} Q(e^{c-1})|s_{x^{1-c},x^a}|^2 dc \xrightarrow[x \to \infty]{p}   \bigg(a\int_{0}^{\infty} Q(e^{-av})\rho(v)dv \bigg) V^{\weight}_\infty. \]
\end{cor}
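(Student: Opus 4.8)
The plan is to bootstrap the corollary from the monomial identity \eqref{eq:poly} by a Weierstrass approximation combined with the probabilistic squeezing lemma \Cref{lem:plim-squeeze}. The only genuine wrinkle is that polynomials are not compactly supported whereas $Q$ is; to get around this I would pass to the random measure $\mu_x(dc):=e^{c-1}\,|s_{x^{1-c},x^a}|^2\,dc$ and write
\[
\int_{\RR} Q(e^{c-1})\,|s_{x^{1-c},x^a}|^2\,dc=\int_{\RR}\widetilde Q(e^{c-1})\,\mu_x(dc),\qquad \widetilde Q(w):=\frac{Q(w)}{w},
\]
where $\widetilde Q$ is still bounded (here $a_0>0$ is used) and supported on the fixed compact set $[a_0,a_1]\subset(0,\infty)$. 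Two facts about $\mu_x$ enter. First, since $\weight$ is supported on $[0,A]$, the sum $s_{t,y}$ vanishes for $t<1/A$, so $\mu_x$ is supported on $\{c\le 1+\log A/\log x\}$; hence for every $x\ge 3$ the values $w=e^{c-1}$ charged by $\mu_x$ lie in the single fixed interval $[0,B]$ with $B:=A^{1/\log 3}$, which is all that matters below. Second, since $\int_{\RR}w^{k-1}\,\mu_x(dc)=\int_{\RR}e^{k(c-1)}\,|s_{x^{1-c},x^a}|^2\,dc$, \eqref{eq:poly} reads $\sqrt{\log\log x}\int_{\RR}w^{k-1}\,\mu_x(dc)\xrightarrow[x\to\infty]{p}a\big(\int_0^\infty e^{-kav}\rho(v)\,dv\big)V^{\weight}_\infty$ for every integer $k\ge 1$, so by linearity
\[
\sqrt{\log\log x}\int_{\RR}P(w)\,\mu_x(dc)\xrightarrow[x\to\infty]{p}a\Big(\int_0^\infty P(e^{-av})\,e^{-av}\rho(v)\,dv\Big)V^{\weight}_\infty
\]
for every polynomial $P$; in particular ($P\equiv 1$, i.e.\ the case $k=1$) the total mass $\sqrt{\log\log x}\,\mu_x(\RR)$ is bounded in probability.

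Next I would produce, for each $\OurEpsilon>0$, polynomials $P^{\OurEpsilon}_-\le\widetilde Q\le P^{\OurEpsilon}_+$ on $[0,B]$ with $\int_0^B\big(P^{\OurEpsilon}_+-P^{\OurEpsilon}_-\big)(w)\,dw\ll\OurEpsilon$. Since $\widetilde Q$ is bounded, non-negative, compactly supported in $(0,\infty)$ and continuous off the two-point set $\{a_0,a_1\}$ (the possible jumps of $Q$, hence of $\widetilde Q$, there are harmless because $\mu_x$ is absolutely continuous), a routine regularisation first gives continuous compactly supported $\widetilde Q^{\OurEpsilon}_-\le\widetilde Q\le\widetilde Q^{\OurEpsilon}_+$ with $\int_0^B(\widetilde Q^{\OurEpsilon}_+-\widetilde Q^{\OurEpsilon}_-)\le\OurEpsilon$; applying the Weierstrass theorem on $[0,B]$ to each and then shifting by $\pm\OurEpsilon$ yields the desired $P^{\OurEpsilon}_\pm$. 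By monotonicity and the support property of $\mu_x$, this sandwiches $\int_{\RR}Q(e^{c-1})\,|s_{x^{1-c},x^a}|^2\,dc$ between $\int_{\RR}P^{\OurEpsilon}_\mp(w)\,\mu_x(dc)$ for all $x\ge 3$. Scaling by $\sqrt{\log\log x}$ and letting $x\to\infty$, the bracketing quantities converge in probability to $c^{\OurEpsilon}_\pm V^{\weight}_\infty$ with $c^{\OurEpsilon}_\pm:=a\int_0^\infty P^{\OurEpsilon}_\pm(e^{-av})\,e^{-av}\rho(v)\,dv$, and the change of variables $w=e^{-av}$, the bound $0\le\rho\le 1$, and $Q(e^{-av})=\widetilde Q(e^{-av})e^{-av}$ give
\[
\Big|c^{\OurEpsilon}_\pm-a\int_0^\infty Q(e^{-av})\rho(v)\,dv\Big|\le\int_0^B\big(P^{\OurEpsilon}_+-P^{\OurEpsilon}_-\big)(w)\,dw\ll\OurEpsilon.
\]
Feeding the sequences $P^{1/m}_\pm$ into \Cref{lem:plim-squeeze}, whose limiting upper and lower random variables $c^{1/m}_\pm V^{\weight}_\infty$ both converge (in probability, indeed almost surely) to $\big(a\int_0^\infty Q(e^{-av})\rho(v)\,dv\big)V^{\weight}_\infty$ as $m\to\infty$, then yields the claim.

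I do not expect a serious obstacle beyond this bookkeeping: \eqref{eq:poly} already supplies the analytically hard input (convergence of every polynomial moment of $\mu_x$, coming ultimately from \Cref{thm:mc-critical} via \Cref{lem:plancherelapp}), and the remainder is the standard passage from moment convergence to testing against an arbitrary non-negative $Q$. The two points that do require (routine) care, and which I would flag as the crux of the argument, are the reduction to a compactly supported integrator by dividing out $w$ — legitimate precisely because $a_0>0$ — and the observation that $\mathrm{supp}(\mu_x)\subseteq\{c\le 1+\log A/\log x\}$, which uses the compact support of $\weight$ and confines the Weierstrass step to the fixed interval $[0,B]$.
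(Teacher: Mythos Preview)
Your proposal is correct and follows essentially the same approach as the paper: a Weierstrass approximation argument built on the monomial input \eqref{eq:poly}, combined with the probabilistic squeezing lemma. The paper itself defers the details to \cite[Corollary~3.9]{GWL1}; your device of pulling out the factor $e^{c-1}$ to form the non-negative measure $\mu_x$ (so that testing against $w^{k-1}$ with $k\ge 1$ matches \eqref{eq:poly} exactly, and in particular the total mass is controlled) is precisely the bookkeeping needed to make the approximation step land on a fixed compact interval, and your use of \Cref{lem:plim-squeeze} is the intended mechanism.
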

We now connect \Cref{cor:sandwich} with the bracket process. By \eqref{eq:orthgeneral} we may write $T_{x,\OurEpsilon,\delta}$ as
\[T_{x,\OurEpsilon,\delta} = \frac{\sqrt{\log \log x}}{x}\sum_{k=0}^{K}\sum_{p \in I_k} \bigg| \sum_{\substack{P(m)\le x_{k}}}\alpha(m)\weight\left(\frac{m}{x/p}\right)\bigg|^2=\sum_{k=0}^{K} T_{x,\OurEpsilon,\delta,k}\]
for 
\[T_{x,\OurEpsilon,\delta,k}:=\frac{\sqrt{\log \log x}}{x} \sum_{p \in I_k}  \bigg| \sum_{P(m)\le x_{k}}\alpha(m)\weight\left(\frac{m}{x/p}\right)\bigg|^2.\]
We introduce
\[R_{a,b}:=\sum_{p \in (x^a,x^b]}\frac{1}{p}|s_{x/p,x^a}|^2.\]
In this notation,
\begin{equation}\label{eq:TR}
T_{x,\OurEpsilon,\delta,k} = \sqrt{\log \log x}  R_{\OurEpsilon+k\delta,\OurEpsilon+(k+1)\delta}.
\end{equation}
The following lemma, proved below, shows that $R_{a,b}$ is close to $\int_{x^a}^{x^b} |s_{x/t,x^a}|^2dt /(t\log t)$. Its proof is surprisingly delicate, requiring information about primes in short intervals and the fact that $s_{x_1,y}$ is close to $s_{x_2,y}$ if $x_1$ is close to $x_2$.
\begin{lem}\label{lem:close}
Let $0<a<b$. The statement
\[ \sqrt{\log \log x}R_{a,b} \xrightarrow[x \to \infty]{p}C \, V^{\weight}_\infty \]
for some constant $C$ is equivalent to 
\[\sqrt{\log \log x}\int_{x^{a}}^{x^{b}}\frac{|s_{x/t,x^a}|^2}{t\log t} d t \xrightarrow[x \to \infty]{p}C\, V^{\weight}_\infty.\]
\end{lem}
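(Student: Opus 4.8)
The plan is to derive the stated equivalence from the single assertion
\[
\sqrt{\log\log x}\,\Big|R_{a,b}-\int_{x^{a}}^{x^{b}}\frac{|s_{x/t,x^{a}}|^{2}}{t\log t}\,dt\Big|\xrightarrow[x\to\infty]{p}0 .
\]
Indeed, if two sequences of random variables differ by a term tending to $0$ in probability, then one of them converges in probability to $C\,V^{\weight}_{\infty}$ if and only if the other does, with the same $C$; so the lemma follows. By Markov's inequality it is enough to bound the first absolute moment of the bracketed difference by $o\big((\log\log x)^{-1/2}\big)$. Fix a scale $\eta=\eta(x)\to 0^{+}$ to be chosen (I expect a fixed negative power of $\log\log x$, e.g.\ $\eta=(\log\log x)^{-2}$, to work), set $T_{j}:=x^{a}(1+\eta)^{j}$ for $0\le j\le J$ with $J\asymp (b-a)(\log x)/\eta$ chosen so that $(x^{a},x^{b}]=\bigcup_{j}(T_{j},T_{j+1}]$ (adjusting the last block), and on each block replace the random, rough integrand $g(t):=|s_{x/t,x^{a}}|^{2}$ by the single value $g(T_{j})$.

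Two inputs drive the block-by-block comparison. The first is a quantitative slow-variation estimate for $s$: using the orthogonality relation \eqref{eq:orth} one checks that, uniformly for $x^{a}\le t\le t'\le (1+\eta)t$ in the relevant range,
\[
\EE\big[|s_{x/t',x^{a}}-s_{x/t,x^{a}}|^{2}\big]\ll \eta
\qquad\text{and}\qquad
\EE\big[|s_{x/t,x^{a}}|^{2}\big]\ll 1 ,
\]
the first by writing $s_{u_{1},x^{a}}-s_{u_{2},x^{a}}$ as a partial sum of $\alpha(n)$ over $x^{a}$-smooth $n$ in a multiplicatively thin window (whose cardinality is $\ll\eta u$) plus a negligible normalisation mismatch, and the second from the trivial bound $\Psi(Au,x^{a})\le Au$. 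By Cauchy--Schwarz this gives $\EE|g(t)-g(t')|\ll\sqrt{\eta}$. The second input is the distribution of primes in short intervals: one needs $\sum_{p\in(T_{j},T_{j+1}]}1/p=\int_{T_{j}}^{T_{j+1}}\frac{dt}{t\log t}+\mathrm{err}_{j}$ with $\sum_{j}|\mathrm{err}_{j}|$ negligible, which — because the windows have width $\asymp\eta T_{j}\gg T_{j}^{1/2}$ — follows from the prime number theorem with classical error term, in the form $\sum_{p\le z}1/p=\log\log z+M+O\big(\exp(-c\sqrt{\log z})\big)$.

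Combining these, write $w_{j}:=\int_{T_{j}}^{T_{j+1}}\frac{dt}{t\log t}$; on each block $\EE\big|\sum_{p\in(T_{j},T_{j+1}]}\tfrac1p g(p)-w_{j}\,g(T_{j})\big|\ll\sqrt{\eta}\,w_{j}+|\mathrm{err}_{j}|\,\EE|g(T_{j})|$ and $\EE\big|\int_{T_{j}}^{T_{j+1}}\tfrac{g(t)}{t\log t}dt-w_{j}\,g(T_{j})\big|\ll\sqrt{\eta}\,w_{j}$, using $\EE|g(T_{j})|\ll 1$. Summing over $j$ and using $\sum_{j}w_{j}=\log(b/a)$ yields
\[
\EE\Big|R_{a,b}-\int_{x^{a}}^{x^{b}}\tfrac{|s_{x/t,x^{a}}|^{2}}{t\log t}\,dt\Big|
\ll \sqrt{\eta}+\tfrac{\log x}{\eta}\exp(-c\sqrt{a\log x}),
\]
and for a suitable slowly decaying $\eta$ both terms are $o\big((\log\log x)^{-1/2}\big)$, which finishes the proof. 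The main obstacle is the interplay of the two error sources with the $\sqrt{\log\log x}$ amplification: the naive Mertens estimate $\sum_{p\le z}1/p=\log\log z+M+O(1/\log z)$ would, after summation over the $\asymp(\log x)/\eta$ blocks, contribute an error of order $1/\eta$, which cannot be absorbed no matter how $\eta$ is chosen — so one genuinely needs a power-of-log (or $\exp(-c\sqrt{\log})$) saving in the short-interval prime count, and then $\eta$ must be tuned small enough to defeat $\sqrt{\eta\log\log x}$ yet large enough that $\tfrac{\log x}{\eta}$ does not overwhelm the prime-number-theorem saving, a balance that works but is tight.
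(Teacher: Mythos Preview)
Your proposal is correct and follows the paper's high-level strategy: show $\sqrt{\log\log x}\,\big|R_{a,b}-\int_{x^a}^{x^b}\frac{|s_{x/t,x^a}|^2}{t\log t}\,dt\big|\to 0$ in $L^1$ by combining the Lipschitz property of $t\mapsto s_{x/t,x^a}$ (from orthogonality \eqref{eq:orth}) with the distribution of primes in short intervals. The implementations differ. You discretise into $J\asymp(\log x)/\eta$ geometric blocks and freeze $g$ on each; the paper instead smooths each summand, writing $|s_{x/p,x^a}|^2=\frac{1}{h(p)}\int_{p-h(p)}^{p}|s_{x/t,x^a}|^2\,dt+\text{(error)}$ with $h(p)=p/\log p$, so that $R_{a,b}$ becomes $\int|s_{x/t,x^a}|^2 M(t)\,dt$ for the kernel $M(t)=\sum_{p:\,t\in[p-h(p),p]}\frac{1}{ph(p)}$, and then one only needs $M(t)=(1+O(1/\log t))/(t\log t)$ pointwise. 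The kernel trick integrates the prime-counting error rather than summing it over blocks, so the polynomial-saving form \eqref{eq:pnt} already in the appendix suffices and no block-count factor $J$ appears; your remark that naive Mertens is not enough is specific to the block approach (and in fact \eqref{eq:pnt} also suffices for your method: the per-block error in $\sum_p 1/p$ is then $O(1/\log^3 x)$, and $J/\log^3 x\ll 1/(\eta\log^2 x)$ is harmless). One minor omission: the paper first reduces to $b\le 1$, observing that $s_{x/t,x^a}$ is bounded for $t\in[x,Ax]$ and vanishes beyond; without this reduction your slow-variation bound picks up an extra $O(t/x)$ term from the ``$+1$'' in the window count, which is not innocuous when $b>1$.
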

Using \Cref{lem:close} and the change of variables $t=x^c$, we find from \eqref{eq:TR} that the statement
\[ T_{x,\OurEpsilon,\delta,k} \xrightarrow[x \to \infty]{p}C(k,\OurEpsilon,\delta) \, V^{\weight}_\infty \]
for some constant $C(k,\OurEpsilon,\delta)$ is equivalent to 
\begin{equation}\label{eq:equiv} 
\sqrt{\log \log x}\int_{\OurEpsilon+k\delta}^{\OurEpsilon+(k+1)\delta} c^{-1}|s_{x^{1-c},x^{\OurEpsilon+k\delta}}|^2 d c \xrightarrow[x \to \infty]{p}C(k,\OurEpsilon,\delta) \, V^{\weight}_\infty.
\end{equation}
By \Cref{cor:sandwich} with $Q(e^{c-1})=c^{-1}\mathbf{1}_{c\in [\OurEpsilon+k\delta,\OurEpsilon+(k+1)\delta]}$ and $a=\OurEpsilon+k\delta$, \eqref{eq:equiv} holds with
\[ C(k,\OurEpsilon,\delta) = (\OurEpsilon+k\delta)\int_{\max\{\frac{1-(\OurEpsilon+(k+1)\delta)}{\OurEpsilon+k\delta},0\}}^{\max\{\frac{1-(\OurEpsilon+k\delta)}{\OurEpsilon+k\delta},0\}} \rho(v) (1-(\OurEpsilon+k\delta)v)^{-1}dv.\]
It follows, since $T_{x,\OurEpsilon,\delta} = \sum_{k=0}^{K} T_{x,\OurEpsilon,\delta,k}$, that
\[T_{x,\OurEpsilon,\delta}  \xrightarrow[x \to \infty]{p} C_{\OurEpsilon,\delta} \, V^{\weight}_\infty \]
with
\begin{equation}\label{eq:CG}
C_{\OurEpsilon,\delta}:=\sum_{k=0}^{K} C(k,\OurEpsilon,\delta)= \int_{0}^{\frac{1}{\OurEpsilon}-1} \rho(v) G_{\OurEpsilon,\delta}(v) dv
\end{equation}
where
\[G_{\OurEpsilon,\delta}(v):=\sum_{\substack{0\le k \le K\\ \frac{1-\delta}{v+1}< \OurEpsilon+k\delta\le \frac{1}{v+1}  }}\frac{\OurEpsilon+k\delta}{1-(\OurEpsilon+k\delta)v}\]
for $v\in (0,1/\OurEpsilon-1)$. We write $\{t\}$ for the fractional part of $t$: $\{t\}=t-\lfloor t\rfloor$. Then
\begin{equation}\label{eq:Gsimp}
G_{\OurEpsilon,\delta}(v)=\frac{ \frac{1+O(\delta)}{v+1} }{1-\frac{1+O(\delta)}{v+1}v}\mathbf{1}_{\left\{(\frac{1}{v+1}-\OurEpsilon)/\delta \right\} < \frac{1}{v+1}}=(1+O_{\OurEpsilon}(\delta))\mathbf{1}_{\left\{(\frac{1}{v+1}-\OurEpsilon)/\delta \right\} < \frac{1}{v+1}}
\end{equation}
for $v\in (0,1/\OurEpsilon-1)$. We need one more lemma in order to estimate $C_{\OurEpsilon,\delta}$.
\begin{lem}\cite[Lemma~3.11]{GWL1}\label{lem:equid}
Let $A,a>0$. If $g \colon [0,A]\to \RR$ is continuous then
\[\lim_{\delta\to0^+}\int_{0}^{A} g(v) \left(\mathbf{1}_{\left\{(\frac{1}{v+1}-a)/\delta \right\} < \frac{1}{v+1}} - \frac{1}{v+1}\right)dv =0.\]
\end{lem}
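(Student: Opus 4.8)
\emph{Proof plan.} Write $\phi(v) := 1/(v+1)$, which is smooth and strictly decreasing on $[0,A]$ with $|\phi'|$ bounded away from $0$ and $\infty$ there and $\phi([0,A]) \subseteq (0,1]$. The statement asserts that the ``acceptance indicator'' $h_\delta(v) := \mathbf{1}_{\{(\phi(v)-a)/\delta\} < \phi(v)\}}$ converges weak-$*$ against continuous test functions to its ``local density'' $\phi(v)$ as $\delta \to 0^+$. The first step is a reduction to $g = \mathbf{1}_{[\alpha,\beta]}$ for a subinterval $[\alpha,\beta] \subseteq [0,A]$: the functional $g \mapsto \int_0^A g(v)\bigl(h_\delta(v)-\phi(v)\bigr)\,dv$ is linear and bounded by $A\|g\|_\infty$ uniformly in $\delta$ (since $|h_\delta - \phi| \le 1$), and step functions are uniformly dense in $C([0,A])$, so it suffices to prove $\int_\alpha^\beta h_\delta(v)\,dv \to \int_\alpha^\beta \phi(v)\,dv$.

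The core of the argument is a localisation at a scale on which $\phi$ is essentially constant. Fix a small $\eta > 0$, partition $[\alpha,\beta]$ into $O(1/\eta)$ subintervals $J_i = [v_i, v_{i+1}]$ of length $\le \eta$, and on each $J_i$ freeze the acceptance level at $c_i := \phi(v_i)$. I would estimate $\int_{J_i} h_\delta$ in two moves, both via the bi-Lipschitz substitution $u = (\phi(v)-a)/\delta$, whose Jacobian $\delta\,|(\phi^{-1})'(\delta u + a)| = \delta/|\phi'(v)|$ has constants uniform over $[0,A]$ because $\phi' \ne 0$. \emph{(i)} Since $|\phi(v)-c_i| \ll \eta$ on $J_i$, the indicators $h_\delta$ and $\mathbf{1}_{\{(\phi(v)-a)/\delta\}<c_i\}}$ differ only where $\{u\}$ lies within $O(\eta)$ of $c_i$; over the $u$-range of $J_i$, of length $\asymp \eta/\delta$, this event (density $O(\eta)$, period $1$) occurs $O(\eta^2/\delta + 1)$ times, each contributing $v$-measure $O(\delta)$, hence the discrepancy on $J_i$ is $O(\eta^2 + \delta)$. \emph{(ii)} For the frozen-level integral, the same substitution plus the facts that $\{u : \{u\}<c_i\}$ has density exactly $c_i$ and that $1/|\phi'|$ varies by $O(\eta)$ over $J_i$ give $\int_{J_i}\mathbf{1}_{\{(\phi(v)-a)/\delta\}<c_i\}}\,dv = c_i|J_i| + O(\delta) + O(\eta^2) = \int_{J_i}\phi(v)\,dv + O(\delta) + O(\eta^2)$, using again $|\phi - c_i| \ll \eta$ on $J_i$. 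Summing over the $O(1/\eta)$ intervals yields $\bigl|\int_\alpha^\beta h_\delta - \int_\alpha^\beta \phi\bigr| \ll A\eta + A\delta/\eta$, valid for every $\eta, \delta > 0$; letting $\delta \to 0^+$ and then $\eta \to 0^+$ completes the proof.

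The obstruction is conceptual rather than computational: in $h_\delta(v)$ the rapidly oscillating argument $(\phi(v)-a)/\delta$ and the threshold $\phi(v)$ both drift with $v$, so no off-the-shelf equidistribution result applies directly. Localising to scale $\eta$ on which $\phi$ is essentially constant fixes this, but one must quantify ``essentially constant'' precisely enough to balance the localisation error $\eta$ against the ``finitely many complete periods per block'' error $\delta/\eta$, and to keep all implied constants uniform on $[0,A]$ — both of which rely on $\phi'(v) = -(v+1)^{-2}$ not vanishing on $[0,A]$. (An alternative would be a Fourier expansion of $x \mapsto \mathbf{1}_{\{x\}<c} - c$ combined with non-stationary phase in $v$, but the slow decay of the Fourier coefficients forces a smoothing step that reintroduces essentially the same bookkeeping, so the real-variable localisation above is the cleaner route.)
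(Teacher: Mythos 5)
Your argument is essentially correct. The reduction to step test functions is clean (the functional $g \mapsto \int_0^A g\,(h_\delta - \phi)\,dv$ is linear, bounded by $A\|g\|_\infty$ uniformly in $\delta$ since $|h_\delta - \phi| \le 1$, and step functions are uniformly dense in $C([0,A])$), and the two-scale scheme with $\delta \to 0^+$ then $\eta \to 0^+$ closes the argument. One small bookkeeping slip in step (i): the bad set where $\{u\}$ lies within $O(\eta)$ of $c_i$ decomposes into $O(\eta/\delta + 1)$ intervals, each of $u$-length $O(\eta)$, hence $v$-length $O(\eta\delta)$ — not ``$O(\eta^2/\delta + 1)$ occurrences each of $v$-measure $O(\delta)$'' as written. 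The correct product is $O(\eta^2 + \eta\delta)$, which is contained in your stated $O(\eta^2 + \delta)$, so the per-interval bound you claim is a valid (if slightly loose) overestimate and the conclusion is unaffected.

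The auxiliary scale $\eta$ can be avoided entirely by performing the bi-Lipschitz substitution $w = 1/(v+1)$ once, globally, at the outset. The integral becomes $\int_{1/(A+1)}^1 G(w)\,\bigl(\mathbf{1}_{\{(w-a)/\delta\}<w} - w\bigr)\,dw$ with $G(w) := g(1/w-1)/w^2$ continuous on $[1/(A+1),1]$, and — this is the point — the oscillating phase $w \mapsto \{(w-a)/\delta\}$ is now \emph{exactly} $\delta$-periodic in $w$. Partitioning $[1/(A+1),1]$ into intervals $[a+j\delta, a+(j+1)\delta]$ and freezing $G$ and the threshold $w$ at the left endpoint of each, the main term on each full period vanishes identically by $\int_0^1 \mathbf{1}_{\{u\}<c}\,du = c$, the per-period error is $O(\omega_G(\delta)\,\delta + \delta^2)$ (with $\omega_G$ the modulus of continuity of $G$), and summing over $O(1/\delta)$ periods plus the two boundary fragments gives $O(\omega_G(\delta) + \delta) \to 0$ directly. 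This eliminates the need to balance $\eta$ against $\delta/\eta$, which was the main source of bookkeeping in your version; the drift of the phase and the drift of the threshold are then both controlled by a single scale $\delta$.
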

Taking $\delta \to 0^+$, and using \Cref{lem:equid} with $g(v)=\rho(v)$, we obtain from \eqref{eq:CG} and \eqref{eq:Gsimp} that
\begin{equation}\label{eq:Ceps}
\lim_{\delta\to 0^+}C_{\OurEpsilon,\delta}= \int_{0}^{\frac{1}{\OurEpsilon} - 1}\rho(v)(1+v)^{-1}dv.
\end{equation}
Since $\rho(u)/(1+v) = -\rho'(v+1)$ by definition, the integral in the right-hand side of \eqref{eq:Ceps} is equal to $\rho(1)-\rho(1/\OurEpsilon)=1-\rho(1/\OurEpsilon)$ which tends rapidly to $1$ as $\OurEpsilon\to 0^+$.

\subsection{Proof of \texorpdfstring{\Cref{lem:plancherelapp}}{Lemma \ref{lem:plancherelapp}}}
Since $\EE |s_{t,y}|^2 \ll 1$, the expectation of $\int_{0}^{\infty}  |s_{t,y}|^2 t^{-1-r} dt$ is finite and so the integral converges almost surely. By Plancherel’s theorem \cite[Theorem 5.4]{MV},
\begin{equation}\label{eq:Planch} \int_{0}^{\infty} \frac{ |s_{t,y}|^2 dt}{t^{1+r}}=\frac{1}{2\pi}\int_{\RR} \big| A_y\big(\tfrac{1}{2}+\tfrac{r}{2}+it\big) K_{\weight}\big(\tfrac{1}{2}+\tfrac{r}{2}+it\big)\big|^2dt
\end{equation}
holds a.s.~for $r>0$. A direct computation using \eqref{eq:orth} shows that
\begin{equation}\label{eq:directcomp}
\EE [\big|A_y(\tfrac{1}{2}+\tfrac{r}{2}+it\big)|^2] = \sum_{P(n)\le y}\frac{1}{n^{1+r}}
\end{equation}
for every $t\in \RR$. By \eqref{eq:Planch} and \eqref{eq:directcomp},
\begin{equation}\label{eq:corapp}
\begin{split}
&\sqrt{\log \log y}\bigg(\sum_{P(n)\le y}\frac{1}{n^{1+r/\log y}}\bigg)^{-1} \int_{0}^{\infty}  \frac{|s_{t,y}|^2 dt}{t^{1+r/\log y}}\\
&\qquad = \frac{1}{2\pi}\int_{\RR} \big|K_{\weight}\big(\sigma_y(r)+it\big)\big|^2m_{y, r}(dt)\\
&\qquad = \frac{1}{2\pi}\int_{\RR} \big|K_{\weight}\big(\tfrac{1}{2}+it\big)\big|^2m_{y, r}(dt)+\frac{1}{2\pi}\int_{\RR} \big(\big|K_{\weight}\big(\sigma_y(r)+it\big)\big|^2 - \big|K_{\weight}\big(\tfrac{1}{2}+it\big)\big|^2\big)m_{y, r}(dt),
\end{split}
\end{equation}
where $\sigma_y(r)$ and $m_{y,r}$ are introduced in \Cref{thm:mc-critical}. Since $\weight$ is a step function with compact support, $K_{\weight}(s)=F(s)/s$ where $F(s)$ is a linear combination of exponential functions $a^s$ for some $a>0$. In particular, $|K_{\weight}(1/2+it)|^2 \ll 1/|t|^2$. By \Cref{cor:mc-critical}, \begin{equation}\label{eq:thmcor}
\frac{1}{2\pi}\int_{\RR} \big|K_{\weight}\big(\tfrac{1}{2}+it\big)\big|^2m_{y, r}(dt) \xrightarrow[y \to \infty]{p}   \frac{1}{2\pi}\int_{\RR} \big|K_{\weight}\big(\tfrac{1}{2}+it\big)\big|^2 m_\infty(dt).
\end{equation}
The right-hand side of \eqref{eq:thmcor} is $V^{\weight}_\infty$. The $n$-sum in the left-hand side of \eqref{eq:corapp} can be simplified using \Cref{lem:tshift}. It remains to show that 
\[\int_{\RR} \big(\big|K_{\weight}\big(\sigma_y(r)+it\big)\big|^2 - \big|K_{\weight}\big(\tfrac{1}{2}+it\big)\big|^2\big)m_{y, r}(dt) \xrightarrow[y \to \infty]{p}  0.\]
Taking expectations, it suffices to show that
\begin{equation}\label{eq:remain}
\lim_{y \to \infty}\sqrt{\log \log y}\int_{\RR} \left| \big|K_{\weight}\big(\sigma_y(r)+it\big)\big|^2 - \big|K_{\weight}\big(\tfrac{1}{2}+it\big)\big|^2\right|dt=0.
\end{equation}
By the mean value theorem,
\begin{align*} \left|\big|K_{\weight}\big(\sigma_y(r)+it\big)\big|^2 - \big|K_{\weight}\big(\tfrac{1}{2}+it\big)\big|^2\right| &\ll (1+|t|)^{-1}\big(\big|K_{\weight}\big(\sigma_y(r)+it\big)\big| - \big|K_{\weight}\big(\tfrac{1}{2}+it\big)\big|\big) \\
 &\ll (1+|t|)^{-2}|\sigma_y(r)-1/2| \ll (1+|t|)^{-2}\frac{r}{\log y} 
\end{align*}
which implies \eqref{eq:remain} and concludes the proof of the lemma.
\subsection{Proof of \texorpdfstring{\Cref{cor:sandwich}}{Corollary \ref{cor:sandwich}}}
This is identical to \cite[Corollary~3.9]{GWL1} apart from the definitions of  $\Ia_x$ and $\Ia_{\infty}$ which should be 
\[\Ia_x( g)
:=\sqrt{\log \log x}\int_{\RR} g(e^{c-1}) |s_{x^{1-c},x^a}|^2 d c,\qquad 
\Ia_\infty(g)
 :=\bigg( a \int_{0}^{\infty} g(e^{-av}) \rho(v) dv \bigg) V^{\weight}_\infty. \]
 \subsection{Proof of \texorpdfstring{\Cref{lem:close}}{Lemma \ref{lem:close}}}
 Let $\weight\colon \RR_{\ge0}\to \CC$ be a step function supported on $[0,A]$. We may assume $b \le 1$ since $s_{x/t,x^a}\equiv 0$ if $t> Ax$, and $s_{x/t,x^a}$ is bounded by a deterministic quantity for $t\in [x,Ax]$ (note $\int_{x}^{Ax}dt/(t\log t)$ and $\sum_{p\in [x,Ax]}1/p$ are $O(1/\log x)$ by Mertens \eqref{eq:mertens35}).  We also need the following Lipschitz-type property of $s_{x,y}$: when $x_2 \ge x_1$,
\begin{equation}\label{eq:lip}
 |s_{x_1,y}-s_{x_2,y}|\le |s_{x_2,y}|(\sqrt{x_2/x_1}-1) + X_{x_1,x_2}/\sqrt{x_1}
\end{equation}
holds where $\EE[|X_{x_1,x_2}|^2]\ll x_2-x_1+1$ (the implied constant depends only on $\weight$). The proof is immediate from the equality $\sqrt{x_1}s_{x_1,y}-\sqrt{x_2}s_{x_2,y} =\sum_{P(n)\le y} \alpha(n)(\weight(n/x_1)-\weight(n/x_2))$, the triangle inequality and \eqref{eq:orth}. 

We fix the function $h(t)=t/\log t$, and apply \eqref{eq:lip} with $x_1=x/p$ and $x_2=x/t$, obtaining that
\[|s_{x/p,x^a}|^2 = \frac{1}{h(p)} \int_{p-h(p)}^{p}(|s_{x/t,x^a}|^2 +Y_{x,p,t})dt \]
holds for the random variable $Y_{x,p,t}=|s_{x/p,x^a}|^2 - |s_{x/t,x^a}|^2$ which satisfies
\begin{equation}\label{eq:Ybnd}
\EE |Y_{x,p,t}| \ll \sqrt{p/t}-1+ \sqrt{(x/t - x/p+ 1)/(x/p)}\ll\sqrt{(p/t) - 1} + \sqrt{p/x}
\end{equation}
for $t \le p$. This bound follows from \eqref{eq:lip} by writing $Y_{x,p,t} = (|s_{x/p,x^a}|+|s_{x/t,x^a}|)(|s_{x/p,x^a}|-|s_{x/t,x^a}|)$, applying Cauchy--Schwarz and recalling $\EE [|s_{x,y}|^2]\ll 1$. Next, let
\[ M(t) := \sum_{p\in (x^a,x^b]: \, t \in [p-h(p),p]} \frac{1}{ph(p)}\]
so that we can write
\begin{equation}\label{eq:Rabeq}
R_{a,b} =\int_{x^{a}-h(x^a)}^{x^b} |s_{x/t,x^a}|^2 M(t) dt + Z_{a,b} \quad \text{where} \quad Z_{a,b}:=\sum_{p\in (x^a,x^b]}\frac{1}{ph(p)}\int_{p-h(p)}^{p}Y_{x,p,t}dt.
\end{equation}
Using \eqref{eq:Ybnd} (and Mertens \eqref{eq:mertens35}), we see that $\sqrt{\log \log x}\EE|Z_{a,b}|$ goes to $0$. By sieve theory (say, \eqref{eq:brun}), the bound $M(t) \ll 1/(t\log t)$ holds for $t\in [x^a-h(x^a),x^b]$. Moreover, the Prime Number Theorem with Error Term, \eqref{eq:pnt}, implies that $\sum_{y\le p\le y+h(y)} 1 = h(y)(1+O(1/\log y))/\log y$ for $y\ge 2$. This implies
\[M(t) = (1+O(1/\log t))/(t\log t)\]
for $t\in [x^{a},x^b-2h(x^b)]$. The proof now follows from \eqref{eq:Rabeq} by discarding
\[\sqrt{\log \log x}\int_{x^a-h(x^a)}^{x^a}|s_{x/t,x^a}|^2 M(t)dt\qquad \text{and} \qquad \sqrt{\log \log x}\int_{x^a}^{x^b} |s_{x/t,x^a}|^2 (M(t) - 1/(t \log t))dt.\]
To discard these, we use the bounds on $M(t)$ to verify that the expectation of their absolute values goes to $0$.

\subsection*{Acknowledgments}
O.G.~is supported by the Israel Science Foundation (grant no.~2088/24) and an Alon Fellowship. O.G.~is the incumbent of the Rabbi Dr.~Roger Herst Faculty Fellowship, which supported this work. M.D.W. is supported by the Royal Society
Research Grant RG\textbackslash R1\textbackslash 251187.

\appendix
\section{Number-theoretic estimates}\label{sec:nt}
Mertens proved the following estimate:
\begin{equation}\label{eq:mertens2} \sum_{p \le x} \frac{\log p}{p} = \log x+ O(1)
\end{equation}
for $x\ge 2$. By summation by parts it implies 
\begin{equation}\label{eq:mertens1} \sum_{p \le x} 1/p = \log \log x + O(1),
\end{equation}
which in turn implies
\begin{equation}\label{eq:mertens3} 
\prod_{p \le x} \left(1-\frac{1}{p}\right)^{-1} \ll \log x.
\end{equation}
For many applications these are enough, but Mertens actually proved the more precise estimate
\begin{equation}\label{eq:mertens35}
\sum_{p \le x} 1/p =\log \log x +M + O(1/\log x)
\end{equation}
for $x \ge 2$ for some absolute constant $M$ (the Meissel–-Mertens constant) as well as
\begin{equation}\label{eq:mertens4} 
\prod_{p \le x} \left(1-\frac{1}{p}\right)^{-1} \sim e^{\gamma} \log x
\end{equation}
as $x\to \infty$ where $\gamma$ is the Euler--Mascheroni constant. See \cite[Theorem~2.7]{MV} for the proofs of Mertens' estimates. Observe \eqref{eq:mertens35} includes Cheybshev's bound $\sum_{x\le p \le 2x} 1 \ll x/\log (2x)$ for $x \ge 1$. The Prime Number Theorem with Error Term \cite[Theorem~6.9]{MV} says, in particular,
\begin{equation}\label{eq:pnt}
\sum_{p \le x} 1 = \int_{2}^{x} \frac{dt}{\log t} + O(x/\log^3 x).
\end{equation}
\subsection{Discussion of  \texorpdfstring{\Cref{lem:upperbnd}}{Lemma \ref{lem:upperbnd}}}
Shiu \cite{Shiu} proved a general upper bound on the sum of a non-negative multiplicative function over a short interval. It is stated for a wide class of  functions. Applying Shiu's result to $f(n) = \mathbf{1}_{p\mid n \implies p \in (y,z]}$ one obtains \Cref{lem:upperbnd}.\footnote{In fact one may take $h\ge x^{\OurEpsilon}$ instead of $h\ge x^{1/4}$, at the cost of an implied constant depending on $\OurEpsilon$.} The function $f$ depends on parameters ($y$ and $z$) and we need a bound uniform in these parameters. Uniformity is explicitly verified in Koukoulopoulos' presentation of Shiu's theorem and proof \cite{Koukoulopoulos}. Below we prove a toy version of \Cref{lem:upperbnd} which suffices for the purpose of this paper. 
\begin{lem}\label{lem:suffice}
Let $I=[x,x+h]$ be an interval with $2\le h\le x$. Suppose $\log^2 x\le y <z$. If $h \ge yz$ then
\[ \sum_{\substack{m \in I \\ m \text{ is }y\text{-rough} \\ \text{ and }z\text{-smooth}}}1 \ll \frac{\log z}{\log x\log y}|I|.\]
If $\log z \gg \log x$, the conditions $\log^2 x\le y <z$ and $h\ge yz$ can be replaced with $2\le y <z$ and $h \gg y$.
\end{lem}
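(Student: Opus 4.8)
The plan is to turn the counting problem into a weighted sum over prime powers by means of the identity $\log n=\sum_{p^a\|n}a\log p$, and then estimate the dominant part of that sum with a short‑interval sieve. First I dispatch the relaxed case $\log z\gg\log x$: discarding the $z$‑smoothness condition entirely, the quantity to bound is at most the number of $y$‑rough integers in $I$, which is $\ll|I|/\log y$ by the short‑interval form of Brun's sieve (valid once $|I|\gg y$), and since $\log z/\log x\gg1$ this is already $\ll\frac{\log z}{\log x\log y}|I|$.

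For the main case I write $\mathcal A$ for the set of integers all of whose prime factors lie in $(y,z]$ and $S:=\#\{n\in I\cap\mathcal A\}$; note $n\le 2x$ for $n\in I$, so $\log n\ge\log x$ and hence
\[(\log x)\,S\le\sum_{\substack{x<n\le x+h\\ n\in\mathcal A}}\log n=\sum_{a\ge1}\sum_{p}a\log p\cdot\#\{n\in I\cap\mathcal A:\ p^a\|n\},\]
where only $p\in(y,z]$ contribute. Writing $n=p^am$ gives $\#\{n\in I\cap\mathcal A:p^a\|n\}\le\#\{m\in(x/p^a,(x+h)/p^a]:\ m\text{ is }y\text{-rough}\}$, and I split the sum according to whether $p^a\le h/y$ or $p^a>h/y$. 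In the first range the interval has length $h/p^a\ge y$, so the sieve bounds the inner count by $\ll(h/p^a)/\log y$; summing $a\,p^{-a}\log p$ over $a\ge1$ (using $\sum_{a\ge1}ap^{-a}\le4/p$) and then over $p\in(y,z]$ via Mertens' estimate \eqref{eq:mertens2} (which gives $\sum_{y<p\le z}\tfrac{\log p}{p}=\log z-\log y+O(1)\ll\log z$, as $\log z\to\infty$) bounds this part by $\ll\frac{h\log z}{\log y}$ — exactly the target after dividing through by $\log x$.

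The remaining part, $p^a>h/y$, is the delicate one and is where the hypotheses $h\ge yz$ and $y\ge\log^2x$ enter. Since $p\le z\le h/y$, the inequality $p^a>h/y$ forces $a\ge2$, so every relevant $n$ is divisible by the square of a prime exceeding $y$; and since for fixed $n$ the total weight it can receive is $\sum_{p^a\|n}a\log p=\log n\le2\log x$, this part of the sum is at most
\[2\log x\cdot\#\{n\in I:\ p^2\mid n\text{ for some prime }p>y\}\le2\log x\sum_{y<p\le z}\Bigl(\frac h{p^2}+1\Bigr)\ll(\log x)\Bigl(\frac hy+z\Bigr),\]
using $\sum_{p>y}p^{-2}\ll1/y$ and $\pi(z)\le z$. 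After dividing by $\log x$ it remains to check $\frac hy+z\ll\frac{h\log z}{\log x\log y}$, and both required inequalities reduce to $\frac{\log x\log y}{\log z}\ll y$: for the $\frac hy$ term directly, and for the $z$ term after invoking $h\ge yz$. This holds because $\log y\le\log z$ gives $\frac{\log x\log y}{\log z}\le\log x\le\log^2x\le y$. Combining the two ranges yields $S\ll\frac{h\log z}{\log x\log y}$.

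The only genuinely external ingredient is the short‑interval upper‑bound sieve — ``an interval of length $L\ge y$ contains $\ll L/\log y$ integers free of prime factors $\le y$'' — which I would quote from standard sieve theory (a short‑interval analogue of \eqref{eq:brunprim}; cf.\ \eqref{eq:brun} and \cite{Koukoulopoulos}); everything else is Mertens' theorem and the trivial bounds above. The one subtle design choice, and the thing I expect to be easy to get wrong, is the weight $\log n$: with a weight of $1$ or of $\Omega(n)$ the leading term overshoots the target by a factor of order $\log x/\log y$, whereas $\log n$ makes the $p^a\le h/y$ contribution come out to precisely $\frac{h\log z}{\log x\log y}$ and simultaneously forces the $p^a>h/y$ remainder to be $O(\log x)$ times the sparse count of integers divisible by the square of a prime larger than $y$ — a count that the condition $y\ge\log^2x$ is exactly tailored to control.
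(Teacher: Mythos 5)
Your proof is correct and follows essentially the same route as the paper: the Chebyshev weight $\log n$ (Wirsing's trick), the split according to whether $p^a\le h/y$ (where the short-interval sieve applies) or $p^a>h/y$ (where $h\ge yz$ forces $a\ge2$, so $n$ has a repeated prime factor $>y$), and the same final reduction to $\frac{\log x\log y}{\log z}\ll y$ via the hypothesis $y\ge\log^2 x$. The only cosmetic difference is in the large--$p^a$ range, where you observe that each such $n\in I\cap\mathcal A$ carries total weight $\le\log n\ll\log x$ and then count $n$ divisible by some $p^2$ with $y<p\le z$ --- a slightly slicker packaging than the paper's term-by-term bound $\log p\,(h/p^i+1)$, though note that the $p\le z$ cutoff in that final count relies on the $z$-smoothness of $n\in\mathcal A$ and should be stated as such rather than for bare $n\in I$.
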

This suffices because in practice we take $h\ge x^{1/4}$ and $y$ is chosen to be smaller than any power of $x$ but greater than any power of $\log x$.
\begin{proof}
We recall the following basic sieve bound \cite[Theorem~3.6]{MV}:
\begin{equation}\label{eq:brun}
\sum_{\substack{n \in I\\ n\text{ is }y\text{-rough}}} 1 \ll \frac{|I|}{\log y}
\end{equation}
which holds for $y\ge 2$ and any finite interval $I\subseteq [1,\infty)$ of length $\gg y$. The case $\log z \gg \log x$ of \Cref{lem:suffice} is in \eqref{eq:brun}: we discard the condition that $m$ has to be $z$-smooth and then the claim follows from \eqref{eq:brun}. Next we treat the case $h\ge yz$ where we may assume $y<z\le x$. We have $\log x \le \log m$ for all $m\in I$ and so 
\[  \sum_{\substack{m \in I \\ m \text{ is }y\text{-rough} \\ \text{ and }z\text{-smooth}}}1  \le \frac{1}{\log x} \sum_{\substack{m \in I \\ m \text{ is }y\text{-rough} \\ \text{ and }z\text{-smooth}}}\log m.\]
We now apply \textit{Wirsing's trick}, namely we write $\log m $ as $\sum_{p,\, i\ge 1: \, p^i \mid m} \log p$ where the sum is over all prime powers that divide $m$. Then
\begin{equation}\label{eq:smoothrough} \sum_{\substack{m \in I \\ m \text{ is }y\text{-rough} \\ \text{ and }z\text{-smooth}}}\log m = \sum_{p \in (y,z],\, i\ge 1} \log p \sum_{\substack{m \in I,\, p^i \mid m \\ m \text{ is }y\text{-rough} \\ \text{ and }z\text{-smooth}}} 1\le \sum_{p \in (y,z],\, i\ge 1} \log p \sum_{\substack{m \in I \\ p^i \mid m\\ m \text{ is }y\text{-rough} }} 1,
\end{equation}
where in the last inequality we discarded the condition that $m$ is $z$-smooth. The inner sum in the right-hand side of \eqref{eq:smoothrough} counts $y$-rough integers in  the interval $I/p^i := \{ t/p^i: t\in I\}$. If $|I|/p^i\ge y$, it is $\ll |I|/(p^i \log y)$ by \eqref{eq:brun}, so these prime powers contribute
\[ \ll \sum_{p \in (y,z],\, i\ge 1} \log p \frac{|I|}{p^i \log y} \ll \frac{|I|}{\log y} \sum_{p \le z} \frac{\log p}{p}\]
to the right-hand side of \eqref{eq:smoothrough}, which is $\ll |I| \log z/\log y$ by \eqref{eq:mertens2}. The contribution of prime powers $p^i$ with $|I|/p^i<y$ and $i \ge 2$ is \[ \ll \sum_{\substack{p\in (y,z],\, i\ge 2:\\ |I|/p^i<y,\, p^i \le 2x}} \log p\left(\frac{|I|}{p^i}+1\right) \ll |I|\sum_{p>y} \frac{\log p}{p^2} + \sum_{p \le z} \log p \frac{\log 2x}{\log p} \ll \frac{\log y}{y}|I|+ z\log x \]
which is $\ll (\log z/\log y)|I|$ under the assumptions $|I|\ge yz$ and $y\ge \log^2x$. The contribution of prime powers $p^i$ with $|I|/p^i<y$ and $i=1$ is empty because $|I|/p^i=|I|/p\ge |I|/z$ and we assume $|I|\ge yz$.
\end{proof}
\subsection{Proof of \texorpdfstring{\Cref{lem:tshift}}{Lemma \ref{lem:tshift}}}
We have the Euler product
\[ \sum_{P(n)\le y} \frac{1}{n^{1+t/\log y}} = \prod_{p \le y}\left(1-p^{-1-t/\log y}\right)^{-1},\]
so for $t=0$ the claim is Mertens' estimate \eqref{eq:mertens4}. For $t>0$, we consider the ratio
\[ ( \sum_{P(n)\le y} \frac{1}{n^{1+t/\log y}}) /  (\sum_{P(n)\le y} \frac{1}{n})= \prod_{p \le y}\left(1-p^{-1-t/\log y}\right)^{-1}\left(1-p^{-1}\right)\]
and it suffices to show that it tends to  \[\exp\bigg(  - \int_{0}^{t} \frac{1-e^{-s}}{s}ds\bigg)\]
as $y\to \infty$. The logarithm of the above ratio is 
\[ \sum_{p \le y} \left(\log\left(1-p^{-1}\right)-\log \left(1-p^{-1-t/\log y}\right)\right) =\sum_{p \le y} p^{-1}( p^{-t/\log y}-1) +O\bigg(\frac{t}{\log y} \sum_{p \le y} \frac{\log p}{p^2}\bigg)\]
by the mean value theorem (applied to $s\mapsto\sum_{p\le y}(\log (1-p^{-1-s})+p^{-1-s})$). The error term goes to $0$ as $y\to \infty$. The main term 
\[ \sum_{p \le y} p^{-1} (p^{-t/\log y}-1)\]
is now studied through Mertens' theorem \eqref{eq:mertens35} and Riemann--Stieltjes integration. If we let
\[ M(t):=\sum_{p \le t} \frac{1}{p}=\log \log t + M+\Delta(t)\]
then $\Delta(t)=O(1/\log t)$ for $t\ge 2$ and
\[ \sum_{p\le y}p^{-1} (p^{-t/\log y}-1) = \int_{2}^{y^+} (r^{-t/\log y}-1) dM(r)=\int_{2}^{y} \frac{r^{-t/\log y}-1}{r\log r}dr + \int_{2^-}^{y^+} (r^{-t/\log y}-1) d \Delta(r) .  \]
The first integral in the right-hand side gives the desired expression after the change of variables $r=y^{s/t}$:
\[\int_{2}^{y} \frac{r^{-t/\log y}-1}{r\log r}dr = \int_{t\log 2 /\log y}^{t} \frac{e^{-s}-1}{s}ds=\int_{0}^{t}\frac{e^{-s}-1}{s}ds  + O(t/\log y).\]
The second integral can be seen to go to $0$ as $y\to \infty$ by using integration by parts: for $y\ge 3$,
\[\int_{2^-}^{y^+} (r^{-t/\log y}-1) d \Delta(r) = \Delta(y^+)(y^{-\frac{t}{\log y}}-1) - \Delta(2^-)(2^{-\frac{t}{\log y}}-1) +\frac{t}{\log y} \int_{2}^{y} r^{-\frac{t}{\log y}-1} \Delta(r) dr \ll \frac{t \log \log y}{\log y}.\]
\section{Concentration and generic chaining}
The following concentration inequality is useful when one has good control over moments of random variables.
\begin{thm}[Bernstein's inequality, {cf.~\cite[Corollary 2.11]{BLM2013}}]\label{thm:Berstein}
Let $X_1, \dots, X_n$ be independent centred real-valued random variables. Suppose there exist $v, c > 0$ such that and
\[  \sum_{k =1}^n \EE[|X_i|^r] \le \frac{r!}{2} v c^{r-2} \qquad \text{for all integers $r \ge 2$}. \]
Then for all $x \ge 0$, we have
\[  \PP\bigg(\bigg|\sum_{k \le n} X_k\bigg| \ge x \bigg) \le 2\exp\left(-\frac{x^2}{2(v+cx)}\right). \]
\end{thm}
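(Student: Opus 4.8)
\textbf{Proof proposal for \Cref{thm:Berstein}.} This is the classical Chernoff-bound argument, and no serious obstacle is expected; the only care needed is bookkeeping with the moment hypothesis. Set $S := \sum_{k=1}^n X_k$. The plan is to control the moment generating function $\EE[e^{\lambda S}]$ for $\lambda \in [0, 1/c)$, then optimise in $\lambda$.

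First I would note that the hypothesis forces $\EE[|X_k|^r] \le \tfrac{r!}{2} v c^{r-2}$ for each individual $k$ and each integer $r \ge 2$, so for $0 \le \lambda < 1/c$ the series $\sum_{r \ge 0} \lambda^r \EE[|X_k|^r]/r!$ converges; hence $\EE[e^{\lambda X_k}]$ is finite and, by dominated convergence, equals $\sum_{r \ge 0} \lambda^r \EE[X_k^r]/r!$. Using $\EE[X_k] = 0$ to kill the linear term and then $1 + t \le e^t$, one gets
\[
\log \EE[e^{\lambda S}] = \sum_{k=1}^n \log \EE[e^{\lambda X_k}] \le \sum_{k=1}^n \big(\EE[e^{\lambda X_k}] - 1\big) \le \sum_{k=1}^n \sum_{r \ge 2} \frac{\lambda^r \EE[|X_k|^r]}{r!} \le \frac{v \lambda^2}{2} \sum_{r \ge 2} (c\lambda)^{r-2} = \frac{v \lambda^2}{2(1 - c\lambda)},
\]
where the interchange of the two sums is justified by non-negativity of the terms and the hypothesis is applied in the penultimate step.

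Next, by Markov's inequality applied to $e^{\lambda S}$, for every $\lambda \in [0, 1/c)$ and $x \ge 0$,
\[
\PP(S \ge x) \le e^{-\lambda x} \EE[e^{\lambda S}] \le \exp\!\left(-\lambda x + \frac{v \lambda^2}{2(1 - c\lambda)}\right).
\]
I would then make the choice $\lambda = x / (v + cx)$, which lies in $[0, 1/c)$, and verify by direct substitution that the exponent becomes $-x^2 / \big(2(v + cx)\big)$ (one checks $1 - c\lambda = v/(v+cx)$, so the quadratic term equals $\tfrac{v}{2}\cdot \tfrac{x^2}{(v+cx)^2}\cdot\tfrac{v+cx}{v} = \tfrac{x^2}{2(v+cx)}$, and the linear term is $-x^2/(v+cx)$). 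This gives $\PP(S \ge x) \le \exp\big(-x^2/(2(v+cx))\big)$. Finally, the random variables $-X_1, \dots, -X_n$ satisfy the same moment hypothesis, so the identical argument yields $\PP(S \le -x) \le \exp\big(-x^2/(2(v+cx))\big)$; a union bound over the two events produces the stated two-sided inequality with the factor $2$, completing the proof.
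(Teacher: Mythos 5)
Your proof is correct. The paper does not prove this inequality itself but cites it as \cite[Corollary 2.11]{BLM2013}; your argument is the standard Cram\'er--Chernoff derivation, which is exactly how the cited reference establishes it (bound the log-moment-generating function via the moment hypothesis, apply Markov's inequality to $e^{\lambda S}$, and optimise $\lambda = x/(v+cx)$, then symmetrise). Two minor remarks for completeness: the interchange $\EE[e^{\lambda X_k}] = \sum_r \lambda^r\EE[X_k^r]/r!$ is justified by dominated convergence once one checks $\EE[e^{\lambda|X_k|}]<\infty$ for $\lambda<1/c$, which follows from the moment bound plus the $r=1$ term being finite by Cauchy--Schwarz; and the swap of the $k$- and $r$-sums is legitimate after first replacing $\EE[X_k^r]$ by $\EE[|X_k|^r]$ so that Tonelli applies — you handle both points correctly.
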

Let $T$ be a separable metric space with respect to a metric $d$. We say $(T_n)_{n \ge 1}$ is an admissible sequence for $T$ if it is an increasing sequence of subsets of $T$ such that $|T_0| = 1$, $|T_n| \le 2^{2^n}$, $|T_n|^2 \le |T_{n+1}|$ for all $n \ge 1$, and that they are equipped with a sequence of maps $\pi_n \colon T \to T_n$ such that $\lim_{n \to \infty} \pi_n(t) = t$ for all $t \in T$.\footnote{If $T$ is a finite set, the sum on the right-hand side of \eqref{eq:metric-entropy} is finite, i.e.~it stops at the first $n_0$ such that $T_{n_0} = T$.} To measure the size of our admissible sequence we introduce the chaining functionals
\begin{equation}\label{eq:metric-entropy}
\gamma_k(T, d) := \sup_{t \in T} \sum_{n \ge 1} 2^{\frac{n}{k}} d(\pi_n(t), \pi_{n-1}(t)) \qquad (k=1,2).
\end{equation}
This is a slight abuse of notation since we suppress the dependence on $(T_n)_n$ in \eqref{eq:metric-entropy}, but this should not cause any confusion below as we will only work with one admissible sequence at a time.\footnote{The definition of admissible sequence and chaining functional here are slightly different from those used by Talagrand for generic chaining in \cite{Tal2022}. This is to keep our discussion self-contained and avoid too much prerequisite knowledge, but with little effort it can be rewritten in the language of \cite{Tal2022}.} We are now ready to state the generic chaining result.
\begin{thm}[Generic chaining with two distances]\label{thm:chaining}
Let $T$ be a separable metric space with respect to both metrics $d_1$ and $d_2$, and $(T_n)_n$ be an admissible sequence. Suppose $(X_t)_{t \in T}$ is a separable centred stochastic process satisfying
\begin{equation}\label{eq:chaining-assumption}
    \PP\left(|X_s - X_t| > x\right) \le C \exp\bigg\{- \min \left(\frac{x^2}{d_2(s, t)^2} ,\frac{x}{d_1(s, t)}\right)\bigg\} \qquad \forall x \ge 0.
\end{equation}
Then
\[\PP\left(\sup_{s, t \in T}|X_s - X_t| > x\right) \le \max(5C, e^2) \exp\left(-\frac{x^2}{16 \gamma_2(T, d_2)^2 + 8\gamma_1(T, d_1)x} \right) \qquad \forall x \ge 0.\]
\end{thm}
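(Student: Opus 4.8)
\noindent The plan is to run the generic chaining argument of Talagrand \cite{Tal2022}, exploiting that the hypothesis \eqref{eq:chaining-assumption} is a Bernstein-type increment bound (a sub-Gaussian regime controlled by $d_2$, a sub-exponential regime controlled by $d_1$), so that after chaining one recovers a Bernstein-type tail for the supremum.

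First I would reduce to a one-sided supremum. Since $|T_0|=1$, the maps $\pi_0$ all send $T$ to a single point $t_0\in T$, so pointwise $\sup_{s,t\in T}|X_s-X_t|\le 2\sup_{t\in T}|X_t-X_{t_0}|$; it therefore suffices to bound $\PP\big(\sup_{t}|X_t-X_{t_0}|>v\big)$ for $v=x/2$, noting that the target denominator $16\gamma_2(T,d_2)^2+8\gamma_1(T,d_1)x$ is exactly $4\gamma_2(T,d_2)^2+4\gamma_1(T,d_1)v$. By separability one may let $t$ range over a countable dense set and expand via the chaining telescope $X_t-X_{t_0}=\sum_{n\ge1}\Delta_n(t)$ with $\Delta_n(t):=X_{\pi_n(t)}-X_{\pi_{n-1}(t)}$, the series converging almost surely.

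Now fix $v>0$ and set $\rho:=v^2/\big(4\gamma_2(T,d_2)^2+4\gamma_1(T,d_1)v\big)$; we may assume $\gamma_1(T,d_1)+\gamma_2(T,d_2)>0$, the contrary case being degenerate. If $\rho\le2$ the claim is immediate, since $\max(5C,e^2)e^{-\rho}\ge e^2e^{-2}=1$, so assume $\rho>2$. The heart of the matter is the allocation of a level-$n$ budget: for each $t$ and $n\ge1$ put
\[
v_n(t):=2^{n/2}\sqrt{\rho}\,d_2\big(\pi_n(t),\pi_{n-1}(t)\big)+2^n\rho\,d_1\big(\pi_n(t),\pi_{n-1}(t)\big),
\]
which depends only on the pair $\big(\pi_n(t),\pi_{n-1}(t)\big)$. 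Summing over $n$ and using the definitions of $\gamma_2$ and $\gamma_1$ together with $\sqrt{\rho}\,\gamma_2(T,d_2)\le v/2$ and $\rho\,\gamma_1(T,d_1)\le v/4$ gives $\sum_{n\ge1}v_n(t)\le \tfrac34 v<v$ for every $t$. Hence on the event $\{\sup_t|X_t-X_{t_0}|>v\}$ there exist $n\ge1$ and $t$ with $|\Delta_n(t)|>v_n(t)$. Bounding the number of distinct pairs $(s',t')=(\pi_n(t),\pi_{n-1}(t))$ at level $n$ by $|T_n||T_{n-1}|\le|T_n|^2\le|T_{n+1}|\le2^{2^{n+1}}$, and using \eqref{eq:chaining-assumption} with the elementary estimates $v_n(t)^2/d_2(s',t')^2\ge2^n\rho$ and $v_n(t)/d_1(s',t')\ge2^n\rho$, a union bound yields
\[
\PP\Big(\sup_{t}|X_t-X_{t_0}|>v\Big)\le\sum_{n\ge1}2^{2^{n+1}}C\,e^{-2^n\rho}=C\sum_{n\ge1}e^{-2^n(\rho-2\log2)}.
\]
Since $\rho>2$ we have $\rho-2\log2>2(1-\log2)>0$, and $2^n\ge2n$ bounds the series by the geometric sum $e^{-2(\rho-2\log2)}/\big(1-e^{-2(\rho-2\log2)}\big)\le 5e^{-\rho}$; thus $\PP\big(\sup_t|X_t-X_{t_0}|>v\big)\le5C\,e^{-\rho}\le\max(5C,e^2)e^{-\rho}$. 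Specialising to $v=x/2$ and combining with the first reduction gives the asserted bound.

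The main obstacle is the constant bookkeeping in the last step: one must choose the $d_2$/$d_1$ split inside $v_n(t)$ so that the total chaining budget stays below $v$ while still forcing each level-$n$ failure probability to decay like $e^{-2^n\rho}$, fast enough to absorb the entropy factor $2^{2^{n+1}}$, and then check that the residual geometric tail is dominated by $5e^{-\rho}$ while disposing separately of the small range $\rho\le2$ (which is precisely what forces the $e^2$ into the maximum). A secondary technical point is the standard justification, via separability of $(X_t)_{t\in T}$, that the chaining series converges almost surely and that it suffices to take suprema over a countable index set.
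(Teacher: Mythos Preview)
Your proof is correct and follows essentially the same chaining argument as the paper's. The only cosmetic difference is that you parameterise the chaining directly by the final exponent $\rho=v^2/(4\gamma_2^2+4\gamma_1 v)$ from the outset, whereas the paper first runs the chaining with an auxiliary parameter $x$ (your $\sqrt{\rho}$), obtains the bound $\max(5C,e^2)e^{-x^2/2}$ for the threshold $2[x^2\gamma_1+x\gamma_2]$, and only then inverts the quadratic $2\gamma_1 y^2+2\gamma_2 y=z$ to recover the stated form; your bookkeeping for the geometric tail (via $2^n\ge 2n$ and the check that $16e^{-2\rho}/(1-16e^{-2\rho})\le 5e^{-\rho}$ for $\rho>2$) is slightly different from but equivalent to the paper's.
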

\begin{proof}
This is essentially established in the proof of \cite[Theorem 4.5.13]{Tal2022}. For completeness we sketch the proof and track the exponents more carefully here.

Let us begin by rewriting \eqref{eq:chaining-assumption} in terms of the weaker inequality
\[\PP\left(|X_s - X_t| > x^2 d_1(s,t) + xd_2(s, t)\right) \le C e^{-x^2}.\]
Suppose $|X_{\pi_{n}(t)} - X_{\pi_{n-1}}(t)| \le 2^n x^2 d_1(\pi_{n}(t), \pi_{n-1}(t)) + 2^{\frac{n}{2}} xd_2(\pi_{n}(t), \pi_{n-1}(t))$ for all $n \ge 1$ and $t \in T$. Then
\begin{align*}
\sup_{s, t \in T} |X_s - X_t|
\le 2 \sup_{t \in T} |X_t - X_{t_0}|
& \le 2 \sup_{t \in T} \sum_{n \ge 1} \left[2^n x^2 d_1(\pi_{n}(t), \pi_{n-1}(t)) + 2^{\frac{n}{2}} xd_2(\pi_{n}(t), \pi_{n-1}(t))\right]\\
& \le 2 \left[ x^2 \gamma_1(T, d_1) + x \gamma_2(T, d_2)\right]
\end{align*}
where $t_0$ is the unique element in $T_0$. In other words,
\begin{align*}
& \PP\left( \sup_{s, t \in T} |X_s - X_t| > 2 \left[ x^2 \gamma_1(T, d_1) + x \gamma_2(T, d_2)\right]\right)\\
&\qquad  \le \PP\left(\exists (n, t) \in \NN \times T: |X_{\pi_{n}(t)} - X_{\pi_{n-1}(t)}| > 2^n x^2 d_1(\pi_{n}(t), \pi_{n-1}(t)) + 2^{\frac{n}{2}} xd_2(\pi_{n}(t), \pi_{n-1}(t))\right) \\
&\qquad \le C \sum_{n \ge 1} |T_n| |T_{n-1}| e^{-2^n x^2}
\le C \sum_{n \ge 1} 2^{2^{n+1}} e^{-2^n x^2}.
\end{align*}
If $x \ge 2$, it is straightforward to verify that $2^n x^2 \ge \frac{x^2}{2} + 2^{n+1}$. Using $1-2/e \ge 1/5$, we obtain
\[ \PP\left( \sup_{s, t \in T} |X_s - X_t| > 2 \left[ x^2 \gamma_1(T, d_1) + x \gamma_2(T, d_2)\right]\right)
\le C \bigg[\sum_{n \ge 1} (2/e)^{2^{n+1}} \bigg]e^{-\frac{x^2}{2}} \le \max(5C, e^2) e^{-\frac{x^2}{2}} \]
where the last inequality also holds for $x \in [0, 2]$ because the left-hand side is always bounded by $1$.

Finally, consider the quadratic equation $Ay^2 + By = z$ where $A, B, y, z \ge 0$. Then 
\[y = -\frac{-B + \sqrt{B^2 + 4Az}}{2A} = \frac{4Az}{2A\left(B + \sqrt{B^2 + 4Az}\right)} \ge \frac{z}{B + \sqrt{Az}}.\]
Taking $y:= x$, $A:= 2\gamma_1(T, d_1)$ and $B:= 2\gamma_2(T, d_2)$, we conclude that
\[ \PP\left(\sup_{s, t\in T} |X_s - X_t| > z\right)
\le \max(5C, e^2) \exp\bigg\{-\frac{1}{2} \left(\frac{z}{B + \sqrt{Az}}\right)^2\bigg\}
\le \max(5C, e^2) \exp\left\{-\frac{z^2}{4(B^2 + Az)}\right\}. \]
\end{proof}
The following lemma allows us to control the chaining functionals $\gamma_k(T, d)$ defined in \eqref{eq:metric-entropy} when $T$ is a subset of the Euclidean space and $d$ is the Euclidean metric up to a rescaling factor.
\begin{lem}\label{lem:metric-entropy-estimate}
Let $I \subset \RR$ be a compact interval. Consider $T := I^3$ equipped with the metric $d(s, t) := K|s-t| $ for some $K > 0$. Then there exists an admissible sequence $(T_n)_n$ for $T$ such that
\[    \gamma_1(T, d) \ll K|I| \qquad \text{and} \qquad \gamma_2(T, d) \ll K|I|.\]
\end{lem}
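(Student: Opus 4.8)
This lemma is a standard entropy bound for generic chaining functionals of a bounded subset of Euclidean space. Here is how I would prove it.

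\medskip

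\noindent\textbf{Construction of the admissible sequence.} The plan is to build the admissible sequence by successive dyadic subdivisions of the cube $I^3$. Without loss of generality assume $|I| = \ell$, so that $I^3$ has diameter $\sqrt{3}\,\ell$ in the Euclidean metric. For each integer $j \ge 0$, partition $I$ into $2^j$ equal subintervals, inducing a partition of $I^3$ into $8^j$ subcubes of side length $\ell \, 2^{-j}$ (hence Euclidean diameter $\sqrt{3}\,\ell\,2^{-j}$); pick one representative point in each such subcube and call the resulting finite set $S_j$, so $|S_j| = 8^j$. Since the admissible-sequence convention in \Cref{thm:chaining} requires $|T_n| \le 2^{2^n}$ and $|T_n|^2 \le |T_{n+1}|$, I would set $T_n := S_{j(n)}$ where $j(n)$ is chosen to grow roughly like $2^{n}$ — concretely, taking $j(n) = 2^{n-2}$ for $n\ge 2$ (and $T_0 = T_1 = \{t_0\}$ a single point) works, since then $|T_n| = 8^{2^{n-2}} = 2^{3\cdot 2^{n-2}} \le 2^{2^n}$ and $|T_n|^2 = 2^{3\cdot 2^{n-1}} \le 2^{3 \cdot 2^{n-1}} = |T_{n+1}|$. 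The maps $\pi_n \colon T \to T_n$ send a point to the chosen representative of the subcube (at level $j(n)$) containing it; then $\pi_n(t) \to t$ because the subcube diameters shrink to $0$, and one may arrange the partitions to be nested so that $(T_n)_n$ is increasing.

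\medskip

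\noindent\textbf{Bounding the increments along the chain.} With this choice, for $t \in T$ both $\pi_n(t)$ and $\pi_{n-1}(t)$ lie in a common subcube at level $j(n-1)$, so
\[
    d(\pi_n(t), \pi_{n-1}(t)) = K\,|\pi_n(t) - \pi_{n-1}(t)| \le K \sqrt{3}\,\ell\, 2^{-j(n-1)} \ll K\ell\, 2^{-2^{n-3}}
\]
for $n$ large (and trivially $d(\pi_n(t),\pi_{n-1}(t)) \le K\sqrt3\,\ell$ for the finitely many small $n$). Therefore, for $k \in \{1,2\}$,
\[
    \gamma_k(T, d) = \sup_{t \in T} \sum_{n \ge 1} 2^{n/k} d(\pi_n(t), \pi_{n-1}(t))
    \ll K\ell \sum_{n \ge 1} 2^{n/k}\, 2^{-2^{n-3}}.
\]
The series on the right converges (indeed very rapidly, since the doubly-exponential decay $2^{-2^{n-3}}$ dominates the geometric factor $2^{n/k}$ for either $k=1$ or $k=2$), and its value is an absolute constant. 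Hence $\gamma_k(T,d) \ll K\ell = K|I|$ for $k = 1, 2$, which is the claim.

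\medskip

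\noindent\textbf{Remarks on the main point.} There is no real obstacle here; the only thing to be careful about is reconciling the cardinality growth required of an admissible sequence (doubly exponential, $|T_n|\le 2^{2^n}$) with the polynomial-in-level growth of a dyadic partition of a fixed-dimensional cube (at level $j$ one has $8^j$ cells of diameter $\asymp 2^{-j}$). The resolution, as above, is simply to let the partition level $j(n)$ grow like $2^n$; this makes the diameter at ``chaining level'' $n$ decay doubly exponentially, which more than compensates the factor $2^{n/k}$ in the definition of $\gamma_k$ and makes the defining series trivially summable. (In fact one gets far more room than needed — any $d$-dimensional compact cube would work with the same argument, with implied constants depending only on $d$.) The linear dependence on $K$ is immediate from the homogeneity of the metric $d(s,t) = K|s-t|$.
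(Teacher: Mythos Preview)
Your proposal is correct and follows essentially the same approach as the paper: both construct $T_n$ as a grid of $2^{3\cdot 2^{n-2}}$ dyadic points in $I^3$ (with $T_0=T_1$ a singleton), verify the cardinality constraints, and observe that the resulting doubly-exponential decay of the increments $d(\pi_n(t),\pi_{n-1}(t))$ makes the series defining $\gamma_k$ trivially summable. The only cosmetic difference is that the paper takes $\pi_n$ to be the nearest-point map rather than the containing-cell map, but the estimates are the same.
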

\begin{proof}
Let $T_0 = T_1 \subset T$ be any singleton set, and define $T_n := I_n^3$ where $I_n \subset I$ consists of $2^{2^{n-2}}$ dyadic points for each $n \ge 2$. Let $\pi_n\colon T \to T_n$ be such that $\pi_n(t) = \argmin_{s \in T_n} d(s, t)$ (pick any representative if there is a tie). One can quickly verify that $|T_n| \le 2^{2^n}$ and $|T_n|^2 \le |T_{n+1}|$ so that the sequence $(T_n)_n$ is indeed admissible. By considering neighbouring dyadic points,  it is easy to see that $|\pi_n(t)- \pi_{n-1}(t)| \le \sqrt{3}|I| (2^{2^{n-2}} -1)^{-1}$ for all $n \ge 2$ and $t \in I^3$, and thus for $k \ge 1$ we have
\[\gamma_k(T, d)
= \sup_{t \in T} \sum_{n \ge 1}2^{\frac{n}{k}} d(\pi_n(t), \pi_{n-1}(t))
\le \sqrt{3}K|I|\sum_{n \ge 2} \frac{2^\frac{n}{k}}{2^{2^{n-2} }-1} 
\ll K|I|. \]
\end{proof}

\bibliographystyle{abbrv}
\bibliography{references}

\begin{thebibliography}{10}

\bibitem{AN2025}
C.~Atherfold and J.~Najnudel.
\newblock The {F}ourier coefficients of the critical holomorphic multiplicative
  chaos.
\newblock {\em arXiv preprint arXiv:2508.13849}, 2025.

\bibitem{ACZ}
A.~Ayyad, T.~Cochrane, and Z.~Zheng.
\newblock The congruence {$x_1x_2\equiv x_3x_4\pmod p$}, the equation
  {$x_1x_2=x_3x_4$}, and mean values of character sums.
\newblock {\em J. Number Theory}, 59(2):398--413, 1996.

\bibitem{BKNSW2015}
J.~Barral, A.~Kupiainen, M.~Nikula, E.~Saksman, and C.~Webb.
\newblock Basic properties of critical lognormal multiplicative chaos.
\newblock {\em Ann. Probab.}, 43(5):2205--2249, 2015.

\bibitem{BW2018}
G.~Baverez and M.~D. Wong.
\newblock Fusion asymptotics for {L}iouville correlation functions.
\newblock {\em arXiv preprint arXiv:1807.10207}, 2018.

\bibitem{Ber2017}
N.~Berestycki.
\newblock An elementary approach to {Gaussian} multiplicative chaos.
\newblock {\em Electron. Commun. Probab.}, 22:12, 2017.
\newblock Id/No 27.

\bibitem{BS}
A.~Bondarenko and K.~Seip.
\newblock Helson's problem for sums of a random multiplicative function.
\newblock {\em Mathematika}, 62(1):101--110, 2016.

\bibitem{BLM2013}
S.~Boucheron, G.~Lugosi, and P.~Massart.
\newblock {\em Concentration inequalities. {A} nonasymptotic theory of
  independence}.
\newblock Oxford: Oxford University Press, 2013.

\bibitem{CS2012}
S.~Chatterjee and K.~Soundararajan.
\newblock Random multiplicative functions in short intervals.
\newblock {\em Int. Math. Res. Not.}, 2012(3):479--492, 2012.

\bibitem{debruijn}
N.~G. de~Bruijn.
\newblock On the number of positive integers {{\(\leq x\)}} and free of prime
  factors {{\(>y\)}}.
\newblock {\em Nederl. Akad. Wet., Proc., Ser. A}, 54:50--60, 1951.

\bibitem{DRSV2014}
B.~Duplantier, R.~Rhodes, S.~Sheffield, and V.~Vargas.
\newblock Renormalization of critical {Gaussian} multiplicative chaos and {KPZ}
  relation.
\newblock {\em Commun. Math. Phys.}, 330(1):283--330, 2014.

\bibitem{GV2023}
C.~Garban and V.~Vargas.
\newblock Harmonic analysis of {G}aussian multiplicative chaos on the circle.
\newblock {\em arXiv preprint arXiv:2311.04027}, 2023.

\bibitem{GWL2}
O.~Gorodetsky and M.~D. Wong.
\newblock Martingale central limit theorem for random multiplicative functions.
\newblock {\em arXiv preprint arXiv:2405.20311}, 2024.

\bibitem{GWL1}
O.~Gorodetsky and M.~D. Wong.
\newblock Multiplicative chaos measure for multiplicative functions: the
  ${L}^1$-regime.
\newblock {\em arXiv preprint arXiv:2503.10555}, 2025.

\bibitem{GWBLMS}
O.~Gorodetsky and M.~D. Wong.
\newblock A short proof of {Helson}'s conjecture.
\newblock {\em Bull. Lond. Math. Soc.}, 57(4):1065--1076, 2025.

\bibitem{HH1980}
P.~Hall and C.~C. Heyde.
\newblock {\em Martingale limit theory and its application}.
\newblock Probability and Mathematical Statistics. Academic Press, Inc.
  [Harcourt Brace Jovanovich, Publishers], New York-London, 1980.

\bibitem{Hardy}
S.~Hardy.
\newblock The distribution of partial sums of random multiplicative functions
  with a large prime factor.
\newblock {\em arXiv preprint arXiv:2503.06256}, 2025.

\bibitem{Harper2013}
A.~J. Harper.
\newblock On the limit distributions of some sums of a random multiplicative
  function.
\newblock {\em J. Reine Angew. Math.}, 678:95--124, 2013.

\bibitem{HarperHigh}
A.~J. Harper.
\newblock Moments of random multiplicative functions. {II}: {High} moments.
\newblock {\em Algebra Number Theory}, 13(10):2277--2321, 2019.

\bibitem{Har2020}
A.~J. Harper.
\newblock Moments of random multiplicative functions. {I}: {Low} moments,
  better than squareroot cancellation, and critical multiplicative chaos.
\newblock {\em Forum Math. Pi}, 8:95, 2020.
\newblock Id/No e1.

\bibitem{harper2023typical}
A.~J. Harper.
\newblock The typical size of character and zeta sums is {{\(o(\sqrt{x})\)}}.
\newblock {\em arXiv preprint arXiv:2301.04390}, 2023.

\bibitem{harper2024moments}
A.~J. Harper.
\newblock Moments of random multiplicative functions, {III}: A short review.
\newblock {\em arXiv preprint arXiv:2410.11523}, 2024.

\bibitem{HNR}
A.~J. Harper, A.~Nikeghbali, and M.~Radziwi{\l}{\l}.
\newblock A note on {Helson}'s conjecture on moments of random multiplicative
  functions.
\newblock In {\em Analytic number theory. In honor of Helmut Maier's 60th
  birthday}, pages 145--169. Springer, Cham, 2015.

\bibitem{HL}
W.~P. Heap and S.~Lindqvist.
\newblock Moments of random multiplicative functions and truncated
  characteristic polynomials.
\newblock {\em Q. J. Math.}, 67(4):683--714, 2016.

\bibitem{Helson}
H.~Helson.
\newblock Hankel forms.
\newblock {\em Stud. Math.}, 198(1):79--84, 2010.

\bibitem{Hofmann}
M.~C. Hofmann, A.~Hoganson, S.~Menon, W.~Verreault, and A.~Zaman.
\newblock Moments of random multiplicative functions over function fields.
\newblock {\em arXiv preprint arXiv:2408.08309}, 2024.

\bibitem{Jeg2021}
A.~Jego.
\newblock Critical {Brownian} multiplicative chaos.
\newblock {\em Probab. Theory Relat. Fields}, 180(1-2):495--552, 2021.

\bibitem{JS2017}
J.~Junnila and E.~Saksman.
\newblock Uniqueness of critical {Gaussian} chaos.
\newblock {\em Electron. J. Probab.}, 22:31, 2017.
\newblock Id/No 11.

\bibitem{JSW2019}
J.~Junnila, E.~Saksman, and C.~Webb.
\newblock Decompositions of log-correlated fields with applications.
\newblock {\em Ann. Appl. Probab.}, 29(6):3786--3820, 2019.

\bibitem{Kal2021}
O.~Kallenberg.
\newblock {\em Foundations of modern probability}, volume~99 of {\em
  Probability Theory and Stochastic Modelling}.
\newblock Springer, Cham, third edition, 2021.

\bibitem{KSX}
O.~Klurman, I.~D. Shkredov, and M.~W. Xu.
\newblock On the random {Chowla} conjecture.
\newblock {\em Geom. Funct. Anal.}, 33(3):749--777, 2023.

\bibitem{Koukoulopoulos}
D.~Koukoulopoulos.
\newblock {\em The distribution of prime numbers}, volume 203 of {\em Grad.
  Stud. Math.}
\newblock Providence, RI: American Mathematical Society (AMS), 2019.

\bibitem{Lac2024}
H.~Lacoin.
\newblock Critical {Gaussian} multiplicative chaos revisited.
\newblock {\em Ann. Inst. Henri Poincar{\'e}, Probab. Stat.}, 60(4):2328--2351,
  2024.

\bibitem{Mastrostefano}
D.~Mastrostefano.
\newblock An almost sure upper bound for random multiplicative functions on
  integers with a large prime factor.
\newblock {\em Electron. J. Probab.}, 27:21, 2022.
\newblock Id/No 32.

\bibitem{MV}
H.~L. Montgomery and R.~C. Vaughan.
\newblock {\em Multiplicative number theory. {I}. {Classical} theory},
  volume~97 of {\em Camb. Stud. Adv. Math.}
\newblock Cambridge: Cambridge University Press, 2007.

\bibitem{NPS}
J.~Najnudel, E.~Paquette, and N.~Simm.
\newblock Secular coefficients and the holomorphic multiplicative chaos.
\newblock {\em Ann. Probab.}, 51(4):1193--1248, 2023.

\bibitem{NPSV}
J.~Najnudel, E.~Paquette, N.~Simm, and T.~Vu.
\newblock The {F}ourier coefficients of the holomorphic multiplicative chaos in
  the limit of large frequency.
\newblock {\em arXiv preprint arXiv:2502.14863}, 2025.

\bibitem{Pow2021}
E.~Powell.
\newblock Critical {Gaussian} multiplicative chaos: a review.
\newblock {\em Markov Process. Relat. Fields}, 27(4):557--606, 2021.

\bibitem{SaksmanWebb}
E.~Saksman and C.~Webb.
\newblock Multiplicative chaos measures for a random model of the {R}iemann
  zeta function.
\newblock {\em arXiv preprint arXiv:1604.08378}, 2016.

\bibitem{SW}
E.~Saksman and C.~Webb.
\newblock The {Riemann} zeta function and {Gaussian} multiplicative chaos:
  statistics on the critical line.
\newblock {\em Ann. Probab.}, 48(6):2680--2754, 2020.

\bibitem{San2015}
F.~Santambrogio.
\newblock {\em Optimal transport for applied mathematicians. {Calculus} of
  variations, {PDEs}, and modeling}, volume~87 of {\em Prog. Nonlinear Differ.
  Equ. Appl.}
\newblock Cham: Birkh{\"a}user/Springer, 2015.

\bibitem{Shiu}
P.~Shiu.
\newblock A {Brun}-{Titchmarsh} theorem for multiplicative functions.
\newblock {\em J. Reine Angew. Math.}, 313:161--170, 1980.

\bibitem{SoundXu}
K.~Soundararajan and M.~W. Xu.
\newblock Central limit theorems for random multiplicative functions.
\newblock {\em J. Anal. Math.}, 151(1):343--374, 2023.

\bibitem{SZ}
K.~Soundararajan and A.~Zaman.
\newblock A model problem for multiplicative chaos in number theory.
\newblock {\em Enseign. Math. (2)}, 68(3-4):307--340, 2022.

\bibitem{Tal2022}
M.~Talagrand.
\newblock {\em Upper and lower bounds for stochastic processes---decomposition
  theorems}, volume~60 of {\em Ergeb. Math. Grenzgeb., 3. Folge}.
\newblock Springer, Cham, second edition, 2021.

\bibitem{Weber}
M.~J.~G. Weber.
\newblock {{\(L^{1}\)}}-norm of {Steinhaus} chaos on the polydisc.
\newblock {\em Monatsh. Math.}, 181(2):473--483, 2016.

\bibitem{Wintner}
A.~Wintner.
\newblock Random factorizations and {Riemann}'s hypothesis.
\newblock {\em Duke Math. J.}, 11:267--275, 1944.

\bibitem{Won2019}
M.~D. Wong.
\newblock Tail universality of critical {G}aussian multiplicative chaos.
\newblock {\em arXiv preprint arXiv:1912.02755}, 2019.

\end{thebibliography}

\Addresses
\end{document}